\def\scriptO{{{\it O}\kern -.42em {\it `}\kern + .20em}}
\newcommand{\Rmn}[1]{\expandafter\@slowromancap\romannumeral #1@}
\newtheorem{thm}{Theorem}[section]
  \newenvironment{hproof2}{%
  \proof}{\endproof}
  \newenvironment{hproof4}{%
  \proof}{\endproof}
 \newtheorem{Th}[thm]{Theorem}
  \newtheorem{op}[thm]{Open Question}
 \newtheorem{Prop}[thm]{Proposition}
 \newtheorem{Co}[thm]{Corollary}
\newtheorem{Lm}[thm]{Lemma}
\newtheorem{Dfi}[thm]{Definition}
\newtheorem{Rm}[thm]{Remark}
\newtheorem{Con}[thm]{Conjecture}
\numberwithin{equation}{section}
\newcommand{\be}{\begin{equation}}
\newcommand{\ee}{\end{equation}}
\newcommand{\bg}{\begin{gather}}
\newcommand{\eg}{\end{gather}}
\newcommand{\ba}{\begin{align}}
\newcommand{\ea}{\end{align}}
\newcommand{\bad}{\begin{aligned}}
\newcommand{\ead}{\end{aligned}}
\newcommand{\R}{\mathbb{R}}
\newcommand{\N}{\mathbb{N}}
\newcommand{\C}{\mathbb{C}}
\newcommand{\s}{\mathbb{S}}
\newcommand{\mca}[1]{\mathcal{#1}}
\newcommand{\Hb}{\mathbb{H}}
\newcommand{\nf}{\infty}
\newcommand{\Qr}{\mathcal{Q}}
\def\avint{\mathop{\mathchoice{\,\rlap{-}\!\!\int}
                              {\rlap{\raise.15em{\scriptstyle -}}\kern-.2em\int}
                              {\rlap{\raise.09em{\scriptscriptstyle -}}\!\int}
                              {\rlap{-}\!\int}}\nolimits}
\def\avint{\mathop{\,\rlap{-}\!\!\int}\nolimits}
\def\ovwe{\text{\larger[1.5]$\we$}}
\def\smi{\setminus}
\def\Xint#1{\mathchoice
{\XXint\displaystyle\textstyle{#1}}%
{\XXint\textstyle\scriptstyle{#1}}%
{\XXint\scriptstyle\scriptscriptstyle{#1}}%
{\XXint\scriptscriptstyle\scriptscriptstyle{#1}}%
\!\int}
\def\XXint#1#2#3{{\setbox0=\hbox{$#1{#2#3}{\int}$ }
\vcenter{\hbox{$#2#3$ }}\kern-.6\wd0}}
\def\dashint{\Xint-}
\def\mkt{\mkern2mu}
\def\mko{\mkern1mu}
\def\La{\Lambda}
\def\lf{\left}
\def\ve{\vec e}
\def\ot{\otimes}
\def\de{\delta}
\def\rg{\right}
\def\al{\alpha}
\def\la{\lambda}
\def\na{\nabla}
\def\nap{\nabla^{\perp}}
\newcommand{\we}{\wedge}
\def\imm{\textup{imm}}
\def\std{\textup{std}}
\def\bwe{\bigwedge}
\def\loc{\textup{loc}}
\def\vae{\varepsilon}
\def\e{\epsilon}
\def\vp{\varphi}
\def\ds{\displaystyle}
\def\Om{\Omega}
\def\II{\mathrm{I\!I}}
\def\bII{\vec{\II}}
\def\p{\partial}
\def\bn{\vec{n}}
\def\bH{\vec{H}}
\def\bv{\vec{v}}
\def\bg{\vec{g}}
\def\g{\nabla}
\def\lan{\langle}
\def\ran{\rangle}
\def\bL{\vec{L}}
\def\bR{\vec{R}}
\def\bw{\vec{w}}
\def\bPS{\vec\Psi}
\def\vp{\varphi}
\def\bP{\vec{\phi}}
\def\bP{\vec{\Phi}}
\def\dvol{d\textup{vol}}
\def\Si{\Sigma}
\newcommand{\res}{\mathbin{\hbox{\vrule height 5pt width .4pt depth 0pt\vrule height .6pt width 4pt depth 0pt}}} 
\newcommand{\metricsub}[3]{_{
    \mkern-#3mu
    \smash{\lower #1\hbox{$\scriptscriptstyle #2$}}
}}
\newcommand{\resg}{\res_g}
\newcommand{\dwe}{\mathbin{\dot{\wedge}}}
\newcommand{\dres}{\mathbin{\dot{\res}_g}}
\newcommand{\wres}{\mathbin{\ovs{\ovwe}{\res}_{\mkern -4mu g}}}
\newcommand{\D}{\mathcal D}
\newcommand{\ov}[1]{\overline{#1}}
\newcommand{\ti}[1]{\tilde{#1}}
\newcommand{\ovs}[2]{\overset{#1}{#2}}
\newcommand{\Sp}{\mathbb{S}}
\DeclareMathOperator{\curl}{curl}
\DeclareMathOperator{\dist}{dist}
\DeclareMathOperator{\supp}{supp}
\DeclareMathOperator{\card}{Card}
\newcommand{\lap}{\Delta}
\newcommand{\Imm}{\mathrm{Imm}}
\begin{document}

\title{The Analysis of Willmore Surfaces and its Generalizations in Higher Dimensions}
\author{Tian Lan \and Dorian Martino \and Tristan Rivière}
\maketitle
\begin{abstract}
We review recent progress concerning the analysis of Lagrangians on immersions into $\mathbb{R}^d$ depending on the first and second fundamental forms and their covariant derivatives. 
\end{abstract}

\centerline{ \it In honor of Professor Gang Tian on the occasion of his 65th birthday}


\section{Introduction}\label{sec:introduction}
The study of topological and geometrical properties of manifolds through the search of special metrics ``equipping'' these manifolds and solving some special Partial Differential Equations has a long and rich history which takes its origin maybe in the construction of special curves (like Brachistochrone curves in the XVII\textsuperscript{th} century, or Euler's elastica in the XVIII\textsuperscript{th} century) and constant Gauss curvature metrics on closed surfaces in relation with the uniformization theorem for Riemann Surfaces in the XX\textsuperscript{th} century. The development of what could be called ``intrinsic geometric analysis'' has been the source of spectacular results in differential topology, differential geometry, and complex geometry with the search of constant scalar curvature metrics, Einstein metrics, K\"ahler--Einstein metrics, solutions to the Ricci flow...  
Another branch of geometric analysis is dealing with the study of ``special submanifolds'' within a given Riemannian manifold and its interaction with the geometry of the manifold itself. This branch of geometric analysis is maybe rooted originally both in the calculus of variations with the variational constructions of closed geodesics, the resolution of the Lagrange-Plateau problem in Euclidean space as well in the ``explicit differential geometry'' of special submanifolds such as plane algebraic curves, algebraic surfaces, etc. The central objects of what can be described as ``extrinsic geometric analysis'' are minimal surfaces and their generalizations (constant and prescribed mean curvature surfaces). 
The underlying PDEs are related to the area variations under various constraints.
In the present work, we consider a branch of extrinsic variational geometric analysis that has seen substantial development in recent decades, and concerns immersions that arise as critical points of Lagrangians depending on the first and second fundamental form, and their covariant derivatives.
The simplest and maybe most studied model for such a Lagrangian is the Willmore functional of immersed surfaces into ${\mathbb R}^m$ with $m\ge 3$. Let $\Sigma$ be a closed oriented two-dimensional manifold and let $\vec{\Phi}$ be a $C^2$ immersion of this surface into ${\mathbb R}^m$. Denoting by $g_{\vec{\Phi}}$ the first fundamental form of this immersion and by $\bII_{\vec{\Phi}}$ its second fundamental form, the mean curvature of the immersion is given by
\[
\vec{H}_{\vec{\Phi}} \coloneqq \frac{1}{2}\mbox{tr}_{g_{\vec{\Phi}}}\bII_{\vec{\Phi}}.
\]
The Willmore energy is
\[
W(\vec{\Phi})\coloneqq \int_\Sigma|\vec{H}_{\vec{\Phi}}|^2\ d\textup{vol}_{g_{\vec{\Phi}}}.
\]
This energy has been initially introduced by Sophie Germain and Siméon Denis Poisson in an attempt to generalize to two-dimensional elastic membranes the famous Euler Elastica modeling the free energy of a beam \cite{Germain, poisson}. The very first derivation of the Euler--Lagrange equation of $W$ was made by Poisson himself around 1814 in the case of a graph (see page 60 of \cite{poisson} and chapter 10 of \cite{michelat-these}) in $\R^3$. Using concepts which were not completely clarified at the time (such as the Gauss curvature and the Laplace--Beltrami operator), we can rewrite Poisson's Euler--Lagrange equation for the Lagrangian $W$ in the following form
\begin{equation}
\label{EL-Will}
\Delta_gH_{\vec{\Phi}}+2H_{\vec{\Phi}}\,(H^2_{\vec{\Phi}}-K_{\vec{\Phi}})=0,
\end{equation}
where $H_{\vec{\Phi}}=\vec{n}_{\vec{\Phi}}\cdot\vec{H}_{\vec{\Phi}}$ is the mean curvature and $\vec{n}_{\vec{\Phi}}$ the unit Gauss map to the immersion, $\Delta_g$ is the negative Laplace--Beltrami operator which reads in local coordinates $(x^1,x^2)$ $\Delta_{g}\cdot \coloneqq (\det(g))^{-1/2}\partial_{x_i}(g^{ij}\,(\det(g))^{1/2} \partial_{x_j}\cdot)$ and $K_{\vec{\Phi}}\coloneqq \mbox{det}_{g_{\vec{\Phi}}}( \vec{n}_{\vec{\Phi}}\cdot\bII_{\vec{\Phi}})$ is the Gauss curvature.\footnote{The Euler-Lagrange equation in higher codimension has the same
cubic structure in the second fundamental form, see \cite[Theorem~2.1]{Weiner78}.} 
Throughout this paper, we shall often write $g=g_{\bP}$ when there is no ambiguity. With these notations we have also
\begin{equation}
\label{mean}
\vec{H}_{\vec{\Phi}}= H_{\vec{\Phi}}\ \vec{n}_{\vec{\Phi}}= \frac{1}{2}\,\left( \vec{n}_{\vec{\Phi}}\cdot\Delta_{g}\vec{\Phi}\right)\ \vec{n}_{\vec{\Phi}}= \frac{1}{2}\,\Delta_{g}\vec{\Phi}.
\end{equation}
Comparing \eqref{mean} with \eqref{EL-Will}, it appears that Poisson's Euler--Lagrange equation is a degenerate nonlinear fourth-order equation combining analytical difficulties which were way too advanced for the early XIX\textsuperscript{th} century. Hence in a natural way, in the following century, the attention has been exclusively devoted to special solutions solving a simpler second order version of (\ref{EL-Will}):
\[
H_{\vec{\Phi}}=0.
\]
This is the minimal surface equation corresponding to immersions that are critical points of the area.\\

Poisson's equation (\ref{EL-Will}) was rediscovered by W. Schadow\footnote{See the comment by Wilhelm Blaschke in~\cite[Exercise~7, $\S$ 83]{Bla}.} and also appeared in the PhD work of Gerhard Thomsen~\cite{thom} a century later (in 1923) in the framework of conformal geometry. Hence, the study of the Lagrangian $W$ was motivated by the merging of two theories: conformal geometry and minimal surfaces theory. 
On the one hand they proved that $W$ is invariant under the action of generic\footnote{Precisely, for any conformal transformation $\Psi$ from ${\mathbb R}^3\cup\{\infty\}$ into itself and for any immersion $\vec{\Phi}$ of an oriented closed two dimensional manifold $\Sigma$ into ${\mathbb R}^3$ such that $\Psi^{-1}(\{\infty\})\cap \vec{\Phi}(\Sigma)=\emptyset$ there holds
\[
W(\Psi\circ\vec{\Phi})=W(\vec{\Phi})\ .
\]} conformal transformations on the other hand, as it has been observed above, minimal surfaces are stable  critical points of $W$.
Probably because of the absence of known examples that were not just composition of minimal immersions with conformal transformations, the interest in functional $W$ disappeared again from the mathematical literature for a few decades until the famous paper \cite{Wil} by Thomas Willmore in 1965. In this paper Willmore proved that for any immersion $\vec{\Phi}$ of a closed surface $\Sigma$ the following lower bound holds
\[
W(\vec{\Phi})\ge 4\pi.
\]
Moreover, there is equality if and only if $\vec{\Phi}(\Sigma)$ is a round sphere. This lower bound could be interpreted as a Fenchel type theorem for surfaces or as a variant of the famous Chern--Lashof inequality (see \cite{ChLa}) stating that
\[
\int_\Sigma|K_{\vec{\Phi}}|\ d\textup{vol}_{g_{\vec{\Phi}}}\ge 4\pi.
\]
There is equality if and only if $\vec{\Phi}(\Sigma)$ is a convex surface. Willmore then conjectured that if $\Sigma$ is not diffeomorphic to $\s^2$ then the lower bound is increased and should be equal to $2\pi^2$, with equality if and only if $\vec{\Phi}(\Sigma)$ is a compact conformal transformation of the 2-torus obtained by rotating the vertical circle included in the $x-z$ plane of radius 1 and centered at the point $(\sqrt{2},0,0)$. Since this paper the Sophie Germain Functional was called {\it Willmore Functional}.\\

The first analytical work on the Willmore functional $W$ is the paper by Leon Simon \cite{Sim} in which he shows that the infimum of $W$ is achieved among all possible $C^2$ immersions of the torus by a smooth immersion.
Relying for a large part on the analysis of \cite{Sim}, Matthias Bauer and Ernst Kuwert in \cite{BaKu}, by proving some clever strict inequalities on connected sums of immersions of surfaces, have been able to extend Simon's result to any genus, that is, for any genus $g$ the infimum of $W$ among all immersions of the surface of genus $g$ is achieved. 
What should be noticed regarding the analysis in \cite{Sim} is that first the minimization is considered in a measure-theoretical sense using the theory of varifolds, then the regularity relies a lot on comparison arguments with local competitors which are graphs satisfying the biharmonic equation (i.e. the linearised version of (\ref{EL-Will})). In this sense, the analysis in \cite{Sim} is very much restricted to minimization operations.\\

The third author of the present work has been considering in the early 2000s the possibility to use more classical functional analysis to perform the minimization of $W$ with the perspective of considering more general variational arguments such as min-max operations, or the study of the associated gradient flow. One of the first difficulties was to obtain an Euler--Lagrange equation compatible with the functional space in which the variations of $W$ are studied. In order to illustrate this difficulty, it could be interesting to look at a simpler framework and to go one dimension lower. We consider the variations of the Euler Elastica for immersions $\vec{\Phi}$ of the segment $[0,1]$ into the plane $\R^2$ given by
\[
E(\vec{\Phi})=\int_{[0,1]}\kappa^2_{\vec{\Phi}}\ dl_{\vec{\Phi}},
\]
where $\kappa_{\vec{\Phi}}$ is the curvature of the immersion and $dl_{\vec{\Phi}}$ is the length element of the immersion. The critical points to $E$ are known to satisfy the Euler--Lagrange equation
\begin{equation}
\label{EEEL}
2\,\ddot{\kappa}_{\vec{\Phi}}+\kappa^3_{\vec{\Phi}}=0.
\end{equation}
This can be seen as the one-dimensional counterpart of the Willmore equation \eqref{EL-Will}. Obviously, this equation does not present any difficulty by itself and can be solved almost explicitly by multiplying by $\dot{\kappa}_{\vec{\Phi}}$, integrating and solving the following first-order ODE using elliptic integrals
\[
\dot{\kappa}^2_{\vec{\Phi}}+\frac{\kappa^4_{\vec{\Phi}}}{4}=c^2_0\qquad\Longleftrightarrow\qquad \frac{\dot{\kappa}_{\vec{\Phi}}}{\sqrt{{c^2_0}-\frac{\kappa^4_{\vec{\Phi}}}{4}}}=\pm 1.
\]
However this is not the solvability of (\ref{EEEL}) which is addressed at this stage, but rather the compatibility between the Euler--Lagrange equation and the Lagrangian from a purely variational perspective. In unit speed parameterization $\vec{\Phi}(t)$ for $s\in[0,L]$ and $L=\int dl_{\vec{\Phi}}$ we have $\kappa_{\vec{\Phi}}\, \vec{n}_{\vec{\Phi}}=\frac{d^2\vec{\Phi}}{ds^2}$ and thus it holds
\[
E(\vec{\Phi})=\int_0^L\left|\frac{d^2\vec{\Phi}}{ds^2}  \right|^2 ds.
\]
Hence, obviously, the natural space in which one should consider the variations of $E$ is the Sobolev space $W^{2,2}([0,L],{\R}^2)$. The problem is that the nonlinearity in the Euler--Lagrange equation
involves the cube of the second derivative of $\vec{\Phi}$ and a priori
\[
\kappa^3_{\vec{\Phi}}=\left(\vec{n}_{\vec{\Phi}}\cdot\frac{d^2\vec{\Phi}}{dt^2}\right)^3\qquad \notin L^1_{\textup{loc}}([0,L]).
\]
This does not define a distribution for an arbitrary $\vec{\Phi}$ in $W^{2,2}([0,L],{\R}^2)$. As paradoxical as it appears, the Euler--Lagrange equation \eqref{EEEL} is not compatible with the Lagrangian itself!\\

The same paradox is present one dimension higher for the Willmore functional. Using (\ref{mean}) we have
\[
W(\vec{\Phi})=\frac{1}{4}\int_\Sigma |\Delta_g\vec{\Phi}|^2\ d\textup{vol}_g.
\]
Naturally the function space to be considered is the space $W^{2,2}(\Sigma,{\mathbb R}^m)$ and, even assuming that the metric $g_{\vec{\Phi}}$ defined on $T\Sigma$ and its inverse $g^{-1}_{\vec{\Phi}}$ would be bounded in $L^\infty(\Sigma)$, the non-linearity 
in the Euler--Lagrange (\ref{EL-Will}) is containing terms such as
\[
2\,H_{\vec{\Phi}}\,|H_{\vec{\Phi}}|^2\qquad \notin L^1_{\textup{loc}}(\Sigma).
\]
As well, this does not define a distribution a priori. Hence, the question of constructing an ad-hoc framework to study the variations of $W$ was open. To that aim, the third author introduced in 2010 (paper published in 2014 \cite{Riv14}) the notion of {\it $W^{2,2}$ weak immersions}:

A map $\vec{\Phi}\in W^{1,\infty}\cap W^{2,2}(\Sigma,{\mathbb R}^m)$ is said to be a 
{\it  $W^{2,2}$ weak immersion}\footnote{In the original work as well as in subsequent works such as \cite{Ri16} the author introduced the space of {\it weak immersions} ${\mathcal E}_\Sigma$ which only requires $\vec{\Phi}\in W^{1,\infty}$ as well as (\ref{weakimm}) and the fact that the Gauss map is in $W^{1,2}(\Sigma,G_2({\R}^m))$. This later hypothesis is implied by the assumption $\vec{\Phi}\in W^{1,\infty}\cap W^{2,2}(\Sigma,{\mathbb R}^m)$  together with (\ref{weakimm}). The reverse is not true as observed in \cite{Lan}, that is weak immersions are not necessarily $W^{2,2}$ in a new bi-Lipschitz chart and hence the space of  {\it  $W^{2,2}$ weak immersions} we are defining here is smaller than the original space defined in \cite{Riv14}. It enjoys nevertheless the desired almost weak closure property (see Theorem~\ref{th-wclo}).} if for a given smooth reference metric
$g_0$ on the closed surface $\Sigma$ there exists $C_{\vec{\Phi}}>1$ such that
\begin{equation}
  \label{weakimm}  
  C_{\vec{\Phi}}^{-1}\, |X|_{g_0}^2\le |X|_{g_{\vec{\Phi}}}^2=|\vec{\Phi}_\ast X|^2_{{\mathbb R}^m}\le C_{\vec{\Phi}}\, |X|_{g_0}^2,\qquad \text{for a.e. } p\in \Sigma\,\text{ and all }X\in T_p\Sigma. 
\end{equation}
The interest of considering $W^{2,2}$ weak immersions is due to an ``almost weak closure'' property which is going to be a consequence
of the existence of an underlying smooth conformal structure for any $W^{2,2}$ weak immersion. Precisely we have the following result.
\begin{Th}
\label{th-conf}
Let $\vec{\Phi}$ be a $W^{2,2}$ weak immersion defined on a smooth closed orientable surface $\Sigma$ then there exists a constant Gauss curvature metric $h$ on $\Sigma$ and a function $\alpha\in C^0\cap W^{1,2}(\Sigma,{\mathbb R})$ such that
\begin{equation}
    \label{consgauss}
    g_{\vec{\Phi}}=e^{2\alpha}\, h.
\end{equation}
Moreover $\alpha$ satisfies the Liouville equation
\begin{equation}
\label{liouv}
-\Delta_{h}\alpha=e^{2\alpha}\, K_{g_{\vec{\Phi}}} -K_h.
\end{equation}
The Gauss--Bonnet theorem holds, denoting by $\gamma(\Sigma)$ the genus of $\Sigma$,
\be\label{gau-bon}
\int_\Sigma K_{g_{\vec{\Phi}}}\, d\textup{vol}_{g_{\vec{\Phi}}}= 4\pi\, (1-\gamma(\Sigma)).
\ee
\end{Th}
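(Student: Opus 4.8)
The plan is to construct the conformal structure locally by Coulomb moving frames, glue these local models into a smooth Riemann surface structure, invoke the uniformization theorem, and then read off the Liouville equation and Gauss--Bonnet from the conformal change of the Gauss curvature.

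\emph{Step 1 (local conformal coordinates with sharp regularity).} Since $\vec\Phi\in W^{1,\infty}\cap W^{2,2}$, the Gauss map $\vec n_{\vec\Phi}$ lies in $W^{1,2}(\Sigma)$ (as recalled in the footnote above), so $\int_U|\nabla\vec n_{\vec\Phi}|^2\,\dvol_{g_{\vec\Phi}}$ can be made smaller than H\'elein's threshold on a sufficiently small neighbourhood $U$ of any prescribed point. On $U$, following H\'elein's construction of Coulomb moving frames, one produces an orthonormal pair $(\vec e_1,\vec e_2)$ of $\R^m$-valued maps spanning the tangent planes of $\vec\Phi$ and satisfying the Coulomb gauge $\mathrm{div}(\vec e_1\cdot\nabla\vec e_2)=0$, with $\|\nabla\vec e_i\|_{L^2(U)}\lesssim\|\nabla\vec n_{\vec\Phi}\|_{L^2(U)}$. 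The Coulomb condition together with the first structure equation yields, after solving a linear elliptic system and integrating, coordinates $(x^1,x^2)$ on $U$ in which $\vec\Phi$ is weakly conformal, $\vec\Phi_{x^1}\cdot\vec\Phi_{x^2}=0$ and $|\vec\Phi_{x^1}|=|\vec\Phi_{x^2}|=e^{\lambda}$, and one checks that this coordinate map is a bi-Lipschitz homeomorphism onto its image. The conformal factor then solves, in the distributional sense, $-\Delta\lambda=e^{2\lambda}K_{g_{\vec\Phi}}$, whose right-hand side has the compensated (Jacobian) structure $\sum_i\{e_1^i,e_2^i\}$ inherited from the frame; by Wente's theorem $\lambda\in C^0\cap W^{1,2}(U)$.

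\emph{Step 2 (globalization and uniformization).} By construction the charts of Step 1 all realise $g_{\vec\Phi}$ as a positive multiple of the Euclidean metric, hence induce the same conformal class; their transition maps are bi-Lipschitz (in particular $W^{1,2}_{\mathrm{loc}}$), orientation-preserving, and conformal almost everywhere, so by Weyl's lemma they are holomorphic, therefore smooth. Consequently these charts define a smooth complex structure $\mathcal{J}$ on $\Sigma$, making $(\Sigma,\mathcal{J})$ a closed Riemann surface with conformal class $[g_{\vec\Phi}]$. The uniformization theorem for the smooth surface $(\Sigma,\mathcal{J})$ furnishes a smooth metric $h$ of constant Gauss curvature $K_h$ in this class, so $g_{\vec\Phi}=e^{2\alpha}h$ for some measurable $\alpha$; in each holomorphic chart $h=e^{2\mu}|dz|^2$ with $\mu$ smooth, whence $\alpha=\lambda-\mu\in C^0\cap W^{1,2}$ locally, hence on all of $\Sigma$. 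This is \eqref{consgauss}.

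\emph{Step 3 (Liouville equation and Gauss--Bonnet).} The conformal change law for the Gauss curvature under $g_{\vec\Phi}=e^{2\alpha}h$ reads $e^{2\alpha}K_{g_{\vec\Phi}}=K_h-\Delta_h\alpha$; since $\alpha\in W^{1,2}$, $K_h$ is a constant, and $e^{2\alpha}K_{g_{\vec\Phi}}\,\dvol_h=K_{g_{\vec\Phi}}\,\dvol_{g_{\vec\Phi}}$ is a finite signed measure (because $|K_{g_{\vec\Phi}}|\le|\bII_{\vec\Phi}|^2\in L^1(\dvol_{g_{\vec\Phi}})$ as $\vec\Phi\in W^{1,\infty}\cap W^{2,2}$), this identity holds in the sense of distributions on $\Sigma$, which is precisely \eqref{liouv}. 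Integrating \eqref{liouv} against $\dvol_h$ over the closed surface annihilates the Laplacian term, so $\int_\Sigma K_{g_{\vec\Phi}}\,\dvol_{g_{\vec\Phi}}=\int_\Sigma e^{2\alpha}K_{g_{\vec\Phi}}\,\dvol_h=\int_\Sigma K_h\,\dvol_h=2\pi\,\chi(\Sigma)=4\pi(1-\gamma(\Sigma))$ by the classical Gauss--Bonnet theorem for the smooth metric $h$; this is \eqref{gau-bon}.

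\emph{Main obstacle.} The heart of the matter is Step 1: producing isothermal coordinates for a metric that is only $W^{1,2}\cap L^\infty$ with $L^\infty$-bounded inverse, together with the continuity of the conformal factor. Mere existence of such coordinates is classical (for instance via the measurable Riemann mapping theorem), but it is the Coulomb gauge that converts the otherwise uncontrolled nonlinearity in the structure equations into a Jacobian amenable to Wente's compensated-compactness estimate, and this is what delivers $\lambda\in C^0$ — hence $\alpha\in C^0$, without which the term $e^{2\alpha}K_{g_{\vec\Phi}}$ in \eqref{liouv} could not even be assigned a meaning. The remaining steps — gluing via Weyl's lemma, uniformization of the resulting smooth Riemann surface, the curvature transformation law, and the final integration — are by comparison routine.
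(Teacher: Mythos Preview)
Your overall architecture matches the paper's: Coulomb frame $\Rightarrow$ local isothermal coordinates with $\lambda\in C^0\cap W^{1,2}$ via a Wente-type estimate, then holomorphic transitions, uniformization, and the standard conformal-change computation for \eqref{liouv} and \eqref{gau-bon}. Steps~2 and~3 are essentially what the paper does in Corollary~\ref{coconstr} and the paragraph following it.

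The gap is in Step~1, and it is not where you think. You write ``one checks that this coordinate map is a bi-Lipschitz homeomorphism onto its image'', but this is precisely the point the paper isolates as the missing ingredient in earlier treatments. After the Coulomb construction you obtain $\varphi\in W^{1,\infty}\cap W^{2,2}(D^2,\R^2)$ with $\det(\nabla\varphi)$ bounded away from zero; however, for a merely $W^{1,\infty}$ map a positive Jacobian lower bound does \emph{not} force local injectivity --- the map $z\mapsto z^2/|z|$ (or its composition with stereographic projection, due to Plotnikov) is a counterexample with $\vec n\in W^{1,2}$ but $\vec\Phi\notin W^{2,2}$. The paper therefore proves a weak inverse function theorem (Lemma~\ref{thinv}): if $\phi\in W^{1,\infty}\cap W^{2,2}(D^2,\R^2)$ has $\det(\nabla\phi)\ge c>0$ a.e., then $\phi$ is locally a bi-Lipschitz homeomorphism with $W^{1,\infty}\cap W^{2,2}$ inverse. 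Their proof goes through the Beltrami equation: the complex dilatation $\mu=\partial_{\bar z}\phi/\partial_z\phi$ lies in $W^{1,2}$ with $\|\mu\|_\infty<1$, so the normalized quasiconformal solution $f$ of $\partial_{\bar z}f=\mu\,\partial_zf$ is in $W^{2,q}_{\loc}$ for all $q<2$; writing $\phi=A\,(g\circ f)^k$ locally for a conformal $g$ and ruling out $k\ge2$ by an integrability argument gives $k=1$, hence local injectivity. (An alternative route via $\nabla\phi\in VMO$ and Theorem~\ref{loin} is also indicated.) This is exactly where the full $W^{2,2}$ hypothesis is consumed, and it is the main technical contribution of the section.

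Your ``Main obstacle'' paragraph therefore misplaces the difficulty: the continuity of $\lambda$ via Wente (or Chanillo--Li) was already understood; what was open, and what the paper supplies, is the local invertibility of the candidate chart. Without it you have produced only a branched covering, not a coordinate system.
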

A proof of the first part of this result was proposed in \cite{Ri16} but a very last ingredient which requires $\vec{\Phi}\in W^{2,2}(\Sigma,{\mathbb R}^m)$
and not only $\vec{n}_{\vec{\Phi}}\in W^{1,2}(\Sigma,G_2({\mathbb R}^m))$ was missing to complete the argument and to ensure that $\phi$ given by \cite[Equation~(1.72)]{Ri16} in the middle of page 323 was a bi-Lipschitz homeomorphism. The complete argument for proving Theorem~\ref{th-conf} is given in Section \ref{sec-weakimm} of the present paper. Observe that
the existence of bi-Lipschitz isothermal charts does not hold if one only assumes that the Gauss map is in $W^{1,2}$ without assuming that the weak immersion is in $W^{2,2}$: in the local sense, a counterexample $z\rightarrow (z^2/|z|,0)\in \R^3$ is provided by the first author in \cite{Lan}, and when $\Sigma=\mathbb S^2$, Plotnikov \cite{Plot} construct a global counterexample by composing the map $z\rightarrow z^2/|z|$ with the stereographic projection.\\

A consequence of Theorem \ref{th-conf}, that is, the existence of an underlying smooth conformal structure, is the following almost sequential weak closure property for $W^{2,2}$ weak immersions proved first in \cite{Riv14} when the underlying conformal class is precompact and in the general case in \cite{LaRi2}.
It relies on Deligne--Mumford compactification of the moduli space of conformal structures (see for instance \cite{Hum} for the definition of nodal surfaces).
\begin{Th}
\label{th-wclo}
Let $\Sigma$ be a closed surface and let $\vec{\Phi}_k$ be a sequence of $W^{2,2}$ weak immersion into ${\mathbb R}^m$ with uniformly bounded $L^2$ norm of the second
fundamental form. Then, up to a subsequence, for any connected component of the limiting punctured nodal surface $\tilde{\Sigma}$ there exist a M\"obius transformation $\Xi_k$ of $\R^m$, a sequence of $W^{2,2}$ bi-Lipschitz homeomorphisms $\psi_k$ and at most finitely many points ${a_1, \dots, a_L}$ of $\tilde{\Sigma}$ such that
\[
\vec{\xi}_k \coloneqq \Xi_k\circ\vec{\Phi}_k\circ\psi_k\ \rightharpoonup\ \vec{\xi}_\infty \qquad\mbox{ weakly in }\quad W^{2,2}_{\textup{loc}}(\tilde{\Sigma}\setminus \{a_1,\dots, a_L\}),
\]
moreover $\vec{\xi}_k$ extends as a possibly branched $W^{2,2}$ weak immersion into ${\mathbb R}^m$.
\end{Th}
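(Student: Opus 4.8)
The plan is to combine the smooth uniformization of Theorem~\ref{th-conf} with a Deligne--Mumford exhaustion of $\Sigma$ by the thick parts of the degenerating conformal structures, a concentration--compactness analysis of the (quasi) conformally invariant curvature energy, and a point-removability statement for $W^{2,2}$ weak immersions with finite total curvature.

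Fixing the conformal gauge first: by Theorem~\ref{th-conf} each $\vec{\Phi}_k$ comes with a constant curvature metric $h_k$ and a factor $e^{2\alpha_k}$ with $g_{\vec{\Phi}_k}=e^{2\alpha_k}h_k$. Passing to a subsequence, the conformal classes $[h_k]$ converge in $\overline{\mathcal M}_{\gamma(\Sigma)}$ to a nodal surface; we fix one of its connected components $\tilde\Sigma$ together with the associated collapsing diffeomorphisms, and pull everything back so that on every compact $K\Subset\tilde\Sigma$ disjoint from the nodes we have $h_k\to h_\infty$ smoothly and $\vec{\Phi}_k$ (relabelled) is a $W^{2,2}$ weak immersion on $K$ conformal for $h_k$. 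The $\psi_k$ of the statement are (bi-Lipschitz extensions of) these collapsing diffeomorphisms; the additional degeneration along the pinching collars contributes finitely many of the exceptional points $a_i$.

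For the concentration--compactness step, note that $\int|\bII_{\vec{\Phi}_k}|^2\,d\textup{vol}$ and $4\int|\vec H_{\vec{\Phi}_k}|^2\,d\textup{vol}$ differ by a Gauss--Bonnet term, so the curvature energy is, up to topological constants, invariant under M\"obius transformations of $\R^m$ and remains bounded after any such normalization. Let $\{a_1,\dots,a_L\}$ be the points of $\tilde\Sigma$ where $\limsup_k\int_{B_r(p)}|\bII_{\vec{\Phi}_k}|^2\,d\textup{vol}\ge\epsilon_0$ for all $r>0$, $\epsilon_0$ being the threshold of the $\epsilon$-regularity theorem for weak immersions; this set is finite, with $L$ controlled by the total energy divided by $\epsilon_0$, and we adjoin it to the points coming from the nodes. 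Away from $\{a_1,\dots,a_L\}$, $\epsilon$-regularity (a Coulomb frame for the Gauss map plus Wente estimates, together with the Liouville equation~\eqref{liouv} for $\alpha_k$) gives local $W^{2,2}$ bounds on $\vec{\Phi}_k$ up to the additive normalization of the conformal factor. We remove that ambiguity with $\Xi_k$: pick a base point $p_0$ in the thick part of $\tilde\Sigma$ carrying nontrivial area, and let $\Xi_k$ be the translation--dilation normalizing $\vec{\Phi}_k(p_0)$ and $e^{\alpha_k}(p_0)$; then $\vec{\xi}_k\coloneqq\Xi_k\circ\vec{\Phi}_k\circ\psi_k$ is bounded in $W^{2,2}_{\textup{loc}}(\tilde\Sigma\setminus\{a_1,\dots,a_L\})$, and along a further subsequence $\vec{\xi}_k\rightharpoonup\vec{\xi}_\infty$ there. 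A Harnack inequality for~\eqref{liouv} keeps $e^{\alpha_\infty}$ bounded below on the thick part, so $\vec{\xi}_\infty$ is again a $W^{2,2}$ weak immersion, and $\int|\bII_{\vec{\xi}_\infty}|^2\,d\textup{vol}<\infty$ by lower semicontinuity.

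It remains to extend $\vec{\xi}_\infty$ across each $a_i$ as a possibly branched $W^{2,2}$ weak immersion, which I expect to be the main obstacle. Near $a_i$ the map $\vec{\xi}_\infty$ is a conformal weak immersion of a punctured disc with finite total curvature; one first removes the singularity of the Gauss map, using that $W^{1,2}$ maps into a compact manifold extend across a point of zero $2$-capacity, and then analyzes the conformal factor by inserting the finite-energy Gauss map into the Liouville and Gauss--Codazzi relations near the puncture. This should produce the asymptotics $\vec{\xi}_\infty(z)=\vec A\,z^{n}+o(|z|^{n})$ and $\alpha_\infty(z)=(n-1)\log|z|+O(1)$ for some integer $n\ge1$, i.e.\ a branch point of order $n$, and the same analysis applied on the two sides of a pinching collar handles the nodes. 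The difficulty is that this forces the full strength of the analysis of the conformally invariant fourth-order system and of the Liouville equation with a Dirac mass, and one must rule out a loss of area or curvature inside the punctures that would prevent the extension from being a genuine branched immersion (rather than degenerating to a constant or developing an essential singularity).
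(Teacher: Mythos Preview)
The paper does not actually contain a proof of Theorem~\ref{th-wclo}; it only states the result and attributes the proof to \cite{Riv14} (precompact conformal class) and \cite{LaRi2} (general case), indicating that it rests on Theorem~\ref{th-conf} together with the Deligne--Mumford compactification. Your strategy is precisely along these lines: uniformize via Theorem~\ref{th-conf}, pass to the Deligne--Mumford limit, run concentration--compactness for the $L^2$ second fundamental form with an $\epsilon$-regularity threshold, normalize with M\"obius maps, and remove the finitely many singularities as branch points. This is the correct architecture and matches what the cited references do.

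Two remarks on the sketch itself. First, the description of the bi-Lipschitz homeomorphisms $\psi_k$ is slightly imprecise: in the actual argument these are not merely ``bi-Lipschitz extensions of the collapsing diffeomorphisms'' but arise from composing the conformal charts produced by Theorem~\ref{th-conf} with the thick-part diffeomorphisms, and the $W^{2,2}\cap W^{1,\infty}$ regularity of $\psi_k$ is exactly what Theorem~\ref{thexicon} and Corollary~\ref{coconstr} supply; this is where the present paper's Section~\ref{sec-weakimm} enters. Second, the point-removability step you flag as the main obstacle is indeed delicate but is already handled in \cite[Lemma~A.5]{Riv14}: the asymptotic $\alpha(x)=d_i\log|x|+O(1)$ with $d_i\in\N$ follows from the finite-energy Liouville equation analysis rather than from a fourth-order system, and the integer degree comes from the topological argument for the Gauss map across the puncture. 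Your worry about ``ruling out loss of area'' is legitimate and is exactly why the M\"obius renormalization $\Xi_k$ is needed on each connected component separately.
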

Some weeks after the proof of Theorem~\ref{th-wclo} was posted on arXiv, a proof of this result appeared on arXiv as well but assuming the sequence $\vec{\Phi}_k$ to be conformal with respect to some sequence of Riemann surface (see \cite{Kuwertli12}).
It is important to insist at this stage that the restriction to conformal weak immersions is missing the goal posed in \cite{Riv14} of developing  a variational theory for the Willmore energy. It does not fulfill the need of exploring the neighborhood of a weak immersion in order to deduce an Euler--Lagrange equation for instance and Theorem~\ref{th-conf} is an essential tool in that respect.\\

The notion of branched $W^{2,2}$ weak immersion into ${\mathbb R}^m$
is an extension of $W^{2,2}$ weak immersion allowing for isolated branched points $\{a_1,\dots, a_L\}$ at which the gradient of the weak immersion is degenerating. Branched $W^{2,2}$ weak immersions have an underlying conformal structure, that is to say, there exists a constant Gauss curvature metric $h$ on $\Sigma$ such that
\[
 g_{\vec{\Phi}}=e^{2\alpha}\, h.
\]
However the conformal factor $\alpha$ is not bounded in $L^\infty$ anymore at each branched point but has a relatively well understood asymptotic expansion: in a local conformal chart $x=(x^1,x^2)\in D^2$ for $h$ containing exactly one of the branched point, say $a_i$, and for which $x(a_i)=0$, there exists $d_i\in {\N}$ such that (see Lemma A.5 of \cite{Riv14})
\[
\|\alpha(x)-d_i\log|x|\|_{L^\infty(D^2)}<+\infty.
\]
The maximal number of branched points for a given closed two-dimensional manifold $\Sigma$ is bounded by the $L^2$ norm of the second fundamental form. Indeed, it satisfies the generalized Liouville equation
\begin{equation}
\label{general-liouv}
-\Delta_{h}\alpha=e^{2\alpha}\, K_{g_{\vec{\Phi}}} -K_h -\sum_{i=1}^L\ d_i\,\delta_{a_i}.
\end{equation}

Once a weak closure result is known, only part of the task of developing a variational theory for Willmore is fulfilled. Indeed, the paradox of having an Euler--Lagrange equation not compatible with the Lagrangian and the assumption remains. This paradox has been solved
 by the third author some years before Theorem~\ref{th-wclo} was known. He proved in \cite{Riv08} that the Willmore equation  can be rewritten in conservative form: In 3 dimension for instance (\ref{EL-Will}) is equivalent to (omitting the subscript $\vec{\Phi}$)
 \begin{equation}
     \label{will-cons}
     d\ast_{g}\lf[ d\vec{H}-\,2\, H\, d\vec{n}-H^2\,d\vec{\Phi}\rg]=0.
 \end{equation}
In local conformal coordinates this equation is equivalent to
\begin{equation}
  \label{will-cons-local}  
\mbox{div}\left[ \nabla\vec{H}-\,2\, H\, \nabla\vec{n}-H^2\,\nabla\vec{\Phi}\right]=0.
\end{equation}
One can observe at this stage that for a weak immersion there holds
\[
\vec{H}\in L^2,\quad \nabla\vec{n}\in L^2,\quad\mbox{ and }\quad \nabla\vec{\Phi}\in L^\infty.
\]
Therefore, the following quantity is a well-defined distribution
\[
\nabla\vec{H}-\,2\, H\, \nabla\vec{n}-H^2\,\nabla\vec{\Phi}\in W^{-1,2}+L^1.
\]
Using Poincar\'e lemma together with a result by Jean Bourgain and Ha\"\i m Brezis \cite{Bourgain03} we deduce that a weak immersion is a critical point to $W$ if and only if there exists $\vec{L}\in L^2$ such that
\[
\nabla^\perp\vec{L}=2\, H\, \nabla\vec{n}+H^2\,\nabla\vec{\Phi}-\nabla\vec{H},
\]
where $\nabla^\perp\coloneqq(-\partial_{2}\,,\partial_{1})$. At this stage the equation, though meaningful for weak immersions, was presenting the difficulty of having a non linearity $(2\, H\, \nabla\vec{n}+ H^2\,\nabla\vec{\Phi})$ a-priori only in $L^1$, whose Hodge decomposition (omitting to precise boundary condition) is given by 
\[
2\, H\, \nabla\vec{n}+H^2\,\nabla\vec{\Phi}=\nabla\vec{H}+\nabla^\perp\vec{L}.
\]
This difficulty has been overcome by identifying further conserved quantities which have been later on interpreted as direct consequences of Noether theorem by Yann Bernard \cite{Ber}. The Euler--Lagrange equation in divergence form (\ref{will-cons-local}) corresponds to the invariance of the Lagrangian by translation. The invariance by dilation following Noether theorem reads
\begin{equation}
\label{dil}
\mbox{div}\left[\vec{L}\cdot\nabla^\perp\vec{\Phi} \right]=0.
\end{equation}
The invariance by rotation is giving
\begin{equation}
\label{rot}
   \mbox{div}\left[ \vec{L}\times\nabla^\perp\vec{\Phi}+H\ \nabla^\perp\vec{\Phi}\right]=0.
\end{equation}
The next step in \cite{Riv08} consists in introducing the ``primitives'' of the two last conservation laws: there exist $S\in W^{1,2}$ and $\vec{R}\in W^{1,2}$ such that
\[
\nabla^\perp S\coloneqq\vec{L}\cdot\nabla^\perp\vec{\Phi}\qquad\mbox{ and }\qquad \nabla^\perp\vec{R}\coloneqq\vec{L}\times\nabla^\perp\vec{\Phi}+H\ \nabla^\perp\vec{\Phi}.
\]
Finally another result in \cite{Riv08} is the discovery of an equation which relates $S$ and $\vec{R}$ called ``the $\vec{R}-S$ system''
\begin{equation}
    \label{RS-sys}
    \nabla\vec{R}=\vec{n}\times \nabla^\perp\vec{R}+\nabla^\perp S\ \vec{n}.
\end{equation}
It is proved in \cite{Riv08} that the solutions to the $\vec{R}-S$ system combined with \eqref{struct} are smooth by applying integrability by compensation to this first order elliptic system combined with the structural equation 
\begin{equation}
\label{struct}
-2\vec{H}=\nabla S\cdot\nabla^\perp\vec{\Phi}+\nabla \vec{R}\times\nabla^\perp\vec{\Phi}.
\end{equation}

While the use of local conformal coordinates was essential in \cite{Riv08}, Section \ref{sec-willsur} of the present paper is presenting a proof  of the regularity without using conformal coordinates. The motivation is to ``prepare the ground'' for considering regularity questions for solution to the Euler--Lagrange equation to higher dimensional counterpart to the Willmore Lagrangian, which is the subject of the second part of the paper. Before coming to this second part we prove the following approximation theorem (in Section \ref{sec:ApproximationResults}), which is one of the main new result of the present paper.
\begin{Th}
\label{th-approx}
Let $(\Sigma,h)$ be a closed Riemann surface, where $h$ is a Riemannian metric on $\Sigma$ of constant Gaussian curvature $1$, $-1$, or $0$. Let $\vec{\Phi}\in W^{2,2}(\Si,\R^m)$ be a conformal weak immersion and $\alpha\in W^{1,2}(\Sigma)$ be such that
\begin{align*}
    g_{\bP}=e^{2\al}\mko h.
\end{align*}
Then there exist sequences of smooth immersions $(\vec{\Phi}_k)_{k\in \N}\subset C^\nf(\Si,\R^m)$, functions $(\alpha_k)_{k\in\N}\subset C^\nf(\Sigma)$, and constant Gaussian curvature metrics $(h_k)_{k\in\N}$ on $\Si$ such that
\begin{align*}
    \begin{dcases}
    (i)\ g_{\vec{\Phi}_k}=e^{2\alpha_k}\mko h_k,\\[1ex]
        (ii)\ \vec{\Phi}_k\rightarrow\vec{\Phi}  &\text{in }\,W^{2,2}(\Sigma,\R^m),\\[1ex]
        (iii)\:  \alpha_k\rightarrow\alpha &\text{in }\,C^0(\Sigma),\\[1ex]
(iv)\ h_k\rightarrow h &\text{in }\,C^\nf (\Si,T^*\Si\ot T^*\Si).
    \end{dcases}
\end{align*}

\end{Th}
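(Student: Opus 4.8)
The plan is to produce the approximation in two independent moves: first mollify the conformal factor and immersion \emph{within a fixed conformal class}, then, if necessary, perturb the conformal class itself to reach the target metric $h$. Since $\vec{\Phi}$ is already conformal with $g_{\vec\Phi}=e^{2\alpha}h$, we may start by working in a finite atlas of $h$-isothermal charts $(U_j,x_j)$, in each of which $g_{\vec\Phi}=e^{2\alpha}\,e^{2\lambda_j}(dx^2+dy^2)$ with $\lambda_j$ smooth; the immersion satisfies the Liouville/structural identities of Theorem~\ref{th-conf}, and $\alpha\in W^{1,2}\cap C^0$. I would first reduce to the case where $\alpha$ and $\vec\Phi$ have a little extra regularity by a preliminary diagonal argument: since $\vec\Phi\in W^{2,2}$, a standard mollification $\vec\Phi\ast\rho_\varepsilon$ (glued via a partition of unity subordinate to the atlas) converges to $\vec\Phi$ in $W^{2,2}$ and, for $\varepsilon$ small, remains an immersion because $|\nabla\vec\Phi\ast\rho_\varepsilon|$ stays uniformly bounded below by the weak-immersion bound \eqref{weakimm} in the $C^0$ topology of $\nabla\vec\Phi$ — but $\nabla\vec\Phi$ is only in $W^{1,2}\hookrightarrow$ no $C^0$ in dimension $2$, so crude mollification of $\vec\Phi$ alone does \emph{not} preserve immersivity. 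This is the crux of the difficulty.

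The way around it is to mollify the \emph{data} $(\alpha,\vec n_{\vec\Phi})$ rather than $\vec\Phi$, and then \emph{solve} for $\vec\Phi_k$. Concretely: set $\alpha_k\coloneqq$ a smoothing of $\alpha$ (by solving $(1-\tfrac1k\Delta_h)\alpha_k=\alpha$, say) so that $\alpha_k\to\alpha$ in $C^0$ and in $W^{1,2}$, and likewise smooth the Gauss map. Then reconstruct a smooth conformal immersion with prescribed conformal factor $\alpha_k$ by solving the first-order system satisfied by a conformal immersion — equivalently, run the $\vec R$--$S$ system \eqref{RS-sys}, \eqref{struct}, or more directly use the representation $\partial_{\bar z}\vec\Phi$ in terms of $e^{\alpha}$ and the (smoothed) frame. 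Since the smoothed data are $C^\infty$, elliptic regularity for this system gives $\vec\Phi_k\in C^\infty$; the $W^{2,2}$-closeness of the data to the original yields $\vec\Phi_k\to\vec\Phi$ in $W^{2,2}$ by the continuous dependence of the reconstruction on its data (this is exactly the linear-elliptic stability used in Theorem~\ref{th-conf}). One must also check the global monodromy/periods match so that the local pieces glue to a genuine immersion of $\Sigma$; because we perturb off an existing global immersion $\vec\Phi$, the period obstructions vary continuously and can be corrected by a finite-dimensional adjustment, which is where the metrics $h_k\ne h$ enter — a small change of conformal class absorbs the mismatch — giving (i) and (iv) with $h_k\to h$ in $C^\infty$ (here one uses that the Teichmüller/Deligne–Mumford coordinates depend smoothly on the period data, and that constant-curvature representatives depend smoothly on the conformal class via the uniformization PDE \eqref{liouv}).

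The main obstacle, as indicated, is \textbf{preserving the weak-immersion nondegeneracy \eqref{weakimm} under the approximation} while simultaneously getting $W^{2,2}$ convergence: one cannot mollify $\vec\Phi$ directly, and one cannot mollify $\alpha$ and hope the old $\vec\Phi$ still has metric $e^{2\alpha_k}h$. The resolution is the reconstruction step above, so the real work is (a) writing the conformal immersion as the solution of an elliptic system in $(\alpha,\text{frame})$, (b) proving quantitative stability of that solution in $W^{2,2}$ under $C^0\cap W^{1,2}$ perturbations of $\alpha$ and $W^{1,2}$ perturbations of the frame, and (c) handling the global gluing via a small deformation of the conformal structure. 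Steps (ii) and (iii) then follow from the stability estimate and the construction of $\alpha_k$; Theorem~\ref{th-conf} is used throughout to guarantee that the relevant elliptic problems are the correct ones and that their solutions are bi-Lipschitz immersions for $k$ large. I expect no difficulty beyond (b); that estimate is essentially the linearization of the argument already present in Section~\ref{sec-weakimm}.
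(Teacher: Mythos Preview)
Your proposal rests on a false premise: you assert that ``crude mollification of $\vec\Phi$ alone does \emph{not} preserve immersivity'' because $\nabla\vec\Phi\in W^{1,2}$ fails to embed into $C^0$ in dimension~$2$. But $W^{1,2}(D^2)\hookrightarrow VMO(D^2)$, and VMO control of the gradient is exactly what is needed: the averages $\fint_{B_\varepsilon}\nabla\vec\Phi$ stay uniformly nondegenerate (they lie in a fixed compact set of full-rank matrices), and the mollified gradient differs from these averages by $o(1)$ in $L^\infty$ as $\varepsilon\to 0$. This is the content of the paper's Theorem~\ref{th:Approx} and Lemma~\ref{lmcovvmo}. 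So the paper \emph{does} mollify $\vec\Phi$ directly, obtaining smooth immersions $\vec\Phi_k\to\vec\Phi$ in $W^{2,2}$ with uniform two-sided bounds on $d\vec\Phi_k$; it then invokes Theorem~\ref{thm-con-stru} (convergence of the induced conformal structures) to produce the constant-curvature metrics $h_k\to h$ in $C^\infty$ and diffeomorphisms $\Psi_k\to\mathrm{id}$, and finishes by reading off $\alpha_k\to\alpha$ in $C^0$ from the Liouville equation \eqref{della=K} via the $W^{2,1}$ estimate of Corollary~\ref{coconfactor}.

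Your proposed workaround --- smoothing $(\alpha,\vec n)$ and then ``solving'' for a conformal immersion with that conformal factor --- is not well-posed. An immersed surface is determined (up to rigid motion) by its first and second fundamental forms only when the Gauss--Codazzi equations hold; independently mollifying the conformal factor and the Gauss map will generically destroy these integrability conditions, so no reconstruction exists. The $\vec R$--$S$ system you cite is a reformulation of the \emph{Willmore} equation, not a parametrization of arbitrary conformal immersions, and does not furnish the inverse map you need. The period/monodromy correction you sketch is therefore moot: the local problem already has no solution. The actual obstacle you should have identified is not immersivity under mollification but rather that the mollified $\vec\Phi_k$ is \emph{not conformal} for $h$; this is what forces the perturbed metrics $h_k$, and the paper handles it by the Uhlenbeck-type gluing argument in Theorem~\ref{thm-con-stru} rather than by any reconstruction from prescribed data.
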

This approximability property shows for instance that in order to prove an analytical property for weak immersions, it is sufficient to prove this property for $C^{\infty}$ immersions and to show that one can pass to the limit. This allows for instance, to define the Willmore flow for arbitrary initial data in the space of $W^{2,2}$ weak immersions. We prove that the density of $C^{\infty}$ immersions into weak immersions with VMO derivatives remains valid in higher dimensions, see Theorem \ref{th:Approx}. This result is fundamental in the recent work \cite{MarRiv2}, where the second and third authors study the closure of the set of weak immersions under a certain Sobolev-type bound on the second fundamental form whose motivation arises from the following context.\\

In 2018, Andrea Mondino and Huy The Nguyen \cite{mondino2018} proved that the Willmore functional is the only (up to a topological constant) conformally invariant functional among all the functionals defined on hypersurfaces of 3-dimensional manifolds and depending only on the first and second fundamental form. However, this is not the case in higher dimensions and codimensions. Easy examples can be found in dimension 4, with for instance, functionals defined on immersed submanifolds $\Sigma^4\subset \R^5$ such as
\begin{align*}
    \int_{\Sigma} \big|\mathring{\II} \big|^4_g\, d\textup{vol}_g, \quad \int_{\Sigma} \mathrm{tr}_g\left(\mathring{\II}^4\right)\, d\textup{vol}_g, \quad \int_{\Sigma} {\det}_g \big(\mathring{\II} \big)\, d\textup{vol}_g.
\end{align*}
Such conformally invariant functionals on submanifolds naturally arise in the context of the AdS/CFT correspondence in physics introduced by Juan Maldacena \cite{maldacena1998}. Broadly speaking, this correspondence states the existence of a duality between some gravitational theories on an asymptotically Anti De Sitter space-time $(X^{d+2},g)$ and some conformal field theories on the conformal boundary $\p_{\infty}X$, see for instance \cite{zwiebach2009} for an introduction. In this context, many questions require a priori to compute the volume of a noncompact minimal hypersurface such as the computation of the entanglement entropy or the expectation value of Wilson lines, see for instance \cite{graham2014} for various applications. In order to give meaning to this volume, one proceeds to a volume renormalization introduced by Henningson--Skenderis \cite{henningson1998} to study the Weyl anomaly. This method can be roughly summarized as follows: first compute the volume in a ball of size $R$, then compute an asymptotic expansion as $R\to +\infty$. Graham--Witten \cite{graham1999} proved in 1999 that one of the term in this expansion provides a conformally invariant functional on submanifolds of $\partial_{\infty} X$. For minimal hypersurfaces of $X^{d+2} = \mathbb{H}^{4}$, they obtained the Willmore energy for surfaces in $\R^3 = \partial_{\infty} \mathbb{H}^4$. In light of \cite{mondino2018}, this result is not surprising, since Graham--Witten proved that one has to find a conformally invariant functional on 2-dimensional surfaces by this procedure, and the Willmore energy is the only possibility. This renormalization procedure has been fully generalized by Gover--Waldron \cite{gover2017} in 2017. \\

The volume renormalization procedure for minimal hypersurfaces has the advantage of producing conformally invariant functionals having minimal hypersurfaces as critical points. However, as discussed in \Cref{sec:GR}, some of these functionals are not bounded from below or from above. Returning to the original idea of Germain and Poisson \cite{Germain,poisson}, one would like to study functionals that measure in some sense the umbilicity of a hypersurface. Such a functional should be able to provide ``best representation" of submanifolds and in particular, this requires to have a lower bound. In \cite{gover2021}, Gover--Waldron proved that conformally invariant functionals can also be stated in terms of singular Yamabe-type problem, a problem raised by Loewner--Nirenberg \cite{loewner1974} in 1974. 
In \cite{glaros2019,gover2021}, the authors prove that given any separating submanifold $\Sigma^{d-1}$ of some Riemannian manifold $(M^d,g)$, if one looks for the ``best" asymptotic expansion of a defining function\footnote{A defining function $s\in C^{\infty}(M)$ is defining for $\Sigma$ if $s=0$ and $|ds|_g=1$ on $\Sigma$. 
The question raised in \cite{gover2021} is to find the highest number $\ell\in\N$ and a defining function $s$ such that we have an expansion of the form $|ds|_g = 1+ O(s^{\ell})$ near $\Sigma$.} $s$ for $\Sigma$, then there is an obstruction, called the \textit{obstruction density}. This obstruction density for $d-1=2$ (i.e. when $\Sigma$ is a surface) and $M^d=\R^3$ turns out to be exactly the left-hand side of \eqref{EL-Will}. As proved by \cite{graham2017,gover2017}, this obstruction can be understood in general as the Euler--Lagrange equation of a coefficient in some renormalized volume expansion reminiscent to \cite{graham1999,graham2020}. These alternatives will be discussed in \Cref{sec:alternatives}. One common point of these functionals is that if $\Sigma^n\subset \R^d$ has even dimension $n\geq 4$, then the Euler--Lagrange equation of any of this generalized Willmore energy has a leading-order term of the form $\Delta_{g_{\Sigma}}^{\frac{n}{2}} \vec{H}_{\Sigma}$. This term is also the leading-order term of the functional 
\begin{align*}
    \bP\in \Imm(\Sigma^n,\R^d) \mapsto \int_{\Sigma^n} \big| \g^{\frac{n-2}{2}}\vec{\II}_{\bP} \big|^2_{g_{\bP}}\ d\textup{vol}_{g_{\bP}}.
\end{align*}
The study of compactness and regularity questions for such functionals has been the subject of recent works that we will describe in \Cref{sec:OP4d}.

\paragraph{Structure of the article.}

In \Cref{sec:OQ}, we review some recent progress surrounding the analysis of Willmore surfaces and their generalizations in higher dimensions. In \Cref{sec-preliminaries}, we set some notations and record some estimates that will be used later. In \Cref{sec-weakimm}, we introduce the setting of weak immersions. We prove that $W^{2,2}$ weak immersions induce a controlled complex structure and obtain Theorem \ref{th-approx}. We also prove that weak immersions have integer densities. In \Cref{sec-willsur}, we provide a new proof of the regularity of Willmore surfaces without the use of conformal coordinates. In \Cref{sec:4d}, we consider the case of scale-invariant Lagrangians on immersed 4-dimensional submanifolds in $\R^5$ with leading order term of the form $\int |dH|^2$. We apply the Noether theorem and compute the associated conservation laws.

\paragraph{Acknowledgments.}

This project is financed by Swiss National Science Foundation, project SNF 200020\textunderscore 219429.

\section{Open questions}\label{sec:OQ}
\subsection{In dimension 2}

In this section, we review some recent advances in the theory of Willmore surfaces (concerning mainly closed Willmore surfaces in codimension 1, but one can ask similar questions in higher codimension) and state some open questions. The field is subject to a fast development, and thus we might not record all the recent literature.

\subsubsection{Construction of Willmore surfaces}

Since the Willmore energy is conformally invariant, minimal surfaces of $\R^3$, $\s^3$ and $\Hb^3$ and their conformal transformations are Willmore.
One of the major problems is the understanding of closed Willmore surfaces, which are not conformal transformations of minimal surfaces. In 1985, Pinkall \cite{pinkall1985} proved that such surfaces exist. Despite the work of Hélein \cite{helein1998}, where he constructed the equivalent of Weierstrass--Enneper parametrization in the theory of minimal surfaces, only few methods for constructing explicit examples of Willmore surfaces are known in codimension 1, and most of them are not conformally invariant. For instance, Babich--Bobenko \cite{babich1993} constructed in 1993 examples of Willmore surfaces with a line of umbilic points by gluing minimal tori of $\Hb^3$ along their common boundary in $\partial_{\infty}\Hb^3 = \R^2$. In 2024, Dall'Acqua--Schätzle \cite{dellacqua2024} proved that any rotationally symmetric Willmore surface with a line of umbilic points is actually a Willmore surface obtained by such construction. To do so, they prove that in this case, the umbilic line should be contained in a plane intersecting the surface orthogonally. By an application of Cauchy--Kovalevskaya theorem, they prove that the mean curvature of the parts of the surface above and below this plane, seen as submanifolds of $\mathbb{H}^3$, completely vanishes. The $\s^1$-equivariant Willmore tori have been classified by Ferus--Pedit \cite{ferus1990} (and actually contains the construction of \cite{pinkall1985}) by analyzing the geodesic flow on these surfaces. The understanding of Willmore graphs has also been subject of numerous works. For example, it is known that entire Willmore graphs with finite energy and entire Willmore graphs of radial functions are flat planes \cite{chen2013,chen2017,luo2014}. Simply connected, orientable, complete Willmore surfaces with vanishing
Gaussian curvature have been fully classified in \cite{wu2023}.  The loop group methods introduced by Hélein \cite{helein1998}, has later been developed by Ma \cite{ma2006} and Xia--Shen \cite{xia2004} (see also \cite{dorfmeister2019} for a recent survey) and provide a way of constructing Willmore surfaces. However, this is a very abstract method and it would be interesting to know whether one can find more explicit constructions.

\begin{op}
    Is it possible to glue a given closed Willmore surface into another given closed Willmore surface?
\end{op}

A first work in this direction has been done by Marque \cite{marque20212}, where he exhibited the first example of bubbling for Willmore surfaces. Using a parametrization of Willmore spheres introduced by Bryant \cite{bryant1984} (discussed in Section \ref{sec:CGM} below), Marque constructed a sequence of Willmore spheres converging to the gluing of the inversions of a L\'opez surface and an Enneper surface. Li \cite{li2016} has also proved that embedded Willmore surfaces verify some rigidity properties. For instance, an embedded Willmore sphere must be a round sphere. Hence, the setting of immersions (opposed to embeddings) is natural and cannot be avoided.\\ 

An alternative way for such constructions is provided by variational methods, such as minimization or min-max procedures in a given class of surfaces. As an example of application, the Willmore conjecture for genus 1 in codimension 1 has been solved by Marques--Neves \cite{Marques14} in 2012 using a min-max scheme for minimal surfaces combined with Urbano theorem, but remains open for higher genus and general codimension. This conjecture states that up to conformal transformations and in all codimensions, the only minimizer of the Willmore functional in genus 1 is the Clifford torus, whereas in genus larger than 1, the likely candidates are the Lawson surfaces \cite[Conjecture 8.4]{kusner1989}. Even if there is some numerical evidence \cite{hsu1992}, its complete resolution remains largely open. The case of surfaces enjoying a priori the same symmetries as the Lawson surfaces has been recently studied in \cite{KLW2024}.

\begin{Con}[Willmore conjecture in codimension 1 \cite{kusner1989}]\label{con:Willmore_min}
    Given an integer $g>1$, the only minimizers of the Willmore energy among all the smooth surfaces of $\R^3$ are the conformal transformations of the Lawson surfaces of genus $g$.
\end{Con}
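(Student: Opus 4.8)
Since the statement is the still-open Willmore conjecture for genus $g>1$, what follows is necessarily a program rather than a proof; the plan is to reproduce, ingredient by ingredient, the strategy that Marques--Neves \cite{Marques14} used for $g=1$, and to identify the higher-genus analogue of each step. First I would invoke existence and regularity of a minimizer: by Simon \cite{Sim} and Bauer--Kuwert \cite{BaKu} --- equivalently, in the framework developed here, by the almost weak closure of Theorem~\ref{th-wclo} together with the controlled conformal structure of Theorem~\ref{th-conf} and the approximation Theorem~\ref{th-approx} --- there is a smooth genus-$g$ immersion $\vec\Phi_g$ realizing $\inf W$ over genus-$g$ surfaces. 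Using that the genus-$g$ infimum of $W$ lies below the Li--Yau threshold $8\pi$, one first argues that $\vec\Phi_g$ is an embedding. The crucial --- and still missing --- point is to show that $\vec\Phi_g$ is a conformal transformation of a minimal immersion into $\mathbb{S}^3$: one would combine the Euler--Lagrange equation in conservative form \eqref{will-cons-local} with the fact that a $W$-minimizer is also stationary under deformations of the conformal class of $g_{\vec\Phi_g}$, which imposes a balancing condition on the immersion; together with a classification of low-energy Willmore surfaces (via integrability of the $\bR$--$S$ system \eqref{RS-sys}) one hopes to identify the minimizer with the stereographic projection of an embedded minimal surface $\Sigma_g\subset\mathbb{S}^3$ of area $W(\vec\Phi_g)$.

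Granting this reduction, the second step is a genus-sensitive min-max. With $\Sigma_g\subset\mathbb{S}^3$ embedded, minimal, of genus $g$, and $\mathrm{Area}(\Sigma_g)=W(\vec\Phi_g)\le\mathrm{Area}(\xi_{g,1})$ where $\xi_{g,1}$ is the Lawson surface, one runs an Almgren--Pitts / Lusternik--Schnirelmann min-max for the area functional on the space of mod-$2$ $2$-cycles $\mathcal{Z}_2(\mathbb{S}^3;\mathbb{Z}_2)$, using a sweepout whose parameter space carries the topology dictated by the genus --- the analogue of the $5$-dimensional canonical family of round-sphere conformal dilations used in \cite{Marques14}. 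The combinatorial core of \cite{Marques14}, namely a lower bound for a $p$-width $\omega_p(\mathbb{S}^3)$ that is strictly larger than $2\pi^2$ once $p$ is large enough, must be replaced by width estimates that detect the genus and produce the threshold $\mathrm{Area}(\xi_{g,1})$; here one would also want the min-max surface to have exactly genus $g$, control of the genus along the degeneration being a delicate point.

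The third step is the rigidity input. For $g=1$, Urbano's theorem --- a minimal surface of $\mathbb{S}^3$ with Morse index $\le 5$ is an equator or the Clifford torus --- closes the argument. One needs the genus-$g$ analogue: a minimal surface of $\mathbb{S}^3$ of genus $g$ with area at most $\mathrm{Area}(\xi_{g,1})$ and sufficiently small Morse index must be $\xi_{g,1}$ up to ambient isometry. Partial lower bounds for the area of embedded minimal surfaces of $\mathbb{S}^3$ in terms of genus, and partial index computations for the Lawson surfaces, are available, but the sharp statement needed here is open already for $g=2$.

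The main obstacle is that essentially every step above is itself a genuine open problem; the two places where I see no technique in sight are (a) proving that a genus-$g$ Willmore minimizer is a conformal transformation of a minimal surface of $\mathbb{S}^3$ --- for $g=1$ this is only known a posteriori, once the minimizer has been identified with the Clifford torus --- and (b) the sharp area/index rigidity of the Lawson surfaces, without which the min-max cannot be certified to produce $\xi_{g,1}$ and not some minimal surface of smaller area. A full resolution along these lines would therefore require genuinely new ideas at both ends of the argument, and not merely a transcription of the genus-$1$ proof.
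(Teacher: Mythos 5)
The statement is labeled as a \emph{Conjecture} in the paper, and the surrounding text explicitly says that for genus $g>1$ ``its complete resolution remains largely open.'' There is therefore no proof in the paper to compare against; you correctly recognized this and offered a program rather than a purported proof, which is the only intellectually honest thing to do here.

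Your program is the natural one, and you correctly isolate the two genuine obstructions: (a) that no mechanism is known to force a genus-$g$ Willmore minimizer to be a conformal transformation of a minimal surface in $\mathbb{S}^3$, and (b) the absence of sharp area/index rigidity for the Lawson surfaces $\xi_{g,1}$. One small structural remark on the genus-$1$ template: Marques--Neves do \emph{not} proceed by first identifying the minimizer and then classifying it. They instead prove the lower bound $W(\vec\Phi)\ge 2\pi^2$ directly, for \emph{every} embedded surface of positive genus in $\mathbb{S}^3$ with $W<8\pi$, by constructing the five-parameter canonical family of conformal dilations, using it as a sweepout in $\mathcal{Z}_2(\mathbb{S}^3;\mathbb{Z}_2)$, and applying Almgren--Pitts together with Urbano's index-$\le 5$ rigidity to identify the width with the area of the Clifford torus; the identification of the minimizer is a consequence of the equality case, not a hypothesis fed into the min-max. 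A genus-$g$ version of this route would therefore more likely bypass your step of first showing the minimizer is minimal in $\mathbb{S}^3$, and instead aim directly at a width estimate $\omega_p(\mathbb{S}^3)\ge\mathrm{Area}(\xi_{g,1})$ for a $p$ dictated by the genus --- which leads straight back to obstruction (b). You say as much in your closing paragraph, so this is a refinement of presentation rather than a correction. The proposal is a reasonable account of the state of the art for an open problem.
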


On the other hand, min-max procedures have recently become an active topic. One motivation for the study of the Willmore energy, is to apply Morse theory to understand the topology of the space $\Imm(\Sigma,\R^m)$ of immersions of $\Sigma$ into $\R^m$. For instance, a basic problem would be to understand min-max procedures for $W$: if $\gamma \in \pi_k(\Imm(\Sigma,\R^m))$ is a generator of regular homotopy of immersions, we consider the quantity
\begin{align*}
	\beta_\gamma \coloneqq \inf\left\{  \sup_{t\in\s^k} W(\bP_t)  :  (\bP_t)_{t\in\s^k} \in \gamma \right\}.
\end{align*}
One can ask a few natural questions: can we bound these numbers? Does there exist any immersion realizing these optimization problems? If so, how many are they? Can we classify them? A first issue is that the map $W\colon \Imm(\Sigma,\R^m) \to \R$ cannot be a Morse function, partly due to the conformal invariance. To overcome this, the third author adopted a viscosity approach in \cite{riviere2021}, previously introduced for the construction of geodesics in \cite{michelat2016}. Instead of considering $W$ alone, he adds a "smoother" times a small parameter $\sigma>0$ and then let $\sigma\to 0$. He applied successfully this method for a well chosen smoother, in the case $\Sigma = \s^2$, and obtained that the values of min-max procedures for $W$ on $\s^2$ is the energy of a bubble tree. We refer to \cite{riviere2017,riviere2021,michelat2020} for more information about the viscosity method for geodesics or minimal surfaces. A particular case of \cite{riviere2021} is the case $m=3$, i.e. the codimension 1 case, which can be applied to the sphere eversion and constitutes a new step towards the $16\pi$-conjecture, originally formulated by Kusner in the 1980s (see for instance \cite{kusner}).
\begin{Con}[16$\pi$-conjecture]\label{con:16pi}
    Let $\Omega$ be the set of path in $\Imm(\s^2,\R^3)$ joining the standard $\s^2$ to the sphere $\s^2$ with opposite orientation. Consider the following number
    \begin{align*}
        \beta_0 \coloneqq \inf_{\omega\in \Omega} \sup_{\bP\in \omega} W(\bP).
    \end{align*}
    Then $\beta_0$ is reached by a path $\omega\in C^0([0,1], \Imm(\s^2,\R^3))$ such that $\omega_{|[0,1/2]}$ and $\omega_{|[1/2,1]}$ are Willmore flows, and $\omega(1/2)$ is the inversion of the Morin surface, a minimal surface with 4 planar ends. 
\end{Con}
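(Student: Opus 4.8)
The statement asserts the existence of an optimal sweepout of a prescribed shape, so the plan is to combine a lower bound for $\beta_0$ with an explicit construction of such a sweepout. For the lower bound, any $\omega\in\Omega$ may, after a density argument, be taken to be a regular homotopy, and then the Banchoff--Max theorem --- every eversion of $\s^2$ passes through a quadruple point --- yields a time $t_0$ at which $\bP_{t_0}$ has a point of multiplicity at least $4$. The Li--Yau inequality, bounding $W\ge 4\pi k$ for any immersed closed surface with a point of multiplicity $k$, then gives $\sup_{\bP\in\omega}W(\bP)\ge W(\bP_{t_0})\ge 16\pi$; taking the infimum over $\omega$ yields $\beta_0\ge 16\pi$.

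For the matching upper bound and the structure, I would produce the optimal sweepout directly from a distinguished Willmore critical point. By Bryant's theory of Willmore spheres \cite{bryant1984}, the inversion of a complete minimal surface of $\R^3$ with four planar ends is an immersed Willmore sphere $\bP_{1/2}\colon\s^2\to\R^3$ with $W(\bP_{1/2})=16\pi$; among these, the one whose inversion carries the symmetry of Morin's half-way model (realised by a minimal surface constructed by Kusner) is, by definition, the inversion of the Morin surface. Three facts about $\bP_{1/2}$ then have to be established: that its Morse index --- understood modulo the M\"obius group, under which $W$ is invariant --- equals $1$; that the two branches of its unstable manifold are Willmore flow trajectories which exist for all time and converge; and, crucially, that their limits are the round $\s^2$ and the round $\s^2$ with \emph{reversed} orientation, which is exactly what makes the concatenated curve an eversion and should follow from the orientation-reversing half-way symmetry $\tau$ of $\bP_{1/2}$ interchanging the two branches. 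Concatenating the two trajectories and reparametrising over $[0,1]$ with the branch point at $t=1/2$ then gives a path $\omega_*\in\Omega$ whose halves are Willmore flows, with $\omega_*(1/2)=\bP_{1/2}$, along which $W$ is strictly monotone on each half; hence $\sup_{\bP\in\omega_*}W(\bP)=16\pi$. Together with the lower bound, this shows $\beta_0=16\pi$ and that $\omega_*$ realises it with the asserted structure.

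The viscosity min-max of Rivière \cite{riviere2021}, applied to the path space $\Omega\subset\Imm(\s^2,\R^3)$, should play the complementary role of certifying that $\bP_{1/2}$ is precisely the critical point detected by a one-parameter min-max, so that no genuinely cheaper eversion exists. One perturbs $W$ to $W_\sigma\coloneqq W+\sigma F_\sigma$ for a suitable smoother $F_\sigma$, runs the min-max over $\Omega$ to obtain critical points $\bP_\sigma$ of $W_\sigma$ with $W_\sigma(\bP_\sigma)\to 16\pi$ and Morse index at most $1$, and lets $\sigma\to 0$; modulo reparametrisation and M\"obius transformations the limit is a bubble tree of possibly branched Willmore spheres of total energy $16\pi$. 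Since a nonconstant Willmore sphere has energy at least $4\pi$ and, when unbranched, energy in $\{4\pi,16\pi,20\pi,\dots\}$ by Bryant's classification \cite{bryant1984}, the index bound and the nondegeneracy built into the scheme should force this bubble tree to be the single unbranched Willmore sphere $\bP_{1/2}$, the quantitative exclusion of the branched and disconnected configurations being one of the delicate points.

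The main obstacle is the long-time behaviour of the Willmore flow on the unstable manifold of $\bP_{1/2}$, which is needed even to make sense of the statement. This flow issues from a surface of energy $16\pi$ and must reach the round sphere of energy $4\pi$, whereas global existence and smooth convergence of the Willmore flow are at present secured only well below that range --- the Kuwert--Sch\"atzle small-energy threshold being $8\pi$ --- so one would have to preclude singularity formation throughout $[8\pi,16\pi)$, presumably by exploiting that the trajectory remains on the one-dimensional unstable manifold, the absence of low-energy Willmore bubbles it could split off, and the almost weak closure of Theorem~\ref{th-wclo}. The remaining points --- the Morse index computation for $\bP_{1/2}$ and, on the min-max side, the ``no-neck'' analysis ruling out the disconnected and branched bubble trees --- are substantial but of a more standard nature.
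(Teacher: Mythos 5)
The statement you are proving is labelled \texttt{Con} in the source: it is a \emph{conjecture}, explicitly named the ``$16\pi$-conjecture,'' and the paper offers no proof of it. So there is no ``paper's own proof'' to compare against, and a complete proof is not expected here. What the paper does record around the statement is exactly the two partial facts you reproduce: the lower bound $\beta_0\ge 16\pi$ via Max--Banchoff (or Hughes) combined with Li--Yau, and the fact --- coming from the viscosity min-max of \cite{riviere2021} together with the classification of branched Willmore spheres in $\R^3$ by \cite{martino2024} --- that $\beta_0$ is a sum of energies of branched Willmore spheres and hence a multiple of $4\pi$. That much of your argument is correct and matches the paper's discussion.

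Beyond that, your ``proof'' is a research program, not a proof, and you honestly flag this yourself. The decisive steps you would need --- that the unstable manifold of the inverted Morin surface is one-dimensional (Morse index exactly $1$ modulo the M\"obius group), that the two Willmore-flow trajectories emanating from it exist for all time, do not develop singularities anywhere in the energy range $[8\pi,16\pi)$, and converge smoothly to the round sphere with opposite orientations, and that on the min-max side the bubble tree of total energy $16\pi$ must reduce to this single unbranched sphere --- are precisely the open problems the paper lists in the surrounding discussion (energy quantization for the Willmore flow, continuing the flow past a singularity, classification of singularities below $16\pi$, Morse index estimates). None of them is established, and each is a genuine obstruction rather than a routine verification. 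In particular the long-time behaviour of the Willmore flow far above the $8\pi$ small-energy threshold is wide open, so ``concatenating the two flow trajectories'' does not yet define a path in $\Omega$; and the exclusion of branched or disconnected $16\pi$-bubble trees, which you call ``one of the delicate points,'' is not known either. You should present this as a conditional outline of a possible attack on the conjecture, not as a proof.
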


Max--Banchoff \cite{max1981} proved that any path $\omega\in \Omega$ must contain an immersion $\Phi\in \omega$ having a quadruple point (another proof is given by Hughes \cite{hughes1985}). Combining this with Li--Yau inequality shows that $\beta_0\geq 16\pi$.
As a consequence of \cite{riviere2021}, we know that the number $\beta_0$ is equal to the sum of the energies of finitely many branched Willmore spheres. In \cite{MiRi2023}, Michelat and the third author proved an energy quantization of the Gauss map in the $W^{1,(2,1)}$ topology for sequences of immersions arising in the viscosity method. Hence, the need for a classification of branched Willmore spheres became strongly motivated. As we will explain in the following \Cref{sec:CGM}, Bryant \cite{bryant1984} proved that any smooth Willmore sphere is a conformal transformation of a minimal surface in $\R^3$. A first generalization to the branched Willmore spheres has been obtained by Lamm--Nguyen \cite{lamm2015} when the total number of branched points counted with multiplicities is at most 3. Michelat and the third author \cite{michelat2022} successfully classified all ''variational branched Willmore spheres'' which include in particular weak limits of immersed Willmore spheres. The second author in \cite{martino2024} proved that every branched Willmore sphere in ${\mathbb R}^3$ is the inversion of a minimal surface. This implies in particular that $\beta_0$ is a multiple of $4\pi$. \\

In order to pursue the understanding of such min-max schemes, one can study the Morse index of closed Willmore surfaces. The Morse index of a Willmore immersion $\bP$ is, by definition, the number of negative eigenvalues of the quadratic form $\delta^2 W(\bP)$, this is a measure of how far is $\bP$ from being a minimizer. The study of the Morse index for Willmore spheres has been initiated by Alexis Michelat in his PhD thesis \cite{michelat-these} by proving that the Morse index of a Willmore immersion $\vec{\Phi}\coloneqq \frac{ \vec{\Psi} }{ |\vec{\Psi}|^2} $, for some minimal immersion $\vec{\Psi}\colon \Sigma\setminus \{p_1,\ldots,p_n\}\to \R^3$ with $n$ ends, is bounded from above by $n$ in full generality, and by $n-1$ when $\vec{\Phi}$ has some branch points of order at most 1 and some branch points of order 2 without any flux. However, Michelat conjectured in \cite{michelat2020} that whether the Willmore surface $\vec{\Phi}$ is branched or not, the Morse index $\mathrm{Ind}_W(\vec\Phi)$ of $\vec{\Phi}$ should be linear in $n$. He also conjectured that if $\vec \Phi$ has no branch point, then $\mathrm{Ind}_W(\vec\Phi)\leq n-3$. To do so, he proved that the computation of the index of $\vec\Phi$ can be reduced to the computation of the index of a matrix of size $n\times n$ whose entries are defined by fluxes of $\vec\Psi$ at each of its ends, together with some coefficients in the asymptotic expansion of some Jacobi fields. This result can be summarized as follows. We denote
\begin{align*}
    V\coloneqq \left\{ u\in C^{\infty}_{loc}\left(\Sigma\setminus \{p_1,\ldots,p_n\}\right) : \frac{u}{|\vec\Phi|^2} \in W^{2,2}(\Sigma)
    \right\}.
\end{align*}
Since $W^{2,2}(\Sigma)\hookrightarrow C^0(\Sigma)$, we can define the following quantity for any $u\in V$, 
\begin{align*}
    \mathrm{Eval}(u,p_i) \coloneqq \left( \frac{u}{|\vec\Phi|^2} \right)(p_i)\in \R.
\end{align*}
We denote $\mathcal{L}\coloneqq \Delta_{g_{\vec\Psi}} - 2 K_{g_{\vec\Psi}}$ the Jacobi operator of the minimal surface $\vec\Psi$. If $u\in V$ verifies $\mathcal{L}^2u=0$, then $u$ verifies the following expansion in a conformal charts near $p_i$ where $|\vec\Psi|^{-2} = |z|^{2m}(1+O(|z|))$, for some numbers $\mathrm{Log}(u,p_i)\in \R$: 
\begin{align*}
    u(z) = \frac{\mathrm{Eval}(u,p_i)}{|z|^{2m}} + \sum_{1\geq k+l\geq 1-2m} \Re\left( \alpha_{k,l}\, z^k\, \bar{z}^l \right) + \mathrm{Log}(u,p_i)\, \log|z| + O(1).
\end{align*}
The main result of \cite{michelat2020} can be stated as (see for instance \cite[Remark 6.5]{michelat2020})
\begin{align*}
    \mathrm{Ind}_W(\vec\Phi) = \dim\left\{ u\in V : \mathcal{L}^2 u =0,\ \sum_{i=1}^n \mathrm{Eval}(u,p_i)\, \mathrm{Log}(u,p_i) <0
    \right\}.
\end{align*}
In the case where $\Sigma=\s^2$ and the ends of $\vec\Psi$ are planar (so that $\vec\Phi$ as no branch point),  Hirsch, Kusner and Mäder-Baumdicker \cite{hirsch2023,hirsch2024}, revisiting Michelat's computations, proved that the Willmore index of $\vec\Phi$ is exactly $n-3$, and thus verified Michelat's conjecture in the case of unbranched Willmore spheres. To do so, they restricted themselves to the study of the following admissible variations 
\begin{align*}
    V'\coloneqq \left\{
        u\in C^{\infty}_{loc}(\s^2\setminus \{p_1,\ldots,p_n\}) : 
        \begin{array}{l}
             \text{ near each $p_i$, we have the expansion }\\
             u(z) = \Re(\alpha_i\, z^{-1}) + \gamma_i\, \log|z| +v_i(z) \\
             \text{ for some }v_i\in C^{2,\beta}(B(p_i,\varepsilon))
        \end{array}
    \right\}.
\end{align*}  
They show (still when $\vec\Phi$ is not branched) in \cite{hirsch2023} that
\begin{align*}
    \mathrm{Ind}_W(\vec\Phi) = \dim\left\{ u\in V' : \exists i\in\{1,\ldots,n\},\ \gamma_i \neq 0\right\} \in \{ n-3,n-2\}.
\end{align*}
In \cite{hirsch2024}, they obtain $\mathrm{Ind}_W(\vec\Phi) =n-3$ by combining a perturbative argument together with the lower semi-continuity of the Morse index for inversions of minimal surfaces.\\

Concerning branched Willmore spheres, the situation is more complex and the bound $n-3$ is no longer valid. In fact, one can show that the Morse index of the inversion of the L\'opez surface (one planar end and one end of multiplicity 3) is exactly 1. It would be interesting to know whether the number $n-1$ is sharp or not for the Morse index of branched Willmore spheres. 

\begin{op}
    Consider $\bP:\s^2\setminus \{p_1,\ldots,p_n\}\to \R^3$ a complete minimal surface with $n$ arbitrary ends such that $0\notin \bP(\s^2\setminus \{p_1,\ldots,p_n\})$ and $\frac{\bP}{|\bP|^2}$ is a branched Willmore surface. Is it true that the Willmore Morse index of $\frac{\bP}{|\bP|^2}$ is linear in $n$? What if we replace $\s^2$ by any closed Riemann surface of higher genus?
\end{op}

In the context of min-max procedures, Michelat \cite{michelat2018} proved that the Morse index of Willmore spheres obtained by realizing some width of Willmore sweepout with the viscosity method is bounded by the number of parameters of this sweepout. The "smoother" used in for this result is the Onofri energy whose second variation is given by a non-local operator. Michelat--Rivière \cite{michelat2023} then proved the upper semicontinuity of the nullity plus the Morse index. Michelat extended this result recently to the case of branched Willmore spheres in \cite{michelat2025} by developing a sharp analysis of weighted Rellich-type inequalities for fourth-order elliptic operators on degenerating annuli. In order to develop min-max procedures when the underlying surface is not a sphere, it would be interesting to have a generic way of estimating the index of a given Willmore surface which is not necessarily a conformal transformation of a minimal surface in $\R^3$. Thanks to the works of Chodosh--Maximo \cite{chodosh2016,chodosh2023}, it is known that the Morse index (for the area functional) is bounded from above and below by the topology of the surface. Since this also holds for smooth Willmore spheres (by the results of Michelat and Hirsch--Kusner--Mäder-Baumdicker), one can wonder whether such a property also holds or not for Willmore surfaces.

\begin{op}
    Can we estimate the Morse index of a given smooth Willmore surface of genus $g\geq 1$ by its topology and its Willmore energy?
\end{op}

\begin{op}
    Can we relate the Willmore index of conformal transformations of minimal surfaces in $\R^3$, $\Hb^3$ or $\s^3$ with their area index?
\end{op}

Now that the Willmore spheres have been classified, one can start working on a possible use of the Willmore functional as a quasi-Morse functional to describe the set of immersions of $\s^2$ into $\R^3$, in the same manner as geodesics and minimal surfaces are used to describe the fundamental groups of manifolds, see for instance \cite{klingenberg1978,moore2017}.

\begin{op}
    Can we describe the topology of the set $\Imm(\s^2,\R^3)$ using the Willmore functional as a pseudo-Morse function?
\end{op}

\subsubsection{Conformal Gauss map approach}\label{sec:CGM}

In 1984, Bryant \cite{bryant1984} (see also \cite{bryant1987}) proved that to each smooth immersion $\bP\colon \Sigma\to \R^3$, one can associate a map $Y\colon \Sigma\to \Sp^{3,1}$, where $\Sp^{3,1}$ is the De Sitter space 
\begin{align*}
    \s^{3,1}\coloneq \left\{ p\in \R^5 : |p|^2_{\eta} =1\right\}, \qquad \eta \coloneq (dx^1)^2 + \cdots+(dx^4)^2 - (dx^5)^2.
\end{align*}
Moreover, the map $Y\colon (\Sigma,g_{\bP})\to (\s^{3,1},\eta)$ is harmonic if and only if $\bP$ is Willmore, and is a space-like immersion away from the umbilic points of $\bP$. We refer to \cite{marque19} for an introduction and to \cite{bernard2023} for $\varepsilon$-regularity properties in this context. Thanks to this duality, Bryant exhibited the quartic $\Qr\coloneq \eta(\partial^2_{zz} Y, \partial^2_{zz} Y)\, (dz)^4$ which is holomorphic when $\bP$ is Willmore. This quartic can be seen as a measure of the distance of $\bP$ to the set of conformal transformations of minimal surfaces in $\R^3$. Indeed, if $\omega \coloneq \mathring{\II}(\partial_z,\partial_z) (dz)^2$ is the complex tracefree curvature of $\bP$, $K_Y$ is the Gaussian curvature of the metric $g_Y\coloneq Y^*\eta$ on $\Sigma$ and $K_Y^{\perp}$ is the curvature of the normal bundle of $Y$ in $\s^{3,1}$ (this is a 2-dimensional vector bundle), then Palmer \cite{palmer1991} proved in 1994 that 
\begin{align*}
    \frac{\Qr}{\omega^2} = (1-K_Y + i\, K_Y^{\perp})\, \frac{|\mathring{\II}|^2_{g_{\bP}} }{2}.
\end{align*}
The above formula is well-defined outside of the umbilic set of $\bP$, that is to say when $\omega\neq 0$. By the work of Bryant \cite{bryant1984}, we have that if $\Qr=0$ (i.e. $K_Y^{\perp}=0$ and $K_Y=1$), then $\bP$ is a conformal transformation of a minimal surface in $\R^3$. If we relax this information, then Marque \cite{marque2021} proved that $K_Y^{\perp}=0$ if and only if $\bP$ is the conformal transformation of a minimal surface in $\R^3$, $\Sp^3$ or $\Hb^3$. This approach seems very promising, but first requires a good understanding of the umbilic set and only a few advances have been made in understanding the conformal Gauss map of Willmore surfaces in codimension 1.\\

One of the major achievement in \cite{bryant1984} is the classification of smooth Willmore spheres. Indeed, since there are no non-zero holomorphic quartic differentials on $\Sp^2$, every smooth Willmore sphere must satisfy $\Qr=0$ and thus, be a conformal transformation of a minimal surface in $\R^3$. If $\bP\colon \s^2\to \R^3$ is a branched Willmore immersion (for instance obtained in a bubble tree), then the equation $\bar{\partial}\Qr=0$ is a-priori not verified at the branch points of $\bP$, hence $\Qr$ is a-priori merely meromorphic and it is not clear that Bryant's classification extends to branched Willmore immersions. Lamm--Nguyen \cite{lamm2015} computed an asymptotic expansion of $\Qr$ across branch points and proved that $\Qr=0$ if the number of branch points of $\bP$ is at most 3, when counted with multiplicities. This idea can also be seen as an application of Liouville theorem for holomorphic functions, as explained in \cite[Theorem 2.6.3]{marque19}. Then Michelat--Rivière \cite{michelat2022} proved that a branched Willmore sphere obtained as a weak limit of smooth Willmore surfaces or obtained in a bubbling process of a sequence of smooth Willmore surfaces (called a ''variational branched Willmore sphere'') also verifies $\Qr=0$, by showing that the limit of Bryant's quartics must remains holomorphic. The full classification has recently been obtained by the second author in \cite{martino2024}, by adapting the analysis of the conformal Gauss map developed in \cite{bernard2023} in order to obtain $\varepsilon$-regularity on the conformal Gauss map and \cite{martino2025} in order to prove an energy quantization result for Willmore surfaces with bounded Morse index. \\

The next step, which would be an interesting starting point for the Willmore conjecture, is to understand the geometry of smooth Willmore surfaces $\bP\colon \Sigma\to \R^3$ when $\Sigma$ has a positive genus. On the one hand, Palmer \cite{palmer1991} proved that Bryant's quartic can be interpreted as the curvature of the conformal Gauss map. Moreover, he proved that if $\Qr$ verifies the pointwise condition $|\Qr|_{g_Y}>1$, then $\Sigma$ is a torus and $\bP$ is unstable. On the other hand, Fisher-Colbrie \cite{fischer1985} proved that stable minimal surfaces in a positively curved manifold have finite total Gauss curvature. Now that some regularity properties have been understood in \cite{bernard2023,marque2025,martino2025}, it would be interesting to study stability questions from the viewpoint of the conformal Gauss map.

\begin{op}
    Can we prove that the Gauss curvature of the conformal Gauss map of a stable Willmore surface in $\R^3$ is integrable?
\end{op}

In higher codimension, the conformal Gauss map has already been studied \cite{montiel2000,ejiri1988,ma,ma2017} and there are Willmore spheres that are not conformal transformations of minimal surfaces. For instance, Montiel \cite{montiel2000} classified all the Willmore spheres in $\R^4$ and proved that in addition to conformal transformations of minimal surfaces, Willmore spheres can also be obtained as a rational curve in $\mathbb{CP}^3$ via the Penrose twistor fibration $\mathbb{CP}^3\to \s^4$. This classification has also been extended in \cite{michelat2022} to branched Willmore spheres arising as limits of smooth Willmore surfaces. The classification of smooth Willmore spheres in arbitrary codimension is much more involved and has recently been studied in \cite{MPW2025}.

\begin{op}
    Is it true that any branched Willmore spheres in $\R^4$ belongs to one of these two classes: either a conformal transformations of a branched minimal surface of $\R^4$, or \textit{via} the Penrose twistor fibration $\mathbb{CP}^3\to \s^4$ a branched rational curve in $\mathbb{CP}^3$?
\end{op}

One of the difficulties with such an approach comes from the size of the umbilic set of Willmore surfaces. Unlike minimal surfaces, Babich--Bobenko \cite{babich1993} proved that Willmore surfaces can have curves of umbilic points. At these points, the geometry of the conformal Gauss map degenerates. Examples of Willmore surfaces having such curves have been constructed by Babich--Bobenko \cite{babich1993} and by Dall'Acqua--Schätzle \cite{dellacqua2024}. However, the structure of such sets is still not fully understood. Schätzle \cite{schatzle2017} proved that the umbilic set of a smooth Willmore immersion is composed of isolated points and one dimensional real-analytic manifolds without boundary. 
In the recent work \cite{marque2025}, Marque and the second author proved that the structure of the umbilic set provides strong information on the geometry of Willmore immersions and on the value of its energy. For instance, if a Willmore surface has a geodesic umbilic line, then it is of Babich--Bobenko type.

\begin{op}
    What properties do the umbilic lines of closed Willmore surfaces satisfy? Do they satisfy any particular equation? Can we describe their location on the surface? Can we estimate their size in terms of the Willmore energy or any other geometric quantities?
\end{op}

One can also wonder about the geometric interpretation of the Bryant's quartic. A natural question would be to understand the set of Willmore surfaces having the same given Bryant's quartic, or even the set of quartics on a given surface that can be achieved as the Bryant's quartic of some Willmore immersion. For instance, we can consider two immersions $\bP_1,\bP_2\colon \Sigma\to \R^3$ having the same Bryant's quartic. Then one can roughly think of this restriction as a relation between the second fundamental forms of their conformal Gauss maps. If their induced metrics $g_{\vec\Phi_1}$ and $g_{\vec\Phi_2}$ are also conformal, then one can hope that these two conformal Gauss maps coincide, and thus $\vec\Phi_1$ and $\vec\Phi_2$ should be a conformal transformation of the other one.

\begin{op}
    Given two Willmore surfaces $\bP_1,\bP_2\colon \Sigma\to \R^3$ such that their Bryant's quartic satisfies $\Qr_1=\Qr_2\neq 0$. Is it possible to relate $\bP_1$ to a conformal transformation of $\bP_2$? It would be interesting to study the possible differences.
\end{op}

\begin{op}
    Given a complex structure on a given closed surface $\Sigma$, can we describe the set of holomorphic quartics which are the Bryant's quartic of some Willmore immersion?
\end{op}

The Poisson problem has been introduced in \cite{dalio2020} (analog of the Plateau problem for Willmore surfaces), where the authors solve the Plateau problem for data given for the immersion and the Gauss map, see also \cite{pozetta2021}. Due to the conformal invariance of the Willmore energy, it would be natural to solve the Plateau problem for conformally invariant data on the boundary.

\begin{op}
    Solve the Poisson problem for boundary data given by a boundary curve $\Gamma\subset \R^3$ and the image of its conformal Gauss map described as a closed curve in $\s^{3,1}$.
\end{op}

\begin{op}
    What is the optimal regularity assumption on the given boundary data in order to solve the Poisson problem? How many solutions are there?
\end{op}

Up to now, the conformal Gauss map approach has been mainly developed for surfaces of a Euclidean space. It would be interesting to study its generalization in manifolds. A possibility is provided by the normal tractor, see for instance \cite{curry2018}.

\subsubsection{Willmore flow}

Geometric flows are designed to produce explicit paths leading to critical points of a given functional. They are useful both from a theoretic and from a practical viewpoint. For instance, Conjecture \ref{con:16pi} is an example of application of the Willmore flow, which roughly says that there is no smooth Willmore sphere of energy strictly between the energy of $4\pi$ and $16\pi$. One could also think about the Willmore flow as a first step before studying the Canham--Helfrich flow  modeling the evolution of membranes of the cells, see for instance \cite{brazda2024,rupp2024} and the references therein. \\

The main analytical difficulty of the Willmore flow (despite the fact that it is a geometric flow, and thus has to be degenerate) is the fact that it is of order 4, meaning that there exists no comparison principle in the spirit of the mean curvature flow. For instance, if an initial data is included in the interior of a given sphere $S$, one can a-priori not say that the flow remains inside the interior of $S$. Geometric flows of order 4 have originally been introduced in the context of biharmonic surfaces \cite{escher1998}. The Willmore flow was first introduced in 2001 independently by Kuwert--Schätzle \cite{kuwert2001} and Simonett \cite{simonett2001}. They proved the existence of a solution in short time, and proved that if the initial data has small energy, then the flow smoothly converges to the round sphere. In 2003, Mayer--Simonett \cite{mayer2003} proved that the Willmore flow does not preserve the embeddedness of the initial data. Blatt \cite{blatt2009} constructed in 2009 an example of initial data of spherical type for which a singularity occurs in the Willmore flow. More recently, Dall'Acqua--Müller--Schätzle--Spener \cite{dall2024} produced examples of flows made of tori of revolution, and also produced degenerating examples. This flow is still widely unexplored. The parametric approach has been introduced by Palmurella--Rivière \cite{palmurella2022,palmurella2024}, where the authors show that there are only two possible kinds of singularity in finite time: either a concentration of energy with a possible branch point or a degeneration of the conformal class. 

\begin{op}
    Is it possible for the conformal class to degenerate in finite time along the Willmore flow?
\end{op}

In order to study the possible degeneracies, one can rely on the existing literature on the bubbling analysis of sequences $\bP_k\colon \Sigma\to \R^3$ of Willmore surfaces. When the underlying complex structure of $(\Sigma, g_{\bP_k})$ remains bounded in the moduli space, Bernard--Rivière \cite{bernard2014} proved in 2014 that some bubbles can appear, but the energy is always preserved. Then Laurain--Rivière \cite{laurain2018} proved in 2018 that when the underlying complex structure degenerates, then only one residue is responsible for a possible loss of energy. This has recently been extended in \cite{li2024}, where Li--Yin--Zhou proved that this residue precisely describes the loss of energy. In 2021, Marque \cite{marque20212} proved that the set of Willmore immersions with energy less than or equal to $12\pi$ is compact. In 2025, the second author \cite{martino2025} proved that the energy is preserved if $\bP_k$ has bounded Morse index, independently of the underlying complex structure. Recently, Michelat \cite{michelat2025} proved the lower semi-continuity of the Morse index under some smallness assumption of the residue (which corresponds to the assumption in \cite{laurain2018}). Since the analysis of Willmore immersions is now understood, it should be possible to study the case of the flow.

\begin{op}
    Can we prove an energy quantization property for the Willmore flow?
\end{op}

Once a singularity has occurred, one would like to continue the flow. However, the initial data is not smooth and the existence of a solution to the Willmore flow is not clear. In particular, the setting of smooth surfaces cannot be used and one has to find the proper notion of solution. 
\begin{op}
    Is it possible to define a solution of the Willmore flow starting from a branched surface? From a nodal surface?
\end{op}

As an application, one can think about the $16\pi$-conjecture and the classification of singularities along the Willmore flow. Lamm--Nguyen \cite{lamm2015} proved that if we start the flow with an initial data $\vec\Phi_0\colon \s^2 \to  \R^3$ verifying $W(\vec\Phi_0)\leq 12\pi$, then the flow starting from $\vec\Phi_0$ can only develop catenoid-type singularities. In a recent work \cite{mader2025}, Mäder-Baumdicker--Seidel identified 4 distinct homotopy classes in the set of immersed spheres with Willmore energy less than or equal to $12\pi$. Two of them contain round spheres and the two others lead to a notion of unavoidable singularities along the flow. A partial study of the set of immersions with energy less than $16\pi$ is also studied in \cite{mader2025}.

\begin{op}[Question 1.13 in \cite{mader2025}]
    Is it possible to classify the homotopy classes of the set of immersions with Willmore energy less than $16\pi$? In particular, given an immersion $\vec\Phi_0 \colon \s^2 \to \R^3$ satisfying $W(\vec\Phi_0)< 16\pi$, under which conditions the Willmore flow $(\vec\Phi_t)_t$ starting from $\vec\Phi_0$ lies in the homotopy class of a round sphere when $W(\vec\Phi_t)<12\pi$? 
\end{op}

\subsubsection{Critical points of the Willmore energy under various constraints}

Instead of looking at critical points of the Willmore functional among the set of all the immersions, one can restrict the set of admissible variations (for example, one needs to fix some geometric constraints for the study of the Canham--Helfrich functional in cell biology). Currently, three main possibilities are studied. \\

One possibility is to study the minimizers (and more generally the critical points) of the Willmore functional under the constraints of fixing both the area of the immersed surface and the volume it encloses. Since the Willmore functional is scale-invariant, this question is equivalent to prescribing the isoperimetric ratio of the immersion $\bP\colon \Sigma\to \R^3$ given by 
\begin{align*}
    & \mathcal{V}(\bP) \coloneqq -\frac{1}{3}\int_{\Sigma} \bP\cdot \vec{n}_{\bP}\, d\textup{vol}_{g_{\bP}}, \\[2mm]
    & \mathcal{I}(\bP) \coloneqq \frac{\textup{vol}_{g_{\bP}}(\Sigma) }{ \mathcal{V}(\bP)^{\frac{2}{3}} }.
\end{align*}
By the isoperimetric inequality, we have
\begin{align*}
    \mathcal{I}(\bP) \geq \mathcal{I}(\s^2) = (36\pi)^{\frac{1}{3}}.
\end{align*}
We can therefore study the following minimization problem for any $R\geq (36\pi)^{\frac{1}{3}}$:
\begin{align*}
    \beta^{\mathcal{I}}(R) \coloneqq \inf\left\{ W(\bP) : \bP\in\Imm(\Sigma,\R^3),\ \mathcal{I}(\bP) = R \right\}.
\end{align*}
Keller--Mondino--Rivière \cite{keller2014} proved in 2014 that there is a minimizer of $\beta^{\mathcal{I}}(R)$ provided that this number verifies some strict inequality. This inequality has been proved for all $R\in \left( (36\pi)^{\frac{1}{3}},+\infty \right)$ when $\Sigma=\s^2$ by Schygulla \cite{schygulla2012} in 2012, when $\Sigma=\mathbb{T}^2$ by Scharrer \cite{scharrer2022} in 2022 and by Ndiaye--Schätzle \cite{ndiaye2015} in all the remaining cases in 2015. 
\begin{op}
    Describe the solutions of $\beta^{\mathcal{I}}(R)$ for $R\in \left( (36\pi)^{\frac{1}{3}},+\infty \right)$. Are they unique? Do they have some symmetries?
\end{op}

Another possibility is to study the minimization of the Willmore energy under a constraint on the total mean curvature. Given an immersion $\bP\colon \Sigma\to \R^3$ (where $\Sigma$ is a Riemann surface, and thus carries an orientation), we denote 
\begin{align*}
    \mathcal{T}(\bP) \coloneqq \frac{1}{\textup{vol}_{g_{\bP}}(\Sigma)^{\frac{1}{2}}} \int_{\Sigma} H_{\bP}\, d\textup{vol}_{g_{\bP}}.
\end{align*}
By Hölder inequality, it holds that
\begin{align*}
    \mathcal{T}(\bP)^2 \leq W(\bP).
\end{align*}
We then study the following problem for any $R\in\R$
\begin{align*}
    \beta^{\mathcal{T}}(R) \coloneqq \inf\left\{ W(\bP)\colon \bP\in\Imm(\Sigma,\R^3),\ \mathcal{T}(\bP) = R \right\}.
\end{align*}
Scharrer--West \cite{scharrer2025} proved the existence of a minimizer for $R\in\left( 0, \sqrt{2}\, \mathcal{T}(\s^2) \right)\setminus \{\mathcal{T}(\s^2)\}$ and studied the properties of the map $(R\in \R\mapsto \beta^{\mathcal{T}}(R))$. For the case $R= \mathcal{T}(\s^2)$, they proved that 
\begin{align*}
    \beta^{\mathcal{T}}(\mathcal{T}(\s^2)) = \inf\left\{
        W(\vec\Phi) : \vec\Phi\in\Imm(\Sigma,\R^3)
    \right\}.
\end{align*}
Hence, the existence of a minimizer for $\beta^{\mathcal{T}}(\mathcal{T}(\s^2))$ should be very related to Conjecture \ref{con:Willmore_min}.

\begin{op}
    Is there a minimizer for $\beta^{\mathcal{T}}(R)$ with $R\notin\left( 0, \sqrt{2}\, \mathcal{T}(\s^2) \right)\setminus \{\mathcal{T}(\s^2)\}$?  
\end{op}

In the borderline case $R=0$, we know by continuity of the map $R\mapsto \beta^{\mathcal{T}}(R)$ that $\beta^{\mathcal{T}}(0)\leq 8\pi$. Knowing whether there exists a minimizer for $\beta^{\mathcal{T}}(0)$ thus amounts to knowing whether $\beta^{\mathcal{T}}(0)=8\pi$ is true (see \cite{scharrer2025}).

\begin{op}
    Can we study the evolution of minimizers of $\beta^{\mathcal{T}}(R)$ when $R$ varies?
\end{op}

The study of $\beta^{\mathcal{T}}$ is related to the study of the following functional, for a given parameter $\bar{H}\in \R$ called the spontaneous curvature
\begin{align*}
    \forall \bP\in \Imm(\Sigma,\R^3),\qquad W_{\bar{H}}(\bP) \coloneqq \int_{\Sigma} (H_{\bP} - \bar{H})^2\, d\textup{vol}_{g_{\bP}}.
\end{align*}
The Helfrich flow has been introduced in 2006 \cite{kohsaka2006}, where Kohsaka--Nagasawa proved the short-term existence of solutions to the gradient flow of $W_{\bar{H}}$ with the enclosed volume and the area prescribed by some given constants $V_0$ and $A_0$. The cases when the initial data has small energy or when it is radially symmetric have been studied by Rupp--Scharrer--Schlierf \cite{rupp2024}. This work extends earlier results of Rupp \cite{rupp2023,rupp20240} concerning the Willmore flow (i.e. the case $\bar{H}=0$) under a prescription of the isoperimetric ratio instead of a prescription of both the enclosed volume and the area. In order to study the flow for more general initial data, where singularities might appear, Scharrer--West \cite{scharrer20252} developed a bubbling analysis for constrained Willmore surfaces in the spirit of \cite{bernard2014,laurain2018}.

\begin{op}
    Can we prove an energy quantization property for the Helfrich flow?
\end{op}

Another important Willmore constrained variational problem consists in studying the variation of Willmore energy under constraints on the Riemann Surface the immersion is inducing.

Let $\vec{\Phi}$ be a weak immersion of a surface $\Sigma$, as   mentioned above it defines a smooth conformal class  --- i.e. a Riemann surface --- which is the one induced by the pull back metric $g_{\vec{\Phi}}=\vec{\Phi}^{\,\ast}g_{{\R}^3}$.  The conformal class defined by this immersion is said to be a degenerate point of the map $\mathcal{C}$ assigning the conformal class of $(\Sigma,g_{\vec{\Phi}})$ in the Teichmüller space if and only if there exists a non zero holomorphic quadratic form\footnote{A holomorphic quadratic form is a holomorphic section of the bundle given by $\wedge^{1,0}T^\ast\Sigma\otimes \wedge^{1,0}T^\ast\Sigma $ where $\Sigma$ is equipped with the conformal structure induced by $g_{\vec{\Phi}}=\vec{\Phi}^{\,\ast}g_{{\R}^m}$} $q$ such that
\begin{equation}
\label{isotherm}
<q,\vec{h}_0>_{WP}=0.
\end{equation}
In the above equation, we denoted $\vec{h}_0$ the Weingarten operator given by
\[
\vec{h}_0\coloneq-2\, \p\pi_{\vec{n}}\,{\otimes}\,\p\vec{\Phi},
\]
where $\pi_{\vec{n}}$ is the linear orthogonal projection map from ${\R}^m$ onto the normal space to the immersion $N_x\Sigma \coloneq (\vec{\Phi}_\ast T_x\Sigma)^\perp$ and $<\cdot,\cdot>_{WP}$ denotes the {\it Weil--Peterson} scalar product given in local conformal coordinates for which $$g_{\vec{\Phi}}=e^{2\lambda}\ [dx_1^2+dx_2^2],$$ by (see \cite{Riv15})
\[
\lf< \phi\ dz\otimes dz, \psi\ dz\otimes dz\rg> \coloneq e^{-4\la} \ \Re\lf[\ov{\phi}\,{\psi}\rg]\ .
\]
Immersions in ${\R}^3$ which are degenerate points of the map ${\mathcal C}$ have been first studied in the late XIXth century under the name of isothermic surfaces (see \cite{Riv13}). These are the surfaces, such that, away from umbilic points there exists locally conformal coordinates which give the principal directions. \\

Assuming now that $(\Sigma,g_{\vec{\Phi}})$ is a non degenerate point under some constraint on the conformal class of the induced metric then there exists a holomorphic quadratic form $q$ such that
\begin{equation}
 \label{conf-const}
 d\ast_{g}\lf[ d\vec{H}-\,3\, \pi_{\vec{n}}(d\vec{H})+\star(\ast_{g}\ d\vec{n}\wedge \vec{H})\rg]=<q,\vec{h}_0>_{WP}\ .
\end{equation}
where $\star$ is the standard Hodge operator on multivectors in ${\R}^m$ (see \cite{Riv15}). This equation is known as the {\it conformal Willmore equation}. In codimension one (i.e. $m=3$) this can be rewritten as follows
\begin{equation}
     \label{will-cons-conf}
     d\ast_{g}\lf[ d\vec{H}-\,2\, H\, d\vec{n}-H^2\,d\vec{\Phi}\rg]=<q,\vec{h}_0>_{WP}\ .
 \end{equation}
The holomorphic quadratic form is a Lagrange multiplier\footnote{The holomorphic quadratic form identifies to the tangent space to the moduli space (theorem 4.2.2 \cite{J06})} associated to the conformal class constraint.
Weak immersions solving (\ref{conf-const}) are proved to be smooth (\cite{bern-rivi-conf}).

\begin{Rm}
    \label{rm-conf-will}
    In \cite{bern-rivi-conf} it is proved that the Conformal Willmore equation (\ref{will-cons-conf}) is equivalent to the existence of $\vec{L}$ satisfying the set of the two conservation laws (\ref{dil}) and (\ref{rot}) 
\begin{equation}
\label{Noet-dil-rot}
    \left\{
    \begin{array}{l}
\displaystyle d\lf(\vec{L}\cdot d\vec{\Phi}\rg)=0,\\[5mm]
\displaystyle d\lf(\vec{L}\times d\vec{\Phi}+H\, d\vec{\Phi}\rg)=0,
    \end{array}
    \right .
\end{equation}
 coming respectively from Noether theorem for dilations and rotations which is itself equivalent to the so-called $(\vec{R},S)$ system (\ref{RS-sys}) combined with the structure equation (\ref{struct}).
For the higher dimensional counterparts to the Willmore functional and in particular for the Dirichlet energy of the mean curvature of a weak immersion of 4-dimensional manifolds into ${\mathbb R}^5$, we are asking in Open Question \ref{op-q-100} whether the conservation laws issued from Noether for dilations and rotations do correspond to the variation of this energy under some constraint.
\end{Rm}

An interesting question is to know whether or not a weak immersion which is both a critical point of the Willmore functional under some constraint on the conformal class  and isothermic (i.e. a degenerate point for the constraint) is solving or not equation (\ref{conf-const}) and is then smooth. The answer to this question is proved to be positive in \cite{Riv15} in full generality assuming that the surface has genus less than 3 or for arbitrary larger genus if the underlying conformal class is avoiding the ``tiny'' subset of the Teichm\"uller Space of $\Sigma$ made of hyper-elliptic points (a similar result has been obtained in \cite{KS13} using different methods but with restrictions both on the energy and the co-dimension). One application of this result is the existence of smooth minimizers of the Willmore energy inside a prescribed conformal classes (Theorem 1.5 \cite{Riv15}).
Once the existence of a smooth minimizer of the Willmore Energy in a fixed conformal class is known some open questions arise naturally.

\begin{op}
\label{op-q-1}
Identify  minimizers of the Willmore energy in any given conformal class. In particular identify the minimizers of the Willmore energy in the class of any flat torus.     
\end{op}

In a first approach one would aim at  giving a lower bound of the Willmore energy in a fixed conformal class. This approach has been initiated in the pioneer paper by P.Li and S.T. Yau \cite{LY}. The paper of Li and Yau has been then followed by several important contributions such as the one of S.Montiel and A.Ros \cite{MoRo} or more recently  by R.Bryant \cite{Bry} (see also recent results pointing in the direction of the open question \ref{op-q-1} in \cite{ndiaye2015} and \cite{HelNd1}).


\subsection{In higher dimensions}\label{sec:OP4d}

Even though geometric properties of generalized Willmore functionals have already been the subject to a lot of works, the study of their analytical properties is relatively new. We list below the few known results and propose different directions of research.

\subsubsection{Graham--Reichert functional}\label{sec:GR}

One advantage of the renormalized volume approach is to provide conformally invariant functionals having minimal surfaces as critical points, which seems to be an important property for physical applications \cite{anastasiou2025}. In dimension 4, this procedure has been applied independently by Zhang \cite{zhang} and Graham--Reichert \cite{graham2020}. They both end up with the following energy, already found by different methods by Guven \cite{guven} in 2005: for an immersion $\bP \colon \Sigma^4 \to \R^5$ the functional is given by
	\begin{align*}
		\mathcal{E}_{GR}(\bP) &\coloneqq \int_\Sigma |d H_{\bP}|^2_{g_{\bP}} - H_{\bP}^2\, |\II_{\bP}|^2_{g_{\bP}} + 7H^4_{\bP} \ d\textup{vol}_{g_{\bP}} .
	\end{align*}
	where $g_{\bP}\coloneqq \bP^*\xi$, $\xi$ is the flat metric on $\R^5$, $\II_{\bP}$ is the second fundamental form of $\bP(\Sigma)$ and $H_{\bP} = \frac{1}{4}\mathrm{tr}_{g_{\bP}} \II_{\bP}$ is its mean curvature. It turns out that the minus sign in the definition is sufficient to make the above functional unbounded from below, as proved in \cite{graham2020,martino2024duality}. The example constructed in \cite{martino2024duality} is based on the observation that the integrand of $\mathcal{E}_{GR}$ is a negative constant for $\Sp^2\times \R^2$, however this is not a closed submanifold. As mentioned by Graham in a private communication, a simple way to obtain a degenerating example of immersions from a closed manifold is to consider an ellipsoid with two semi-axes of length 1 and two others equal to some parameter $a\in(0,+\infty)$ that we can either send to $0$ or to $+\infty$ (since $\mathcal{E}_{GR}$ is scale invariant, both possibilities are equivalent and we recover $\s^2\times \R^2$ by sending $a\to +\infty$). At first glance, it seems that non-compact CMC immersions can provide an interesting source of examples for possible degeneracy of $\mathcal{E}_{GR}$ towards $-\infty$.
    \begin{op}
        Fix a closed submanifold $\Sigma^4\subset \R^5$. Is it possible to classify all the behaviour of immersions $\bP_k\colon\Sigma^4\to \R^5$ such that $\mathcal{E}_{GR}(\bP_k)\xrightarrow[k\to+\infty]{} -\infty$?
    \end{op}

An interesting starting point would be to find bounds in terms of the geometry of the immersion. For example, one can ask the following question.

\begin{op}
    Let $\bP\colon \Sigma^4\to \R^5$ be a smooth immersion and $g$ be its first fundamental form. Given bounds on intrinsic conformally invariant quantities of $g$, such as the total $Q$-curvature, the $L^2$-norm of its Weyl tensor, or the $L^1$-norm of its Bach tensor, can we deduce bounds on $\mathcal{E}_{GR}(\bP)$?
\end{op}

Another important question is the construction of critical points. We know that conformal transformations of minimal hypersurfaces are critical points of $\mathcal{E}_{GR}$, but to the best of the authors' knowledge, no other critical point is known.
\begin{op}
    Are there critical points of $\mathcal{E}_{GR}$ which are not conformal transformations of minimal hypersurfaces?
\end{op}

We now list a few natural questions regarding the two-dimensional case. First, it would be important to know if a generalization of Bryant's quartic exists or not.

\begin{op}
    Assume that $\bP\colon \Sigma^4\to \R^5$ is a smooth critical point of $\mathcal{E}_{GR}$ such that $(\Sigma,g_{\bP})$ carries a complex structure. Can we exhibit a holomorphic differential form? 
\end{op}

Another natural question is to know how many critical points $\mathcal{E}_{GR}$ has, or if there is some topological obstruction to the existence of critical points.

\begin{op}
    Does $\mathcal{E}_{GR}$ have critical points for every topological hypersurface $\Sigma^4\subset \R^5$? What about the critical points which are not conformal transformations of minimal surfaces?
\end{op}

In order to develop the analysis of $\mathcal{E}_{GR}$, it is important to first understand the regularity of its critical points. First computations in this direction have been performed by Bernard \cite{Bernard25}. In these recent works, Bernard rewrote the Euler--Lagrange equation of any conformally invariant functional of the form $\int_{\Sigma^4} |dH_{\bP}|^2_{g_{\bP}} + l.o.t.$, using the Noether theorem to obtain a system in divergence form. We describe these computations in a simpler setting in \Cref{sec:4d}. The full regularity is the subject of the recent work \cite{bernard2025}. It would also be interesting to have explicit formulas for higher-dimensional renormalized volumes.

\begin{op}
    In \cite{graham2020}, the authors provide an abstract formula for the generalized Willmore functional in any dimension. Compute it explicitly for hypersurfaces of $\R^7$. 
\end{op}

\subsubsection{Alternatives}\label{sec:alternatives}

In \cite{anastasiou2025}, the authors argue that the functional $\mathcal{E}_{GR}$ is relevant for physical purposes. However, it would be interesting to look for other possible functionals that could be of interest for geometric purposes. In \cite{blitz2023generalized,gover2020}, the authors define a generalized Willmore energy for hypersurfaces $\Sigma^4 \subset \R^5$ as a conformally invariant functional whose Euler--Lagrange equation has a leading term of the form $\lap^2 H$. They use tractor calculus to generate many different functionals arising from different geometric contexts.\\

The key idea of \cite{blitz2023generalized,gover2020}, which has recently been developed in \cite{allen2025}, is the link between $Q$-curvature and generalized Willmore energies. They show in \cite{gover2020} that one can define an extrinsic version of the GJMS (Graham--Jenne--Mason--Sparling) operators \cite{GJMS1992} on submanifolds. From these operators, one deduces a notion of extrinsic $Q$-curvature. In \cite{blitz2023generalized}, the authors prove that for a 4-dimensional submanifold $\Sigma^4\subset M^{n}$ with $n\geq 5$, the integral of the difference between the extrinsic $Q$-curvature and the intrinsic $Q$-curvature of $\Sigma$ is a generalized Willmore energy. Hence, one could hope to find similarities between the study of the standard $Q$-curvature and generalized Willmore energies. \\

In order to study the space of immersions of a given hypersurface $\Sigma^4\subset\R^5$ into $\R^5$ and to provide a notion of ``best representation", one would need a functional which is bounded from below and, as for the 2-dimensional case, would be a sort of conformally invariant distance of hypersurfaces to totally umbilic sets. One possibility has been introduced in \cite{martino2024duality} motivated by the generalization of the Dirichlet energy for the conformal Gauss map in dimension 2, see also \cite{allen2025} where Allen--Gover propose an alternative in any codimension using the normal tractor: if $\bP\colon \Sigma^4\to \R^5$ is a smooth immersion, then
\begin{align*}
    \mathcal{E}(\bP) = \int_{\Sigma} 4|dH_{\bP}|^2_{g_{\bP}} + \frac{1}{3}|\mathring{\II}_{\bP}|_{g_{\bP}}^4 + 2H^2_{\bP} |\II_{\bP}|_{g_{\bP}}^2 - 4H_{\bP}\, \text{tr}_{g_{\bP}}(\II_{\bP}^3) + 2\,  \text{tr}_{g_{\bP}}(\II_{\bP}^4)\ d \textup{vol}_{g_{\bP}}. 
\end{align*}
A priori, minimal hypersurfaces of $\R^5$ are not critical points of $\mathcal{E}$ anymore. However, it is proved in \cite{martino2024duality}, that $\mathcal{E}(\bP)\geq 0$ with equality if and only if $\bP(\Sigma)$ is a round sphere. Furthermore, $\mathcal{E}$ is coercive, in the sense that there exists a universal constant $C>0$ such that for any immersion $\bP\colon\Sigma\to \R^5$, it holds
\begin{align}\label{eq:coercivity}
    \int_{\Sigma} |d\II_{\bP}|^2_{g_{\bP}} + |\II_{\bP}|^4_{g_{\bP}}\, d\textup{vol}_{g_{\bP}} \leq C\, \mathcal{E}(\bP).
\end{align}
We refer to the appendix of \cite{martino2024duality} for various relations between the $L^p$-norms of $H$, $\II$, $\mathring{\II}$ and their derivatives. Under this coercivity property, one could hope to generalize the known results of Willmore surfaces to the higher dimensional case.

\begin{op}
    Given a 4-dimensional manifold $\Sigma^4$, can we explicitly compute or at least estimate the following number:
    \begin{align*}
        \inf\left\{ \mathcal{E}(\bP) \colon \bP\in \mathrm{Imm}(\Sigma^4,\R^5) \right\}.
    \end{align*}
\end{op}

\begin{op}\label{qu:minimize1}
    Can we minimize $\mathcal{E}$ among all the immersions of a given closed 4-dimensional manifold $\Sigma$ into $\R^5$? If yes, what are the minimizers?
\end{op}

\begin{op}
    Can we exhibit a generic class of critical points of $\mathcal{E}$? For example, one could consider conformal transformations of hypersurfaces satisfying an equation of the form $P(H)=0$ for some polynomial $P$.
\end{op}

It is possible that $\mathcal{E}$ has a specific structure for which there are no minimizers. In this case, it would be interesting to find another functional, coercive as well and which does possess minimizers. For instance, one can prove the following lower bound for $\mathcal{E}_{GR}$:
\begin{align*}
    \mathcal{E}_{GR}(\bP) \geq \int_{\Sigma} |dH_{\bP}|^2_{g_{\bP}} -\frac{1}{12}|\mathring{\II}_{\bP}|_{g_{\bP}}^4\, d\textup{vol}_{g_{\bP}}.
\end{align*}
Thus, there also exists a universal constant $C>0$ for which it holds
\begin{align*}
    \int_{\Sigma} |d\II_{\bP}|_{g_{\bP}}^2 + |\II_{\bP}|_{g_{\bP}}^4\, d\textup{vol}_{g_{\bP}} \leq C\left( \mathcal{E}_{GR}(\bP) + \int_{\Sigma} |\mathring{\II}_{\bP}|^4\, d\textup{vol}_{g_{\bP}}\right).
\end{align*}
The energy on the right-hand side is still a generalized Willmore energy. All these generalized Willmore energies are of the form
\begin{align}\label{eq:generic_Willmore}
    \bP\in\mathrm{Imm}(\Sigma^4, \R^5)\mapsto \int_{\Sigma} |d\II_{\bP}|_{g_{\bP}}^2\, d\textup{vol}_g+ O\left(\int_{\Sigma} |\II|^4_{g_{\bP}}\, d\textup{vol}_{g_{\bP}}\right).
\end{align}

\begin{op}\label{qu:minimize2}
    Can we prove that any generalized Willmore energy satisfying the coercivity condition \eqref{eq:coercivity} possesses minimizers?  
\end{op}

\begin{op}
    Given a conformally invariant functional of the form \eqref{eq:generic_Willmore}, can we produce a generic class of critical points?
\end{op}

In order to study Open Questions \ref{qu:minimize1} and \ref{qu:minimize2}, one could follow the strategy introduced by Simon \cite{Sim} and Bauer--Kuwert \cite{BaKu} in order to show the existence of a minimizer of the Willmore functional among the immersed surfaces of any given genus. In first step in \cite{BaKu} is to prove a key inequality on connected sums of non-umbilic immersed submanifolds. A generalization of this inequality has recently been proved in \cite{wu2025}.

\subsubsection{Analysis in high dimensions}

In order to answer the previous questions, one would need to develop a refined analysis for submanifolds in the spirit of the approach presented here and most likely combined with the existing theory of varifolds. An important starting point already raised in \cite[Remark 5.4.6]{helein2002}, would be to generalize the study of Coulomb frames in higher dimensions but also to generalize the work of Müller--Šverák  \cite{MS1995} on the construction of isothermal coordinates for immersed surfaces in a Euclidean space. The Coulomb frames are of major importance due to their relation to isothermal coordinates, see for instance Theorem \ref{thexicon} below or \cite[Section 5.4]{helein2002}. In higher dimensions, isothermal coordinates do not exist in general, the Cotton and Weyl tensors are the obstruction. However, harmonic coordinates always exist on any manifold of any dimension $n\geq 2$ (isothermal coordinates are harmonic). In recent work \cite{MarRiv1,MarRiv2}, the second and third authors show that for an immersion $\bP\colon \Sigma^4\to \R^5$, an estimate of the following form provides an atlas of harmonic coordinates on $\Sigma$ describing a $C^1$-differential structure with quantitative estimates on all the quantities involved
\begin{align}\label{eq:bound_E}
    \int_{\Sigma} |d\II_{\bP}|^2_{g_{\bP}} + |\II_{\bP}|^4_{g_{\bP}}\ d\textup{vol}_{g_{\bP}} \leq E.
\end{align}
In particular, the work \cite{MarRiv1} provides a possible generalization to the work of Müller--Šverák \cite{MS1995} together with an answer to Hélein's question. The construction of harmonic coordinates is done in two steps. First, we proceed by a continuity argument reminiscent of the one of Uhlenbeck \cite{Uh82} for the construction of a Coulomb gauge for Yang--Mills connections. In this first step, we construct both a controlled system of coordinates and a controlled Coulomb frame. We can then perturb these coordinates to obtain controlled harmonic coordinates. The crucial analytic ingredient that replaces the $\mathrm{div}$-$\curl$ structure in dimension 2 is the use of Lorentz spaces. An estimate of the form \eqref{eq:bound_E} combined with the Gauss--Codazzi equations implies that the Riemann tensor $\mathrm{Riem}^{g_{\bP}}$ lies in $L^{(2,1)}$. This Lorentz exponent 1 is just enough to be able to close the continuity argument and obtain the existence of harmonic coordinates if $\mathrm{Riem}^{g_{\bP}}$ is small in $L^{(2,1)}$. This construction extends to all dimensions $n\geq 3$ as long as $\mathrm{Riem}^{g_{\bP}}$ lies in $L^{\left(\frac{n}{2},1\right)}$.

\begin{op}
    Given a closed orientable $n$-dimensional manifold $\Sigma$ and an immersion $\bP\colon \Sigma\to \R^d$ with $\II_{\bP}\in L^n(\Sigma^n)$, is it possible to construct counterexamples to the existence of controlled harmonic coordinates? Or can we construct controlled harmonic coordinates on a ``large" subset of $\Sigma$?
\end{op}

The next question is to know whether this atlas of harmonic coordinates can be ``uniformized" to obtain a global metric on $\Sigma$ conformal to $g_{\bP}$ together with some control following from an estimate of the form \eqref{eq:bound_E}.
A first attempt would be to find a metric having constant scalar curvature. This question is at the origin of the Yamabe problem and is not solvable in full generality, the topology of $\Sigma$ plays a role when it has a boundary. An alternative has been proposed recently in \cite{laurain2025}, where the authors are able to solve locally $\Delta_{\bar{g}} \mathrm{Scal}_{\bar{g}}=0$ for some metric $\bar{g}$ conformal to $g$ in dimension 4. This equation is always solvable as long as the Sobolev inequalities are verified for the metric $g$ and the following regularity estimate holds
\begin{align}\label{eq:Laplacian}
    \forall u\in C^{\infty}_c(\Sigma),\qquad \| \nabla u\|_{L^4(\Sigma,g_{\bP})} \leq C\, \|\Delta_{g_{\bP}} u\|_{L^2(\Sigma,g_{\bP})}.
\end{align}
Sobolev inequalities are a consequence of the isoperimetric inequality, which always holds for submanifolds with a small $L^n$-norm of its mean curvature, thanks to the Michael--Simon inequality \cite{allard1975,michael1973}. A global version of the equation $\Delta_{\bar{g}} \mathrm{Scal}_{\bar{g}}=0$ would be that the $Q$-curvature of $\bar{g}$ is pointwise controlled by $|\mathrm{Riem}^{\bar{g}}|^2_{\bar{g}}$.

\begin{op}
    Given an immersion $\bP\colon \Sigma\to \R^m$ from a closed $4$-dimensional oriented manifold $\Sigma$ verifying \eqref{eq:bound_E}, can we find a metric $\bar{g}$ conformal to $g_{\bP}$ such that the $Q$-curvature of $\bar{g}$ is constant? Is it pointwise controlled by $|\vec{\II}_{\bP}|^4_{g_{\bP}}$?
\end{op}

In order to know whether an equation on a manifold is regularizing or not, one needs to develop a refined PDE theory. In particular, one needs to control the constants involved in inequalities such as Sobolev or Poincaré inequalities and elliptic regularity. Such inequalities are directly related to isoperimetric problems, see for instance \cite[Chapter IV]{chavel1984}. Gallot \cite{gallot1988} proved that the isoperimetric inequality holds on a complete manifold $(\Sigma^n,g)$ as soon as $\mathrm{Ric}_g\in L^p(\Sigma,g)$ for some $p>\frac{n}{2}$. Moreover, the Michael--Simon inequality (see Brendle \cite{brendle2021} for the optimal constants) implies that an isoperimetric inequality always holds for immersed $n$-dimensional submanifolds of arbitrary codimension in a Euclidean space whose mean curvature has small $L^n$-norm. 

\begin{op}
    Given a complete manifold $(\Sigma^n,g)$, is there a number $s>0$ such that if $\mathrm{Riem}_g\in W^{s,\frac{n}{s+2}}(\Sigma,g)$, then the isoperimetric inequality holds?
\end{op}

In order to develop a variational theory for submanifolds of the Euclidean space, one needs to understand the "closure" of this space under the norm we are interested in, in the same spirit that the notions of currents or varifolds arise as limits of smooth submanifolds having bounded area. In other words, it would be interesting to generalize Theorem \ref{th-wclo}.

\begin{op}
    Fix a number $s\in[0,n]$. Let $(\bP_k)_{k\in\N}$ be a sequence of smooth immersions from a given orientable $n$-dimensional submanifold $\Sigma^n\subset \R^{n+m}$ into $\R^{n+m}$. Assume that 
    \begin{align*}
        \sup_{k\in\N} \left\|\vec{\II}_{\bP_k}\right\|_{W^{s,\frac{n}{s+1}}(\Sigma,g_{\bP})} <+\infty.
    \end{align*}
    What is the limit $\displaystyle{\lim_{k\to+\infty}} \bP_k$? Can we develop a bubbling analysis adapted to the above energy?
\end{op}

The Sobolev space $W^{s,\frac{n}{s+1}}(\Sigma^n)$ always embeds in $L^n(\Sigma^n)$. Hence, one should not expect a strong limit in full generality, but rather a weak limit on $\Sigma$ minus a set having small measure. The case $s=\frac{n-2}{2}$ and $n\geq 4$ even has been treated in \cite{MarRiv2}, based on the approximation result obtained in Theorem \ref{th:Approx}.
\begin{Th}
    Let $(\bP_k)_{k\in\N}$ be a sequence of smooth immersions from a given orientable $n$-dimensional submanifold $\Sigma^n\subset \R^d$ into $\R^d$. Assume that 
    \begin{align*}
        \sup_{k\in\N} \int_{\Sigma} \big| \g^{\frac{n}{2}-1} \vec{\II}_{\bP_k} \big|^2_{g_{\bP_k}} + \cdots + \big| \vec{\II}_{\bP_k} \big|^n_{g_{\bP_k}}\, d\textup{vol}_{g_{\bP_k}} \leq C.
    \end{align*}
    Up to rescaling, we can assume that each $\bP_k(\Sigma)$ is contained in the unit ball $\mathbb{B}^d(0,1)$ and has diameter 1. Then, there exists a closed $C^1$ manifold $\Sigma_{\infty}$ (possibly non-connected) and a weak immersion $\bP_{\infty}\colon \Sigma_{\infty}\to \R^d$ such that the following holds. There exists a finite number of points $\vec{q}_1,\ldots,\vec{q}_K\in \R^d$ such that for each $r>0$, we have the following convergence in the pointed Gromov--Hausdorff topology
    \begin{align*}
        \left( \Sigma\setminus \bP_k^{-1}\left(\mathbb{B}^d(\vec{q}_1,r)\cup \cdots \cup \mathbb{B}^d(\vec{q}_1,r) \right),g_{\bP_k} \right) \to \left( \Sigma_{\infty}\setminus \bP_{\infty}^{-1}\left(\mathbb{B}^d(\vec{q}_1,r)\cup \cdots \cup \mathbb{B}^d(\vec{q}_1,r) \right),g_{\bP_{\infty}} \right).
    \end{align*}
    Moreover, $\bP_k$ converges to $\bP_{\infty}$ in $C^0\left( \Sigma_{\infty}\setminus \bP_{\infty}^{-1}\left(\mathbb{B}^d(\vec{q}_1,r)\cup \cdots \cup \mathbb{B}^d(\vec{q}_1,r) \right),g_{\bP_{\infty}} \right)$ for any $r>0$.
\end{Th}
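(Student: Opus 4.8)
The plan is to run a concentration--compactness analysis directly in the target $\R^d$, in the spirit of the proof of Theorem~\ref{th-wclo}, with the two-dimensional conformal charts replaced by the atlas of controlled harmonic coordinates produced from a curvature bound of the form \eqref{eq:bound_E}, and with the approximation Theorem~\ref{th:Approx} used to certify that the limit is a genuine weak immersion of a $C^1$ manifold. The hypothesis controls $\int_{\Sigma}|\g^{n/2-1}\vec{\II}_{\bP_k}|^2_{g_{\bP_k}}+\cdots+|\vec{\II}_{\bP_k}|^n_{g_{\bP_k}}\,d\textup{vol}_{g_{\bP_k}}$, which by the Gauss--Codazzi equations forces $\mathrm{Riem}^{g_{\bP_k}}$ to lie in $L^{(n/2,1)}$ with uniformly bounded norm; this Lorentz refinement is the higher-dimensional substitute for the $\mathrm{div}$-$\curl$ structure available in dimension $2$, and it is what lets the estimates below be closed with constants independent of $k$.

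First I would localize the curvature. Set $e_k\coloneqq|\g^{n/2-1}\vec{\II}_{\bP_k}|^2_{g_{\bP_k}}+\cdots+|\vec{\II}_{\bP_k}|^n_{g_{\bP_k}}$ and push the finite measures $e_k\,d\textup{vol}_{g_{\bP_k}}$ forward by $\bP_k$ to Radon measures $\mu_k$ on $\overline{\mathbb{B}^d(0,1)}$ of uniformly bounded mass; up to a subsequence, $\mu_k\rightharpoonup\mu$ in the sense of Radon measures. Let $\varepsilon_0>0$ be the threshold furnished by the $\varepsilon$-regularity lemma below and set $\{\vec q_1,\dots,\vec q_K\}\coloneqq\{\vec q\in\R^d:\mu(\{\vec q\})\ge\varepsilon_0\}$, a finite set since $\mu(\R^d)<\infty$; for $\vec q\notin\{\vec q_1,\dots,\vec q_K\}$ choose $\rho>0$ with $\mu(\mathbb{B}^d(\vec q,2\rho))<\varepsilon_0$, so that $\mu_k(\mathbb{B}^d(\vec q,2\rho))<\varepsilon_0$ for $k$ large. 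The $\varepsilon$-regularity lemma I would prove asserts that there is $\varepsilon_0>0$ such that, whenever $\int_{\bP_k^{-1}(\mathbb{B}^d(\vec q,2\rho))}e_k\,d\textup{vol}_{g_{\bP_k}}<\varepsilon_0$, every connected sheet of $\bP_k(\Sigma)\cap\mathbb{B}^d(\vec q,\rho)$ is the graph of a map over an open subset of its affine tangent plane, with a uniform estimate in the Sobolev space dual to the scaling of $e_k$, hence, by the Lorentz embedding, in $C^1$. Its proof is the localized form of the continuity method of \cite{MarRiv1} following Uhlenbeck \cite{Uh82}: one builds controlled harmonic coordinates on each sheet, where $\mathrm{Riem}^{g_{\bP_k}}$ is small in $L^{(n/2,1)}$, and inserts them into the second-order elliptic system satisfied by $\bP_k$. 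A companion area estimate --- from the diameter normalization together with control of $\|\vec{H}_{\bP_k}\|_{L^n}$ via the Michael--Simon inequality \cite{michael1973,allard1975} --- bounds uniformly the number of sheets through a fixed ball, so only finitely many sheets need be tracked.

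Next I would pass to the limit on the good region $U_{k,r}\coloneqq\Sigma\setminus\bP_k^{-1}\big(\bigcup_i\mathbb{B}^d(\vec q_i,r)\big)$. There the sheets of $\bP_k(\Sigma)$ form a uniformly controlled finite family of $C^1$ graphs; a standard compactness argument together with weak compactness in the relevant Sobolev space gives subsequential convergence to limiting $C^1$ graphs, which overlap consistently --- the transition maps being controlled by the same estimates --- and patch together into a $C^1$ manifold-with-boundary carrying a $C^1$ immersion. Letting $r\downarrow 0$ along a diagonal subsequence yields a closed $C^1$ manifold $\Sigma_\infty$ (possibly disconnected) and a map $\bP_\infty\colon\Sigma_\infty\to\R^d$; the two-sided bounds on $g_{\bP_k}$ coming from the graph estimates survive on each $U_{k,r}$, so $\bP_\infty$ is a weak immersion, possibly with branch points over the $\vec q_i$, and Theorem~\ref{th:Approx} is exactly what guarantees that this patched object lies in the closure of smooth immersions. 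The $C^0$ convergence $\bP_k\to\bP_\infty$ on $U_{k,r}$ is then built into the construction, and combined with the inequalities $C^{-1}|X|^2_{g_0}\le|X|^2_{g_{\bP_k}}\le C|X|^2_{g_0}$ valid on $U_{k,r}$ it yields the stated pointed Gromov--Hausdorff convergence of $(U_{k,r},g_{\bP_k})$.

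The hard part will be the $\varepsilon$-regularity step at the borderline Lorentz scaling, together with the topological control of the good region that is intertwined with it: one must rule out that infinitely many sheets, or a degenerating sequence of thin necks, accumulates away from the concentration points $\vec q_i$, and one must choose the local reparametrizations compatibly so that the limit is a genuine $C^1$ manifold rather than merely an integral varifold. This is precisely where the improvement $\mathrm{Riem}^{g_{\bP_k}}\in L^{(n/2,1)}$ is indispensable, exactly as in \cite{MarRiv1,MarRiv2}. A complete description of what degenerates at the points $\vec q_i$ --- the higher-dimensional analogue of the bubble tree and of the Deligne--Mumford-type degeneration appearing in Theorem~\ref{th-wclo} --- is a further and more delicate layer, which the statement as phrased sidesteps by asserting convergence only on the complement of $\bigcup_i\mathbb{B}^d(\vec q_i,r)$.
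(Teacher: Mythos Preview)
The paper does not contain its own proof of this theorem: it is stated in Section~\ref{sec:OP4d} as a result established in \cite{MarRiv2}, with the surrounding text only sketching the ingredients (controlled harmonic coordinates via an Uhlenbeck-type continuity argument \cite{MarRiv1}, the Lorentz improvement $\mathrm{Riem}^{g_{\bP}}\in L^{(n/2,1)}$ from Gauss--Codazzi, and the approximation Theorem~\ref{th:Approx}). Your outline reproduces exactly these ingredients and organizes them through a concentration--compactness scheme in the target, which is the approach the paper attributes to \cite{MarRiv2}; so at the level of strategy your proposal and the paper's indicated route coincide.
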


It would be interesting to study the formation of singularities. It would also be useful to study the cases where $s$ is not an integer. Once we have a critical point, it is important to know whether the Euler--Lagrange equation is regularizing or not. In \cite{bernard2025}, we obtain the following result in dimension $4$.
\begin{Th}
    Let $\bP\colon \mathbb{B}^4(0,1)\to \R^5$ be a weak immersion critical point of the functional
    \begin{align*}
        E(\bP) = \int_{\mathbb{B}^4(0,1)} \big| d H_{\bP}\big|^2_{g_{\bP}}\ d\textup{vol}_{g_{\bP}}.
    \end{align*}
    Then $\bP$ is real-analytic in harmonic coordinates.
\end{Th}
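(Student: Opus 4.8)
The plan is to mimic, in dimension four, the strategy that the paper establishes in dimension two for the Willmore equation: first rewrite the Euler--Lagrange equation of $E(\bP)=\int|\g H_{\bP}|^2_{g_{\bP}}\,d\textup{vol}_{g_{\bP}}$ in conservative (divergence) form using the Noether theorem applied to the translation, dilation and rotation invariances, then exploit the resulting first-order elliptic system together with compensation phenomena to upgrade regularity, and finally bootstrap to real-analyticity. The computations rewriting the Euler--Lagrange equation in divergence form are exactly those announced in \Cref{sec:4d} (following Bernard \cite{Bernard25}), so I would take as given the existence of a potential $\vec{L}$, together with the analogues of \eqref{dil} and \eqref{rot}, and of the primitives $S$ and $\vec{R}$ satisfying an $(\vec{R},S)$-type first-order system coupled to a structure equation. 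The point of departure from the two-dimensional case is that there are no isothermal coordinates on a $4$-manifold; instead one works in the controlled harmonic coordinates furnished by \cite{MarRiv1,MarRiv2} under the bound \eqref{eq:bound_E}, which is available since a critical point of $E$ of finite energy satisfies a bound of the form \eqref{eq:coercivity}--\eqref{eq:bound_E}.

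The key steps, in order, would be as follows. \textbf{Step 1: conservative form.} Write the Euler--Lagrange equation $\mathrm{div}_{g_{\bP}}[\g\Delta_{g_{\bP}}\vec{\Phi}+\text{l.o.t.}]=0$ and, using Noether's theorem, produce $\vec{L}\in W^{-1,2}+L^{4/3}$ and conservation laws for dilation and rotation; extract primitives $S$ and $\vec{R}$ in a suitable Sobolev space, and isolate the first-order elliptic system relating $\g\vec{R}$, $\g S$ and the Gauss map $\vec{n}_{\bP}$, together with a structure equation expressing $\g H_{\bP}$ (or $\Delta_{g_{\bP}}\vec{\Phi}$) in terms of products of $\g S$, $\g\vec{R}$ and $\g\vec{\Phi}$. \textbf{Step 2: initial integrability.} Working in controlled harmonic coordinates, the metric $g_{\bP}$ is $C^0\cap W^{1,(4,1)}$-controlled and $\Riem^{g_{\bP}}\in L^{(2,1)}$; the bound \eqref{eq:bound_E} gives $\vec{\II}_{\bP}\in L^4$ and $\g\vec{\II}_{\bP}\in L^2$, hence $H_{\bP}\in L^4$, $\g H_{\bP}\in L^2$ and $\g\vec{n}_{\bP}\in L^4\cap W^{1,2}$. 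Feed these into the elliptic system and apply a Rivière-type Wente/Lorentz-improvement lemma (the role of div-curl in $2$D is now played by the duality of Lorentz spaces $L^{(2,1)}$--$L^{(2,\infty)}$) to gain that $\g H_{\bP}$, $\g\vec{R}$, $\g S$ lie in a slightly better space, e.g.\ $L^{(2,1)}_{\loc}$ or $L^{2+\e}_{\loc}$. \textbf{Step 3: bootstrap to smoothness.} Iterate: each improvement of the integrability of the curvature quantities improves the right-hand side of the first-order system and of the conformal-type equation for the relevant potentials, so by a Moser--type iteration one reaches $\g H_{\bP}\in L^p_{\loc}$ for all $p<\infty$, then $C^{0,\beta}_{\loc}$, then by Schauder applied to the (now subcritical) system one obtains $\bP\in C^\infty$ in harmonic coordinates, which in turn implies the harmonic coordinates are themselves smooth. \textbf{Step 4: analyticity.} The Euler--Lagrange system, written in harmonic coordinates for a smooth immersion, is an elliptic system with analytic (polynomial) nonlinearity in $\bP$ and its derivatives; invoke Morrey's analyticity theorem for solutions of analytic elliptic systems to conclude real-analyticity.

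The main obstacle I expect is \textbf{Step 2}, i.e.\ closing the first compensation estimate without isothermal coordinates. In $2$D the whole regularity proof rests on the fact that the nonlinearities have a Jacobian/div-curl structure that lives in the Hardy space, which is dual to $\mathrm{BMO}$; in $4$D the analogous gain is much weaker and must be squeezed out of Lorentz-space duality, with the critical exponent being exactly $L^{(2,1)}$ for $\Riem^{g_{\bP}}$ (as the paper stresses in the discussion of \cite{MarRiv1}). One has to check that the precise algebraic form of the Noether quantities for $\int|\g H|^2$ — in particular the way $\g\vec{n}_{\bP}$ and $\g\vec{\Phi}$ enter $\vec{L}$ — produces terms that are genuinely in the right predual space (a product of an $L^{(2,1)}$ factor with an $L^{(2,\infty)}$ factor, or a true $4$D Jacobian lying in the local Hardy space $\mathcal{H}^1$), rather than merely in $L^1$. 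A secondary difficulty is controlling the lower-order terms $H^2|\vec{\II}|^2$ and $H^4$ type contributions, which are critically scaling and must be absorbed using the coercivity \eqref{eq:coercivity} plus a small-energy hypothesis on sufficiently small balls (obtained by covering $\mathbb{B}^4(0,1)$ and using absolute continuity of the energy). Once the first $\e$ of integrability is gained, Steps 3 and 4 are comparatively routine, being a standard bootstrap plus Morrey's theorem.
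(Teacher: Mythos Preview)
The paper does not contain a proof of this theorem. The statement appears in \Cref{sec:OP4d} as a result quoted from the separate preprint \cite{bernard2025}; the present paper only carries out your Step~1, namely the derivation of the Noether currents and the potentials $\vec L$, $S$, $\vec R$ together with the structure equation \eqref{HSR}, summarized in the system \eqref{sys-LRS}. So there is no ``paper's own proof'' to compare against beyond that first step, and on that step your outline matches the paper exactly.

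Two points in your plan deserve correction. First, your justification for the bound \eqref{eq:bound_E} is wrong: the functional $E(\bP)=\int|\g H|_g^2\,d\textup{vol}_g$ is \emph{not} coercive in the sense of \eqref{eq:coercivity}; that inequality is stated for the distinct functional $\mathcal E$. The control $\vec\II\in L^4$, $\g\vec\II\in L^2$ you need comes instead directly from the hypothesis $\bP\in W^{3,2}_{\imm}$ (Definition at the start of \Cref{sec:4d}) via the Sobolev embedding $W^{1,2}(B^4)\hookrightarrow L^4(B^4)$. Second, you assume the existence of a first-order $(\vec R,S)$ system of the type \eqref{con2}, i.e.\ pointwise algebraic relations between $dS$, $d\vec R$ and $\bn$. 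The paper's 4-dimensional computation does \emph{not} produce such a system: what \eqref{sys-LRS} gives are equations for $d^{*_g}S$ and $d^{*_g}\vec R$ still involving $\vec L$, plus the scalar structure equation \eqref{HSR}. There is no analogue of \eqref{con2} eliminating $\vec L$ and yielding a closed first-order elliptic system in $S,\vec R$ alone. Whatever regularity mechanism \cite{bernard2025} uses must therefore work with the weaker information in \eqref{sys-LRS}; your Step~2 as written presumes more structure than the paper actually supplies.
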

The generalizations to the cases with higher codimension or with lower order terms as in the Graham--Reichert functional will be the subject of a forthcoming work.

 \section{Preliminaries}\label{sec-preliminaries}

\subsection{Notation}\label{geono} 
\begin{enumerate}[(i)]
\item In the Euclidean space $\R^n$, we define $B_r(x)\coloneqq\{y\in \R^n\colon |y-x|<r\}$, and write $B^n=B_1(0)\subset \R^n$ for the unit ball. In the case $n=2$, write $D^2=B^2$, $D_r(x)=B_r(x)\subset\R^2$. The phrase ``for every ball $B_r\subset U$'' refers to every ball of radius $r$ contained in $U$. Denote $\mathbb S^{n-1}=\partial B^n$.
\item We denote by $\R^{m\times n}$ the space of $m\times n$ real matrices, and by $\R^{n\times n}_{\text{sym}}$ the space of $n\times n$ symmetric real matrices.
\item For a real-valued function $f$, we denote $f^+\coloneqq\max(f,0)$ and $f^-\coloneqq\max(-f,0)$.
\item Let $\Sigma$ be a closed smooth manifold, $E$ be a vector bundle over $\Sigma$, we denote by $L^{p}(\Si,E)$ the space of $L^{p}$ sections of $E$. This convention also applies to the other function/distribution spaces.
\item For an open set $U\subset \R^n$, we denote by $\mathcal D'(U,\R^m)$ the space of $\R^m$-valued distributions on $U$ with the weak$^*$ topology, and we write $\mathcal D'(U)=\mathcal D'(U,\R)$.
\item We denote by $\mathcal L^n$ the Lebesgue measure on $\R^n$. For $U\subset \R^n$ with $\mathcal L^n(U)<\infty$ and $f\in L^1(U)$, we write $\fint_U f\coloneqq(\mathcal L^n(U))^{-1}\int _U f$.
\item For open sets $U$ and $V$, we write $V\Subset U$ if $\bar V$ is compact and $\bar V\subset U$.
\item Let $U$ be a smooth manifold, $f\colon U\rightarrow \R^m$. We denote by $df$ the differential of $f$. \\When $U$ is an open subset of $\R^n$, we denote $\p_{i}=\frac{\p}{\p x^i}$ when there is no ambiguity, and denote the Jacobian matrix of $f$ by $\g f$. If $n=2$, we write $\nabla^\perp f\coloneqq(-\p_{2}f,\p_{1}f)$. If in addition that $f=(\vec f_1,\vec f_2)$ for $\vec f_1,\vec f_2\colon U\to \R^m$, we write $\curl f\coloneqq\p_{1}\vec f_2-\p_{2}\vec f_1$.

\item \label{not-Hosta}
Let $(V, g)$ be an $n$-dimensional inner product space with a positively oriented orthonormal basis $(e_1,\dots,e_n)$. The \textit{Hodge star operator} $*_g$ is defined as the unique linear operator from $\bigwedge ^k V$ to $\bigwedge ^{n-k}V$ satisfying $$
    \alpha\we *_g\, \beta=\langle \alpha,\beta  \rangle_g\, e_1\we\cdots \we e_n, \qquad \text{for all }\, \al,\beta\in \bwe\nolimits^kV.$$
See for instance \cite[Section 3.3]{Jost17}. 
   When $V=\R^n$ and $g=g_{\text{std}}$ is the standard inner product on $\R^n$, we write $\star$ instead of $*_g$.
    \item Let $U\subset \R^n$ be an open set and let $g$ be a Riemannian metric on $U$. On the space of differential $\ell$-forms on $U$, we define the
\textit{codifferential} (see \cite[Definition~3.3.1]{Jost17}) by
\begin{align}
\label{defd*glap}
    &d^{*_g}\coloneqq(-1)^\ell*_g^{-1}d\,*_g=(-1)^{n(\ell+1)+1}*_gd\,*_g.
\end{align} 
 In particular, when $n=4$, we have $d^{*_g}=-*_g d\,*_g$. If $\al\in C^\nf(U,\bwe^\ell T^* U)$ and $\beta \in C^\nf(U,\bwe^{\ell+1} T^* U)$ with at least one of $\al,\beta$ compactly supported in $U$, then we have
\begin{align}\label{d^*adjoi}
    \int_U \lan d\al,\beta \ran_g \,\dvol_g = \int_U \lan \al,d^{*_g}\beta\ran_g\,\dvol_g.
\end{align}
    \item Levi-Civita symbol:\begin{align*}
  \vae^{a_1a_2\dots a_n}\coloneqq\begin{cases}
    1 & \text{if }(a_1,\dots,a_n)\text{ is an even permutation of }(1,\dots,n),\\
    -1 & \text{if }(a_1,\dots,a_n)\text{ is an odd permutation of }(1,\dots,n),\\
    0 & \text{otherwise}.
    \end{cases}
\end{align*}
\item Let $U\subset \R^n$ be open and bounded. We call $U$ a \textit{Lipschitz domain} if for each $y\in \p U$, there exist $r>0$ and a Lipschitz function $f\colon \R^{n-1}\rightarrow\R$ such that, upon rotating and relabeling the coordinate axes if necessary and writing $x=(x',x_n)$, we have
\begin{align*}
    U\cap B_r(y)=\{x\in B_r(y):x_n>f(x')\}.
\end{align*}
Equivalently, we say $\p U$ is Lipschitz or in $C^{0,1}$.
\item \label{not-overset} We define a bilinear map $\dwe\colon \big(\bwe^{k_1} T^*_p\Si\big )^m \times\big(\bwe^{k_2} T^*_p\Si\big )^m \rightarrow  \bwe^{k_1+k_2} T^*_p\Si$ by \begin{align}\label{defdwe}
              (\vec u_1 \mkt dx_I)\dwe(\vec u_2\mkt dx_J)=( \vec u_1 \cdot  \vec u_2) \mkt dx_I\wedge dx_J,
         \end{align}
         where $\vec u_1,\vec u_2\in\R^m$, $I,J $ are multi-indices. If one of $k_1,k_2$ is $0$, we use $\cdot$ instead of $\dwe$ for convenience. Similarly, we define bilinear maps $\times$, $\overset{\times}{\we}, \we, \overset{\ovwe}{\wedge}$ by \begin{align*}
            & \vec v_1 \times (\vec v_2\mkt dx_J)=( \vec v_1 \times  \vec v_2) \mkt dx_J,\quad  (\vec v_1 \mkt dx_I)\overset{\times}\wedge(\vec v_2\mkt dx_J)=( \vec v_1 \times  \vec v_2) \mkt dx_I\wedge dx_J,\\
             & \bw_1 \we (\bw_2\mkt dx_J)=( \bw_1 \we  \bw_2)\mkt  dx_J,\quad  (\bw_1 \mkt dx_I)\overset{\ovwe}\wedge(\bw_2\mkt dx_J)=( \bw_1 \we  \bw_2) \mkt dx_I\wedge dx_J,
         \end{align*}
         where $\vec v_1,\vec v_2\in \R^3$, $\bw_1,\bw_2\in\bigwedge\R^m$, $I,J$ are multi-indices.
\item \label{vec-ope}
For $\vec u=(\vec u_1,\vec u_2)$, $\vec v=(\vec v_1, \vec v_2)$, where $\vec u_i, \vec v_i\in \R^3$, we denote $$\vec u\cdot \vec v\coloneqq\sum_{i=1}^2 \vec u_i \cdot \vec v_i, \quad\vec u\times \vec v\coloneqq\sum_{i=1}^2 \vec u_i \times \vec v_i.$$
 \item We adopt Einstein summation convention for convenience of computation, and set $\delta_i^j=\de_{ij}=\mathbf 1_{i=j}$.
\item We use $C$ to denote positive constants, and $C(\alpha,\beta,\dots)$ to denote a positive constant depending only on $\alpha,\,\beta,\dots$. Similarly, when a term $A$ depends only on $\alpha,\,\beta,\dots$, we write $A=A(\alpha,\beta,\dots)$.
\item \label{not-met}Let $U\subset\R^{n}$ be open and $\bP\colon U\to\R^{m}$ be an immersion. Set $g=g_{\bP}=\bP^*g_{\text{std}}$, where $g_{\std}$ is the Euclidean metric on $\R^m$. We denote $$
   g_{ij}=\p_i\bP\cdot \p_j\bP,\quad (g^{ij})=(g_{ij})^{-1},\quad  \det g=\det(g_{ij}),\quad d\textup{vol}_g=(\det g)^{\frac 12}dx^1\we\cdots \we dx^n.$$
The pullback metric induces a pairing on $T^* U$ with $\lan dx^i,dx^j \ran_g=g^{ij}$. We denote by $|\cdot|_{g}$
the associated pointwise norm. We also write $|\cdot|_{\R^{m}}$ for the
Euclidean norm in $\R^{m}$ (abbreviated to $|\cdot|$ when unambiguous).

For $f\colon U\to \R$, define the \textit{Laplacian} \begin{align*}
    \lap_g f\coloneqq *_g d *_g df=(\det g)^{-\frac 12} \p_i \big(g^{ij}(\det g)^{\frac 12} \p_j f\big).
\end{align*}
Let $\pi_{\bn}$ denote the orthogonal projection onto the normal bundle of $\bP(U)\subset \R^m$. We define the \textit{Gauss map} and \textit{second fundamental form} \begin{gather*}
\bn\coloneqq \star \,\frac{\p_1 \bP\we \cdots \we \p_n \bP}{|\p_1 \bP\we \cdots \we \p_n \bP|},\\[2mm] 
\bII(X,Y)\coloneqq\pi_{\bn} X(d\bP(Y)),\qquad \forall X,Y\in T_p U,\ \forall p\in U.
\end{gather*}
The \textit{mean curvature vector} is defined by
\[
\vec{H}\coloneqq n^{-1}\,\mbox{tr}_{g}\,\bII=\frac 1n\, g^{ij}\,\bII_{ij}=\frac{1}{n} g^{ij}\, \bII\lf(\frac{\p}{\p x^i},\frac{\p}{\p x^j}\rg).
\]
When $m=n+1$, we write
\[
 \II_{ij}\coloneqq\bII_{ij}\cdot \bn,\quad H\coloneqq\bH \cdot \bn .
\] 
\item \label{not-res}    Let $(V,g)$ be a finite-dimensional inner product         space. For $\alpha\in \bigwedge^pV$ and $\beta\in \bigwedge^q   V$ with $q\le p$, we define the \textit{interior product} (see also \cite[Section~1.5]{Federer96} and \cite[Section~I]{Riv08}) $\alpha\,\resg \beta\in \bigwedge^{p-q}V$ satisfying 
    \begin{align}\label{defresg}
    \langle \alpha\,\resg \beta,  \gamma\rangle_g=\langle \alpha, \beta\wedge \gamma\rangle_g,\qquad \forall \gamma\in \bigwedge\nolimits^{p-q}V. 
    \end{align}
 In particular, if $\alpha,\beta \in \bigwedge^pV$, then we have $\alpha\,\resg\,\beta=\langle \alpha,\beta \rangle_g$. For $q>p$, set $\al\, \resg \beta\coloneqq0$. 
When $V=\R^5$ with the Euclidean metric, we write $\,\res$ instead of $\,\resg$. In addition, for $\al\in \bigwedge^p V$, $\beta\in\bigwedge V$, and $v\in V$, we have the following identities:
   \begin{numcases}{}   *_g\,(\alpha\wedge \beta )=(*_g\,\alpha)\,\resg \beta \label{*_gcommwed},\\[3mm]
   (\al\we \beta)\,\resg v=(\al\,\resg v) \we \beta+(-1)^p\, \alpha\we (\beta\,\resg v).\label{prodrule}
   \end{numcases}\smallskip
  A consequence of \eqref{*_gcommwed} is that, for $\al\in \bwe^p V$ and $\beta\in \bwe^q V$,
  \begin{align}\label{*_gcomres}
      *_g\, (\al\, \resg \beta)=\begin{dcases}
          (*_g\, \al) \we \beta,& \text{if }\dim(V) \text{ is odd},\\[1.3ex]
          (-1)^q (*_g\, \al) \we \beta, \quad& \text{if }\dim(V) \text{ is even}.
      \end{dcases}
  \end{align} 
\item \label{con-formsob}Throughout this paper, unless explicitly stated otherwise, we write $\nabla$, $|\cdot|$ for the flat Euclidean derivative and
pointwise tensor norms. We reserve $\nabla^g$, $\lan\cdot,\cdot \ran_g$, and $|\cdot|_g$ for the metric $g$. 
 For an open set $U\subset \R^n$, we write $\al\in W^{k,p}(U,\bwe^\ell \R^n)$  if $\alpha=\sum_{|I|=\ell}\alpha_I\mko dx^I$ with each $\al_I\in W^{k,p}(U)$, where $I$ ranges over all strictly increasing multi-indices of length $\ell$. We set
\begin{align}\label{convsonab}
    |\nabla\alpha|^2\coloneqq\sum_{i=1}^n\sum_I |\partial_i\mko \alpha_I|^2,\qquad 
    \|\al\|_{W^{k,p}(U)}\coloneqq\sum_{I} \|\al_I\|_{W^{k,p}(U)}.
\end{align}
Similarly, we write $X\in W^{k,p}(U, TU)$ if $X=X_j\mko \frac {\p}{\p x^j}$ with each $X_j\in W^{k,p}(U)$, and set 
\begin{align}\label{sobvecfie}
    |\nabla X|^2\coloneqq\sum_{i=1}^n\sum_{j=1}^n |\partial_i\mko X_j|^2,\qquad 
    \|X\|_{W^{k,p}(U)}\coloneqq\sum_{j=1}^n \|X_j\|_{W^{k,p}(U)}.
\end{align}
The same convention applies to any Banach function/distribution space on $U$
(e.g. $L^{p}$, $L^{p,q}$, $W^{k,(p,q)}$).
     \end{enumerate}




\subsection{A useful formula}\label{uselem}

 We provide the proof of an identity here without using conformal coordinates.
\begin{Lm} \label{lmuse}{\cite[Lemma 1.8]{Ri16}} Let $\Sigma$ be a $2$-dimensional oriented smooth manifold, and $\bP\colon\Sigma\rightarrow \R^3$ be a smooth immersion. Let $*_g$ be the Hodge star operator with respect to $g=g_{\bP}$. Then we have
    \begin{align}\label{dnHcon}
        -2H\,d\bP=d\bn +\bn \times *_g\, d\bn.
    \end{align}
\end{Lm}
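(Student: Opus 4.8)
The plan is to reduce \eqref{dnHcon} to a pointwise statement of linear algebra and verify it in a convenient frame. Both sides of \eqref{dnHcon} are $\R^3$-valued $1$-forms on $\Sigma$ depending \emph{tensorially} (pointwise-linearly) on $d\bP$ and $d\bn$, so it suffices to check the identity at an arbitrary point $p\in\Sigma$. At $p$ I would choose a positively oriented orthonormal frame $(\vec e_1,\vec e_2)$ of $\bP_\ast T_p\Sigma\subset\R^3$; since $\bn=\star\,(\p_1\bP\we\p_2\bP)/|\p_1\bP\we\p_2\bP|$ (see the conventions of Section~\ref{geono}), the triple $(\vec e_1,\vec e_2,\bn)$ is then a positively oriented orthonormal basis of $\R^3$, so that
\[
\vec e_1\times\vec e_2=\bn,\qquad \bn\times\vec e_1=\vec e_2,\qquad \bn\times\vec e_2=-\vec e_1 .
\]
Writing $(e_1,e_2)$ for the corresponding orthonormal basis of $T_p\Sigma$ (so $\vec e_i=d\bP(e_i)$) and $(\theta^1,\theta^2)$ for its dual, one has $d\bP=\vec e_i\,\theta^i$, and the Hodge-star convention of Section~\ref{geono} gives $*_g\theta^1=\theta^2$, $*_g\theta^2=-\theta^1$ on $\bwe^1 T_p^*\Sigma$.

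Next I would use $|\bn|\equiv1$, which forces $\bn\cdot d\bn=0$, so that $d\bn$ is tangential and can be written $d\bn=-h_{ij}\,\theta^j\,\vec e_i$ with $h_{ij}=-\bigl(d\bn(e_i)\bigr)\cdot\vec e_j$. Differentiating the orthogonality relations $\bn\cdot d\bP(e_j)=0$ identifies $h_{ij}$ with the second fundamental form $\II_{ij}$ in the basis $(e_i)$; in particular $h_{ij}=h_{ji}$ and $h_{11}+h_{22}=\mathrm{tr}_g\II=2H$. A direct substitution then yields
\[
*_g\,d\bn=(h_{i2}\,\theta^1-h_{i1}\,\theta^2)\,\vec e_i,\qquad
\bn\times *_g\,d\bn=-\bigl(h_{22}\theta^1-h_{21}\theta^2\bigr)\vec e_1+\bigl(h_{12}\theta^1-h_{11}\theta^2\bigr)\vec e_2 ,
\]
and adding $d\bn=-(h_{11}\theta^1+h_{12}\theta^2)\vec e_1-(h_{21}\theta^1+h_{22}\theta^2)\vec e_2$ the off-diagonal terms cancel thanks to $h_{12}=h_{21}$, leaving $d\bn+\bn\times *_g\,d\bn=-(h_{11}+h_{22})(\theta^1\vec e_1+\theta^2\vec e_2)=-2H\,d\bP$ at $p$. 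Since $p$ is arbitrary, \eqref{dnHcon} follows.

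Conceptually, \eqref{dnHcon} encodes the elementary fact that a symmetric endomorphism $S$ of an oriented Euclidean $2$-plane satisfies $S+J\,S\,J^{-1}=(\mathrm{tr}\,S)\,\mathrm{Id}$, where $J$ is the rotation by $\tfrac{\pi}{2}$ — equivalently, the trace-free part of $S$ anticommutes with $J$; here $-S$ is the Weingarten map $d\bn=-S\,d\bP$, the rotation $J$ on $T\Sigma$ is realized by $*_g$ acting on $1$-forms, and the rotation $J$ on $\bP_\ast T\Sigma$ is realized by $\bn\times(\cdot)$. I do not anticipate a substantial obstacle: the only delicate point is keeping the three orientation conventions mutually consistent (the sign in $\bn=\star(\cdots)$, the sign of $*_g$ on $1$-forms, and the cross-product relations above) together with the symmetry of $\II$ — and it is exactly this symmetry that makes the off-diagonal contributions cancel and forces the diagonal ones to assemble into the trace $2H$.
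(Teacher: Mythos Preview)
Your proof is correct. The paper proceeds differently: it stays in an arbitrary positive chart $(x^1,x^2)$ and, instead of evaluating on an orthonormal basis, wedges both sides of \eqref{dnHcon} with each $dx^s$; the resulting $2$-form identities are then closed using the triple-product formula $(\vec u\times\vec v)\times\vec w=(\vec u\cdot\vec w)\vec v-(\vec v\cdot\vec w)\vec u$ together with the Levi-Civita relation $f(s)\varepsilon^{ki}+f(k)\varepsilon^{is}+f(i)\varepsilon^{sk}=0$. Your pointwise orthonormal frame is more direct---it eliminates the factors $g^{ij}$, $\sqrt{\det g}$ and makes the underlying two-dimensional identity $S+JSJ^{-1}=(\mathrm{tr}\,S)\,\mathrm{Id}$ transparent---while the paper's computation never invokes a special frame, in keeping with its theme of avoiding conformal coordinates (a pointwise orthonormal basis is not a conformal chart, so there is no real conflict). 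One harmless slip: from your expansion $d\bn=-h_{ij}\theta^j\vec e_i$ one actually reads off $-(d\bn(e_i))\cdot\vec e_j=h_{ji}$ rather than $h_{ij}$; the symmetry of $\II$ that you invoke makes this immaterial.
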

\begin{proof}
   Let $p\in \Sigma$, $(x^1,x^2)$ be positive local coordinates around $p$, $\alpha\coloneqq dx^1\wedge dx^2$, $\II$ denotes the second fundamental form of $\bP$, and define $$
   g_{ij}\coloneqq\p_i\bP\cdot \p_j\bP,\quad (g^{ij})=(g_{ij})^{-1},\quad  \det g=\det(g_{ij}).$$ For any $k\in\{1,2\}$, we have 
   $$\p_{k}(\bn\cdot \bn)=2\,\p_{k}\bn \cdot \bn=0.
   $$ 
   Hence, it holds 
   \begin{align}\label{dn}
       \p_k\bn=g^{ij} (\p_k\bn\cdot \p_j\bP)\, \p_i\bP.
   \end{align}Fix $s\in \{1,2\}$, then we have \begin{align*}
       dx_s \wedge (\bn \times *_g \,d\bn)&=\bn \times \langle dx_s, d\bn\rangle_g\, \sqrt{\det g} \ dx^1\wedge dx^2\\[1mm]
       &=g^{sj}(\p_1\bP\times \p_2\bP)\times \p_j\bn \ \alpha \\[1mm]
       &=g^{sj}\,\vae^{ki}(\p_k\bP\cdot\p_j\bn )\, \p_i\bP \, \alpha\\[1mm]
       &=-g^{sj}\,\vae^{ki}\,\II_{kj}\,\p_i\bP \, \alpha, \end{align*}
       where the third equality follows from the identity \begin{align*}
           \forall, \vec u, \vec v,\vec w\in \R^3,\qquad (\vec{u}\times \vec{v})\times \vec{w}=(\vec u \cdot \vec w)\vec v -(\vec v \cdot \vec w)\vec u.
       \end{align*}  
       By \eqref{dn}, we also get\begin{align*}
            dx_s \wedge d\bn&= \vae^{sk}\,\p_k n\,\al=-g^{ij}\e^{sk}\,\II_{kj}\,\p_i\bP\, \al.
       \end{align*}
       Therefore, it holds
       \begin{align}\begin{aligned}\label{wedequ}
            dx_s \wedge (d\bn+\bn \times *_g \,d\bn)&=-(g^{sj}\vae^{ki}+g^{ij}\vae^{sk})\,\II_{kj}\,\p_i\bP\, \al\\[1mm]
            &=-\vae^{si}g^{kj}\,\II_{kj}\,\p_i\bP\,\al\\[1mm]
            &=-2H \,dx_s\wedge d\bP,
            \end{aligned}
       \end{align}
       where the second equality follows from the identity, valid for any $i,k,s\in\{1,2\}$ and real-valued function $f$ defined on $\{1,2\}$
       \begin{align*}
f(s)\vae^{ki}+f(k)\vae^{is}+f(i)\vae^{sk}=0.
       \end{align*}
       Since \eqref{wedequ} holds for any $s\in\{1,2\}$, the relation \eqref{dnHcon} follows immediately.
\end{proof}

\subsection{Some estimates for elliptic equations}
We first introduce a standard Morrey estimate for elliptic equations of divergence form with no lower-order terms.
\begin{Lm}[{\cite[Lemma 4.12]{hanlinpde}}]\label{derdecay}
Let $n\in \N^+$ and $\Lambda>0$. Suppose $\{a^{ij}\}_{i,j=1}^n\subset L^{\infty}(B^n)$ satisfies \begin{align}\label{elli}
    \La^{-1}|\xi|^2\le a^{ij}(x)\,\xi_i\, \xi_j \le \La |\xi|^2,\qquad \text{for a.e. } x\in B^n \text{ and all } \,\xi\in \R^n.
\end{align}
Suppose $u\in W^{1,2}(B^n)$ is a weak solution of the equation $ \p_j(a^{ij}\,\p_i u)=0$ in $B^n$, i.e. \begin{align*}
    \int_{B^n} a^{ij}\,\p_i u\,\p_j \vp=0,\qquad \forall \vp\in C^\infty_c(B^n).
\end{align*}
 Then there exist $\al=\alpha(n,\La)\in (0,1)$ and $C=C(n,\La)>0$ such that for any $0<r<1$ there holds \begin{align*}
    \int_{B_r(0)} |\nabla u|^2 \le C \,r^{n-2+2\al}\int_{B^n}|\nabla u|^2.
\end{align*}
\end{Lm}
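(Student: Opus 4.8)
The plan is to combine the interior Hölder continuity of $u$ with the Caccioppoli inequality. Since adding a constant to $u$ changes neither $\nabla u$ nor the equation, we may assume that $u$ has zero average on $B^n$, so Poincaré's inequality gives $\|u\|_{L^2(B^n)}^2\le C(n)\int_{B^n}|\nabla u|^2$. The key input is the De Giorgi--Nash--Moser theorem (see \cite{hanlinpde}): after modification on a Lebesgue-null set, $u\in C^{0,\alpha}_{\mathrm{loc}}(B^n)$ for some $\alpha=\alpha(n,\La)\in(0,1)$, with
\begin{align*}
    [u]_{C^{0,\alpha}(B_{1/2}(0))}\le C(n,\La)\,\|u\|_{L^2(B^n)};
\end{align*}
in particular the value $u(0)$ is well defined.

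Next I would apply the Caccioppoli inequality on small concentric balls: for $0<r\le\tfrac14$, pick $\eta\in C^\infty_c(B_{2r}(0))$ with $\eta\equiv1$ on $B_r(0)$ and $|\nabla\eta|\le C/r$, test the (extended) weak formulation against the admissible test function $\eta^2\,(u-u(0))$, and absorb using the ellipticity bound \eqref{elli}; this yields
\begin{align*}
    \int_{B_r(0)}|\nabla u|^2\le\frac{C}{r^2}\int_{B_{2r}(0)}|u-u(0)|^2 .
\end{align*}
Since $B_{2r}(0)\subset B_{1/2}(0)$, the Hölder bound gives $|u(x)-u(0)|\le[u]_{C^{0,\alpha}(B_{1/2}(0))}(2r)^{\alpha}$ for $x\in B_{2r}(0)$, whence $\int_{B_{2r}(0)}|u-u(0)|^2\le C\,r^{\,n+2\alpha}[u]_{C^{0,\alpha}(B_{1/2}(0))}^2$. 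Chaining these inequalities gives
\begin{align*}
    \int_{B_r(0)}|\nabla u|^2\le C\,r^{\,n-2+2\alpha}\,[u]_{C^{0,\alpha}(B_{1/2}(0))}^2\le C\,r^{\,n-2+2\alpha}\int_{B^n}|\nabla u|^2 ,
\end{align*}
which is the assertion for $0<r\le\tfrac14$. For $\tfrac14<r<1$ it is trivial, because $\int_{B_r(0)}|\nabla u|^2\le\int_{B^n}|\nabla u|^2$ while $r^{\,n-2+2\alpha}$ is bounded below by a positive constant depending only on $n$ and $\La$.

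The only substantial step is the interior Hölder continuity invoked at the start: it is precisely the De Giorgi--Nash--Moser regularity theory for divergence-form equations with bounded measurable coefficients and no lower-order terms (proved by De Giorgi's, equivalently Moser's, iteration), and everything after it is the routine Caccioppoli-and-scaling computation above. A more self-contained but weaker route is Widman's hole-filling argument --- Caccioppoli on a dyadic annulus together with Poincaré on that annulus, then absorbing a fixed fraction of $\int_{B_r(0)}|\nabla u|^2$ back to the left-hand side and iterating --- which only yields $\int_{B_r(0)}|\nabla u|^2\le C\,r^{\gamma}\int_{B^n}|\nabla u|^2$ for some $\gamma=\gamma(n,\La)>0$; this already suffices when $n=2$, but for $n\ge3$ the exponent $\gamma$ need not reach $n-2+2\alpha$, so the Hölder estimate seems genuinely needed there.
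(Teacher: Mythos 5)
The paper does not actually prove this lemma; it is cited directly from Han--Lin's book, so there is no internal proof to compare against. Your argument is correct and is essentially the standard route: the De Giorgi--Nash--Moser interior Hölder estimate $[u]_{C^{0,\alpha}(B_{1/2})}\le C(n,\Lambda)\|u\|_{L^2(B^n)}$ plus the Caccioppoli inequality on concentric balls centered at a point of Hölder continuity, followed by the trivial dispatch of the range $1/4<r<1$. The individual steps check out: after normalizing to zero average, $u-u(0)$ solves the same equation, the test function $\eta^2(u-u(0))\in W^{1,2}_0(B^n)$ is admissible by density, absorbing via Young's inequality yields $\int_{B_r}|\nabla u|^2\le C r^{-2}\int_{B_{2r}}|u-u(0)|^2$, and the Hölder bound converts the right-hand side into $C\,r^{n+2\alpha}[u]^2_{C^{0,\alpha}(B_{1/2})}$. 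Your concluding observation is also accurate: Widman's hole-filling gives a decay $r^{\gamma}$ for some $\gamma(n,\Lambda)>0$ without invoking De Giorgi--Nash--Moser, and that weaker decay would in fact suffice for the paper's applications (which are in $n=2$, where $n-2+2\alpha=2\alpha$ is just "some positive exponent"), but to reach the sharper exponent $n-2+2\alpha$ stated in the lemma one does need the Hölder estimate, exactly as you say.
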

Now we assume $n=2$. In 1992, S. Chanillo and Y.Y. Li obtained the following result, which generalized a famous estimate on integrability by compensation proved by H.C. Wente \cite{Wen}, in which it is assumed $a^{ij}=\delta_{ij}$.
\begin{Lm}[{\cite[Theorem 0.2]{CL63}}]\label{lm-chali}
Suppose $\{a^{ij}\}_{i,j=1}^2$ satisfies \eqref{elli} for some $\La\in (0,\infty)$, and $a^{ij}\equiv a^{ji}$. For $u,v\in W^{1,2}(D^2)$, there exists a unique solution $\phi\in W^{1,2}_0 (D^2)$ to the problem \begin{align*}
    \begin{cases}
   \p_i(a^{ij}\, \p_j\phi)=\p_1 u\,\p_2 v -\p_2 u\,\p_1 v&\mbox{in } D^2,\\[2mm]
  \phi=0 &\mbox{on } \p D^2.
\end{cases}
\end{align*}
We have $\phi \in C(\overline{D^2})$ with the estimate \begin{align*}
 \|\phi\|_{L^{\infty}(D^2)} +  \|\nabla \phi\|_{L^2(D^2)}\le C(\La)\, \|\nabla u\|_{L^2(D^2)}\,\|\nabla v\|_{L^2(D^2)}.
\end{align*}
\end{Lm}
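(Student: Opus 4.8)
I would prove the \emph{a priori} estimate first when $a^{ij},u,v$ are smooth, and then recover the full statement by approximation. Fix $u,v\in W^{1,2}(D^2)$, pick smooth symmetric $a^{ij}_k$ satisfying the same ellipticity bounds \eqref{elli}, and $u_k,v_k\in C^\infty(\overline{D^2})$ with $u_k\to u$, $v_k\to v$ in $W^{1,2}(D^2)$. Since $\{u_k,v_k\}\coloneqq\p_1u_k\,\p_2v_k-\p_2u_k\,\p_1v_k$ is smooth, classical theory yields a unique $\phi_k\in C^\infty(\overline{D^2})\cap W^{1,2}_0(D^2)$ with $\p_i(a^{ij}_k\p_j\phi_k)=\{u_k,v_k\}$. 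Granting the bound $\|\phi_k\|_{L^\infty}+\|\nabla\phi_k\|_{L^2}\le C(\Lambda)\|\nabla u_k\|_{L^2}\|\nabla v_k\|_{L^2}$, the sequence is bounded in $W^{1,2}_0\cap L^\infty$; since $\{u_k,v_k\}\to\{u,v\}$ in $L^1$ (from $|\{u_k,v_k\}-\{u,v\}|\le|\nabla(u_k-u)||\nabla v_k|+|\nabla u||\nabla(v_k-v)|$), a weak limit $\phi\in W^{1,2}_0(D^2)$ solves the problem and inherits the estimate, and applying the estimate on shrinking subballs gives an interior modulus of continuity, so $\phi\in C(\overline{D^2})$ together with the zero boundary data. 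Uniqueness is immediate, since the difference of two solutions is $a$-harmonic and lies in $W^{1,2}_0$. Note that $\{u,v\}$ need not lie in $W^{-1,2}(D^2)$, so this approximation (rather than a direct Lax--Milgram argument) is what actually produces a solution.

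\textbf{The $L^\infty$ estimate (the crux).} Assume now $a^{ij},u,v$ are smooth. Let $G_{x_0}=G(x_0,\cdot)$ be the Green's function of $\psi\mapsto\p_i(a^{ij}\p_j\psi)$ on $D^2$ with zero Dirichlet data, so $G_{x_0}>0$, $G_{x_0}|_{\p D^2}=0$, and $\phi(x_0)=-\int_{D^2}G_{x_0}(y)\,\{u,v\}(y)\,dy$. Two ingredients are combined. First, the div--curl structure: after subtracting means and extending $u,v$ to $\R^2$ with control of their $\dot W^{1,2}$-norms, the Jacobian $\{u,v\}$ belongs to the (local) Hardy space with $\|\{u,v\}\|_{\mathcal H^1}\le C\|\nabla u\|_{L^2}\|\nabla v\|_{L^2}$, by Coifman--Lions--Meyer--Semmes. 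Second, a uniform splitting $G_{x_0}=c(x_0)\log\tfrac1{|\cdot-x_0|}+h_{x_0}$ with $c(x_0)\in[C(\Lambda)^{-1},C(\Lambda)]$ and $\|h_{x_0}\|_{L^\infty(D^2)}\le C(\Lambda)$, uniformly in $x_0\in D^2$. Then $|\phi(x_0)|\le c(x_0)\big|\int\log\tfrac1{|\cdot-x_0|}\{u,v\}\big|+\|h_{x_0}\|_{L^\infty}\int_{D^2}|\{u,v\}|$; the second term is $\le C(\Lambda)\|\nabla u\|_{L^2}\|\nabla v\|_{L^2}$ because $|\{u,v\}|\le|\nabla u||\nabla v|$, while the first is controlled by $\mathcal H^1$--$\mathrm{BMO}$ duality (equivalently, by the classical constant-coefficient Wente inequality for $\Delta w=\{u,v\}$), since $\log\tfrac1{|\cdot-x_0|}\in\mathrm{BMO}$ with a dimensional bound. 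Hence $\|\phi\|_{L^\infty}\le C(\Lambda)\|\nabla u\|_{L^2}\|\nabla v\|_{L^2}$.

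\textbf{The $W^{1,2}$ estimate.} Still in the smooth case, test the equation with $\phi$ and integrate by parts:
\[
\Lambda^{-1}\|\nabla\phi\|_{L^2}^2\le\int_{D^2}a^{ij}\p_i\phi\,\p_j\phi=-\int_{D^2}\phi\,\{u,v\}\le\|\phi\|_{L^\infty}\int_{D^2}|\nabla u||\nabla v|\le\|\phi\|_{L^\infty}\|\nabla u\|_{L^2}\|\nabla v\|_{L^2}.
\]
Combined with the previous step this gives $\|\nabla\phi\|_{L^2}\le C(\Lambda)\|\nabla u\|_{L^2}\|\nabla v\|_{L^2}$, which completes the \emph{a priori} estimate and, via the approximation above, the lemma.

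\textbf{Main obstacle.} The one genuinely nontrivial point is the uniform two-sided logarithmic control $G_{x_0}=c(x_0)\log\tfrac1{|\cdot-x_0|}+O_{L^\infty}(1)$, with constants depending only on $\Lambda$, for a planar divergence-form operator whose coefficients are controlled only through \eqref{elli}. I would obtain it from De~Giorgi--Nash--Moser theory: on balls $B_r(y_0)\subset D^2$ not meeting $x_0$, $G_{x_0}$ is a positive $a$-harmonic function, so Harnack together with the Morrey/Hölder decay of Lemma~\ref{derdecay} (rescaled) and Caccioppoli's inequality bound its $\mathrm{BMO}$ seminorm there; near $x_0$, the flux identity $\int_{\p B_\rho(x_0)}a\,\nabla G_{x_0}\cdot\nu=-1$ is independent of $\rho$, and iterating Harnack over the dyadic annuli $\{2^{-k-1}<|y-x_0|<2^{-k+1}\}$ pins the growth rate to be logarithmic with constants fixed by $\Lambda$. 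Everything else — the div--curl lemma, $\mathcal H^1$--$\mathrm{BMO}$ duality, Caccioppoli, the Harnack inequality, and the approximation scheme — is standard.
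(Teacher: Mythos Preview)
Your central technical claim --- the uniform decomposition $G_{x_0}=c(x_0)\log\frac{1}{|\cdot-x_0|}+h_{x_0}$ with $\|h_{x_0}\|_{L^\infty}\le C(\Lambda)$ --- is false for merely bounded measurable coefficients, and this breaks the approximation step. Harnack on dyadic annuli together with the flux identity gives only the two-sided \emph{multiplicative} bound $c_1(\Lambda)\log\frac{1}{|y-x_0|}\le G_{x_0}(y)\le c_2(\Lambda)\log\frac{1}{|y-x_0|}$, not an additive $O(1)$ remainder. A concrete obstruction: take $x_0=0$ and $a^{ij}(x)=\lambda(|x|)\,\delta^{ij}$ with $\lambda$ piecewise constant, equal to $1$ on $[2^{-N_{2k+1}},2^{-N_{2k}}]$ and $2$ on $[2^{-N_{2k}},2^{-N_{2k-1}}]$ for a rapidly increasing sequence $N_j$. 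The Green function is radial, $G_0(r)=\frac{1}{2\pi}\int_r^1\frac{ds}{s\lambda(s)}$, and one checks that $G_0(r)/\log\frac{1}{r}$ oscillates between $\frac{1}{4\pi}$ and $\frac{1}{2\pi}$ along the sequence $r=2^{-N_j}$, so $G_0-\tilde c\log\frac{1}{|\cdot|}$ is unbounded for every $\tilde c$. Your decomposition does hold when $a^{ij}$ is smooth, but then the constants depend on the modulus of continuity of $a$, not on $\Lambda$ alone, and blow up under the approximation $a_k\to a$.

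The salvageable version of your approach (and the one Chanillo--Li actually use) replaces the decomposition by the weaker fact $G_{x_0}\in\mathrm{BMO}$ with seminorm $\le C(\Lambda)$, and then applies $\mathcal H^1$--$\mathrm{BMO}$ duality directly to $\phi(x_0)=-\int G_{x_0}\{u,v\}$; your Harnack-on-annuli sketch does not establish this either, though it can be obtained from the coarea identity $\int_{\{s<G_{x_0}<t\}}a^{ij}\partial_iG_{x_0}\partial_jG_{x_0}\,dx=t-s$ combined with Poincar\'e on balls. The paper itself does not prove Lemma~\ref{lm-chali} but cites \cite{CL63}, and instead proves the closely related Proposition~\ref{pr:Wente} by an entirely different route (Bethuel--Ghidaglia): slice the representation $\phi(p)=\int G_p\,da\wedge db$ over level bands $\{\alpha_n<G_p<\alpha_{n+1}\}$, use that same coarea identity, and bound each piece by a one-dimensional Poincar\'e inequality along the level curves $G_p^{-1}(\alpha_n)$. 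This avoids CLMS and $\mathcal H^1$--$\mathrm{BMO}$ altogether and yields the explicit universal constants $18$ and $3\sqrt{2}$.
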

We prove a stronger version of this result in Proposition \ref{pr:Wente} below.
\subsection{Bourgain--Brezis inequality}
 In 2002, J. Bourgain and H. Brezis proved a striking inequality \cite{Bourgain03}:
 \begin{align}\label{BBo}
   \lf\| u- \dashint_{\mathbb T^n} u\rg\|_{L^{\frac{n}{n-1}}(\mathbb T^n)}\le C\lf\|\g u\rg\|_{L^1+W^{-1,\frac{n}{n-1}}(\mathbb T^n)},
 \end{align}
     where $C=C(n)$, $\mathbb T^n$ is the $n$-dimensional torus. In the case $n=2$, F. Da Lio and the third author \cite{dalio2021} provided a proof for a stronger version of \eqref{BBo} in 2020. Here we denote $\dot W^{-1,2}(\R^2)$ as the dual space of the homogeneous Sobolev space $\dot W^{1,2}(\R^2)$.
     
     \begin{Lm}[{\cite[Lemma II.3]{dalio2021}}]\label{bourgainb}
Assume $g\in L^1(\R^2,\R^2)$, $ f\in \dot W^{-1,2}(\R^2,\R^2)$, $u$ be a tempered distribution on $\R^2$ such that $\g u= f+g$. Then there exist $c\in \R$ and a universal constant $C$ such that 
\begin{align*}
    \|u-c\|_{L^2(\R^2)}\le C( \| f\|_{\dot W^{-1,2}(\R^2)}+\|g\|_{L^1(\R^2)}).
\end{align*}
\end{Lm}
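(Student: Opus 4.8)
The plan is to derive the estimate from the Bourgain--Brezis phenomenon behind \eqref{BBo}, reducing the whole statement to a single hard ingredient and treating everything else as routine. First I would reduce, by density and a limiting argument, to the case $f\in\mathcal S(\R^2,\R^2)$ and $g\in C^\infty_c(\R^2,\R^2)$, so that $u=\Delta^{-1}\operatorname{div}(f+g)$ is a genuine smooth function, determined up to an additive constant, and all the pairings below are classical; it suffices to establish the asserted bound with a constant independent of the approximating sequence.

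Next I would argue by duality. Mean-zero functions in $C^\infty_c(\R^2)$ are dense in $L^2(\R^2)$ and do not detect an additive constant, so to prove that $u-c\in L^2(\R^2)$ for some $c\in\R$ with the right norm bound it is enough to estimate $|\langle u,\varphi\rangle|$ uniformly over $\varphi\in C^\infty_c(\R^2)$ with $\int_{\R^2}\varphi=0$ and $\|\varphi\|_{L^2}\le1$. For each such $\varphi$ I would invoke the two-dimensional, scale-invariant form of the Bourgain--Brezis lifting: there is a vector field $Z\in\dot W^{1,2}(\R^2,\R^2)\cap L^\infty(\R^2,\R^2)$ with $\operatorname{div}Z=\varphi$ and $\|Z\|_{\dot W^{1,2}(\R^2)}+\|Z\|_{L^\infty(\R^2)}\le C\|\varphi\|_{L^2(\R^2)}$, with $C$ universal. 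Then, integrating by parts (justified after a harmless cut-off of $Z$ if needed) and using $\nabla u=f+g$,
\begin{align*}
|\langle u,\varphi\rangle|=|\langle\nabla u,Z\rangle|\le|\langle f,Z\rangle|+|\langle g,Z\rangle|\le\|f\|_{\dot W^{-1,2}(\R^2)}\|Z\|_{\dot W^{1,2}(\R^2)}+\|g\|_{L^1(\R^2)}\|Z\|_{L^\infty(\R^2)},
\end{align*}
which is bounded by $C(\|f\|_{\dot W^{-1,2}}+\|g\|_{L^1})\|\varphi\|_{L^2}$; taking the supremum over $\varphi$ finishes the proof.

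The only genuinely hard point is the lifting, which is the core of Bourgain--Brezis and is \emph{not} accessible by elementary singular-integral estimates: the obvious candidate $Z=\nabla\Delta^{-1}\varphi$ lies in $\dot W^{1,2}$ but fails the $L^\infty$ bound (the endpoint failure of $W^{1,2}(\R^2)\hookrightarrow L^\infty$), and indeed $\Delta^{-1}\operatorname{div}$ does not map $L^1(\R^2)$ into $L^2(\R^2)$ --- a spreading dipole is a counterexample --- so the curl-free coupling of $f$ and $g$ must be used in an essential way. One either imports this lifting directly, or proves it via a Littlewood--Paley decomposition in which, at each frequency scale, one exploits the freedom in the underdetermined equation $\operatorname{div}Z=\varphi$ to control the $L^\infty$-norm, summing the contributions by a square-function argument. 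A third option is to deduce Lemma~\ref{bourgainb} from \eqref{BBo} directly, by covering $\R^2$ with a lattice of overlapping squares, rescaling each to a fixed torus, applying \eqref{BBo} on each, and adding up; the cost there is the subadditivity $\sum_Q\|f\|_{W^{-1,2}(Q)}^2\lesssim\|f\|_{\dot W^{-1,2}(\R^2)}^2$ (dual to the corresponding estimate for $W^{1,2}$ under localization) together with the reconciliation of the per-square mean values into a single constant, using the overlaps and the local $L^2$ bounds just obtained. Either way the non-trivial content is entirely concentrated in the Bourgain--Brezis estimate.
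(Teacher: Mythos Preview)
The paper does not prove this lemma; it simply imports it from \cite[Lemma~II.3]{dalio2021}, so there is no in-paper proof to compare against. Your approach---duality against mean-zero test functions, invoking the Bourgain--Brezis lifting $\operatorname{div}Z=\varphi$ with $Z\in\dot W^{1,2}\cap L^\infty$, then pairing $\langle f,Z\rangle$ and $\langle g,Z\rangle$ separately---is the standard and correct strategy, and is essentially how the result is proved in the cited reference as well. Your discussion of why the naive choice $Z=\nabla\Delta^{-1}\varphi$ fails and of the alternative route via tiling and \eqref{BBo} is accurate and shows you understand where the depth lies. One minor point: the density reduction at the start should be handled with a little care, since $\dot W^{-1,2}(\R^2)$ is defined as a dual space and approximating $f$ in that norm by Schwartz functions requires a short argument (or one simply works directly with the distributional pairings throughout, which your duality setup already accommodates).
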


\begin{Co}\label{corofbb}
Assume $u\in \mathcal D'(D^2)$ such that $\g u= f+g$ with $g\in L^1(D^2,\R^2)$, $ f\in  W^{-1,2}(D^2,\R^2)$. Then for any $r\in (0,1)$, there holds
\begin{align*}
    \lf\|u-\dashint_{D_r(0)} u\rg\|_{L^2(D_r(0))}\le C(r) \big(\|f\|_{ W^{-1,2}(D^2)}+\|g\|_{L^1(D^2)}\big).
\end{align*}
\end{Co}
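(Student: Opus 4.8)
The strategy is to reduce the statement on the disk $D^2$ to the statement on the whole plane $\R^2$ given in Lemma \ref{bourgainb}, by a localization/cut-off argument. First I would fix $r\in(0,1)$, choose an intermediate radius $\rho$ with $r<\rho<1$, and pick a cut-off function $\chi\in C^\infty_c(D_\rho(0))$ with $\chi\equiv 1$ on $D_r(0)$ and $|\g\chi|\le C(r)$. The natural object to study is $v\coloneqq \chi\,(u-c_0)$ for a suitable constant $c_0$ (e.g. $c_0=\dashint_{D_\rho(0)}u$), which is then a compactly supported distribution on $\R^2$, hence a tempered distribution, and I would compute $\g v = \chi\,\g u + (u-c_0)\,\g\chi = \chi f + \chi g + (u-c_0)\,\g\chi$.

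The key step is to check that $\g v$ splits as $\dot W^{-1,2}(\R^2)+L^1(\R^2)$ with the right quantitative bounds. The term $\chi g$ is controlled in $L^1(\R^2)$ by $\|g\|_{L^1(D^2)}$ trivially. For $\chi f$: since $f\in W^{-1,2}(D^2)$ and multiplication by $\chi\in C^\infty_c$ maps $W^{-1,2}_{\loc}$ into compactly supported distributions in $\dot W^{-1,2}(\R^2)$, one gets $\|\chi f\|_{\dot W^{-1,2}(\R^2)}\le C(r)\,\|f\|_{W^{-1,2}(D^2)}$ — this uses the standard fact that on a bounded domain $W^{-1,2}$ and the homogeneous $\dot W^{-1,2}$ restricted to compact subsets are equivalent, via duality with $W^{1,2}_0$. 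Finally, the term $(u-c_0)\,\g\chi$ requires knowing that $u-c_0\in L^2$ on the support of $\g\chi$ (an annulus in $D_\rho$); this does not follow from the hypotheses alone, so I would first establish an a priori $L^2_{\loc}(D^2)$ bound on $u$ by a preliminary application of the argument on a slightly larger scale — i.e. bootstrap: apply the same cut-off construction between $\rho$ and $\rho'$ with $\rho<\rho'<1$, but this circularity means the cleanest route is to invoke that $u\in L^2_{\loc}$ is automatic here because $\g u \in W^{-1,2}+L^1 \subset W^{-1,2}+W^{-1,p}$ for $p<2$, hence $u\in L^2_{\loc}$ by Sobolev embedding / elliptic regularity for $\Delta$, with the local $L^2$ norm controlled by $\|f\|_{W^{-1,2}(D^2)}+\|g\|_{L^1(D^2)}$ plus the mean of $u$ (which is absorbed into $c_0$). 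Then $\|(u-c_0)\,\g\chi\|_{L^1(\R^2)}\le C(r)\,\|u-c_0\|_{L^2(D_\rho)}$.

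With $\g v = f' + g'$, $f'\in\dot W^{-1,2}(\R^2)$, $g'\in L^1(\R^2)$ established, Lemma \ref{bourgainb} yields a constant $c\in\R$ with $\|v-c\|_{L^2(\R^2)}\le C(\|f'\|_{\dot W^{-1,2}}+\|g'\|_{L^1})$. Since $v=u-c_0$ on $D_r(0)$, this gives $\|u-(c_0+c)\|_{L^2(D_r(0))}\le C(r)\big(\|f\|_{W^{-1,2}(D^2)}+\|g\|_{L^1(D^2)}\big)$. Replacing the constant $c_0+c$ by $\dashint_{D_r(0)}u$ only improves the $L^2(D_r)$ norm (the average is the $L^2$-best constant), so the stated inequality follows. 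The main obstacle is the middle step: making rigorous the passage from the local negative Sobolev datum $f$ on $D^2$ to a genuine element of $\dot W^{-1,2}(\R^2)$ after multiplication by the cut-off, and simultaneously securing the a priori $L^2_{\loc}$ control on $u$ needed to handle the commutator term $(u-c_0)\g\chi$; both are standard but must be done with constants depending only on $r$.
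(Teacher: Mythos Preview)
Your localization strategy is correct and is the natural route from Lemma~\ref{bourgainb}; the paper does not spell out a proof of the corollary. Two small corrections are worth noting, though neither breaks the argument. First, from $\nabla u\in W^{-1,2}+L^1\hookrightarrow W^{-1,q}(D^2)$ for any fixed $q<2$ you obtain $u\in L^{q}_{\textup{loc}}$ with $\big\|u-\dashint_{D_\rho}u\big\|_{L^q(D_\rho)}\le C(q,\rho)\big(\|f\|_{W^{-1,2}}+\|g\|_{L^1}\big)$, not $L^2_{\textup{loc}}$; but this suffices, since you only need $(u-c_0)\nabla\chi\in L^1(\R^2)$. Second, $\chi f$ does not land purely in $\dot W^{-1,2}(\R^2)$: writing $f=f_0+\partial_i f_i$ with $f_0,f_i\in L^2(D^2)$, the commutator pieces $\chi f_0-f_i\,\partial_i\chi$ are compactly supported $L^2$ functions, which in general are \emph{not} in $\dot W^{-1,2}(\R^2)$ (test against logarithmic cut-offs $\phi_R$ with $\|\nabla\phi_R\|_{L^2}\to 0$: a nonzero mean is an obstruction). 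They do, however, lie in $L^1(\R^2)$ with norm $\le C(r)\|f\|_{W^{-1,2}(D^2)}$, so $\chi f\in L^1+\dot W^{-1,2}$---which is exactly the splitting Lemma~\ref{bourgainb} accepts.
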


\subsection{Quasiconformal maps and Beltrami equations}\label{sec-quasi}
\begin{Dfi}\label{dfiquasicon}
    Let $U\subset \R^2$ be an open subset. A \textit{quasiconformal} map $f\colon U\to f(U)\subset \C$ is an orientation-preserving homeomorphism (i.e. $\det(\g f)>0$ a.e.) satisfying $f\in W^{1,2}_{\textup{loc}}(U)$ and there exists a constant $k\in [0,1)$ such that\begin{align*}
        |\p_{\bar z}f|\le k |\p_z f| \quad \mbox{a.e. on } U,
        \end{align*}
       where $\p_z\coloneqq(\p_x-i\p_y)/2$, $\p_{\bar z}\coloneqq(\p_x+i\p_y)/2$. We call the function $\mu_{f}\coloneqq\p_{\bar z }f/\p_z f$ the \textit{complex dilatation} of $f$.
\end{Dfi}
\begin{Lm}[{\cite[Theorem 4.30]{Imayoshi92}}]
    \label{exisolbel}
    Let $\mu\in L^\infty(\C,\C)$ be such that $\|\mu\|_{L^\infty(\C)}<1$.
    Then there exists a quasiconformal homeomorphism $f\colon \bar \C\rightarrow\bar{\C}$ verifying the \textit{Beltrami equation} $\p_{\bar z} f=\mu\,\p_{z}f$. Such $f$ is uniquely determined by the conditions $f(0)=0$, $f(1)=1$, $f(\infty)=f(\infty)$.
\end{Lm}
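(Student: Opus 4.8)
The plan is to prove this by the Ahlfors--Bers method, reducing the Beltrami equation to a linear integral equation that is solved by a Neumann series in $L^p(\C)$ for $p$ slightly above $2$.

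\textbf{Step 1: the integral equation for compactly supported $\mu$.} Suppose first that $\mu$ is supported in a disk $D_R$. I would use the Cauchy transform $(Ph)(z)=-\frac1\pi\int_\C \frac{h(w)}{w-z}\,dA(w)$, which satisfies $\p_{\bar z}(Ph)=h$ in $\mathcal D'$, and the Beurling transform $(Sh)(z)=-\frac1\pi\,\mathrm{p.v.}\!\int_\C\frac{h(w)}{(w-z)^2}\,dA(w)$, which satisfies $\p_z(Ph)=Sh$. The Fourier multiplier of $S$ is unimodular, so $S$ is an isometry of $L^2(\C)$; by Calderón--Zygmund theory $S$ is bounded on each $L^p(\C)$, $1<p<\nf$, and $\|S\|_{L^p\to L^p}\to 1$ as $p\to 2$ (Riesz--Thorin interpolation around the exponent $2$). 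Looking for a solution of the form $f(z)=z+(Ph)(z)$, the Beltrami equation $\p_{\bar z}f=\mu\,\p_z f$ becomes $(I-M_\mu S)h=\mu$, where $M_\mu$ is multiplication by $\mu$. Choosing $p>2$ close enough to $2$ that $\|M_\mu S\|_{L^p\to L^p}\le\|\mu\|_{L^\nf}\,\|S\|_{L^p\to L^p}<1$, the operator $I-M_\mu S$ is invertible on $L^p(\C)$ and $h=\sum_{n\ge 0}(M_\mu S)^n\mu\in L^p(\C)$, since $\mu$ has compact support and hence $\mu\in L^p(\C)$.

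\textbf{Step 2: from the integral equation to a normalized homeomorphism.} With $h\in L^p(\C)$, $p>2$, the function $Ph$ is continuous (indeed Hölder of exponent $1-2/p$) and tends to $0$ at $\infty$, so $f(z)=z+(Ph)(z)$ is continuous on $\C$, extends to $\bar\C$ with $f(\infty)=\infty$, lies in $W^{1,p}_{\loc}\subset W^{1,2}_{\loc}$, and solves $\p_{\bar z}f=h=\mu\,\p_z f$ a.e., so that $\det\g f=|\p_z f|^2(1-|\mu|^2)\ge 0$. To upgrade $f$ to a homeomorphism I would approximate $\mu$ by smooth, compactly supported $\mu_j\to\mu$ a.e.\ and in every $L^q(\C)$, with $\|\mu_j\|_{L^\nf}\le\|\mu\|_{L^\nf}$; the corresponding maps $f_j$ from Step 1 are smooth, and the classical theory (existence of local normal solutions, or integration along the characteristic directions) shows each $f_j$ is a diffeomorphism of $\C$. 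Stability of the Neumann series gives $h_j\to h$ in $L^p$, hence $f_j\to f$ locally uniformly, and since the $f_j$ are $K$-quasiconformal with the fixed constant $K=\frac{1+\|\mu\|_{L^\nf}}{1-\|\mu\|_{L^\nf}}$, the compactness and openness properties of quasiconformal mappings guarantee that the locally uniform limit $f$ is again a $K$-quasiconformal homeomorphism of $\bar\C$. Post-composing with the unique Möbius transformation carrying $f(0),f(1),f(\infty)$ to $0,1,\infty$, which leaves the complex dilatation unchanged, yields the normalized solution.

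\textbf{Step 3: general $\mu$ and uniqueness.} For $\mu\in L^\nf(\C)$ with $\|\mu\|_{L^\nf}<1$ but not compactly supported, I would apply Steps 1--2 to the truncations $\mu_R\coloneqq\mu\,\mathbf 1_{D_R}$, obtaining normalized $K$-quasiconformal maps $f_R$; these form a normal family, and a locally uniform limit $f$ along $R\to\infty$ still satisfies the Beltrami equation for $\mu$ — one passes to the limit by combining the weak $L^p_{\loc}$ convergence $\p f_R\rightharpoonup\p f$ with the strong $L^q_{\loc}$ convergence $\mu_R\to\mu$ in the product $\mu_R\,\p_z f_R$ — and by the uniqueness argument the limit is the desired normalized map. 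For uniqueness: if $f_1,f_2$ are two normalized solutions, then $F\coloneqq f_1\circ f_2^{-1}$ is an orientation-preserving homeomorphism of $\bar\C$ in $W^{1,2}_{\loc}$ whose complex dilatation vanishes a.e., by the composition formula for Beltrami coefficients applied to maps with the same $\mu$. By Weyl's lemma a $W^{1,2}_{\loc}$ function with $\p_{\bar z}F=0$ is holomorphic, so $F$ is a conformal automorphism of $\bar\C$, i.e.\ a Möbius transformation; fixing $0,1,\infty$ forces $F=\mathrm{id}$, whence $f_1=f_2$. The analytic construction in Steps 1 and 3 is essentially soft once the $L^p$ bounds for $S$ and the smallness $\|\mu\|_{L^\nf}<1$ are in hand; the main obstacle is Step 2 — showing that the $W^{1,p}_{\loc}$ solution $f=z+Ph$ is a genuine \emph{homeomorphism} of $\bar\C$, since injectivity and local invertibility are invisible at the level of the integral equation and must be extracted from the qualitative theory of quasiconformal maps (equicontinuity of normalized families, openness and discreteness of nonconstant quasiregular maps, stability under locally uniform limits), with the reduction to compact support in Step 3 being the secondary technical point.
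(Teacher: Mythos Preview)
The paper does not prove this lemma at all: it is stated as a citation to \cite[Theorem~4.30]{Imayoshi92} and used as a black box. Your proposal is a correct outline of the classical Ahlfors--Bers proof (Cauchy and Beurling transforms, Neumann series in $L^p$ for $p$ close to $2$, approximation by smooth dilatations, normal-family compactness, and uniqueness via Weyl's lemma), which is indeed the argument given in the cited reference and in Ahlfors' \textit{Lectures on Quasiconformal Mappings}; there is nothing to compare against in the paper itself.
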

 \begin{Lm}[{\cite[Proposition 4]{CFMOZ}}]\label{lmregbel}
 Suppose $\mu\in W^{1,2}(\C)$ has compact support and $\|\mu\|_{\infty}<1$. Let $f\colon \C\rightarrow\C$ be a quasiconformal map solving the Beltrami equation $\p_{\bar z} f=\mu\,\p_{z}f$.
 Then it holds $f\in W_{\textup{loc}}^{2,q}(\C)$ for any $q<2$.
\end{Lm}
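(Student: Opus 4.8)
The plan is to bootstrap the regularity of the complex derivatives $\p_z f$ and $\p_{\bar z} f$ by means of the Beurling--Ahlfors transform. By Lemma~\ref{exisolbel}, composing with an affine map if necessary, we may assume $f$ is the principal solution, so that $\phi\coloneqq\p_{\bar z}f$ is compactly supported (since $\mu$ is) and $f(z)=z+\mathcal{P}\phi(z)$, where $\mathcal{P}$ is the solid Cauchy transform; affine changes do not affect membership in $W^{2,q}_{\textup{loc}}$. Write $\mathcal{S}=\p_z\mathcal{P}$ for the Beurling--Ahlfors transform: it is a Fourier multiplier with unimodular symbol, hence a Calder\'on--Zygmund operator which is an isometry of $L^2(\C)$ and commutes with $\p_z$, $\p_{\bar z}$ and with translations. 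Inserting $\p_z f=\mathcal{S}\phi+1$ into $\phi=\mu\,\p_z f$ yields the Beltrami identity
\begin{align}\label{eq:beltrami-id}
    (I-\mu\,\mathcal{S})\,\phi=\mu .
\end{align}

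The first step is to prove that $\phi\in L^p(\C)$ for \emph{every} $p\in(1,\infty)$, hence $\p_z f=\mathcal{S}\phi+1\in L^p_{\textup{loc}}(\C)$ for every $p<\infty$. This is where the hypothesis $\mu\in W^{1,2}$ enters, via the embedding $W^{1,2}(\C)\hookrightarrow\mathrm{VMO}(\C)$ (valid for compactly supported functions by Poincar\'e's inequality). By the theorem of Coifman--Rochberg--Weiss and Uchiyama the commutator $[\mu,\mathcal{S}]$ is then compact on $L^p(\C)$ for all $p\in(1,\infty)$; combined with $\|\mu\mathcal{S}\|_{L^2\to L^2}\le\|\mu\|_\infty<1$ and the continuity of $p\mapsto\|\mathcal{S}\|_{L^p\to L^p}$, a standard continuation-in-$p$ argument shows that $I-\mu\mathcal{S}$ is invertible on $L^p(\C)$ for every $p\in(1,\infty)$ --- this is precisely the $L^p$ theory of the Beltrami equation with $\mathrm{VMO}$ coefficient. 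Since $\mu\in L^p(\C)$ for all $p$, \eqref{eq:beltrami-id} gives $\phi=(I-\mu\mathcal{S})^{-1}\mu\in L^p(\C)$ for every $p<\infty$.

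The second step is to differentiate \eqref{eq:beltrami-id}. For a unit coordinate direction $e_j$ and $0<|t|\le1$ set $D^t_j u\coloneqq t^{-1}\big(u(\cdot+te_j)-u\big)$; this operator commutes with $\mathcal{S}$, so applying it to \eqref{eq:beltrami-id} and using Leibniz's rule gives
\begin{align}\label{eq:diffquot-id}
    \big(I-\mu(\cdot+te_j)\,\mathcal{S}\big)\big(D^t_j\phi\big)=(D^t_j\mu)\,(1+\mathcal{S}\phi).
\end{align}
Fix $r<2$ so close to $2$ that $\|\mu\|_\infty\,\|\mathcal{S}\|_{L^r\to L^r}<1$; then $I-\mu(\cdot+te_j)\mathcal{S}$ is invertible on $L^r(\C)$ by a Neumann series, with inverse bounded uniformly in $t$. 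On the right of \eqref{eq:diffquot-id}, $D^t_j\mu$ is supported in a fixed compact set $K$ with $\|D^t_j\mu\|_{L^2}\le\|\nabla\mu\|_{L^2}$, while $1+\mathcal{S}\phi\in L^p_{\textup{loc}}(\C)$ with $\tfrac1r=\tfrac12+\tfrac1p$; by H\"older's inequality on $K$ the right-hand side of \eqref{eq:diffquot-id} is bounded in $L^r(\C)$ uniformly in $t$. Hence $\sup_{0<|t|\le1}\|D^t_j\phi\|_{L^r(\C)}<\infty$, so $\nabla\phi\in L^r(\C)$. Consequently $\nabla(\p_{\bar z}f)=\nabla\phi\in L^r(\C)$ and $\nabla(\p_z f)=\mathcal{S}(\nabla\phi)\in L^r(\C)$, i.e. $\nabla^2 f\in L^r_{\textup{loc}}(\C)$; together with $f\in W^{1,p}_{\textup{loc}}(\C)$ for all $p<\infty$ this yields $f\in W^{2,r}_{\textup{loc}}(\C)$. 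Since $W^{2,r}_{\textup{loc}}\subset W^{2,q}_{\textup{loc}}$ for $q\le r$, the conclusion $f\in W^{2,q}_{\textup{loc}}(\C)$ for every $q<2$ follows.

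I expect the main obstacle to be the borderline nature of the gain. Had one used only the classical Bojarski--Astala higher integrability $\p_z f\in L^p_{\textup{loc}}$ for $p<1+\tfrac1k$ with $k=\|\mu\|_\infty$, then the right-hand side of \eqref{eq:diffquot-id} would lie only in $L^r$ with $r<\tfrac{2(1+k)}{1+3k}<2$, which for $k\ge\tfrac13$ even falls below the invertibility range of $I-\mu\mathcal{S}$; so the argument genuinely requires upgrading the integrability of $\p_z f$ to all exponents $p<\infty$, and it is precisely here that $\mu\in W^{1,2}\subset\mathrm{VMO}$ (equivalently, compactness of the commutator $[\mu,\mathcal{S}]$) is indispensable. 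The remaining steps --- difference quotients, Neumann series, H\"older --- are routine.
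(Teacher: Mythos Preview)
The paper does not prove this lemma; it is quoted verbatim as \cite[Proposition 4]{CFMOZ} and used as a black box in the proof of Lemma~\ref{thinv}. Your argument is correct and is, in outline, precisely the proof given in \cite{CFMOZ}: reduce to the principal solution, use the Beurling--Ahlfors identity $(I-\mu\mathcal S)\phi=\mu$, invoke Iwaniec's VMO theory (compactness of $[\mu,\mathcal S]$ via Uchiyama, hence invertibility of $I-\mu\mathcal S$ on every $L^p$) to get $\partial_z f\in L^p_{\textup{loc}}$ for all $p<\infty$, and then run difference quotients on the Beltrami identity to obtain $\nabla\phi\in L^r$ for $r<2$ close enough to $2$ that the Neumann series converges. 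Your final remark is also the point stressed in \cite{CFMOZ}: without the VMO upgrade of integrability, the H\"older step after differentiating would not close for $\|\mu\|_\infty$ close to $1$.
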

\subsection{Hardy space $\mathcal H^1$}
For the definition of the Hardy space $\mathcal H^1(\R^n)$, see \cite{grafakos2014-2,steinhar}.
\begin{Lm}[{\cite{CLMS}}]\label{CLMS}
        Let $n\ge 2$. Suppose $u\in W^{1,n}(\R^n,\R^n)$. Then $\det(\g u)\in \mathcal H^1 (\R^n)$ with \begin{align*}
            \|\det (\g u)\|_{\mathcal H^1(\R^n) }\le C(n) \|u\|_{W^{1,n}(\R^n)}.
        \end{align*}
    \end{Lm}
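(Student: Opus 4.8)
The plan is to establish the scaling-correct form $\|\det(\nabla u)\|_{\mathcal H^1(\R^n)}\le C(n)\,\|\nabla u\|_{L^n(\R^n)}^n$ (from which the stated bound follows, for instance whenever $\|u\|_{W^{1,n}}\le 1$), along the lines of the compensated-compactness argument of Coifman--Lions--Meyer--Semmes. First I would reduce to a maximal-function estimate: fix $\varphi\in C^\infty_c(B^n)$ with $\int\varphi=1$, set $\varphi_t(z)\coloneqq t^{-n}\varphi(z/t)$ and $\mathcal M_\varphi f(x)\coloneqq\sup_{t>0}|(\varphi_t*f)(x)|$. Since $|\det(\nabla u)|\le C_n\prod_{k=1}^n|\nabla u^k|$, Hölder gives $\det(\nabla u)\in L^1(\R^n)$ with $\|\det(\nabla u)\|_{L^1}\le C_n\|\nabla u\|_{L^n}^n$, so by the Fefferman--Stein characterization of $\mathcal H^1$ (see \cite{grafakos2014-2,steinhar}) it suffices to prove $\|\mathcal M_\varphi(\det\nabla u)\|_{L^1(\R^n)}\le C(n)\|\nabla u\|_{L^n}^n$.

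The structural input is the divergence form of the Jacobian. Expanding $\det(\nabla u)$ along the first row gives $\det(\nabla u)=\nabla u^1\cdot\xi$, where $\xi_j$ is $(-1)^{1+j}$ times the $(1,j)$ minor of $\nabla u$; by Hadamard's inequality $|\xi|\le C_n\prod_{k=2}^n|\nabla u^k|$, so $\xi\in L^{n/(n-1)}(\R^n)$ with $\|\xi\|_{L^{n/(n-1)}}\le C_n\prod_{k=2}^n\|\nabla u^k\|_{L^n}$. The key algebraic fact is that the cofactor field is divergence free: $\operatorname{div}\xi=0$ in $\mathcal D'(\R^n)$. For smooth $u$ this is the Piola identity, equivalently the statement $d(du^2\wedge\cdots\wedge du^n)=0$ coming from $d^2=0$ applied to the exact form $du^2\wedge\cdots\wedge du^n$; for $u\in W^{1,n}$ it follows by approximating $u$ in $W^{1,n}$ by smooth maps and using that the minors are continuous as multilinear maps from $(L^n)^{n-1}$ to $L^{n/(n-1)}$. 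Consequently $\det(\nabla u)=\operatorname{div}(u^1\xi)$ distributionally, with $u^1\xi\in L^1$.

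Next I would plug this into the radial maximal function. Integrating by parts, $(\varphi_t*\det\nabla u)(x)=\int u^1(y)\,\xi_j(y)\,(\partial_j\varphi_t)(x-y)\,dy$, and since $\operatorname{div}\xi=0$ one may replace $u^1(y)$ by $u^1(y)-c$ for any constant $c$; taking $c=c_{x,t}\coloneqq\dashint_{B(x,t)}u^1$ and using $\operatorname{supp}\varphi_t\subset B_t$ together with $|\partial_j\varphi_t|\le C t^{-n-1}$ yields $|(\varphi_t*\det\nabla u)(x)|\le C t^{-1}\dashint_{B(x,t)}|u^1-c_{x,t}|\,|\xi|$. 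Here comes the delicate point, the endpoint exponent $p=n$: I would choose auxiliary exponents $q\in\big(1,\tfrac{n}{n-1}\big)$ and $p_0\in(1,n)$ with $\tfrac1q+\tfrac1{q'}=1$ and $q'\le\tfrac{np_0}{n-p_0}$ — possible because $q'<\infty$ while $\tfrac{np_0}{n-p_0}\to\infty$ as $p_0\uparrow n$ — and then apply Hölder followed by the Sobolev--Poincaré inequality $\big(\dashint_{B(x,t)}|u^1-c_{x,t}|^{q'}\big)^{1/q'}\le C\,t\,\big(\dashint_{B(x,t)}|\nabla u^1|^{p_0}\big)^{1/p_0}$ to obtain the pointwise estimate $\mathcal M_\varphi(\det\nabla u)(x)\le C\,\big(\mathcal M(|\nabla u^1|^{p_0})(x)\big)^{1/p_0}\big(\mathcal M(|\xi|^q)(x)\big)^{1/q}$, where $\mathcal M$ is the Hardy--Littlewood maximal operator. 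Finally, integrating and applying Hölder in $L^1$ with exponents $n$ and $\tfrac{n}{n-1}$, together with the $L^s$-boundedness of $\mathcal M$ for $s>1$ — applicable precisely because $\tfrac{n}{p_0}>1$ and $\tfrac{n}{(n-1)q}>1$ — gives $\|\mathcal M_\varphi(\det\nabla u)\|_{L^1}\le C\|\nabla u^1\|_{L^n}\|\xi\|_{L^{n/(n-1)}}\le C(n)\|\nabla u\|_{L^n}^n$, which completes the proof.

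The main obstacle is precisely the endpoint nature of this estimate: at $p=n$ a direct choice of exponents would require $\mathcal M$ to be bounded on $L^1$, which is false, so the argument has to be run with the strictly subcritical pair $(q,p_0)$ above and the resulting excess integrability absorbed through the $L^1$-Hölder step. The second point needing care is the distributional identity $\operatorname{div}\xi=0$ in the low-regularity class $W^{1,n}$, which I would handle by the density argument indicated. Everything else — Hölder, Sobolev--Poincaré on balls, and the maximal-function description of $\mathcal H^1$ — is standard.
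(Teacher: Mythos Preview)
The paper does not give a proof of this lemma; it is stated as a citation to \cite{CLMS}. Your argument is correct and is essentially the original Coifman--Lions--Meyer--Semmes proof: the divergence-free structure of the cofactor row (Piola identity), integration by parts against the mollifier, subtraction of the mean, Sobolev--Poincar\'e on balls, and the pointwise maximal-function bound followed by H\"older and the $L^s$-boundedness of the Hardy--Littlewood maximal operator for $s>1$. Your handling of the endpoint issue --- working with strictly subcritical exponents $p_0<n$ and $q<\tfrac{n}{n-1}$ so that the maximal operator stays bounded --- is exactly the standard device. You also correctly observe that the inequality as printed is missing the exponent $n$ on the right-hand side (the left side is $n$-homogeneous in $u$); the scaling-correct bound $\|\det(\nabla u)\|_{\mathcal H^1}\le C(n)\|\nabla u\|_{L^n}^n$ is what one actually proves and what the paper needs in Corollary~\ref{coconfactor}.
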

    Standard singular integral theory (see for instance \cite[Theorem III.4]{steinhar}) implies the following results.
    \begin{Lm}\label{thpoihar}
        Let $n\ge 2$, $f\in \mathcal H^1 (\R^n)$. Suppose $u\in L^1(B^n)$ satisfies $\Delta u=f$ in $\D'(B^n)$. Then it holds $u\in W^{2,1}_{\textup{loc}}(B^n)$ and for any $r\in (0,1)$, we have \begin{align}
            \|u\|_{W^{2,1}(B_r(0))}\le C(n,r) \big(\|f\|_{\mathcal H^1(\R^n) }+\|u\|_{L^1(B^n)}\big).
        \end{align}
    \end{Lm}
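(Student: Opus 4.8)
The plan is to decompose $u$ as the sum of a Newtonian potential of $f$, whose second derivatives lie in $\mathcal H^1(\R^n)\hookrightarrow L^1(\R^n)$ by the Calder\'on--Zygmund theory on Hardy spaces, and a harmonic remainder controlled by classical interior estimates.

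First I would produce a particular solution. Let $N$ denote the fundamental solution of $\lap$ on $\R^n$ and set $w\coloneqq N*f$. Since $f\in\mathcal H^1(\R^n)\subset L^1(\R^n)$ and, being an $\mathcal H^1$ function, satisfies $\int_{\R^n}f=0$ (which is what makes the logarithmic potential meaningful when $n=2$), $w$ is a well-defined element of $\mathcal D'(\R^n)$ with $\lap w=f$ in $\R^n$. Each $\p_i\p_j w$ is then a second-order Riesz transform of $f$, so by the boundedness of singular integral operators on $\mathcal H^1$ --- exactly the content of the cited \cite[Theorem~III.4]{steinhar} --- one has $\p_i\p_j w\in\mathcal H^1(\R^n)$ and, since $\mathcal H^1\hookrightarrow L^1$,
\[
\|\g^2 w\|_{L^1(\R^n)}\le\|\g^2 w\|_{\mathcal H^1(\R^n)}\le C(n)\,\|f\|_{\mathcal H^1(\R^n)}.
\]
Similarly $\g w$ is, up to constants, a first-order Riesz potential of $f$ followed by a Riesz transform, hence $\g w\in L^{\frac{n}{n-1}}(\R^n)$ with $\|\g w\|_{L^{\frac{n}{n-1}}(\R^n)}\le C(n)\,\|f\|_{\mathcal H^1(\R^n)}$; in particular $w\in W^{2,1}_{\textup{loc}}(\R^n)$. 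Normalising $w$ by an additive constant so that $\fint_{B^n}w=0$ (which changes neither $\lap w$ nor $\g^2 w$), the Poincar\'e inequality on $B^n$ gives $\|w\|_{W^{2,1}(B^n)}\le C(n)\,\|f\|_{\mathcal H^1(\R^n)}$.

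Next I would treat the remainder $h\coloneqq u-w\in L^1_{\textup{loc}}(B^n)$. Since $\lap h=f-f=0$ in $\mathcal D'(B^n)$, Weyl's lemma gives $h\in C^\nf(B^n)$ with $h$ harmonic. Fixing $\rho\coloneqq(1+r)/2\in(r,1)$ and using the interior derivative estimates for harmonic functions on $B_\rho(0)\Subset B^n$, together with $\|h\|_{L^1(B_\rho(0))}\le\|u\|_{L^1(B^n)}+\|w\|_{L^1(B^n)}$ and the previous step, one obtains
\[
\|h\|_{W^{2,1}(B_r(0))}\le C(n,r)\,\|h\|_{C^2(\overline{B_r(0)})}\le C(n,r)\,\big(\|u\|_{L^1(B^n)}+\|f\|_{\mathcal H^1(\R^n)}\big).
\]
Adding this to the bound on $w$ and using $u=w+h$ on $B_r(0)$ yields the claimed estimate
\[
\|u\|_{W^{2,1}(B_r(0))}\le\|w\|_{W^{2,1}(B^n)}+\|h\|_{W^{2,1}(B_r(0))}\le C(n,r)\,\big(\|f\|_{\mathcal H^1(\R^n)}+\|u\|_{L^1(B^n)}\big),
\]
while $u=w+h\in W^{2,1}_{\textup{loc}}(B^n)$ because $w\in W^{2,1}_{\textup{loc}}(\R^n)$ and $h\in C^\nf(B^n)$.

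The one genuinely delicate point is the first step: making rigorous sense of $w=N*f$ and of the statement that $\g^2 w$ is a second-order Riesz transform of $f$ for an arbitrary $f\in\mathcal H^1(\R^n)$ --- in particular the normalisation of the logarithmic potential in dimension $2$ --- together with the Hardy-space estimate for second-order Riesz transforms, i.e.\ \cite[Theorem~III.4]{steinhar}. Everything afterwards is classical interior elliptic regularity and routine bookkeeping.
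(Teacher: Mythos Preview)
Your proposal is correct and is precisely the standard argument that the paper has in mind: the paper does not give its own proof but merely records that the lemma follows from ``standard singular integral theory (see for instance \cite[Theorem III.4]{steinhar})'', and your decomposition $u=N*f+h$ with the Hardy-space Calder\'on--Zygmund estimate on $\nabla^2(N*f)$ and interior harmonic estimates on $h$ is exactly how one unpacks that citation.
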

    
    \begin{Co}\label{coconfactor}
  Let $\bP\in W^{2,2}_{\textup{imm}}(D^2,\R^m)$ be weakly conformal (see Definitions \ref{defweakimm} and \ref{dficoncha}) with $g_{\bP} \coloneqq\bP^*g_{\text{std}}=e^{2\la}(dx_1^2+dx_2^2)$ for $\la\in L^\infty(D^2)$. Then we have $\la\in W^{2,1}_{\textup{loc}}(D^2)$.
    \end{Co}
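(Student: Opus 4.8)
The plan is to reduce the statement to the Hardy--space elliptic estimate of Lemma~\ref{thpoihar} by exhibiting $\Delta\lambda$, locally on $D^2$, as a finite sum of Jacobian determinants of $W^{1,2}$ maps, so that Lemma~\ref{CLMS} applies. First I would upgrade the hypothesis $\lambda\in L^\infty$ to $\lambda\in W^{1,2}\cap L^\infty(D^2)$: weak conformality gives $e^{2\lambda}=|\p_1\bP|^2=|\p_2\bP|^2$ and $\p_1\bP\cdot\p_2\bP=0$ a.e., and since $\bP\in W^{1,\infty}\cap W^{2,2}$ the function $e^{\lambda}=|\p_1\bP|$ lies in $W^{1,2}\cap L^\infty$ and is bounded away from $0$, whence $\lambda=\log e^{\lambda}\in W^{1,2}\cap L^\infty$. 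Then the moving frame $\vec{e}_i\coloneqq e^{-\lambda}\p_i\bP$ ($i=1,2$) satisfies $\vec{e}_i\in W^{1,2}\cap L^\infty(D^2,\R^m)$, $|\vec{e}_i|=1$, $\vec{e}_1\cdot\vec{e}_2=0$ a.e., and in particular each component $e_i^a$ ($a=1,\dots,m$) belongs to $W^{1,2}(D^2)$.

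The core step is the distributional identity
\[
-\Delta\lambda=\sum_{a=1}^m\det\big(\nabla u^a\big),\qquad u^a\coloneqq(e_1^a,e_2^a)\colon D^2\to\R^2,\qquad\text{in }\mathcal{D}'(D^2).
\]
Differentiating the a.e. relations $|\p_i\bP|^2=e^{2\lambda}$ and $\p_1\bP\cdot\p_2\bP=0$ (all the products involved lie in $W^{1,1}$) one obtains, exactly as in the smooth case, $\p_1\vec{e}_1\cdot\vec{e}_2=-\p_2\lambda$ and $\p_2\vec{e}_1\cdot\vec{e}_2=\p_1\lambda$ a.e. Hence the $1$--form $\omega\coloneqq(\p_1\vec{e}_1\cdot\vec{e}_2)\,dx^1+(\p_2\vec{e}_1\cdot\vec{e}_2)\,dx^2$ equals $-\p_2\lambda\,dx^1+\p_1\lambda\,dx^2$, so $d\omega=\Delta\lambda\,dx^1\wedge dx^2$ in $\mathcal{D}'$. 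On the other hand, mollifying $\vec{e}_i$ and passing to the limit—the bilinear terms $\p_k\vec{e}_1^{\,\varepsilon}\cdot\p_l\vec{e}_2^{\,\varepsilon}$ and $\p_k\vec{e}_1^{\,\varepsilon}\cdot\vec{e}_2^{\,\varepsilon}$ converge in $L^1_{\loc}$ because $\nabla\vec{e}_i^{\,\varepsilon}\to\nabla\vec{e}_i$ in $L^2_{\loc}$ while $\vec{e}_i^{\,\varepsilon}$ is bounded in $L^\infty$—one sees that the classical identity $d\langle d\vec{e}_1,\vec{e}_2\rangle=(\p_2\vec{e}_1\cdot\p_1\vec{e}_2-\p_1\vec{e}_1\cdot\p_2\vec{e}_2)\,dx^1\wedge dx^2$ persists distributionally. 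Equating the two expressions for $d\omega$ and writing $\p_1\vec{e}_1\cdot\p_2\vec{e}_2-\p_2\vec{e}_1\cdot\p_1\vec{e}_2=\sum_a(\p_1e_1^a\,\p_2e_2^a-\p_2e_1^a\,\p_1e_2^a)$ yields the claimed identity.

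It then remains to localize and invoke the Hardy--space machinery. Fix $r\in(0,1)$. Using a bounded extension operator $W^{1,2}(D^2)\to W^{1,2}(\R^2)$ followed by multiplication by a cutoff $\chi\in C^\infty_c(D^2)$ with $\chi\equiv1$ on $\overline{D_r(0)}$, I obtain maps $\vec{E}_i\in W^{1,2}(\R^2,\R^m)$ with compact support such that $\vec{E}_i=\vec{e}_i$ on $D_r(0)$; since $\nabla\chi=0$ there, $\det\nabla(E_1^a,E_2^a)=\det\nabla u^a$ on $D_r(0)$. By Lemma~\ref{CLMS} the function $f\coloneqq-\sum_a\det\nabla(E_1^a,E_2^a)$ belongs to $\mathcal{H}^1(\R^2)$, and by the previous step $\Delta\lambda=f$ in $\mathcal{D}'(D_r(0))$ with $\lambda\in L^\infty(D_r(0))\subset L^1(D_r(0))$. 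After an affine change of variables mapping $D_r(0)$ to $B^2$, Lemma~\ref{thpoihar} gives $\lambda\in W^{2,1}_{\loc}(D_r(0))$; letting $r\uparrow1$ yields $\lambda\in W^{2,1}_{\loc}(D^2)$.

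The main obstacle—and the only place where the full strength $\bP\in W^{2,2}$ (rather than merely $\bn_{\bP}\in W^{1,2}$) is used—is the gain from $L^1$ to $\mathcal{H}^1$: a priori $\Delta\lambda$ is only in $L^1$, for which no $W^{2,1}$ elliptic estimate is available, and it is precisely the compensated-compactness (Jacobian) structure of the second step, valid because $\vec{e}_1,\vec{e}_2\in W^{1,2}$, that rescues the argument. The delicate technical points are therefore the rigorous derivation of that Jacobian identity at this low regularity (the mollification passage to the limit) and carrying out the localization so as not to destroy the Jacobian structure on $D_r(0)$.
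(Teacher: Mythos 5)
Your argument is correct and follows essentially the same route as the paper: same Coulomb-type moving frame $\vec e_i = e^{-\lambda}\p_i\bP$, same distributional identity expressing $\Delta\lambda$ as a sum of $2\times 2$ Jacobians $\det(\nabla e^i)$, and the same conclusion via Lemma~\ref{CLMS} and Lemma~\ref{thpoihar}. The only cosmetic differences are that you spell out the upgrade $\lambda\in W^{1,2}$ and the mollification step justifying the identity (which the paper leaves implicit), and your localization uses an extension operator before the cutoff, which is superfluous since $\eta\mko e^i$ with $\eta\in C^\infty_c(D^2)$ already extends by zero to a compactly supported element of $W^{1,2}(\R^2)$.
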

    
    \begin{proof}
      Let $(\ve_1,\ve_2)\coloneqq e^{-\la}(\p_1 \bP, \p_2\bP)\in L^\infty\cap W^{1,2}(D^2)$, then  we have 
    \begin{align*}
        \begin{cases}
          \ve_1\cdot \p_1\ve_2=e^{-2\la}\p_1\bP\cdot \p_{1}\p_2\bP=\frac 12 e^{-2\la}\p_2|\p_1\bP|^2=\p_2\la,\\[3mm]
         \ve_1\cdot \p_2\ve_2=e^{-2\la}\p_1\bP\cdot \p_{2}\p_2\bP=-e^{-2\la}\p_2\p_1\bP\cdot \p_2\bP=-\p_1\la.
        \end{cases}
    \end{align*}
    In other words, it holds $-\nabla^\perp\la =\ve_1\cdot \g \ve_2$. Thus we have, denoting $\ve_j=(e_{j,1},\dots, e_{j,m})$ for $j\in\{1,2\}$,
    \be\label{della=K} 
        \Delta \la=\curl(\g^\perp \la)=-\g^\perp \ve_1\cdot \g\ve_2=-\sum_{i=1}^m \g^\perp e_{1,i}\,\g e_{2,i}. 
    \ee 
    For $1\le i\le m$, define $e^i\in W^{1,2}(D^2,\R^2)$ by $e^i=(e_{1,i},e_{2,i})$. Then we have 
    $$
    \det(\g e^i)=\g^\perp e_{1,i}\cdot \g e_{2,i}.
    $$
    For any fixed $r\in (0,1)$, we choose $\eta\in C_c^\infty(D^2)$ such that $\eta\equiv 1$ on $D_r(0)$. Then it holds
        \begin{align*}
            \Delta \la =-\sum_{i=1}^m \det(\g(\eta\mko e^i)) \qquad \text{in }\D'(D_r(0)).
        \end{align*} 
        Since $\eta\mko e^i\in W^{1,2}(\R^2)$ for each $i$, we have $\det(\g(\eta e^i))\in \mathcal H^1 (\R^2)$ by Lemma \ref{CLMS}. This finally implies $\la\in W^{2,1}_{\textup{loc}}(D^2)$ by Lemma \ref{thpoihar}.
        \end{proof}
\subsection{Functions of VMO}\label{sec-VMO}
      \begin{Dfi}\label{dfi-VMO}
\begin{enumerate}[(i)]
    \item Let $n\ge 1$, $U\subset \R^n$ be open, $f\in L^{1}_{\textup{loc}}(U)$. For $r>0$, we define 
\begin{align}\label{vmomodules}
    \beta_r(f)=\beta_{r,U}(f)\coloneqq\sup_{\substack{\rho\le r\\B_\rho\subset U}} \fint_{B_\rho} \left|f-\fint_{B_\rho} f \right|,
\end{align}
where $B_\rho$ ranges over balls of radius $\rho$ contained in $U$. We say $f$ is of \textit{vanishing mean oscillation} and write $f\in VMO(U)$ if $$\lim_{r\rightarrow 0}\beta_r(f)=0.$$ 

\item Let $N$ be an $n$-dimensional smooth manifold without boundary, $U\subset N$ be open and precompact. For $f\in L^1_{\textup{loc}}(U)$, we say $f\in VMO(U)$ if for any coordinate system $(V,\phi)$ (an open set $V\subset N$ with a smooth diffeomorphism $\phi\colon V\rightarrow \phi(V)\subset \R^n$), any $\xi\in C^\infty_c(V)$ there holds $(\xi f)\circ \phi^{-1}\in VMO(\phi(V\cap U))$. Similarly, for $f\in W^{1,1}_{\textup{loc}}(U)$ we say $df\in VMO(U)$ if for any coordinate system $(V,\phi)$, $\xi\in C_c^\infty(V)$ there holds $\g((\xi f)\circ \phi^{-1})\in VMO(\phi(V\cap U),\R^n)$.
\end{enumerate}
\end{Dfi}
\begin{Lm}[{\cite[Lemma 2.3]{Jo80}}]\label{bmo-estimate}
Let $B_R\subset \R^n$ be a ball of radius $R$. Suppose there exist constants $\beta, K>0$ such that for any $r>0$ and ball $B_r\subset B_R$ of radius $r$ satisfying $\dist(B_r,\p B_R)\ge Kr$, there holds \begin{align}\label{interiorbmocon}
    \int_{B_r} \left|f-\fint_{B_r} f \right|\le \beta \,r^n.
\end{align}
Then there exists a positive constant $C=C(n,K)$ such that 
\begin{align}\label{bmo-est-eq}
    \int_{B_R} \left|f-\fint_{B_R}f \right|\le C \beta\, R^n.
\end{align}
\end{Lm}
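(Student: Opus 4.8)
The plan is to bootstrap the interior oscillation bound \eqref{interiorbmocon} to the whole ball $B_R$ by combining a Whitney covering of $B_R$ with a chaining argument. We may assume $f\in L^1_{\textup{loc}}(B_R)$ so that the averages below make sense; the argument in fact yields $f\in L^1(B_R)$ a posteriori. For a ball $B\subset B_R$ write $c_B\coloneqq\fint_B f$, and record the elementary estimate: if $B\subset B'$ are balls of radii $\rho\le\rho'$ and $B'$ satisfies \eqref{interiorbmocon}, then
\begin{align}\label{eq:bmo-nested}
    |c_B-c_{B'}|=\Big|\fint_B(f-c_{B'})\Big|\le\frac{1}{|B|}\int_{B'}|f-c_{B'}|\le\frac{\beta}{|B^n|}\Big(\frac{\rho'}{\rho}\Big)^n,
\end{align}
so that $|c_B-c_{B'}|\le C(n)\,\beta$ as soon as in addition $\rho'\le 2\rho$. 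Since $\int_{B_R}|f-c_{B_R}|\le 2\int_{B_R}|f-c|$ for every constant $c$, it is enough to bound $\int_{B_R}|f-c_{B^*}|$, where $B^*$ denotes the ball concentric with $B_R$ of radius $R^*\coloneqq R/(2(K+1))$. Then $\dist(B^*,\p B_R)=R-R^*\ge(K+1)R^*\ge KR^*$, so $B^*$ satisfies \eqref{interiorbmocon} and $\int_{B^*}|f-c_{B^*}|\le\beta (R^*)^n\le\beta R^n$.

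It remains to control $\int_{B_R\setminus B^*}|f-c_{B^*}|$. Here I would use the standard Whitney covering of the boundary layer: a countable family of balls $\{B_j=B_{r_j}(x_j)\}_j$ with $\bigcup_j B_j\supset B_R\setminus B^*$, with overlap bounded by a constant depending only on $n$, and with $r_j=c_0\,\dist(x_j,\p B_R)$ for a small constant $c_0=c_0(K)>0$ chosen so that: (i) each $B_j$ itself satisfies \eqref{interiorbmocon}; and (ii) $B_j$ can be joined to $B^*$ by a chain of balls $B_j=D_0,D_1,\dots,D_{M_j}$ with $B^*\subset D_{M_j}$, each $D_i$ satisfying \eqref{interiorbmocon}, consecutive radii comparable, every consecutive pair $D_{i-1},D_i$ contained in a common ball which again satisfies \eqref{interiorbmocon} and has radius comparable to that of $D_i$, and $M_j\le C(n,K)\,(1+\log(R/r_j))$. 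Concretely one slides the centre of $D_i$ along the segment from $x_j$ towards the centre of $B_R$ while enlarging its radius by a fixed factor $q=q(K)>1$ at each step: moving inward only increases the distance to $\p B_R$, and taking $q$ close enough to $1$ (depending on $K$) lets the chain keep up with the inward displacement required to stay interior. Telescoping \eqref{eq:bmo-nested} along the chain (through the common enclosing balls) and finally comparing $D_{M_j}$ with $B^*$ gives $|c_{B_j}-c_{B^*}|\le C(n,K)\,\beta\,(1+\log(R/r_j))$; hence
\begin{align}\label{eq:bmo-perball}
    \int_{B_j}|f-c_{B^*}|\le\int_{B_j}|f-c_{B_j}|+|B_j|\,|c_{B_j}-c_{B^*}|\le C(n,K)\,\beta\,|B_j|\,(1+\log(R/r_j)).
\end{align}

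To finish I would sum \eqref{eq:bmo-perball} over $j$, grouped by dyadic scale: the balls with $r_j\in[2^{-k-1}R,2^{-k}R]$ have centres in a shell $\{x\in B_R:\dist(x,\p B_R)\approx 2^{-k}R\}$ of volume $\le C(n)\,2^{-k}R^n$, so by the bounded overlap $\sum_{j:\,r_j\approx 2^{-k}R}|B_j|\le C(n,K)\,2^{-k}R^n$, while $1+\log(R/r_j)\le C(1+k)$ on this group. Consequently
\begin{align*}
    \int_{B_R\setminus B^*}|f-c_{B^*}|\le C(n,K)\,\beta\,R^n\sum_{k\ge 0}(1+k)\,2^{-k}\le C(n,K)\,\beta\,R^n,
\end{align*}
and adding the bound on $B^*$ yields $\int_{B_R}|f-c_{B^*}|\le C(n,K)\,\beta\,R^n$, which gives \eqref{bmo-est-eq}. (Running the same estimates on $\int_{B_j}|f|\le\beta r_j^n+|B_j|(|c_{B^*}|+|c_{B_j}-c_{B^*}|)$ also shows $\int_{B_R}|f|<\infty$.) The one genuinely delicate point is the construction of the Whitney family together with the chains $D_0,\dots,D_{M_j}$: one has to check that along each chain the enlarged balls never come closer to $\p B_R$ than the factor $K$ allows while the number of links remains logarithmic and consecutive radii stay comparable --- and it is precisely this geometric bookkeeping that produces the dependence $C=C(n,K)$; the rest is routine.
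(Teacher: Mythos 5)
Your proposal is correct and takes essentially the same route as the paper's proof: the paper's explicit families $\mathcal G_i$ of balls of radius $\sim\sigma^i(1-\sigma)$ centered at distance $\sigma^i$ from $\partial B_R$ are precisely a realization of your Whitney covering organized into exponential shells, the paper's layer-to-layer telescoping through the enclosing balls $3B'$ plays the role of your radial chains with $M_j\lesssim\log(R/r_j)$ links, and both close with the same series $\sum_k (1+k)\sigma^k<\infty$ (resp. $\sum_k (1+k)2^{-k}<\infty$). The only cosmetic difference is that the paper builds a single concrete nested family of covering balls and chains through neighbours in that family, whereas you separate the covering from the chain construction.
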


\begin{proof}
We can without loss of generality assume $B_R=B_1(0)$. Let $\sigma\coloneqq1-(6K+6)^{-1}>\frac 56$. We construct collections $\{\mathcal G'_i\}_{i=0}^\infty$, $\{\mathcal G_i\}_{i=0}^\infty$ of open balls in the following way.\\
Let $\tilde B\coloneqq B_{1-\sigma}(0)$, $\mathcal G'_0\coloneqq\{\tilde B\}$. For each $i\ge 1$, there exists a cover $\mathcal G_i'$ of $\{x\in\R^n:|x|=1-\sigma^i\}$ such that any element $B\in \mathcal G'_i$ is a ball of radius $\sigma^i(1-\sigma)$ with center lying on $\{x\in \R^n:|x|=1-\sigma^i\}$, and 
$\card(\mathcal G_i)\le C(n,K)\,\sigma^{-(n-1)i}$. Let $\mathcal G_i\coloneqq\{2B:B\in\mathcal G'_i\}$ for $i\ge 0$, where for any constant $c>0$ and any ball $B = B_r(x)$ we have denoted $cB=B_{cr}(x)$. 

For any $i\ge 0$ and $x\in B_1(0)$ with $1-\sigma^{i}\le |x|\le 1-\sigma^{i+1}$, there exists $|x'|=1-\sigma^i$ such that $|x'-x|\le \sigma^i(1-\sigma)$. Hence there exists a ball $B\in \mathcal G'_i$ centered at $x''\in \{y\in \R^n:|y|=1-\sigma^i\} $ such that $x'\in B$. Thus it holds $|x-x''|\le |x-x'|+|x'-x''|\le 2\sigma^i(1-\sigma)$. It follows that $B_1(0)\subset \bigcup_{i=0}^\infty \bigcup_{B\in \mathcal G_i}\bar B$. 

Now let $i\ge 1$ and $B\in \mathcal G_i$ centered at $x\in \{y\in \R^n:|y|=1-\sigma^i\}$. Then we have $B\cap \{y\in \R^n:|y|=1-\sigma^{i-1}\}\neq \emptyset$ since $\dist\big(x,\{y\in \R^n:|y|=1-\sigma^{i-1}\}\big)=\sigma^{i-1}(1-\sigma)<2\sigma^i(1-\sigma)$. Hence there exists $B'\in \mathcal G_{i-1}$ such that $B'\cap B\neq \emptyset$. If we define $B''\coloneqq3B'$, then it holds $B\cup B'\subset B''$ since the radius of $B$ is smaller than that of $B'$. Moreover, we have $\dist(B'',\p B_1(0))=\sigma^{i-1}-6\sigma^{i-1}(1-\sigma)=6K\sigma^{i-1}(1-\sigma)$. Therefore, it holds
\begin{align*}
\left|\fint_{B}f-\fint_{B''}f \right|\le \fint_B \left|f-\fint_{B''}f \right|\le \left( \frac 3\sigma \right)^n \fint_{B''} \lf|f-\fint_{B''}f\rg|\le C(n,K)\,\beta.
\end{align*}
As well, we have
\begin{align*}
    \left|\fint_{B'}f-\fint_{B''}f \right|\le \fint_{B'} \left|f-\fint_{B''}f \right|\le 3^n \fint_{B''} \left|f-\fint_{B''}f \right|\le C(n)\,\beta.
\end{align*}
Consequently, we obtain
\begin{align*}
    \left|\fint_{B}f- \fint_{B'}f\right|\le C(n,K)\, \beta.
\end{align*}
Then by induction we have  for all $i\ge 0$ and all $B\in \mathcal G_i$,
\begin{align*}
     \lf|\fint_{B}f- \fint_{2\tilde B}f\rg|\le C(n,K)\,i\beta.
\end{align*}
 Finally, since $\dist(B,\p B_1(0))=\sigma^i-2\sigma^i(1-\sigma)>2K\sigma^i(1-\sigma)$ for any $i\ge 0$ and $B\in \mathcal G_i$, we have 
\begin{align*}
    \int_{B_1(0)} \left|f-\fint_{2\tilde B}f \right| &\le \sum_{i=0}^\infty\sum_{B\in \mathcal G_i} \int_{B} \left|f-\fint_{2\tilde B}f \right|\\[3mm]
    & \le \sum_{i=0}^\infty\sum_{B\in \mathcal G_i} \left(\int_{B} \left|f-\fint_{B}f \right|+ |B|\,\left|\fint_{B}f- \fint_{2\tilde B}f \right| \right)\\[3mm]
 & \le C(n,K)\sum_{i=0}^\infty\sum_{B\in \mathcal G_i} |B|\,(i+1)\beta\\[3mm]
 & \le C(n,K) \sum_{i=0}^\infty \sigma^i (i+1)\beta\le C(n,K) \beta.
\end{align*} 
\end{proof}
\begin{Lm}[{\cite[Theorem 2]{Rei74}, \cite[Section 3]{Jo80}}]\label{lm-quasi-vmo}
    Let $U$ be an open subset of $\R^n$, $f\in VMO(U)$, $\vp\in W^{1,n}_{\textup{loc}}(U,\R^n)$ be a homeomorphism onto its image. Assume there exists a positive constant $K$ such that 
    $$
    \det(\g \vp)>K|\g \vp|^n \qquad \text{a.e. in } U. 
    $$
     Then we have $f\circ \vp^{-1}\in VMO(\vp(U))$ with 
     $$
    \forall r>0,\qquad \beta_{r,\vp(U)}(f\circ\vp^{-1})\le C(K) \,\beta _{r,U} (f).
    $$
\end{Lm}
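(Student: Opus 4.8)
The plan is to combine the distortion theory of quasiconformal mappings (here $n\ge 2$) with the interior-to-global passage for mean oscillation furnished by Lemma~\ref{bmo-estimate}. Write $\psi\coloneqq\vp^{-1}\colon\vp(U)\to U$, $g\coloneqq f\circ\psi$ and $J\coloneqq\det(\g\vp)>0$ a.e. Since the elementary inequality $\det(\g\vp)\le C_n\,|\g\vp|^n$ always holds, the hypothesis $\det(\g\vp)>K\,|\g\vp|^n$ says precisely that $\vp$ is a quasiconformal homeomorphism onto $\vp(U)$ with dilatation bounded in terms of $n$ and $K$. Hence, by classical quasiconformal theory, $\psi$ is quasiconformal with dilatation $\le C(n,K)$, the map $\vp$ satisfies Lusin's condition $(N)$ and the area formula, and --- the deepest input, a form of Gehring's lemma --- the Jacobian $J$ satisfies a reverse Hölder inequality on balls whose concentric double lies in $U$, with exponent and constant depending only on $n,K$; equivalently, the measure $w\coloneqq J\,\mathcal L^n$ is a locally $A_\infty$ weight on $U$ with constants controlled by $n,K$. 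From the $A_\infty$ property I shall use that $w$ is doubling on balls well inside $U$, and that for every such ball $\tilde B$ and every $h\in L^1_{\mathrm{loc}}$ one has $\fint^{w}_{\tilde B}\big|h-\fint^{w}_{\tilde B}h\big|\le C(n,K)\,\sup_{\tilde B'\subseteq\tilde B}\fint_{\tilde B'}\big|h-\fint_{\tilde B'}h\big|$, where $\fint^{w}$ denotes the $w$-average --- this is the transfer of the John--Nirenberg inequality to $A_\infty$ measures.

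First I would prove the estimate for balls lying well inside $\vp(U)$. Fix $M=M(n,K)\ge 2$ large enough that the classical distortion theorems apply, and let $B=B_\rho(y_0)\subset\vp(U)$ with $B_{M\rho}(y_0)\subset\vp(U)$; set $x_0\coloneqq\psi(y_0)$ and $\Omega\coloneqq\psi(B)$. Since $\psi$ is quasiconformal and $B$ has definite room inside $\vp(U)$, $\Omega$ has bounded eccentricity: there are radii $a\le b\le C(n,K)\,a$ with $B_a(x_0)\subset\Omega\subset B_b(x_0)$, and, enlarging $M$ if necessary, $B_{2b}(x_0)\subset U$; write $\tilde B\coloneqq B_b(x_0)$. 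By the area formula, for every constant $c$,
\[
\int_B|g-c|\,dy=\int_\Omega|f-c|\,J\,dx,\qquad |B|=\int_\Omega J\,dx=w(\Omega).
\]
Choosing $c\coloneqq\fint^{w}_{\tilde B}f$, using $\Omega\subset\tilde B$ together with the doubling of $w$ (so $w(\tilde B)\le C(n,K)\,w(B_a(x_0))\le C(n,K)\,w(\Omega)$), and then the $A_\infty$ transfer above together with $\sup_{\tilde B'\subseteq\tilde B}\fint_{\tilde B'}|f-\fint_{\tilde B'}f|\le\beta_{b,U}(f)$ (valid since $\tilde B\subset U$ has radius $b$), we obtain
\[
\fint_B\big|g-\fint_B g\big|\le 2\fint_B|g-c|\le 2\,\frac{w(\tilde B)}{w(\Omega)}\,\fint^{w}_{\tilde B}\big|f-\fint^{w}_{\tilde B}f\big|\le C(n,K)\,\beta_{b,U}(f),
\]
and here $b\le\diam\psi(B)$.

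It remains to bound the mean oscillation of $g$ over an arbitrary ball $B_\rho(y_0)\subset\vp(U)$ with $\rho\le r$, which is where Lemma~\ref{bmo-estimate} enters. If $B_{M\rho}(y_0)\subset\vp(U)$, the previous step applies directly. Otherwise set $R\coloneqq\dist(y_0,\p(\vp(U)))$, so that $\rho\le R<M\rho$ and $B_R(y_0)\subset\vp(U)$; every ball $B_{r'}(x')\subset B_R(y_0)$ with $\dist(B_{r'}(x'),\p B_R(y_0))\ge(M-1)\,r'$ satisfies $B_{Mr'}(x')\subset B_R(y_0)\subset\vp(U)$, so the previous step bounds the mean oscillation of $g$ over it by $C(n,K)\,\beta_{b',U}(f)$ with $b'\le\diam\psi(B_{r'}(x'))\le\omega_\vp(Mr)$, where $\omega_\vp$ is a modulus (depending on $\vp$) with $\omega_\vp(t)\to 0$ as $t\to 0$ coming from the distortion estimate for $\psi$. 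Thus the hypothesis \eqref{interiorbmocon} of Lemma~\ref{bmo-estimate} holds on $B_R(y_0)$ with margin constant $M-1$ and with the coefficient there taken to be $C(n,K)\,\beta_{\omega_\vp(Mr),U}(f)$, so that lemma bounds the mean oscillation of $g$ over $B_R(y_0)$ by $C(n,K)\,\beta_{\omega_\vp(Mr),U}(f)$; since $B_\rho(y_0)\subset B_R(y_0)$ and $(R/\rho)^n<M^n$, a comparison of averages gives the same bound (with a larger constant) over $B_\rho(y_0)$. Taking the supremum over admissible balls yields $\beta_{r,\vp(U)}(f\circ\vp^{-1})\le C(n,K)\,\beta_{\omega_\vp(Mr),U}(f)$, which tends to $0$ as $r\to 0$ whenever $f\in VMO(U)$; this proves $f\circ\vp^{-1}\in VMO(\vp(U))$, and --- up to replacing $r$ by the comparable scale $\omega_\vp(Mr)$, which is immaterial for the VMO property --- the quantitative bound stated. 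The main obstacle is the quasiconformal toolbox assembled in the first paragraph, above all the reverse Hölder (equivalently $A_\infty$) property of the Jacobian and the bounded-eccentricity distortion estimate; granting these, the remainder is just a change of variables, the $A_\infty$ transfer of BMO, and Lemma~\ref{bmo-estimate} to descend to the near-boundary balls.
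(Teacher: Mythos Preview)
The paper does not supply its own proof of this lemma; it is quoted from the cited references \cite{Rei74} and \cite{Jo80}. Your approach is precisely the one underlying those references: Reimann's theorem that quasiconformal maps preserve BMO (via the bounded-eccentricity distortion estimate, the area formula, and the $A_\infty$/reverse-H\"older property of the Jacobian), combined with Jones's device of passing from interior balls to arbitrary balls, which in this paper is isolated as Lemma~\ref{bmo-estimate}. So your sketch is correct and aligned with the intended sources.

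Your self-critique at the end is accurate and worth making explicit. The argument yields
\[
\beta_{r,\vp(U)}(f\circ\vp^{-1})\le C(n,K)\,\beta_{\omega_\vp(Mr),U}(f),
\]
with a scale on the right that depends on $\vp$ through its modulus of continuity. The inequality as printed, with the \emph{same} $r$ on both sides and a constant depending only on $K$, cannot hold in general: already $\vp(x)=\lambda x$ gives $\beta_{r}(f\circ\vp^{-1})=\beta_{r/\lambda}(f)$, which for small $\lambda$ is not controlled by $\beta_r(f)$. This is harmless for the VMO conclusion, and when the paper actually applies the lemma (Lemma~\ref{lmcovvmo}) the constant is allowed to depend on $\|\g\vp\|_{L^\infty}$ and $\|\g\vp^{-1}\|_{L^\infty}$, exactly the extra data that makes the two scales comparable. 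So your bound is the honest one.
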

\begin{Co}[{\cite{Jo80}}]\label{covmoext}
     Let $U\subset \R^n$ be a bounded Lipschitz domain and $f\in VMO(U)\cap L^\infty(U)$. Then there exists $\tilde f\in VMO(\R^n)\cap L^\infty(\R^n)$ such that $\tilde f= f$ on $U$ satisfying the inequality 
     $$
     \forall r>0,\qquad \beta_{r,\R^n}(\tilde f)\le C(U) \big(\beta_{r,U}(f)+r\|f\|_{L^\infty(U)}\big).
     $$
\end{Co}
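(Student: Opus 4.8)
The plan is to reduce the global extension to a local problem near $\partial U$, flatten the boundary there by a globally bi-Lipschitz shear, extend by even reflection in the flattened picture, and glue the pieces back together with a partition of unity. The two inputs are Lemma~\ref{lm-quasi-vmo}, which transports $VMO$ along bi-Lipschitz changes of variables with a uniform control on the modulus $\beta_r$, and a reflection estimate for $VMO$ on a half-space, which we in turn reduce to a covering argument in the spirit of Lemma~\ref{bmo-estimate}.

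First, since $U$ is bounded, for every bounded extension $\tilde f$ of $f$ and every $r$ larger than some fixed $r_0=r_0(U)>0$ one has trivially $\beta_{r,\R^n}(\tilde f)\le 2\|\tilde f\|_{L^\infty}\le C(U)\,r\,\|f\|_{L^\infty(U)}$, so only the regime $r\le r_0$ matters. Using compactness of $\overline U$ and the definition of a Lipschitz domain, cover $\overline U$ by finitely many open sets $V_0,V_1,\dots,V_N$ with $\overline{V_0}\subset U$ and with each $V_j$ ($j\ge1$) a ball in which, after an orthogonal change of coordinates, $U\cap V_j=\{x_n>\gamma_j(x')\}$ for some globally $L$-Lipschitz $\gamma_j\colon\R^{n-1}\to\R$. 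Fix a smooth partition of unity $\chi_0,\dots,\chi_N$ with $\sum_j\chi_j\equiv1$ on a neighbourhood of $\overline U$ and $\supp\chi_j\Subset V_j$, together with cut-offs $\eta_j\in C_c^\infty(V_j)$ equal to $1$ on $\supp\chi_j$. One checks directly that for $\chi\in C_c^\infty$ and $f\in VMO(U)\cap L^\infty(U)$ the product $\chi f$ lies in $VMO(U)$ with $\beta_{r,U}(\chi f)\le C(\|\chi\|_{C^1})\big(\beta_{r,U}(f)+r\|f\|_{L^\infty(U)}\big)$; this cut-off estimate will be used several times.

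For the interior piece, let $\tilde f_0$ be the extension of $\chi_0 f$ by zero; for $r\le r_0$ any ball of radius $\le r$ meeting $\supp(\chi_0 f)$ lies in $U$, so $\tilde f_0\in VMO(\R^n)\cap L^\infty(\R^n)$ with the required bound. For $j\ge1$, the shear $\Psi_j(x)\coloneqq(x',x_n-\gamma_j(x'))$ is a globally bi-Lipschitz homeomorphism of $\R^n$ with $\det\nabla\Psi_j\equiv1$ and $|\nabla\Psi_j|\le C_L$ a.e., and it maps $U\cap V_j$ into the half-space $\R^n_+\coloneqq\{x\in\R^n:x_n>0\}$; thus both $\Psi_j$ and $\Psi_j^{-1}$ satisfy the hypotheses of Lemma~\ref{lm-quasi-vmo}. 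Consequently $h_j\coloneqq(\chi_j f)\circ\Psi_j^{-1}$, extended by zero, lies in $VMO(\R^n_+)\cap L^\infty$, is compactly supported in $\overline{\R^n_+}$, and satisfies $\beta_{r,\R^n_+}(h_j)\le C(U)\big(\beta_{r,U}(f)+r\|f\|_{L^\infty(U)}\big)$ for $r\le r_0$. Let $\tilde h_j(x',x_n)\coloneqq h_j(x',|x_n|)$ be the even reflection of $h_j$, and set $\tilde f_j\coloneqq\eta_j\cdot(\tilde h_j\circ\Psi_j)$. Applying Lemma~\ref{lm-quasi-vmo} once more (composition with the bi-Lipschitz $\Psi_j$) together with the cut-off estimate gives $\tilde f_j\in VMO(\R^n)\cap L^\infty$ with $\beta_{r,\R^n}(\tilde f_j)\le C(U)\big(\beta_{r,\R^n}(\tilde h_j)+r\|f\|_{L^\infty(U)}\big)$; and since $\Psi_j(U\cap V_j)\subset\R^n_+$ and $\eta_j\equiv1$ on $\supp\chi_j$, one verifies that $\tilde f_j=\chi_j f$ on $U$. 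Then $\tilde f\coloneqq\sum_{j=0}^N\tilde f_j$ coincides with $f$ on $U$ (because $\sum_j\chi_j\equiv1$ on $\overline U$), is bounded, and — modulo the reflection estimate below — satisfies $\beta_{r,\R^n}(\tilde f)\le C(U)\big(\beta_{r,U}(f)+r\|f\|_{L^\infty(U)}\big)$; since $\beta_{r,U}(f)\to0$ as $r\to0$, this yields $\tilde f\in VMO(\R^n)$.

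The remaining, and genuinely delicate, point is the reflection estimate $\beta_{r,\R^n}(\tilde h)\le C(n)\,\beta_{r,\R^n_+}(h)$ for the even reflection $\tilde h$ of $h\in VMO(\R^n_+)$. For a ball $B=B_\rho(p)$ contained in $\{x_n>0\}$ or in $\{x_n<0\}$ this is immediate, reflection being an isometry; for a ball straddling $\{x_n=0\}$, writing $p'$ for the orthogonal projection of $p$ onto $\{x_n=0\}$, the reflection symmetry yields $\int_{B}|\tilde h-c|\le 2\int_{B_{2\rho}(p')\cap\R^n_+}|h-c|$ for every constant $c$, so that choosing $c$ to be the mean of $h$ over the half-ball $B_{2\rho}(p')\cap\R^n_+$ reduces everything to the estimate
\begin{align*}
\fint_{B_\sigma(q)\cap\R^n_+}\Big|\,h-\fint_{B_\sigma(q)\cap\R^n_+}h\,\Big|\ \le\ C(n)\,\beta_{\sigma,\R^n_+}(h),\qquad q\in\{x_n=0\}.
\end{align*}
This is the analogue for the convex half-ball of Lemma~\ref{bmo-estimate}, and I would prove it by exactly the same covering scheme: cover $B_\sigma(q)\cap\R^n_+$ by balls contained in $\R^n_+$ whose radii decay geometrically like $\sigma\lambda^i$ as one approaches the topological boundary of the half-ball, use convexity to link each ball at level $i$ to a comparable ball at level $i-1$ in a single step (so that the mean of $h$ over any covering ball differs from the mean over a fixed central ball by at most $C(i+1)\beta_{\sigma,\R^n_+}(h)$), and then sum: level $i$ requires $O(\lambda^{-(n-1)i})$ balls, each of measure $O((\sigma\lambda^i)^n)$, giving the total bound $C(n)\sum_i\lambda^i(i+1)\,\beta_{\sigma,\R^n_+}(h)<\infty$. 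This is precisely the mechanism that makes Lipschitz (and, more generally, uniform) domains $VMO$-extension domains, and is contained in \cite{Jo80}. I expect this covering/chaining step — keeping the constant uniform as the balls shrink toward the flattened boundary — to be the only real obstacle; everything else is bookkeeping with Lemma~\ref{lm-quasi-vmo} and the cut-off estimate.
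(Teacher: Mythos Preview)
The paper does not give its own proof of Corollary~\ref{covmoext}; it simply records the statement with a citation to \cite{Jo80}. The tools it does supply --- Lemma~\ref{bmo-estimate} (with full proof) and Lemma~\ref{lm-quasi-vmo} (quoted from \cite{Rei74,Jo80}) --- are precisely the ingredients Jones uses, and your reconstruction assembles them in the standard way: localize, flatten each boundary patch by the bi-Lipschitz shear $\Psi_j(x)=(x',x_n-\gamma_j(x'))$ (which has $\det\nabla\Psi_j\equiv1$ and bounded gradient, so Lemma~\ref{lm-quasi-vmo} applies in both directions), reflect evenly, and glue.

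Your argument is correct. A few small points worth tightening if you write this up in full. First, after applying Lemma~\ref{lm-quasi-vmo} you obtain $h_j\in VMO(\Psi_j(U\cap V_j))$, not directly $VMO(\R^n_+)$; the passage to $\R^n_+$ uses that $\supp\chi_j\Subset V_j$, so for $\rho$ below a fixed threshold any ball $B_\rho\subset\R^n_+$ meeting $\supp h_j$ lies inside $\Psi_j(U\cap V_j)$ --- you implicitly use this but it is worth one sentence. Second, the half-ball oscillation estimate is indeed the only genuine analytic step, and your chaining sketch is exactly the mechanism of Lemma~\ref{bmo-estimate}; since the half-ball is convex and has fixed aspect ratio, the proof of Lemma~\ref{bmo-estimate} carries over verbatim (Jones formulates it directly for uniform domains). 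So there is no obstacle there beyond bookkeeping.
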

\subsection{Sobolev--Lorentz spaces}\label{sec:soblor}
\begin{Dfi}\label{sumofspace}
Assume $X_1,X_2$ are Banach spaces that continuously embed in the same Hausdorff topological vector space $Z$. Let $$ X_1+X_2\coloneqq\{x_1+x_2:x_1\in X_1, x_2\in X_2\}.
$$ We define norms on $X_1+X_2$ and $X_1\cap X_2$ by \begin{gather*}
     \begin{dcases}\|x\|_{X_1\cap X_2}\coloneqq\|x\|_{X_1}+\|x\|_{X_2},\\[3mm]
     \|x\|_{X_1+X_2}\coloneqq\inf\{\|x_1\|_{X_1}+\|x_2\|_{X_2}:x=x_1+x_2,x_1\in X_1,x_2\in X_2\}.\end{dcases}\end{gather*}
     Equipped with these norms, the sets $X_1+X_2$ and $X_1\cap X_2$ are Banach spaces. For brevity we write $L^p+L^q(\Omega)\coloneqq L^p(\Omega)+L^q(\Om)$. The same convention applies to Sobolev--Lorentz spaces.
\end{Dfi}
\begin{Dfi}[Lorentz spaces]\label{def-Lor}
Let \(U\subset\R^{n}\) be a measurable set. Given a measurable function \(f\colon U\to\R\), we define the distribution function and the decreasing rearrangement of \(f\) as
\[
d_{f}(\lambda)\coloneqq\mathcal{L}^{n}\bigl\{x\in U:|f(x)|>\lambda\bigr\},
\qquad
f^{*}(t)\coloneqq\inf\bigl\{\lambda\ge 0 : d_{f}(\lambda)\le t\bigr\}.
\]
For \(1\le p<\infty\) and \(1\le q\le\infty\), we define the Lorentz quasinorm
\[
|f|_{L^{p,q}(U)}
\coloneqq\bigl\|t^{\frac1p}f^{*}(t)\bigr\|_{L^{q}(\R_{+},\,dt/t)}
    =p^{\frac1q}\bigl\|\lambda\,d_{f}(\lambda)^{\frac1p}\bigr\|_{L^{q}(\R_{+},\,d\lambda/\lambda)}.
\]
The Lorentz space \(L^{p,q}(U)\) consists of all measurable \(f\) with
\(|f|_{L^{p,q}(U)}<\infty\). We have (see for instance \cite[Chapter~4, Proposition~4.2]{Bennett88})
\begin{enumerate}[(i)]
    \item  $L^{p,p}(U)=L^p(U)$,
    \item  $L^{p,q}(U)\hookrightarrow L^{p,r}(U)$ if $q<r$,
    \item  $L^{p,q}(U)\hookrightarrow L^{s,r}(U)$ if $p>s$ and $\mca L^n(U)<\nf$.
\end{enumerate}
When \(p>1\), the Lorentz quasinorm is equivalent to a norm, which we denote by
\(\|\cdot\|_{L^{p,q}(U)}\).
\end{Dfi}
We record the following form of H\"older's inequality for Lorentz spaces.
\begin{Lm}[{\cite[Thm.~4.5]{Hunt}}]\label{lm:lorHold}
   Assume $f_1\in L^{p_1,q_1}(U)$ and $f_2\in L^{p_2,q_2}(U)$, with $p,p_1,p_2\in [1,\nf)$, $q,q_1,q_2\in [1,\nf]$, and $1/p=1/p_1+1/p_2$, $1/q\le 1/q_1+1/q_2$. Then $f_1f_2\in L^{p,q}(U)$, with
   \begin{align*}
       |f_1f_2|_{L^{p,q}(U)}\le C(p_1,p_2,q_1,q_2) |f_1|_{L^{p_1,q_1}(U)}|f_2|_{L^{p_2,q_2}(U)}.
   \end{align*}
\end{Lm}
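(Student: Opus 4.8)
The statement is the Lorentz-space counterpart of H\"older's inequality, and the plan is to prove it by the classical two-step scheme: first establish the submultiplicativity of the decreasing rearrangement, then apply the ordinary H\"older inequality on the multiplicative half-line $(\R_+,dt/t)$. Since only the quasinorm $|{\cdot}|_{L^{p,q}}$ enters, no distinction between $p=1$ and $p>1$ is required and the normed versions of the spaces play no role. The first move is to reduce to the sharp exponent relation $1/q=1/q_1+1/q_2$: set $q_0\coloneqq(1/q_1+1/q_2)^{-1}$, so that $q_0\le q$ by hypothesis; then the Lorentz nesting $|{\cdot}|_{L^{p,q}(U)}\le C\,|{\cdot}|_{L^{p,q_0}(U)}$ holds with a constant depending only on $p$ and $q_0$ (this is essentially property (ii) of Definition~\ref{def-Lor}, and one checks it directly for every $q_0>0$: since $f^*$ is decreasing, $t^{1/p}f^*(t)\le (q_0/p)^{1/q_0}\bigl\|s^{1/p}f^*(s)\bigr\|_{L^{q_0}(\R_+,ds/s)}$, and interpolating this $L^{p,\infty}$-bound against the $L^{p,q_0}$-quasinorm controls all $L^{p,q}$-quasinorms with $q\ge q_0$). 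Thus it suffices to estimate $\bigl\|t^{1/p}(f_1f_2)^*(t)\bigr\|_{L^{q_0}(\R_+,dt/t)}$.

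The one non-routine ingredient is the submultiplicativity of the decreasing rearrangement: for all measurable $f_1,f_2$ and all $s,t\ge 0$,
\[
(f_1f_2)^*(s+t)\le f_1^*(s)\,f_2^*(t).
\]
This holds because $\mathcal L^n\{|f_1|>f_1^*(s)\}\le s$ and $\mathcal L^n\{|f_2|>f_2^*(t)\}\le t$, so outside a set of measure at most $s+t$ one has $|f_1f_2|\le f_1^*(s)f_2^*(t)$, which forces the claimed bound on $(f_1f_2)^*$. Taking $s=t$ gives $(f_1f_2)^*(2t)\le f_1^*(t)f_2^*(t)$. Now substitute $t\mapsto 2t$ in the integral, split the weight $t^{1/p}=t^{1/p_1}t^{1/p_2}$ using $1/p=1/p_1+1/p_2$, and apply the scalar H\"older inequality on $(\R_+,dt/t)$ with exponents $q_1,q_2$ (legitimate since $1/q_0=1/q_1+1/q_2$):
\[
\bigl\|t^{1/p}(f_1f_2)^*(t)\bigr\|_{L^{q_0}(dt/t)}
= 2^{1/p}\bigl\|t^{1/p}(f_1f_2)^*(2t)\bigr\|_{L^{q_0}(dt/t)}
\le 2^{1/p}\bigl\|t^{1/p_1}f_1^*(t)\bigr\|_{L^{q_1}(dt/t)}\bigl\|t^{1/p_2}f_2^*(t)\bigr\|_{L^{q_2}(dt/t)}.
\]
The right-hand side is exactly $2^{1/p}\,|f_1|_{L^{p_1,q_1}(U)}\,|f_2|_{L^{p_2,q_2}(U)}$, and combined with the reduction above this proves the inequality, with $C(p_1,p_2,q_1,q_2)$ absorbing the factor $2^{1/p}$ and the nesting constant (recall $p$ and $q_0$ are determined by $p_1,p_2$ and $q_1,q_2$).

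Finally I would dispose of the endpoint cases where one or more of $q,q_1,q_2$ equals $\infty$: there the corresponding $L^{q_i}(\R_+,dt/t)$-quasinorm is replaced by an essential supremum, and both the nesting step and the H\"older step go through verbatim. I do not anticipate a genuine obstacle: the rearrangement submultiplicativity is the only mildly clever step and it is elementary, while everything else is exponent bookkeeping; the single point needing care is checking that the nesting constant in the first reduction is uniform in $q$, so that the final constant depends only on $p_1,p_2,q_1,q_2$ as claimed.
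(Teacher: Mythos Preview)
The paper does not supply its own proof of this lemma; it is recorded as a known result with a citation to \cite[Thm.~4.5]{Hunt}. Your argument via the submultiplicativity $(f_1f_2)^*(s+t)\le f_1^*(s)f_2^*(t)$ followed by H\"older on $(\R_+,dt/t)$ is the standard correct proof (essentially the one in Hunt and O'Neil), and your handling of the reduction to $1/q=1/q_1+1/q_2$ and the endpoint cases is fine.
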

\begin{Dfi}
    \label{dfi-So-Lor}
    Let $k\in \mathbb N^+$ and $U\subset \R^n$ be an open set. For $1< p< \infty$, $1\le q\le \infty$, we set \begin{align*}
    W^{k,(p,q)}(U)\coloneqq\Big\{f\in L^{p,q}(U)\colon \p^\al f\in L^{p,q}(U) \text{ for each } 0\le |\al|\le k\Big\}.
    \end{align*}
    We also define the negative-order Sobolev and Sobolev--Lorentz spaces by
    \begin{align*} W^{-k,p}(U)&\coloneqq\bigg\{f\in\mathcal D'(U)\colon f=\sum_{|\alpha|\le k} \p^\alpha f_{\alpha} \mbox{ for some }\{f_{\al}\}\subset L^p(U)\bigg\},\\[0.5mm]
   W^{-k,(p,q)}(U)&\coloneqq\bigg\{f\in\mathcal D'(U)\colon f=\sum_{|\alpha|\le k} \p^\alpha f_{\alpha} \mbox{ for some }\{f_{\al}\}\subset L^{p,q}(U)\bigg\}. \end{align*}
  The corresponding norms are \begin{align*}
       \|f\|_{W^{-k,p}(U)}&\coloneqq\inf \bigg\{\sum_{|\alpha|\le k}\|f_\alpha\|_{L^p(U)}\colon f=\sum_{|\alpha|\le k} \p^\alpha f_{\alpha}\bigg\},\\
       \|f\|_{W^{-k,(p,q)}(U)}&\coloneqq\inf \bigg\{\sum_{|\alpha|\le k}\|f_\alpha\|_{L^{p,q}(U)}\colon f=\sum_{|\alpha|\le k} \p^\alpha f_{\alpha}\bigg\}.
   \end{align*}
    When $U$ is bounded, we denote by $W_0^{k,p}(U)$ the closure of $C_c^\infty(U)$ in $W^{k,p}(U)$, equipped with the norm \begin{align*}
        \|f\|_{W^{k,p}_0(U)}\coloneqq\|\g^k f\|_{L^p(U)}.
    \end{align*}
     If $1<p<\infty$, $1< q\le \nf$, and \(1/p+1/p'=1\), \(1/q+1/q'=1\), then the same argument as in \cite[Section 1.1.15]{Mazya2011} implies
\begin{align}\label{SoLodua}
W^{-k,(p,q)}(U)=\bigl(W^{k,(p',q')}_{0}(U)\bigr)^{\!*}.
\end{align}
\end{Dfi}
Applying estimates for Riesz potentials, we obtain the following embedding results for Sobolev--Lorentz spaces, see for instance \cite[Eqs.~(1.3)--(1.5)]{Mingi11}, \cite[Ch.~4, Thm.~4.18]{Bennett88}, and \cite[Eq.~(1.2.4) and Thm.~3.1.4]{Adams96}.
\begin{Lm}\label{lm:LpembW-1}
    Let $U\subset \R^n$ be an open set, $1< p<n$, and $1\le q\le \nf$. Suppose $f\colon U\to \R$ is measurable.
\begin{enumerate}[(i)]
  \item If $f\in L^1(U)$, then $f\in W^{-1,(\frac n{n-1},\nf)}(U)$, with 
  \begin{align*}
      \|f\|_{W^{-1,\left( \frac{n}{n-1},\nf \right)}(U)}\le C(n) \|f\|_{L^1(U)}.
  \end{align*}
  \item If $f\in L^{p,q}(U)$, then $f\in W^{-1,(\frac{np}{n-p},q)}(U)$, with 
  \begin{align*}
      \|f\|_{W^{-1,\left( \frac{np}{n-p},q \right)}(U)}\le C(n,p) \|f\|_{L^{p,q}(U)}.
  \end{align*}
\end{enumerate}
\end{Lm}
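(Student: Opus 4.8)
The plan is to exhibit $f$ explicitly as a distributional divergence $f=\sum_{i=1}^n\partial_i g_i$ with each $g_i$ in the appropriate Lorentz space, and then to conclude directly from the definition of the negative-order Sobolev--Lorentz norm as an infimum over such representations (Definition~\ref{dfi-So-Lor}). The vector field $\vec g=(g_1,\dots,g_n)$ will be built from the Riesz potential of $|f|$, and the required Lorentz bounds on $\vec g$ are precisely the endpoint and Lorentz-refined Hardy--Littlewood--Sobolev inequalities recorded in the references cited alongside the statement.

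First I would extend $f$ by zero to all of $\R^n$, obtaining $\tilde f$; since extension by zero does not increase the distribution function, $\|\tilde f\|_{L^1(\R^n)}=\|f\|_{L^1(U)}$ and $\|\tilde f\|_{L^{p,q}(\R^n)}=\|f\|_{L^{p,q}(U)}$. Writing $N$ for the fundamental solution of $-\Delta$ on $\R^n$, so that $\nabla N(x)=c_n\,x/|x|^n$ is a clean kernel in every dimension $n\ge 2$, I set
\[
g_i(x)\coloneqq-\int_{\R^n}\partial_i N(x-y)\,\tilde f(y)\,dy,\qquad \vec g=(g_1,\dots,g_n),
\]
so that $|g_i(x)|\le C(n)\,(I_1|\tilde f|)(x)$ pointwise, where $I_1$ denotes the Riesz potential with kernel $|x|^{1-n}$. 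The key analytic inputs are: for part (i) the endpoint weak-type bound $\|I_1 h\|_{L^{n/(n-1),\nf}(\R^n)}\le C(n)\|h\|_{L^1(\R^n)}$, and for part (ii) the O'Neil-type refinement $\|I_1 h\|_{L^{np/(n-p),q}(\R^n)}\le C(n,p)\|h\|_{L^{p,q}(\R^n)}$ valid for $1<p<n$ and all $1\le q\le\nf$. In particular $I_1|\tilde f|<\nf$ a.e., so each $g_i$ is well defined a.e.\ and lies in $L^{n/(n-1),\nf}(\R^n)$ (resp.\ $L^{np/(n-p),q}(\R^n)$) with the stated norm bounds. Testing $\vec g$ against $\varphi\in C_c^\infty(\R^n)$ and using $N*\Delta\varphi=-\varphi$ together with Fubini gives $\mathrm{div}\,\vec g=\tilde f$ in $\mathcal D'(\R^n)$; restricting to $U$ yields $f=\sum_i\partial_i(g_i|_U)$ in $\mathcal D'(U)$ with $\|g_i|_U\|_{L^{s,r}(U)}\le\|g_i\|_{L^{s,r}(\R^n)}$, where $(s,r)=(n/(n-1),\nf)$ in case (i) and $(s,r)=(np/(n-p),q)$ in case (ii). Feeding this representation into the definition of $\|\cdot\|_{W^{-1,(s,r)}(U)}$ produces both inequalities, the factor $n$ being absorbed into the constant.

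Since the substantive content is already packaged in the quoted Riesz-potential estimates, the only points requiring care are elementary: the non-increase of Lorentz quasinorms under extension by zero and under restriction to $U$, and the distributional identity $\mathrm{div}\,\vec g=\tilde f$, which I would verify directly at the level of the vector field $\vec g$ rather than through the potential $u=N*\tilde f$ (whose convergence is unclear when $\tilde f$ is merely in $L^1$ and $n\le 2$). An alternative, dual route---using \eqref{SoLodua} to identify $W^{-1,(s,r)}(U)$ with $(W^{1,(s',r')}_0(U))^*$ and then combining H\"older's inequality in Lorentz spaces with the Sobolev--Lorentz embedding of the test space---also works, but it leans on endpoint embeddings such as $W^{1,(n,1)}\hookrightarrow L^\nf$ and is less transparent, so I would mention it only as a remark.
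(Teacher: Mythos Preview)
Your proposal is correct and follows exactly the route the paper indicates: the paper does not write out a proof but simply says ``Applying estimates for Riesz potentials, we obtain the following embedding results'' and points to \cite{Mingi11}, \cite{Bennett88}, and \cite{Adams96}. Your argument---extend by zero, convolve with $\nabla N$, invoke the weak-type $L^1\to L^{n/(n-1),\infty}$ bound and O'Neil's $L^{p,q}\to L^{np/(n-p),q}$ bound for $I_1$, then read off the $W^{-1,(s,r)}$ norm from Definition~\ref{dfi-So-Lor}---is precisely the standard mechanism behind those citations, and your care to work directly with $\vec g=-\nabla N*\tilde f$ rather than the potential $N*\tilde f$ correctly sidesteps the convergence issue when $n=2$.
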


\section{Weak immersions}\label{sec-weakimm}
We follow the definition as in \eqref{weakimm}.
\begin{Dfi}\label{defweakimm}
Let $\Si$ be an $n$-dimensional closed smooth manifold with a fixed smooth reference metric $g_0$. Fix $m,k\in \N$ and $1\le p \le \infty$. We
say that $\bP$ is a $W^{k,p}$ \textit{weak immersion} and write $\vec{\Phi}\in W^{k,p}_{\text{imm}}(\Sigma, \R^m) $ if $\vec{\Phi}\in W^{k,p}\cap W^{1,\infty}(\Sigma, \R^m)$ and there exists a constant $\La>0$ such that the following holds for a.e. $q\in \Si$ and any $X\in T_q\Sigma$:
\begin{align}\label{immcon}
\Lambda^{-1}\,g_0( X,X) \le | d\vec{\Phi}_q(X)|_{\R^m}^2  \le \Lambda \,g_0( X,X).
\end{align} 
We define $W^{k,p}_{\text{imm}}(B^n,\R^m)$ similarly. 
\end{Dfi}

In this section, we provide some additional material on the notion of weak immersions developed in the lecture notes \cite{Ri16}. First, we prove the existence of conformal coordinates for a weak immersion in \Cref{sec:ConformalCoordinates}. Then, we prove that weak immersions with VMO derivatives have integral densities and are locally injective in Section \ref{sec:Density}. Finally, we prove that a weak immersion can always be approximated by smooth immersions and that the underlying conformal structure (in the case of surfaces) converges as well in \Cref{sec:Approximation}. Throughout this paper, we denote by $D^2$ the unit disk of $\R^2$.

\subsection{Existence of conformal coordinates}\label{sec:ConformalCoordinates}

We start with the definition of conformal coordinates, or isothermal coordinates.

\begin{Dfi}\label{dficoncha}
    Let $\Si$ be a $2$-dimensional closed smooth manifold. Consider a weak immersion $\bP\in W^{1,\infty}_{\textup{imm}}(\Si,\R^m)$ and $U\subset \Si$ an open set. Let $\vp\colon U\subset \Si\rightarrow \vp(U)\subset \R^2$ be a $W^{1,2}_{\textup{loc}}$ bi-Sobolev homeomorphism (i.e. a homeomorphism in $W^{1,2}_{\textup{loc}}$ with a $W^{1,2}_{\textup{loc}}$ inverse). We say $\vp$ is a (weak) \textit{isothermal chart} of $\bP$ and $\bP\circ \vp^{-1}$ is \textit{(weakly) conformal} if there exists a (Lebesgue) measurable function $\la\colon\vp(U)\rightarrow \R$ such that for $i,j\in \{1,2\}$ there holds \begin{align*}
        \p_i(\bP\circ \vp^{-1})\cdot \p_j(\bP\circ \vp^{-1})=e^{2\la} \delta_{ij} \qquad \text{a.e. in }\vp(U). 
    \end{align*}
    We define isothermal charts for $\bP\in W^{1,\infty}_{\textup{imm}}(D^2,\R^m)$ similarly.
\end{Dfi}
Throughout this paper, we will frequently use the notion of non-smooth metrics and their conformal class.
\begin{Dfi}\label{dfi-weak-metric}
    Let $\Si$ be an $n$-dimensional smooth manifold. Let $g$ be a measurable section on the bundle $T^*\Sigma\ot T^*\Sigma$, we say that $g$ is a \textit{metric} if $g_p$ is symmetric and positive definite on $(T_p\Sigma)^2$ for a.e. $p\in \Sigma$. Let $g_1,g_2$ be two metrics, we say that $g_1$ is (weakly) conformal to $g_2$ if there exists a measurable function $\la\colon\Sigma\rightarrow\R$ such that $g_1=e^{\la}g_2$. 
\end{Dfi}

Our main goal in this subsection is to prove the following result:\begin{Th}\label{thexicon}
    Let $\bP\in W_{\textup{imm}}^{2,2}(D^2,\R^m)$, then there exists an open neighborhood $U$ of $0\in D^2$ with an orientation-preserving weak isothermal chart $\vp\in W^{2,2}\cap W^{1,\infty}(U,\R^2)$ of $\bP$. Moreover, we have $\vp^{-1}\in W^{2,2}\cap W^{1,\infty}(\vp(U))$.
\end{Th}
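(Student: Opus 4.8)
The plan is to produce the chart by solving a Beltrami equation attached to the metric $g_{\vec\Phi}$ and then to bootstrap its regularity; the hypothesis $\vec\Phi\in W^{2,2}$ will be used first to make the Beltrami coefficient Sobolev and later to force the conformal factor to be bounded. After restricting to a small disc $D_r(0)$ (on which the Dirichlet energy of the Gauss map $\vec n_{\vec\Phi}$ is as small as needed) and rescaling, I write $g_{ij}=\partial_i\vec\Phi\cdot\partial_j\vec\Phi$; since $\vec\Phi\in W^{2,2}\cap W^{1,\infty}$ we have $g_{ij}\in W^{1,2}\cap L^\infty(D^2)$, and \eqref{immcon} makes $(g_{ij})$ uniformly elliptic, so the complex dilatation of the metric
\[
\mu\coloneqq\frac{g_{11}-g_{22}+2ig_{12}}{\,g_{11}+g_{22}+2\sqrt{\det g}\,}
\]
lies in $W^{1,2}\cap L^\infty(D^2)$ (the denominator being bounded below) with $\|\mu\|_{L^\infty}\le k<1$, $k=k(\Lambda)$. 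I then multiply $\mu$ by a cut-off $\eta\in C_c^\infty(D^2)$ equal to $1$ on a neighbourhood $U$ of $0$, obtaining $\tilde\mu\in W^{1,2}(\C)\cap L^\infty$, compactly supported, with $\|\tilde\mu\|_{L^\infty}<1$, and I solve $\partial_{\bar z}f=\tilde\mu\,\partial_z f$ via Lemma~\ref{exisolbel} to get a quasiconformal homeomorphism $f\colon\bar{\C}\to\bar{\C}$; set $\varphi\coloneqq f|_U$. On $U$ one has $\tilde\mu=\mu$, so the chain rule for Beltrami coefficients shows that $\vec\Phi\circ\varphi^{-1}$ is weakly conformal, and $\varphi,\varphi^{-1}$ are quasiconformal with bounded distortion ($|\nabla\varphi|^2\le C(\Lambda)\det\nabla\varphi$ a.e., similarly for $\varphi^{-1}$). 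Because $\tilde\mu\in W^{1,2}$, Lemma~\ref{lmregbel} gives $\varphi,\varphi^{-1}\in W^{2,q}_{\mathrm{loc}}$ for every $q<2$, hence $\varphi,\varphi^{-1}\in C^{1,\beta}_{\mathrm{loc}}\cap W^{1,p}_{\mathrm{loc}}$ for every $p<\infty$ and some $\beta>0$; a further use of $\mu\in W^{1,2}$ (via the Beltrami equation satisfied by $\log\partial_z\varphi$) keeps $J_\varphi$ away from $0$ locally, so that $\Psi\coloneqq\vec\Phi\circ\varphi^{-1}$ is a genuine immersion on $V\coloneqq\varphi(U)$, with no branch points.

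Next I would prove that the conformal factor is bounded. Write $g_\Psi=e^{2\lambda}(dw_1^2+dw_2^2)$; by the previous step $\Psi\in W^{1,p}_{\mathrm{loc}}(V,\R^m)$ for all $p<\infty$, and, using bounded distortion together with a change of variables, $\vec n_\Psi=\vec n_{\vec\Phi}\circ\varphi^{-1}\in W^{1,2}_{\mathrm{loc}}(V)$ with $\int_V|\nabla\vec n_\Psi|^2$ small. Shrinking $V$ to a disc, I construct a Coulomb orthonormal moving frame $(\vec e_1,\vec e_2)$ of the tangent plane bundle of $\Psi$ --- a construction depending only on $\vec n_\Psi\in W^{1,2}$, not on $\lambda$, and whose analytic input is the Chanillo--Li/Wente estimate of Lemma~\ref{lm-chali} (as in H\'el\'ein \cite{helein2002}) --- with $\operatorname{div}(\vec e_1\cdot\nabla\vec e_2)=0$ and $\|\nabla\vec e_i\|_{L^2}^2\le C\|\nabla\vec n_\Psi\|_{L^2}^2$. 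Using the Coulomb condition together with the structure equations of the conformal immersion $\Psi$ (compare the proof of Corollary~\ref{coconfactor}, now with the Coulomb frame in place of the coordinate frame), $\lambda$ satisfies $\Delta\lambda=-\,\nabla^\perp\vec e_1\cdot\nabla\vec e_2$, whose right-hand side is a finite sum of Jacobian determinants of $W^{1,2}$ functions and hence lies in $\mathcal H^1$ by Lemma~\ref{CLMS}; Lemma~\ref{thpoihar} then gives $\lambda\in W^{2,1}_{\mathrm{loc}}(V)\hookrightarrow C^0\cap W^{1,2}$, so in particular $\lambda\in L^\infty_{\mathrm{loc}}(V)\cap W^{1,2}_{\mathrm{loc}}(V)$.

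With $\lambda\in L^\infty$, $|\nabla\Psi|^2=2e^{2\lambda}$ is bounded above and below, so $\Psi\in W^{1,\infty}$; since $\nabla\varphi^{-1}=\big(g_{\vec\Phi}^{-1}(\nabla\vec\Phi)^{T}\big)\!\circ\varphi^{-1}\cdot\nabla\Psi$ with the first factor in $L^\infty(V)$, and $|\det\nabla\varphi^{-1}|=e^{2\lambda}(\det g_{\vec\Phi}\circ\varphi^{-1})^{-1/2}\ge c>0$, it follows that $\varphi^{-1}\in W^{1,\infty}$ and hence that $\varphi$ is bi-Lipschitz. Combining $\vec n_\Psi\in W^{1,2}$, $\lambda\in W^{1,2}\cap L^\infty$ and the structure equations yields the pointwise estimate $|\nabla^2\Psi|^2\lesssim e^{2\lambda}|\nabla\lambda|^2+|\nabla\vec n_\Psi|^2$, so $\Psi\in W^{2,2}_{\mathrm{loc}}(V)$ is a conformal $W^{2,2}$ weak immersion and Corollary~\ref{coconfactor} applies, giving $\lambda\in W^{2,1}_{\mathrm{loc}}(V)$. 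Finally, since $\varphi^{-1}$ is now Lipschitz, both $\big(g_{\vec\Phi}^{-1}(\nabla\vec\Phi)^{T}\big)\circ\varphi^{-1}$ and $\nabla\Psi$ belong to $W^{1,2}\cap L^\infty(V)$, which is stable under multiplication; hence $\nabla\varphi^{-1}\in W^{1,2}_{\mathrm{loc}}(V)$, i.e.\ $\varphi^{-1}\in W^{2,2}_{\mathrm{loc}}$, and inverting (using the determinant bound and the same stability) gives $\varphi\in W^{2,2}\cap W^{1,\infty}$ on a neighbourhood of $0$, which finishes the proof.

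The step I expect to be the main obstacle is the boundedness of $\lambda$, equivalently the bi-Lipschitz property of $\varphi$. This is the only place where $\vec\Phi\in W^{2,2}$ cannot be weakened to $\vec n_{\vec\Phi}\in W^{1,2}$: it is used through $\mu\in W^{1,2}$ --- hence the enhanced regularity of $\varphi$, hence the absence of branch points and enough a priori regularity of $\Psi$ for the structure equations to hold --- to exclude conical behaviour of $g_\Psi$. That it cannot be dropped is visible on $\vec\Phi\colon z\mapsto(z^2/|z|,0)$, for which $\mu(z)=-\tfrac13\,z/\bar z\notin W^{1,2}$, $\Psi$ acquires a branch point, and the only isothermal chart, up to conformal reparametrisation, is $z\mapsto z|z|^{-1/2}$, which is neither Lipschitz nor $W^{2,2}$. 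A secondary technical issue, handled by a standard approximation/weak-formulation argument, is the justification at the a priori available regularity both of the equation $\Delta\lambda=-\nabla^\perp\vec e_1\cdot\nabla\vec e_2$ and of the bound $\Psi\in W^{2,2}_{\mathrm{loc}}$.
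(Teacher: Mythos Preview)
Your approach is genuinely different from the paper's, and it contains a real gap at the step you yourself flag as the main obstacle.

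The paper does \emph{not} solve the Beltrami equation first. It constructs the Coulomb frame $(\vec e_1,\vec e_2)$ directly in the \emph{original} coordinates on $D^2$, where $\vec\Phi\in W^{1,\infty}\cap W^{2,2}$ guarantees that the Gram--Schmidt frame of $(\partial_1\vec\Phi,\partial_2\vec\Phi)$ lies in $W^{1,2}\cap L^\infty$. The Chanillo--Li estimate (Lemma~\ref{lm-chali}) is then applied with the \emph{non-flat} operator $\Delta_g$, giving a function $\lambda\in C^0\cap W^{1,2}$ with $d\lambda=*_g\langle\vec e_1,d\vec e_2\rangle$. Only then is the chart defined, via $d\varphi^i=e^{-\lambda}e_i^*$; this $\varphi$ is born in $W^{1,\infty}\cap W^{2,2}$ with $\det\nabla\varphi\ge c>0$ a.e. Local injectivity and the regularity of $\varphi^{-1}$ are obtained from the weak inverse function theorem Lemma~\ref{thinv}, whose proof uses the Beltrami regularity of Lemma~\ref{lmregbel} together with an order-of-vanishing argument. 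Thus in the paper the boundedness of the conformal factor comes \emph{before} the chart is constructed, and the Beltrami machinery is used only inside Lemma~\ref{thinv} to upgrade ``$\det\nabla\varphi>c$ a.e.'' to local bi-Lipschitzness.

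You reverse the order: you obtain $\varphi$ first as the global Beltrami solution (a homeomorphism for free), and then attempt to bound the conformal factor $\lambda$ of $\Psi=\vec\Phi\circ\varphi^{-1}$ using a Coulomb frame $(\vec e_1,\vec e_2)$ built from $\vec n_\Psi\in W^{1,2}$ alone. The problem is the identification $\Delta\lambda=-\nabla^\perp\vec e_1\cdot\nabla\vec e_2$. What the Wente/Hardy argument actually produces is a function $\nu\in C^0\cap W^{2,1}$ solving $\Delta\nu=-\nabla^\perp\vec e_1\cdot\nabla\vec e_2$; linking $\nu$ to the \emph{actual} conformal factor $\lambda$ requires the Liouville identity $-\Delta\lambda=K_{g_\Psi}e^{2\lambda}$, and this identity acquires Dirac masses exactly at branch points of $\Psi$ (compare the paper's equation~\eqref{general-liouv}). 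So you must rule out branch points \emph{before} invoking the equation. Your proposed mechanism---``the Beltrami equation satisfied by $\log\partial_z\varphi$''---does not do this: setting $u=\log\partial_z\varphi$ one gets $u_{\bar z}-\mu u_z=\mu_z$ with $\mu_z\in L^2$, which yields $u\in W^{1,2}_{\mathrm{loc}}$ but \emph{not} $u\in L^\infty_{\mathrm{loc}}$. Showing $|\partial_z\varphi|$ is locally bounded above and away from $0$ is precisely the bi-Lipschitz statement you are after, and it is not a standard consequence of $\mu\in W^{1,2}$; the paper packages the required depth of analysis into Lemma~\ref{thinv}, applied to a map already known to be $W^{1,\infty}\cap W^{2,2}$ with Jacobian bounded below---hypotheses your Beltrami solution does not satisfy a priori. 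A minor error in the same paragraph: $W^{2,q}_{\mathrm{loc}}$ for all $q<2$ in dimension~$2$ gives $W^{1,p}_{\mathrm{loc}}$ for all $p<\infty$ and $C^{0,\alpha}_{\mathrm{loc}}$ for all $\alpha<1$, but \emph{not} $C^{1,\beta}_{\mathrm{loc}}$.
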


As a corollary, we obtain the following global result.
\begin{Co}\label{coconstr}
     Let $\Si$ be a $2$-dimensional closed manifold  with a smooth structure $\mathcal U$, and $\bP\in W^{2,2}_{\textup{imm}}(\Si,\R^m)$. Then we have:
     \begin{enumerate}[(i)]
         \item there exists a smooth atlas $\mathcal V$ on $\Si$ consisting of all the weak isothermal charts of $\bP$. If $\Si$ is oriented, $\mathcal V$ defines a complex structure on $\Si$.
         \item For any choice of two charts respectively in $\mathcal U$ and $\mathcal V$, the transition map between them is a $W^{1,\infty}_{\textup{loc}}\cap W^{2,2}_{\textup{loc}}$ bi-Sobolev homeomorphism.
         \item There exists a Riemannian metric $h$ conformal and smooth on $(\Si,\mathcal V)$ such that $h$ has constant Gaussian curvature $1$, $-1$ or $0$, and  $g_{\bP}\coloneqq\bP^*g_{\text{std}}=e^{2\al}h$ for some $\al\in W^{2,1}(\Si)\subset C^0(\Si)$.
     \end{enumerate}
\end{Co}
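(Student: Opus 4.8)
The plan is to build the chart by solving a Beltrami equation carrying the conformal class of $g_{\bP}$, and then to bootstrap its regularity in three stages: existence (from the quoted quasiconformal theory), the $L^{\infty}$ bound on the conformal factor (the crux, where the hypothesis $\bP\in W^{2,2}$ is genuinely used), and finally the $W^{2,2}$ bound. Writing $g=g_{\bP}=\bP^{*}g_{\std}$, by \eqref{immcon} the matrix $(g_{ij})=(\p_{i}\bP\cdot\p_{j}\bP)$ lies in $W^{1,2}\cap L^{\infty}(D^{2},\R^{2\times2}_{\sym})$ and is uniformly elliptic, so its Beltrami coefficient
\[
\mu\coloneqq\frac{g_{11}-g_{22}+2i\,g_{12}}{\,g_{11}+g_{22}+2\sqrt{\det g}\,}
\]
satisfies $\mu\in W^{1,2}\cap L^{\infty}(D^{2})$ with $\|\mu\|_{L^{\infty}}\le k$ for some $k\in[0,1)$ (positive definiteness gives $|\mu|<1$ pointwise, the ellipticity \eqref{immcon} gives the uniform bound). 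The membership $\mu\in W^{1,2}$ is equivalent to $g\in W^{1,2}$, hence to $\bP\in W^{2,2}$, and is precisely what fails for weak immersions with only a $W^{1,2}$ Gauss map. Choosing $\eta\in C^{\infty}_{c}(D^{2})$ with $\eta\equiv1$ on a neighbourhood $U_{0}$ of $0$, the cut-off $\hat\mu\coloneqq\eta\mu\in W^{1,2}(\C)\cap L^{\infty}$ has compact support and sup-norm $\le k<1$; Lemma~\ref{exisolbel} produces a quasiconformal homeomorphism $f\colon\bar\C\to\bar\C$ solving $\p_{\bar z}f=\hat\mu\,\p_{z}f$, and Lemma~\ref{lmregbel} gives $f\in W^{2,q}_{\loc}(\C)$ for every $q<2$, hence $f,f^{-1}\in W^{1,p}_{\loc}$ for all $p<\infty$ by Sobolev embedding and quasiconformality. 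On $U_{0}$ the equation reads $\p_{\bar z}f=\mu\,\p_{z}f$, which is exactly the condition that $\vp\coloneqq f|_{U_{0}}$ be an orientation-preserving weak isothermal chart of $\bP$; existence is thus settled, and it remains to show $\vp,\vp^{-1}\in W^{2,2}\cap W^{1,\infty}$.

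Set $\vec\Psi\coloneqq\bP\circ\vp^{-1}$, so $g_{\vec\Psi}=e^{2\lambda}(dx_{1}^{2}+dx_{2}^{2})$ with $\lambda$ a priori only measurable; the hard part is to prove $\lambda\in L^{\infty}$, equivalently that $\vp$ is bi-Lipschitz. Here I would bring in the Coulomb moving-frame construction of \cite{MS1995,helein2002}: since $\bP\in W^{2,2}\cap W^{1,\infty}$ has second fundamental form in $L^{2}$, on a sufficiently small ball (after a rescaling making the $L^{2}$-norm of $\bII_{\bP}$ there below the H\'elein threshold) one obtains a bi-Lipschitz isothermal parametrization $\vp_{0}$ of $\bP$ carrying a tangent Coulomb frame $(\vec e_{1},\vec e_{2})\in W^{1,2}$ whose conformal factor $\lambda_{0}$ satisfies $\Delta\lambda_{0}=-\sum_{i}\nabla^{\perp}e_{1,i}\cdot\nabla e_{2,i}$, a sum of Jacobians, hence $\Delta\lambda_{0}\in\mathcal H^{1}$ by Lemma~\ref{CLMS}; Lemma~\ref{thpoihar} then gives $\lambda_{0}\in W^{2,1}_{\loc}\subset C^{0}$, and the $L^{\infty}$ estimate of Lemma~\ref{lm-chali} gives $\|\lambda_{0}\|_{L^{\infty}}+\|\nabla\lambda_{0}\|_{L^{2}}\le C\|\nabla\vec n_{\bP}\|_{L^{2}}^{2}$. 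Since $\vp$ (which belongs to $W^{1,p}_{\loc}$ for all $p$) and $\vp_{0}$ are both isothermal for $\bP$, the transition $\vp\circ\vp_{0}^{-1}$ is a $W^{1,2}_{\loc}$ weakly conformal orientation-preserving homeomorphism, hence biholomorphic, hence smooth; therefore $\vp$ is bi-Lipschitz, $\vec\Psi=\vec\Psi_{0}\circ(\vp_{0}\circ\vp^{-1})$ with $\vec\Psi_{0}\coloneqq\bP\circ\vp_{0}^{-1}\in W^{2,2}\cap W^{1,\infty}$ is a conformal weak immersion in $W^{2,2}_{\imm}$, and Corollary~\ref{coconfactor} yields $\lambda\in W^{2,1}\subset C^{0}$. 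In particular $|\nabla\vp|^{2}=e^{-2\lambda\circ\vp}(g_{11}+g_{22})$ and $J_{\vp}=e^{-2\lambda\circ\vp}\sqrt{\det g}$ are bounded above and below, so $\vp\in W^{1,\infty}(U)$ and $\vp^{-1}\in W^{1,\infty}(\vp(U))$ on a suitable neighbourhood $U$ of $0$.

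It remains to upgrade $W^{1,\infty}$ to $W^{2,2}$, which I would do by differentiating the Beltrami equation. With $w\coloneqq\p_{j}\vp$ one has $\p_{\bar z}w-\mu\,\p_{z}w=(\p_{j}\mu)\,\p_{z}\vp$ on $U$, and the right-hand side lies in $L^{2}$ because $\p_{j}\mu\in L^{2}$ and, by the previous step, $\p_{z}\vp\in L^{\infty}$. By the $L^{2}$-theory for the Beltrami operator --- the Beurling transform $S$ has norm $1$ on $L^{2}$ while $\|\mu\|_{L^{\infty}}<1$, so $I-\mu S$ is invertible on $L^{2}$ --- a solution $w\in\bigcap_{q<2}W^{1,q}_{\loc}$ with such a right-hand side must in fact lie in $W^{1,2}_{\loc}$; hence $\vp\in W^{2,2}_{\loc}(U)$, and after a final shrinking $\vp\in W^{2,2}(U)$. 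The inverse solves a Beltrami equation with coefficient $-\bigl(\mu\,\overline{\p_{z}\vp}/\p_{z}\vp\bigr)\circ\vp^{-1}$, which again belongs to $W^{1,2}\cap L^{\infty}$ with sup-norm $<1$ now that $\p_{z}\vp\in W^{1,2}\cap L^{\infty}$ is bounded away from $0$, so the same argument gives $\vp^{-1}\in W^{2,2}(\vp(U))$, completing the proof.

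I expect the main obstacle to be the second stage: the $L^{\infty}$ bound on $\lambda$, equivalently the bi-Lipschitz character of the chart. This is precisely the ingredient that was missing from the original argument in \cite{Ri16}, and it genuinely fails for weak immersions with only a $W^{1,2}$ Gauss map --- witness $z\mapsto(z^{2}/|z|,0)$ of \cite{Lan}, a $W^{1,\infty}$ weak immersion with $\nabla^{2}\bP\notin L^{2}$ and non-bi-Lipschitz isothermal charts. It is the unique step where the full hypothesis $\bP\in W^{2,2}$ enters, both through $\mu\in W^{1,2}$ in Lemma~\ref{lmregbel} and through the moving-frame/Wente estimate; everything else is either soft (changes of variables, Sobolev embeddings, quasiconformal distortion bounds) or a direct application of the quoted lemmas.
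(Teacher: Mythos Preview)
Your proposal targets the wrong statement: the entire argument is devoted to constructing a single local isothermal chart with $W^{2,2}\cap W^{1,\infty}$ regularity, which is Theorem~\ref{thexicon}, not Corollary~\ref{coconstr}. The Corollary takes Theorem~\ref{thexicon} as input and deduces (i) that any two isothermal charts have holomorphic (or antiholomorphic) transition, via the observation that both solve the same Beltrami equation~\eqref{Bel-metric}; (ii) the regularity of $\mathcal U$--$\mathcal V$ transition maps, immediate from Theorem~\ref{thexicon}; and (iii) the constant-curvature metric $h$ via uniformization on $(\Sigma,\mathcal V)$, together with $\alpha\in W^{2,1}(\Sigma)$ from Corollary~\ref{coconfactor}. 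You never address the global atlas, uniformization, or the passage from the local factor $\lambda$ to the global $\alpha$.

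Even read as a proof of Theorem~\ref{thexicon}, your second stage is circular. You assert that the Coulomb moving-frame construction of \cite{MS1995,helein2002} furnishes ``a bi-Lipschitz isothermal parametrization $\vp_0$''. But upgrading the Coulomb frame to a \emph{bi-Lipschitz} chart is precisely the content of Theorem~\ref{thexicon}; the paper stresses (paragraph following Theorem~\ref{th-conf}) that this is the step missing from \cite{Ri16}, and supplies it via Lemma~\ref{thinv}, the $W^{2,2}$ inverse-function lemma (whose proof, incidentally, already uses the Beltrami-regularity Lemma~\ref{lmregbel} you invoke). The Coulomb construction produces the frame $(\vec e_1,\vec e_2)$ and the $C^0$ factor $\lambda_0$; that the map obtained by integrating the dual $1$-forms $e^{-\lambda_0}e_i^*$ is locally injective is a separate issue which you never address. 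If you grant $\vp_0$ bi-Lipschitz, your Beltrami chart $\vp$ is redundant---take $\vp_0$ itself; if not, stage two has a gap. The paper's route is to build the chart directly from the Coulomb frame and close with Lemma~\ref{thinv}, obtaining $\vp\in W^{2,2}\cap W^{1,\infty}$ and $\vp^{-1}\in W^{2,2}\cap W^{1,\infty}$ in one pass, with no separate bootstrap to $W^{2,2}$.
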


\begin{proof}
\begin{enumerate}[(i)] 
    \item We first assume $\bP\in W^{1,\infty}_{\textup{imm}}(D^2,\R^m)$ and $\vp_1,\vp_2\in W^{1,2}(D^2,\R^2)$ be isothermal charts as in Definition \ref{dficoncha}. For $k\in \{1,2\}$, since $\vp_k$ is a $W^{1,2}_{\textup{loc}}$ bi-Sobolev homeomorphism, by standard degree theory we have $\vp_k$ is differentiable a.e. and either $\det(\nabla \vp_k)>0$ a.e. or $\det(\nabla \vp_k)<0$ a.e., see \cite[Theorem 1.7 \& Lemma A.28]{Hencl14}. Without loss of generality we assume $\det(\g \vp_k)>0$ a.e.
Let $g_{ij}\coloneqq\p_i \bP\cdot \p_j \bP $, then following the computation in \cite[Section 1.5.1]{Imayoshi92}, since $\vp_k$ is an isothermal chart, we have 
\begin{equation}\label{Bel-metric}
\frac{\p_{\bar z} \vp_k}{\p_z \vp_k}=\frac{g_{11}-g_{22}+2ig_{12}}{g_{11}+g_{22}+2\sqrt{g_{11}g_{22}-g_{12}^2}}\qquad\text{a.e.}
\end{equation}
The condition $\bP\in W^{1,\infty}_{\textup{imm}}(D^2,\R^m)$ implies the right-hand side has an $L^\infty$ norm strictly less than $1$. In particular, $\vp_k$ is quasiconformal, and $\vp_1\circ(\vp_2)^{-1}$ is holomorphic on $\vp_2(D^2)$, see Definition \ref{dfiquasicon} and \cite[Corollary 1.2.8]{Nag88}. Now return to the case when $\bP\in W^{2,2}_{\textup{imm}}(\Sigma,\R^m)$. Let $\mathcal V$  be the set of all the weak isothermal charts. Then by the above argument each transition map between elements of $\mathcal V$ is either holomorphic or anti-holomorphic, and $\mathcal V$ is an atlas by Theorem \ref{thexicon}.
    \item This follows from $(i)$ and Theorem \ref{thexicon}.
    \item When $\Si$ is orientable, there exists a metric $h$ of constant Gaussian curvature $1$, $-1$ or $0$ and conformal under $\mathcal V$ by the uniformization 
    theorem for compact Riemann surfaces (see e.g. \cite[Section 4.4]{J06}). When $\Sigma$ is non-orientable, $h$ is constructed by using a $2$-cover from a closed orientable smooth surface to $\Sigma$. 
    
  The fact $\alpha\in W^{2,1}(\Sigma)$ follows from Corollary \ref{coconfactor}. Concerning the embedding $W^{2,1}(\Sigma)\hookrightarrow C^0(\Sigma)$ we refer to \cite[Theorem 3.3.4 \& 3.3.10]{helein2002}.
    \end{enumerate}
\end{proof}
\begin{Rm}\label{rm-gconstr}
If $\Sigma$ is oriented and equipped with a metric $g$ such that in any local coordinates the right-hand side in \eqref{Bel-metric} has an $L^\infty$ norm less than $1$, then by Lemma \ref{exisolbel} we can still define a $g$-associated conformal structure via the Beltrami equation \eqref{Bel-metric}. One major difference is about the regularity of the transition map as in Corollary \ref{coconstr} (ii): it may not be Lipschitz, but will be in $W^{1,p}$ for some $p>2$ instead.
\end{Rm}
Using the conformal structure associated to $\bP\in W^{2,2}_{\textup{imm}}(\Si,\R^m)$, the relation \eqref{liouv} follows from standard computations, see for instance \cite[Theorem 1.6]{Ri16}. By integrating \eqref{liouv} with respect to $d\textup{vol}_h$ (when $\Sigma$ is orientable), we have \eqref{gau-bon} by the Gauss--Bonnet theorem for the smooth metric $h$. Thus Theorem \ref{th-conf} is a consequence of Corollary \ref{coconstr}.\\

The proof of Theorem \ref{thexicon} is similar to the case when $\bP$ is a smooth immersion, while the major difficulty lies in the following analytical lemma, which is a weak inverse function theorem, and its higher-dimensional generalization is proved in \cite{Heinonen00}. 
It does not hold if we replace $W^{2,2}$ by some lower regularity from the perspective of Sobolev spaces. One counter example is the map $z\mapsto z^2/|z|$. 
Here we provide a different proof for the $2$-dimensional case. 
Instead of using properties of maps with bounded length distortion (BLD maps) \cite[Lemma 4.6 \& Theorem 4.7]{martio88}, we use regularity results for Beltrami equations \cite{CFMOZ}. Thanks to the embedding $W^{1,2}(D^2)\hookrightarrow VMO(D^2)$, we obtain another proof in Corollary~\ref{Co:locinj}.
\begin{Lm}[{\cite[Theorem 1.1]{Heinonen00}}]\label{thinv}
    Let $\phi\in W^{1,\infty}\cap W^{2,2}(D^2,\R^2)$. Assume there exists a constant $c>0$ such that $\det(\nabla \phi)>c$ a.e. in $D$. Then there exists an open neighborhood $U\subset D^2$ of $0$ such that $\phi$ is open and injective on $U$, and its inverse $\phi^{-1}\in W^{1,\infty}\cap W^{2,2}(\phi(U),\R^2)$.
\end{Lm}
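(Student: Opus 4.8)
\emph{Strategy.} The plan is to reduce the statement to the classical holomorphic inverse function theorem through a Sto\"ilow‑type factorisation $\phi=g\circ f$, with $g$ holomorphic and $f$ quasiconformal, and then to recover the Sobolev regularity of $\phi^{-1}$ by a bi‑Lipschitz change of variables. Set $M:=\|\nabla\phi\|_{L^\infty(D^2)}$ and $\mu_\phi:=\p_{\bar z}\phi/\p_z\phi$. From $\det(\nabla\phi)=|\p_z\phi|^2-|\p_{\bar z}\phi|^2\ge c$ and $|\nabla\phi|\le M$ a.e.\ one gets $|\p_z\phi|^2\ge c$ a.e., hence $|\mu_\phi|^2=1-\det(\nabla\phi)/|\p_z\phi|^2\le 1-2c/M^2=:k^2<1$ a.e. Since $\p_z\phi,\p_{\bar z}\phi\in W^{1,2}\cap L^\infty(D^2)$ and $|\p_z\phi|$ is bounded below, $1/\p_z\phi\in W^{1,2}\cap L^\infty$, so $\mu_\phi\in W^{1,2}_{\textup{loc}}\cap L^\infty(D^2)$. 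Fixing $\rho>0$ with $\overline{D_\rho(0)}\subset D^2$ and $\eta\in C_c^\infty(D_\rho(0))$ with $\eta\equiv 1$ on $D_{\rho/2}(0)$, the function $\hat\mu:=\eta\,\mu_\phi$ lies in $W^{1,2}(\C)$, is compactly supported, satisfies $\|\hat\mu\|_{L^\infty(\C)}\le k<1$, and agrees with $\mu_\phi$ on $D_{\rho/2}(0)$.

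By Lemma~\ref{exisolbel} there is a quasiconformal homeomorphism $f\colon\bar\C\to\bar\C$ solving $\p_{\bar z}f=\hat\mu\,\p_z f$, and by Lemma~\ref{lmregbel} $f\in W^{2,q}_{\textup{loc}}(\C)$ for all $q<2$; in particular $f\in C^1_{\textup{loc}}(\C)$ (choosing $q\in(1,2)$, since $W^{2,q}_{\textup{loc}}\hookrightarrow C^1_{\textup{loc}}$ in dimension two). On $D_{\rho/2}(0)$ both $\phi$ and $f$ solve the Beltrami equation with coefficient $\mu_\phi$ and $f$ is a homeomorphism, so $g:=\phi\circ(f|_{D_{\rho/2}(0)})^{-1}$ is well defined on the open set $\Omega:=f(D_{\rho/2}(0))$; a direct computation using that $\phi$ and $f$ have the same Beltrami coefficient shows $\p_{\bar z}g=0$, hence $g$ is holomorphic, and $\phi=g\circ f$ on $D_{\rho/2}(0)$. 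The crucial point is that $g'(f(0))\neq 0$: from $\phi=g\circ f$ one has $\det(\nabla\phi)=|g'\circ f|^2\det(\nabla f)$ a.e.\ on $D_{\rho/2}(0)$, the right‑hand side is continuous in $z$ (both $g'$ and $\det(\nabla f)$ being continuous), and it tends to $0$ as $z\to 0$ whenever $g'(f(0))=0$, contradicting the hypothesis $\det(\nabla\phi)\ge c$ a.e.\ on every neighbourhood of $0$. This is exactly where $\phi\in W^{2,2}$ is used, and indeed for the map $z\mapsto z^2/|z|$, which lies in $W^{1,\infty}$ but not in $W^{2,2}$, the associated $g$ has $g'(f(0))=0$.

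Since $g'(f(0))\neq 0$, the classical inverse function theorem provides an open $V\ni f(0)$ on which $g$ is a biholomorphism with $|g'|$ bounded above and below; set $U:=(f|_{D_{\rho/2}(0)})^{-1}(V)$, an open neighbourhood of $0$. Then $\phi|_U=g\circ f|_U$ is a composition of homeomorphisms, hence injective and open; being moreover quasiregular, $\phi|_U$ is quasiconformal. The a.e.\ bounds $c\le\det(\nabla\phi)\le\tfrac12|\nabla\phi|^2\le\tfrac12 M^2$, combined with the area identity $|\phi(B_s(x))|=\int_{B_s(x)}\det(\nabla\phi)$ and the bounded eccentricity of images of discs under quasiconformal maps, force $\phi|_U$ to be bi‑Lipschitz after one further shrinking of $U$; in particular $\phi^{-1}\in W^{1,\infty}(\phi(U))$. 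Finally, $(\nabla\phi)^{-1}=(\det\nabla\phi)^{-1}\,\mathrm{adj}(\nabla\phi)$ lies in $W^{1,2}\cap L^\infty(U)$ because $\det(\nabla\phi)\ge c$, $|\nabla\phi|\le M$ and $\nabla^2\phi\in L^2(U)$; since the chain rule for inverses gives $\nabla(\phi^{-1})=(\nabla\phi)^{-1}\circ\phi^{-1}$ a.e.\ and a bi‑Lipschitz change of variables preserves $W^{1,2}$, we conclude $\nabla(\phi^{-1})\in W^{1,2}(\phi(U))$, i.e.\ $\phi^{-1}\in W^{1,\infty}\cap W^{2,2}(\phi(U),\R^2)$.

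The main obstacle is the passage, in the last paragraph, from the purely measure‑theoretic bounds on $\det(\nabla\phi)$ to a genuine two‑sided bi‑Lipschitz estimate on $U$: this step relies on the quasiconformal distortion (quasisymmetry) theory and cannot be reached by elementary Sobolev‑space arguments; the factorisation step and the continuity of $\det(\nabla f)$, which in turn rests on the $W^{1,2}$‑regularity of $\mu_\phi$ and hence on $\phi\in W^{2,2}$, are the other essential ingredients. An alternative route to the local injectivity, bypassing Lemma~\ref{lmregbel} and using instead the embedding $W^{1,2}(D^2)\hookrightarrow VMO(D^2)$ together with the stability of $VMO$ under quasiconformal change of variables (Lemma~\ref{lm-quasi-vmo}), is carried out in Corollary~\ref{Co:locinj}.
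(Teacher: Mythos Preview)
Your overall strategy---the Sto\"ilow factorisation $\phi=g\circ f$ with $g$ holomorphic and $f$ quasiconformal, followed by the holomorphic inverse function theorem---is exactly the paper's. The gap lies in your proof that $g'(f(0))\neq 0$. You assert that $f\in W^{2,q}_{\textup{loc}}(\C)$ for all $q<2$ implies $f\in C^1_{\textup{loc}}(\C)$, making $\det(\nabla f)$ continuous. This embedding is \emph{false} in dimension two: for $q<2$ one only has $W^{2,q}_{\textup{loc}}\hookrightarrow C^{0,\alpha}_{\textup{loc}}$ with $\alpha=2-2/q<1$, since $\nabla f\in W^{1,q}$ lies strictly below the critical exponent $q=2$ for continuity, and the intersection over all $q<2$ does not help. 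Hence $\det(\nabla f)$ need not be continuous or even locally bounded near $0$, and if it blows up fast enough the product $|g'\circ f|^2\det(\nabla f)$ can remain $\ge c$ a.e.\ even when $g'(f(0))=0$; your continuity argument collapses.

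The paper circumvents this via an integrability argument that uses only what Lemma~\ref{lmregbel} actually gives. Writing locally $\phi=A\,f_0^{\,k}$ with $f_0$ conformal and $k\ge 1$ the order of the zero of $g$ at $f(0)$, the bound $\sqrt{c}\le|\p_z\phi|=k|A|\,|f_0|^{k-1}|\p_z f_0|$ combined with the H\"older estimate $|f_0(x)|\le C|x|^\alpha$ (valid for every $\alpha<1$ by $W^{2,q}_{\textup{loc}}\hookrightarrow C^{0,\alpha}_{\textup{loc}}$) yields, when $k\ge 2$, the lower bound $|\p_z f_0(x)|\ge C|x|^{-\alpha}$. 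Choosing $\alpha>1/2$ forces $\int|\p_z f_0|^4=\infty$, contradicting $f_0\in W^{1,4}_{\textup{loc}}$---and this last inclusion \emph{does} follow, since $W^{2,q}_{\textup{loc}}\hookrightarrow W^{1,p}_{\textup{loc}}$ for all $p<\infty$ when $q$ is close to $2$. Thus $k=1$, and the rest of your argument (openness, injectivity, and the regularity of $\phi^{-1}$ via $\nabla\phi^{-1}=(\nabla\phi)^{-1}\circ\phi^{-1}$) goes through.
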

\begin{proof}
    Without loss of generality assume $\phi(0)=0$ by translation. We denote $h_{ij}\coloneqq \langle \p_i \phi,\p_j \phi\rangle$. Choose $ \vp \in C_c^\infty(D^2)$ such that $\vp(\R^2)\subset [0,1]$ and $\vp\equiv 1$ on $D_{1/2}(0)$.  Then we regard $\phi$ as a function from $D^2\subset \C$ to $\C$ and define \begin{align*}
        \mu\coloneqq\vp \frac{\p_{\bar z} \phi}{\p_z \phi}.
    \end{align*} By direct computation we have \begin{align*}
        |\mu|^2\le \frac{|\p_{\bar z} \phi|^2}{|\p_z \phi|^2}=\frac{h_{11}+h_{22}-2\det(\nabla \phi)}{h_{11}+h_{22}+2\det (\nabla \phi)}\le \frac{\|\nabla \phi\|^2_{L^\infty(D^2)}-c}{\|\nabla \phi\|^2_{L^\infty(D^2)}+c}<1 \quad \text{ a.e. on $D^2$.}
    \end{align*}
    Hence, it holds $\|\mu\|_{L^\infty(\C)}<1$. Then we apply Lemma \ref{exisolbel} and obtain a quasiconformal map $f\colon\C\rightarrow \C$ such that $\p_{\bar z} f=\mu\,\p_{z}f$ almost everywhere.
    
    Let $h\coloneqq\phi\circ f^{-1}$. By the same computation as in \cite[Corollary 1.2.8]{Nag88}, the map $h$ is holomorphic on $f(D_{1/2}(0))$. Let $k\ge 1$ be the order of zero of $h$ at $0$. Then there exists a constant $ A\neq 0\in \C$, a bounded open set $U_1\subset f(D_{1/2}(0))$ containing $0$ and a map $g\in 
C^\infty(\overline {U_1},\C)$ conformal from $U_1$ to $g(U_1)\subset \C$ with 
$g(0)=0$ and $g'(0)=1$ such that $h=A\, g^k $ on $U_1$. Let $f_0\coloneqq g\circ f$, then since 
$h=\phi\circ f^{-1}$, we have $\phi=h\circ f=A(g\circ f)^k=A \,f_0^k$ on $f^{-1}(U_1)$. Since $f$ is quasiconformal and satisfies the equation $\p_{\bar z} f=\mu\,\p_{z}f$ with $\mu \in W^{1,2}(\C)$ of compact support, by Lemma \ref{lmregbel} we have $f\in W^{2,q}_{\textup{loc}}(\C)$ for any $q<2$. 
Fix $\al\in (0,1)$. By the Sobolev embedding we have $f\in C^{0,\al}_{\textup{loc}}(\C)$. In particular, this implies that $f_0=g\circ f\in C^{0,\al}(f^{-1}(\overline{U_1}))$, and there exists a constant $C_0>0$ such that for all $x\in f^{-1}\big( \overline{U_1}\big)$, it holds that
\begin{align}\label{holest}
     |f_0(x)|\le  C_0|x|^{\al}.
    \end{align}
Now we prove that $k=1$. By contradiction, suppose that $k\ge 2$. Then there exists a positive constant $C_1$ such that \begin{align}\label{sqrtc<fdf}
 \sqrt c\le \sqrt{ \det(D\phi)}  \le |\p_z \phi| =|kA f_0^{k-1}|\, |\p_z f_0|\le C_1|f_0|\, |\p_z f_0|\quad \text{ a.e. on }f^{-1}(\overline{U_1}).
\end{align}
By \eqref{holest} and \eqref{sqrtc<fdf}, there exists a positive constant $C_2$ such that for all $x\in f^{-1}(\overline{U_1})$, it holds that
\begin{align}\label{dz>|x|^-a}
    |\p_z f_0(x)|\ge C_2|x|^{-\al}.
\end{align}
Choose $p\in (1,\nf)$ such that $\al p\ge 2$. Then it follows from \eqref{dz>|x|^-a} that
\begin{align}\label{intpfpinf}
    \int_{f^{-1}(U_1)} |\p_z f_0|^p=\infty.
\end{align}
Since $f\in W^{2,q}_{\loc}(\C)$ for any $q<2$, we obtain by Sobolev embedding that $f\in W^{1,p}_{\textup{loc}}(\C)$. Hence we have $f_0=g\circ f \in W^{1,p}(f^{-1}(U_1))$, which contradicts~\eqref{intpfpinf}.

Consider an open set $U$ such that $0\in U\Subset f^{-1}(U_1)$. Since $k=1$, we have that $\phi=A(g\circ f)$. Hence, $\phi$ is quasiconformal on $U$, and $\phi^{-1}\in W^{1,\infty}\cap W^{2,2}(\phi(U),\R^2) $ since $
    D\phi^{-1}=(D\phi)^{-1}\circ \phi^{-1}$ a.e. on $\phi (U)$.
\end{proof}
We are now ready to prove Theorem \ref{thexicon}.
\begin{hproof2}
 We apply the Gram--Schmidt process to the frame $(\p_1\bP,\p_2\bP)$ and get 
 \begin{align}\label{gramsch}
    \vec f_1\coloneqq\frac{\p_1\bP}{|\p_1\bP|},\qquad \text{ and } \qquad \vec f_2\coloneqq\frac{\p_2\bP-(\p_2\bP \cdot \vec f_1)\vec f_1}{|\p_2\bP-(\p_2\bP \cdot \vec f_1)\vec f_1|}.
\end{align}
By the definition of weak immersions, there exists a constant $\Lambda>0$ such that for a.e. $p\in D^2$ and any $X\in T_p(D^2)$,
\begin{align}\label{immconD}
\Lambda^{-1} |X|^2 \le | d\vec{\Phi}_p(X)|^2  \le \Lambda |X|^2.
\end{align} 
In particular, we have $|\p_1 \bP|,|\p_2\bP|\in [\La^{-1},\La]$ and $\p_2\bP-(\p_2\bP \cdot \vec f_1)\vec f_1\in [\La^{-1}, \La(1+\La^2)]$.
Hence $(\vec{f}_1,\vec{f}_2)\in L^\infty \cap W^{1,2}(D^2,V_2({\R}^m))$, where $V_2({\R}^m)$ denotes the space of orthonormal 2-frames in ${\R}^m$. 
For $\theta\in W^{1,2}(D^2,{\R})$, we denote $(\vec{f}^{\,\theta}_1,\vec{f}^{\,\theta}_2)$ the rotation by the angle $\theta$ of the original frame  $(\vec{f}_1,\vec{f}_2)$:
\begin{align*}
\vec{f}^{\,\theta}_1+i\vec{f}^{\,\theta}_2=e^{i\theta}(\vec{f}_1+i\vec{f}_2).
\end{align*}
We have $(\vec{f}^{\,\theta}_1,\vec{f}^{\,\theta}_2)\in L^\infty\cap W^{1,2}(D^2,V_2(\R^m))$.
 We denote $g=g_{\bP}$. We look now for a rotation of this orthonormal frame realizing the following absolute minimum (for notation, see \ref{not-met})
\be
\label{I.8}
\begin{array}{l}
\ds\inf_{\theta\in W^{1,2}(D^2,{\R})}\int_{D^2} |\lan \vec{f}^{\,\theta}_1,d\vec{f}^{\,\theta}_2\ran|^2_{g}\ d\textup{vol}_{g}
\\[5mm]
\ds=\inf_{\theta\in W^{1,2}(D^2,{\R})}\int_{D^2} \sum_{i,j=1}^2g^{ij}\, \lan\vec{f}^{\,\theta}_1,\p_{i}\vec{f}^{\,\theta}_2\ran\,\lan\vec{f}^{\,\theta}_1,\p_{j}\vec{f}^{\,\theta}_2\ran\, \sqrt{\det g}\ dx^1\we dx^2.
\end{array}
\ee
For $k\in\{1,2\}$, we have 
\be
\label{I.10}
\lan\vec{f}^{\,\theta}_1,\p_{k}\vec{f}^{\,\theta}_2\ran=\p_{k}\theta+\lan\vec{f}_1,\p_{k}\vec{f}_2\ran.
\ee
Hence the following energy is strictly convex in $W^{1,2}(D^2,{\R})$:
\begin{align*}
E(\theta)\coloneqq\int_{D^2} |\lan \vec{f}^{\,\theta}_1,d\vec{f}^{\,\theta}_2\ran|^2_{g}\, d\textup{vol}_{g}=\int_{D^2} |d\theta+\lan \vec{f}_1,d\vec{f}_2\ran|^2_{g}\ d\textup{vol}_{g}.
\end{align*}
By a standard application of Mazur's lemma (see for instance \cite[Corollary 3.8]{Brezis11}), we obtain a unique $\theta\in W^{1,2}(D^2,\R)$ that achieves the minimum of $E$. It satisfies the Euler--Lagrange equation (see Notation \ref{not-Hosta} for the Hodge star operator $*_g$):
\begin{align}\label{eularequ}
 \forall \phi\in W^{1,2}(D^2,{\R}),\qquad \int_{D^2}d\phi\wedge *_g\lf( d\theta+\lan\vec{f}_1,d\vec{f}_2\ran  \rg) =0.
\end{align}
Denote $(\vec{e}_1,\vec{e}_2)\coloneqq(\vec{f}^{\,\theta}_1,\vec{f}^{\,\theta}_2)$ for this value of $\theta$. From \eqref{eularequ} we obtain that
\[
d\Big(  *_g \lan\vec{e}_1,d\vec{e}_2\ran  \Big)=0 \quad \text{in }\mathcal{D}'(D^2).
\] 
By the weak Poincar\'e lemma, there exists $\la\in W^{1,2}(D^2)$ such that 
\begin{align}\label{eq:def_lambda}
    d\la =  *_g \lan\vec{e}_1,d\vec{e}_2\ran.
\end{align}
Substituting this into \eqref{eularequ}, we have 
\[ 
\forall \phi\in W^{1,2}(D^2,{\R}),\qquad \int_{D^2}d\phi\wedge d\la =0.
\]
Hence for any $\phi\in C^\infty(\overline{D^2})$, we have by Stokes theorem
\[
\int_{\p D^2}\la\, d\phi =\int_{D^2} d(\la\, d\phi)=0.
\]
It follows that
$\la \big |_{\mathbb S^1}$ is constant by the fundamental lemma of the calculus of variations \cite[Lemma 1.1.1]{J98}. Up to adding a constant we can assume $\la \in W^{1,2}_0(D^2)$. By applying the operator $*_g\, d \,*_g$ to the equation \eqref{eq:def_lambda}, we obtain
\[\Delta_g \la=*_g\, d*_g d\la=-*_g d\lan\vec{e}_1,d\vec{e}_2\ran \quad \text{in }D^2. \]
This is written in coordinates as:
\[
\begin{cases}
\ds\,\p_{i}\lf(\sqrt{\det g}\ g^{ij}\,\p_{j}\la\rg)=\p_2 \vec e_1 \cdot \p_1 \vec e_2 -\p_1 \vec e_1 \cdot \p_2 \vec e_2\quad &\mbox{ in }D^2,\\[3mm]
\ds\la=0\quad &\mbox{ on }\p D^2.
\end{cases}
\]
We deduce from Lemma \ref{lm-chali} that $\la\in C^0(\overline{D^2})\cap W^{1,2}_0(D^2,{\R})$.  
For $i\in \{1,2\}$, we define the pullback of $\vec e_i$ by $\bP$ and the dual $1$-forms \begin{align}\begin{dcases}\label{pullvec}e_i\coloneqq g^{jk}\,\lan \vec e_i, \p_j\bP \ran \,\dfrac{\p}{\p x^k}\in L^\infty\cap W^{1,2}(D^2,TD^2),\\[2mm]
e_i^\ast\coloneqq\lan \vec{e}_i,\p_{j}\vec{\Phi}\ran \,dx^j=\lan \vec e_i, d\bP\ran\in L^\infty \cap W^{1,2}(D^2,T^*D^2).
\end{dcases}
\end{align}
It holds that $e_i^*(e_j)=\delta_{ij}$ and $d\bP(e_i)=\vec e_i$ for any $i,j\in\{1,2\}$.
Since both $e_i^*$ and $e_i$ are in $L^\infty\cap W^{1,2}(D^2)$, by a weak version of Cartan's magic formula, we have as in the smooth case \cite[Equation (1.70)]{Ri16}:
\be\label{dei*}
 de_i^*=-e_i^* ([e_1,e_2])\, e_1^*\wedge e_2^*=d\la\wedge e_i^\ast.
\ee
By direct computation we also have 
\[d\bP([e_1,e_2])=\pi_T (e_1(\vec e_2)-e_2(\vec e_1))\in L^2(D^2,\R^m),\] 
where $\pi_T$ is the orthogonal projection onto $d\bP(TD^2)$.
Since $\la\in L^\infty(D^2)\cap W^{1,2}(D^2,{\R})$, we deduce from the chain rule that $e^{-\la}\in W^{1,2}(D^2)$ and $d(e^{-\la})=-e^{-\la}\,d\la\in L^2(D^2,T^*D^2) $. Hence the product rule and \eqref{dei*} lead to
\[
d(e^{-\la}\, e_i^*)=0.
\]
By weak Poincar\'e lemma, we obtain the existence of a map $\vp^i\in W^{1,\infty}\cap W^{2,2}(D^2,{\R})$ satisfying
\be
\label{dp=}
d\varphi^i=e^{-\la}\, e_i^*.
\ee
Let $\vp\coloneqq(\vp^1,\vp^2)\in  W^{1,\infty}\cap W^{2,2}(D^2,{\R^2})$. By \eqref{pullvec} and \eqref{dp=}, we have
\[
\det (\g\varphi)=e^{-2\la}\det\big(\lan \vec{e}_i,\p_{j}\vec{\Phi}\ran\big)=e^{-2\la} \sqrt{\det(\p_i\bP\cdot \p_j\bP)}.
\]
Since $(e_1,e_2)$ is positive and using \eqref{immconD}, we obtain that all the eigenvalues of $(\p_i\bP \cdot \p_j\bP)_{1\le i,j\le 2}$ lie in $[\La^{-1},\La]$ and thus, it holds that
\[
    \det (\g\varphi) \geq \La^{-1}\,e^{-2\|\la\|_{L^\infty}}>0.
\]
It follows that there exists an open neighborhood $U\subset D^2$ of $0$ such that $\vp$ is open and injective on $U$, and its inverse $\vp^{-1}\in W^{1,\infty}\cap W^{2,2}(\vp(U),\R^2)$ by Lemma \ref{thinv}. We define 
\[\frac{\p}{\p \vp^i}\coloneqq \p_i (\vp^{-1})^j\, \frac{\p}{\p x^j}\in L^\infty\cap W^{1,2}(\vp(U),\vp_*(TD^2)),\]
where $(\vp^{-1})^j$ is the $j$-th component of $\vp^{-1}$. Since $e^\la( d\vp^1,d\vp^2 )$ is orthonormal with respect to $g$, 
its dual $e^{-\la}(\frac{\p}{\p \vp^1},\frac{\p}{\p \vp^2})$ is also 
orthonormal with respect to $g$. Therefore, by chain rule, we finally obtain \begin{align*}
     \p_i(\bP\circ \vp^{-1})\cdot \p_j(\bP\circ \vp^{-1})=d\bP\Big(\frac{\p}{\p \vp^i}\Big)\cdot d\bP\Big(\frac{\p}{\p \vp^j}\Big) =g \Big(\frac{\p}{\p \vp^i},\frac{\p}{\p \vp^j}\Big)=e^{2\la} \delta_{ij}.
\end{align*}
\end{hproof2}

\subsection{Constant of Wente's inequality for a metric and consequences}

In 1971, Wente \cite{Wen} discovered that solutions to elliptic equations of the following form are more regular than expected:
\begin{align*}
    \begin{cases}
        \Delta u = \nabla a \cdot \nabla^{\perp} b & \text{in }D^2,\\
        u = 0 & \text{on }\partial D^2.
    \end{cases}
\end{align*}
Indeed, it holds
\begin{align*}
    \|u\|_{L^{\infty}(D^2)} + \|\nabla u\|_{L^2(D^2)} \leq C\|\nabla a\|_{L^2(D^2)} \|\nabla b\|_{L^2(D^2)}.
\end{align*}
Since then, equations that possess this Jacobian structure have been intensively studied. Bethuel--Ghidaglia \cite{bethuel1993} proved that the constant $C$ is independent of the domain. Ge \cite{ge1998}, Braraket \cite{baraket1996} and Topping \cite{topping1997} studied such equations on surfaces and proved that the constant is independent of the underlying surface. A weighted Wente inequality has also been studied by Da Lio, Gianocca and Rivière in \cite{dalio2025,gianocca2025}. In this section, we provide a new proof of the independence of the constant with respect to the underlying metric and domain in Proposition \ref{pr:Wente}. As an application, we obtain the existence of a Coulomb frame under the smallness of the Gauss curvature in Proposition \ref{pr:Clb}.

\subsubsection{Wente inequality}

The strategy follows the one introduced in \cite[Theorem 1.3]{bethuel1993} and \cite[Lemma A.1]{dalio2025b}, relying on a direct study of the Green kernel.
	
	\begin{Prop}\label{pr:Wente}
    Let $\Omega\subset \R^2$ be a smooth simply connected open set. Let $g$ be a smooth metric on $\Omega$. Let $a,b\in W^{1,2}(\Omega)$ and $u\in W^{1,1}(\Omega)$ be the solution to 
		\begin{equation}\label{eq:Wente}
			\left\{
			\begin{aligned}
				& -\lap_g u = *_g (da\wedge db) \quad &&\text{in }\Omega,\\
				& u=0 \quad &&\text{on }\p \Omega.
			\end{aligned}
			\right.
		\end{equation}
		Then, we have the estimate 
		\begin{align*}
        \begin{cases}
 			\|u\|_{L^{\infty}(\Omega)}  \leq 18\,\|da\|_{L^2(\Omega,g)} \|db\|_{L^2(\Omega,g)}, \\[2mm]
            \|du\|_{L^2(\Omega,g)} \leq 3\sqrt{2}\,\|da\|_{L^2(\Omega,g)} \|db\|_{L^2(\Omega,g)}.
        \end{cases}
		\end{align*}
	\end{Prop}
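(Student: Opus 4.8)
The plan is to remove the metric by conformal invariance and then estimate the Dirichlet Green kernel of the flat Laplacian directly, exploiting the divergence structure of the Jacobian, following the Bethuel--Ghidaglia scheme.

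\emph{Step 1 (removing the metric).} The first observation is that, in dimension two, every quantity in \eqref{eq:Wente} and in the conclusion is conformally invariant. Writing $g=e^{2\lambda}(dx_1^2+dx_2^2)$ in a conformal chart, which exists globally on the simply connected $\Omega$, one has that $da\wedge db$ does not involve $g$, while $*_g(da\wedge db)\,d\textup{vol}_g=da\wedge db$, $\lap_g u\,d\textup{vol}_g=\lap u\,dx$, and $\|da\|_{L^2(\Omega,g)}=\|\nabla a\|_{L^2}$ (likewise for $b$ and $u$), and of course $\|u\|_{L^\infty}$ is metric-independent. Pulling everything back through the conformal diffeomorphism, it suffices to treat $g=g_{\std}$: we must show that the solution of $-\lap u=\nabla a\cdot\nabla^\perp b$ in $\Omega$ with $u=0$ on $\p\Omega$ obeys $\|u\|_{L^\infty(\Omega)}\le 18\|\nabla a\|_{L^2}\|\nabla b\|_{L^2}$ and $\|\nabla u\|_{L^2(\Omega)}\le 3\sqrt2\|\nabla a\|_{L^2}\|\nabla b\|_{L^2}$, with constants independent of $\Omega$; exhausting $\Omega$ by bounded subdomains we may assume $\Omega$ bounded.

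\emph{Step 2 (Green representation and the Jacobian structure).} Let $G=G_\Omega\ge 0$ be the Dirichlet Green function of $-\lap$ on $\Omega$, so $u(x)=\int_\Omega G(x,y)\,(\nabla a\cdot\nabla^\perp b)(y)\,dy$. The key structural fact is that the right-hand side is a divergence, $\nabla a\cdot\nabla^\perp b=\mathrm{div}(a\,\nabla^\perp b)=-\mathrm{div}(b\,\nabla^\perp a)$. Integrating by parts, with no boundary term since $G(x,\cdot)$ vanishes on $\p\Omega$, gives
\begin{align*}
u(x)=-\int_\Omega a(y)\,\nabla_y G(x,y)\cdot\nabla^\perp b(y)\,dy=\int_\Omega b(y)\,\nabla_y G(x,y)\cdot\nabla^\perp a(y)\,dy .
\end{align*}
Moreover $\int_\Omega \nabla_y G(x,y)\cdot\nabla^\perp b(y)\,dy=0$ (again $G(x,\cdot)|_{\p\Omega}=0$ and $\mathrm{div}\,\nabla^\perp b=0$), so one may subtract from $a$ any constant in the first formula, and from $b$ any constant in the second. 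This is exactly where the compensation that makes \eqref{eq:Wente} subcritical enters.

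\emph{Step 3 (the kernel estimate, the heart of the proof).} It remains to bound $\bigl|\int_\Omega G(x,y)\,(\nabla a\cdot\nabla^\perp b)(y)\,dy\bigr|$, and its $x$-derivative, by a universal multiple of $\|\nabla a\|_{L^2}\|\nabla b\|_{L^2}$. I would decompose $G(x,y)=-\tfrac1{2\pi}\log|x-y|+h_x(y)$ with $h_x$ harmonic in $y$ and equal to $\tfrac1{2\pi}\log|x-y|$ on $\p\Omega$. For the explicit logarithmic part one uses $\bigl|\nabla_y\log|x-y|\bigr|=|x-y|^{-1}$ together with the representations of Step 2, keeping $a$ and $b$ coupled — a Cauchy--Schwarz that decoupled them would destroy the cancellation — and, passing to polar coordinates around $x$ (or comparing with the Green function of a disk), one is reduced to a one-dimensional integral inequality with explicit constant, in which the radial expressions for $\|\nabla a\|_{L^2}^2$ and $\|\nabla b\|_{L^2}^2$ are used. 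For the harmonic part one integrates by parts once more, using $\lap h_x=0$; this is precisely where the only domain- and scale-dependent piece of $G$ is eliminated, and what is left is again controlled by $\|\nabla a\|_{L^2}\|\nabla b\|_{L^2}$. Bookkeeping the numerical losses yields the constant $18$. The gradient bound then needs no new idea: the same analysis delivers the bilinear estimate $\bigl|\int_\Omega\varphi\,(\nabla a\cdot\nabla^\perp b)\bigr|\le C\|\nabla\varphi\|_{L^2}\|\nabla a\|_{L^2}\|\nabla b\|_{L^2}$ for $\varphi\in W^{1,2}_0(\Omega)$, and testing the equation against $u$,
\begin{align*}
\|\nabla u\|_{L^2(\Omega)}^2=\int_\Omega u\,(\nabla a\cdot\nabla^\perp b)\le C\,\|\nabla u\|_{L^2(\Omega)}\,\|\nabla a\|_{L^2(\Omega)}\,\|\nabla b\|_{L^2(\Omega)} ,
\end{align*}
one divides by $\|\nabla u\|_{L^2}$ and optimizes the constants to get $C=3\sqrt2$.

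\emph{Main obstacle.} The difficulty is concentrated in Step 3 and is the borderline nature of the inequality: $|\nabla_yG(x,y)|$ has a critical $|x-y|^{-1}$ singularity and $G$ itself is only logarithmically bounded, so no naive Hölder or Cauchy--Schwarz estimate closes; one must exploit the divergence (Jacobian) structure of the source and the precise logarithmic shape of the kernel simultaneously, and must arrange the splitting $G=-\tfrac1{2\pi}\log|x-y|+h_x$ so that the harmonic correction $h_x$, which alone carries the domain- and scale-dependence, cancels out and leaves a dimensionless constant. This cancellation is the mechanism behind the domain- and metric-independence asserted in the proposition.
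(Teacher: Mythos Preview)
Your conformal reduction in Step~1 is valid in two dimensions, but it sidesteps the very point of the proposition in context: the paper's proof works intrinsically with the Green function $G_p$ of $-\lap_g$, never passing to conformal coordinates, precisely because the surrounding sections aim to develop techniques that survive in dimension four where no such reduction exists. So even if your route closes, it is a genuinely different argument from the one the paper gives.

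That said, your Step~3 does not close as written. The split $G(x,y)=-\tfrac{1}{2\pi}\log|x-y|+h_x(y)$ is not the mechanism behind domain-independence: the harmonic correction $h_x$ carries all the domain dependence and has no universal pointwise or gradient bound, so the assertion that it ``cancels out'' after one more integration by parts is unjustified. The actual Bethuel--Ghidaglia scheme --- and the paper's own proof --- proceeds differently: one slices $\Omega$ by the level sets $\{G_p=\alpha\}$, uses the coarea formula to obtain the exact identity $\int_{\{\alpha\le G_p\le\beta\}}|dG_p|_g^2\,d\textup{vol}_g=\beta-\alpha$, and then, after a pigeonhole choice of levels $\alpha_n\in[n,n+1]$, integrates by parts on each annulus, exploiting that each level curve is closed so one may subtract the mean of $a$ along it. This is what produces a dimensionless constant; your log-plus-harmonic decomposition does not.

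Your gradient-bound argument is also overcomplicated and leans on a trilinear $W^{1,2}_0$ estimate you have not established. The paper's route is immediate: multiply the equation by $u$, integrate, and bound
\[
\int_\Omega|du|_g^2\,d\textup{vol}_g=\int_\Omega u\,*_g(da\wedge db)\,d\textup{vol}_g\le\|u\|_{L^\infty}\|da\|_{L^2(\Omega,g)}\|db\|_{L^2(\Omega,g)}\le 18,
\]
after normalizing $\|da\|_{L^2}=\|db\|_{L^2}=1$, which gives $3\sqrt{2}$ directly. No separate bilinear estimate is needed.
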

	
	\begin{Rm}
		\begin{itemize}
			\item The first equation of \eqref{eq:Wente} can also be written as
			\begin{align*}
				-\p_i (g^{ij}\sqrt{\det g}\, \p_j u) = \na a\cdot \nap b.
			\end{align*}
			\item We can reduce the regularity on $g$, that is to say we can assume $g$ to be a metric of class $L^{\infty}(\Omega)$ such that there exists $C>0$ satisfying $C^{-1}g_{\text{std}}\leq g\leq Cg_{\text{std}}$. By using a mollifier argument, there exists a sequence $(g_k)_{k\in\N}$ of smooth metrics converging to $g$ in $L^{\infty}$ and almost everywhere such that $C^{-1} g_{\text{std}} \leq g_k\leq Cg_{\text{std}}$. Hence, we can pass to the limit in Proposition \ref{pr:Wente}. 
		\end{itemize}
	\end{Rm}
	
	\begin{proof}
    Up to dividing $a$ and $b$ by $\|da\|_{L^2(\Omega,g)}$ and $\|db\|_{L^2(\Omega,g)}$ respectively, we can assume that $\|da\|_{L^2(\Omega,g)}=\|db\|_{L^2(\Omega,g)}=1$.
		Given $p\in \Omega$, let $G_p$ be the solution to 
		\begin{equation*}
			\begin{cases}
				-\lap_g G_p = \delta_p & \text{in }\Omega,\\
				G_p = 0 & \text{on }\p \Omega.
			\end{cases}
		\end{equation*}
		By the maximum principle, it holds $G_p>0$ on $\Omega$, see for instance \cite[Chapter 4, Section 2, Theorem 4.17]{aubin1998}. Given $0\leq \alpha<\beta$, we define
		\begin{align*}
			\omega_p(\alpha,\beta) \coloneqq \{x\in \Omega : \alpha\leq G_p(x)\leq \beta \}, & & \Omega_p(\alpha) \coloneqq\{ x\in \Omega : \alpha \leq G_p(x)\}.
		\end{align*}
		By elliptic regularity, it holds $G_p\in C^{\infty}(\Omega\setminus \{p\})$. Thanks to Sard's theorem (see for instance \cite[Chapter 6, Theorem 6.10]{jlsmo}), for almost every $\alpha>0$, the set $G_p^{-1}(\{\alpha\})$ is a regular curve in $\Omega$. We orient this curve with the vector field
		\begin{align*}
			\nu^i = - \frac{g^{ij}\p_j G_p}{|dG_p|_g}.
		\end{align*}
		Hence we obtain by integration by parts
		\begin{align*}
			\int_{G_p^{-1}(\{\alpha\})} |dG_p|_g\, d\textup{vol}_g = -\int_{G_p^{-1}(\{\alpha\})} \p_{\nu} G_p\, d\textup{vol}_g = -\int_{\Omega_p(\alpha)} \lap_g G_p\, d\textup{vol}_g = 1.
		\end{align*}
		Using the coarea formula, see for instance \cite[Chapter IV, Theorem 1]{chavel1984}, we obtain for almost every $0\leq \alpha<\beta$:
		\begin{align}\label{eq:L2_Gp}
			\int_{\omega_p(\alpha,\beta)} |dG_p|^2_g\, d\textup{vol}_g = \int_{\alpha}^{\beta} \int_{G_p^{-1}(\{s\})} |dG_p|_g\, d\textup{vol}_g\, ds = \beta- \alpha.
		\end{align}
		Now, the solution $u$ to \eqref{eq:Wente} is given by 
		\begin{align}\label{eq:rep}
			u(p) = \int_{\Omega} G_p(x)\, da\wedge db.
		\end{align}
		For each $n\in \N$, there exists $\alpha_n\in [n,n+1]$ such that 
		\begin{align}
			\int_{G_p^{-1}(\{\alpha_n\})} |da|_g + |db|_g\, d\textup{vol}_g & \leq 2\int_n^{n+1} \int_{G_p^{-1}(s)} (|da|_g + |db|_g)\, |dG_p|_g\, d\textup{vol}_g \nonumber \\[2mm]
            &\leq  2\int_{\omega_p(n,n+1)} (|da|_g + |db|_g)\, |dG_p|_g\, d\textup{vol}_g.\label{eq:ch_a}
		\end{align}
		From \eqref{eq:rep}, it holds
		\begin{align}
			u(p) & = \sum_{n\in\N} \int_{\omega_p(\alpha_n,\alpha_{n+1})} G_p(x)\, *_g(da\wedge db)\, d\textup{vol}_g \nonumber\\[2mm]
			& = \sum_{n\in\N} \int_{\omega_p(\alpha_n,\alpha_{n+1})} [G_p(x) - n ]\, *_g(da\wedge db)\, d\textup{vol}_g \label{eq:rep1}\\[2mm]
            &\qquad + \sum_{n\in\N} n\int_{\omega_p(\alpha_n,\alpha_{n+1})} *_g(da\wedge db)\, d\textup{vol}_g. \label{eq:rep2}
		\end{align}
		We estimate the term \eqref{eq:rep1} by Cauchy--Schwarz:
		\begin{align}
			\left| \sum_{n\in\N} \int_{\omega_p(\alpha_n,\alpha_{n+1})} [G_p(x) - n ]\, *_g(da\wedge db)\, d\textup{vol}_g \right| & \leq 2 \sum_{n\in\N} \int_{\omega_p(\alpha_n,\alpha_{n+1})} |da|_g\, |db|_g\, d\textup{vol}_g \nonumber\\[2mm]
			& \leq 2\int_{\Omega} |da|_g\, |db|_g\, d\textup{vol}_g. \label{eq:rep11}
		\end{align}
		We estimate the term \eqref{eq:rep2} by integration by parts:
		\begin{align}
			& \sum_{n\in\N} n\int_{\omega_p(\alpha_n,\alpha_{n+1})} *_g(da\wedge db)\, d\textup{vol}_g \nonumber \\[2mm]
            = &\ \sum_{n\in\N} n\left(\int_{G_p^{-1}(\alpha_{n+1})} (\p_{\tau} b) a\, d\textup{vol}_g - \int_{G_p^{-1}(\alpha_{n})} (\p_{\tau} b) a\, d\textup{vol}_g \right) \nonumber \\[2mm]
			     =& - \sum_{n\in\N^*} \int_{G_p^{-1}(\alpha_{n})} (\p_{\tau} b) a\, d\textup{vol}_g . \label{eq:rep21}
		\end{align}
		For each $n\in \N^*$, we decompose $G_p^{-1}(\{\alpha_n\}) = \bigcup_{i\in I_n} \gamma^i_n$, where every $\gamma^i_n$ is a closed embedded curve in $\Omega$. Then, it holds
		\begin{align*}
			\left| \int_{G_p^{-1}(\alpha_{n})} (\p_{\tau} b) a\, d\textup{vol}_g \right| & = \left| \sum_{i\in I_n} \int_{\gamma^i_n} (\p_{\tau} b) a\, d\textup{vol}_g \right| \\[2mm]
			 & = \left| \sum_{i\in I_n} \int_{\gamma^i_n} (\p_{\tau} b) \left(a - \fint_{\gamma^i_n} a\, d\textup{vol}_g\right)\, d\textup{vol}_g \right| \\[2mm]
			 & \leq \sum_{i\in I_n} \left( \int_{\gamma^i_n} |\p_{\tau} b|\, d\textup{vol}_g\right)  \left( \int_{\gamma^i_n} |da|_g\, d\textup{vol}_g \right) \\[2mm]
			 & \leq \left(\int_{G_p^{-1}(\{\alpha_n\})} |db|_g\, d\textup{vol}_g \right)\left(\int_{G_p^{-1}(\{\alpha_n\})} |da|_g\, d\textup{vol}_g \right).
		\end{align*}
		Coming back to \eqref{eq:rep21}, we obtain
		\begin{align*}
			\left| \sum_{n\in\N} n\int_{\omega_p(\alpha_n,\alpha_{n+1})} *_g(da\wedge db)\, d\textup{vol}_g \right| \leq \sum_{n\in\N^*} \left(\int_{G_p^{-1}(\{\alpha_n\})} |db|_g\, d\textup{vol}_g \right)\left(\int_{G_p^{-1}(\{\alpha_n\})} |da|_g\, d\textup{vol}_g \right).
		\end{align*}
		Using the choice of the $\alpha_n$ in \eqref{eq:ch_a}, we obtain
		\begin{align*}
			& \left| \sum_{n\in\N} n\int_{\omega_p(\alpha_n,\alpha_{n+1})} *_g(da\wedge db)\, d\textup{vol}_g \right| \\[3mm]
            \leq &\ 4\sum_{n\in\N^*} \left(\int_{\omega_p(n,n+1)} (|da|_g+|db|_g)\, |dG_p|_g\, d\textup{vol}_g \right)^2 \\[3mm]
			\leq&\ 4\sum_{n\in\N^*} \|dG_p\|_{L^2(\omega_p(n,n+1),g)}^2\, \left( \|da\|_{L^2(\omega_p(n,n+1),g)} + \|db\|_{L^2(\omega_p(n,n+1),g)}\right)^2.
		\end{align*}
		Thanks to \eqref{eq:L2_Gp}, we obtain
		\begin{align}
			& \left| \sum_{n\in\N} n\int_{\omega_p(\alpha_n,\alpha_{n+1})} *_g(da\wedge db)\, d\textup{vol}_g \right| \nonumber \\[3mm]
			& \leq 4\sum_{n\in\N^*} \left( \|da\|_{L^2(\omega_p(n,n+1),g)} + \|db\|_{L^2(\omega_p(n,n+1),g)}\right)^2 \nonumber\\[3mm]
			& \leq 8\,\|da\|_{L^2(\Omega,g)}^2 + 8\|db\|_{L^2(\Omega,g)}^2 = 16. \label{eq:rep22}
		\end{align}
		We used Cauchy--Schwarz and the equalities $\|da\|_{L^2(\Omega,g)}=\|db\|_{L^2(\Omega,g)}=1$ for the last estimate. Coming back to \eqref{eq:rep}, together with \eqref{eq:rep11} and \eqref{eq:rep22}, we deduce that 
		\begin{align*}
			\|u\|_{L^{\infty}(\Omega)} \leq 18.
		\end{align*}
		We multiply \eqref{eq:Wente} by $u$ and integrate by parts:
		\begin{align*}
			\int_{\Omega} |du|^2_g\, d\textup{vol}_g & = -\int_{\Omega} u\, \lap_g u\, d\textup{vol}_g  \leq \|u\|_{L^{\infty}(\Omega)} \int_{\Omega} |da|_g\, |db|_g\, d\textup{vol}_g \leq 18.
		\end{align*}
	\end{proof}

	\subsubsection{Estimate of the moving frame}
	
	It has been proved in \cite{li2013} that a Wente-type inequality leads to an estimate on Coulomb frames. Using \ref{pr:Wente}, we obtain the following result.
	
	\begin{Prop}\label{pr:Clb}
        Let $\Omega\subset \R^2$ be a smooth simply connected open set. Consider a weak immersion $\bP :\Omega\to \R^m$ such that, with $g=g_{\bP}$, 
		\begin{align*}
			\int_{\Omega} |K_g|\, d\textup{vol}_g \leq \frac{1}{36}.
		\end{align*}
		Then, there exists an orthonormal frame $(\vec{e}_1,\vec{e}_2) \in W^{1,2}(\Omega,g)$ such that 
		\begin{align*}
			\int_{\Omega} |d\vec{e}_1|^2_g + |d\vec{e}_2|^2_g\, d\textup{vol}_g \leq \frac{3}{2}\int_{\Omega} |d\bn|^2_g\, d\textup{vol}_g.
		\end{align*} 
	\end{Prop}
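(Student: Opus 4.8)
The plan is to realize $(\vec{e}_1,\vec{e}_2)$ as the Coulomb gauge, exactly as in the proof of Theorem~\ref{thexicon}. Starting from the Gram--Schmidt frame $(\vec f_1,\vec f_2)\in L^\infty\cap W^{1,2}(\Omega,V_2(\R^m))$ of $(\p_1\bP,\p_2\bP)$, I would minimize the strictly convex functional $E(\theta)=\int_{\Omega}|d\theta+\langle\vec f_1,d\vec f_2\rangle|^2_g\,d\textup{vol}_g$ over $\theta\in W^{1,2}(\Omega)$ (the minimizer being unique up to an additive constant) and set $\vec{e}_1+i\vec{e}_2=e^{i\theta}(\vec f_1+i\vec f_2)$. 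Then $\omega\coloneqq\langle\vec{e}_1,d\vec{e}_2\rangle=d\theta+\langle\vec f_1,d\vec f_2\rangle\in L^2(\Omega,T^*\Omega)$ satisfies the Euler--Lagrange equation $d^{*_g}\omega=0$ in $\mathcal D'(\Omega)$ together with the natural boundary condition that $\omega$ has vanishing $g$-normal component on $\p\Omega$. Since $\Omega$ is simply connected, the weak Poincaré lemma applied to the closed $1$-form $*_g\omega$ gives $\mu\in W^{1,2}(\Omega)$ with $*_g\omega=d\mu$, i.e. $\omega=-*_g d\mu$; the boundary condition forces $\mu$ constant on $\p\Omega$, so we may take $\mu\in W^{1,2}_0(\Omega)$, and since $*_g$ is a pointwise isometry, $\int_{\Omega}|\omega|^2_g\,d\textup{vol}_g=\int_{\Omega}|d\mu|^2_g\,d\textup{vol}_g$.

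Next I would record two structural identities. Writing $d\vec{e}_1=-\omega\,\vec{e}_2+\pi_{\bn}d\vec{e}_1$ and $d\vec{e}_2=\omega\,\vec{e}_1+\pi_{\bn}d\vec{e}_2$ and using that the tangential and normal parts are $g$-orthogonal, one gets pointwise $|d\vec{e}_1|^2_g+|d\vec{e}_2|^2_g=2|\omega|^2_g+|\pi_{\bn}d\vec{e}_1|^2_g+|\pi_{\bn}d\vec{e}_2|^2_g$; moreover, since $\bn=\star(\vec{e}_1\wedge\vec{e}_2)$ and $\star$ is an isometry, $d\bn=\star(\pi_{\bn}d\vec{e}_1\wedge\vec{e}_2+\vec{e}_1\wedge\pi_{\bn}d\vec{e}_2)$ with the two summands $g$-orthogonal, hence $|d\bn|^2_g=|\pi_{\bn}d\vec{e}_1|^2_g+|\pi_{\bn}d\vec{e}_2|^2_g$. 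Thus the Proposition reduces to showing $\int_{\Omega}|d\mu|^2_g\,d\textup{vol}_g\le\tfrac14\int_{\Omega}|d\bn|^2_g\,d\textup{vol}_g$. On the other hand $d\omega=d\langle\vec{e}_1,d\vec{e}_2\rangle=d\vec{e}_1\dwe d\vec{e}_2$, and because the tangential parts cancel in the exterior product this equals $\pi_{\bn}d\vec{e}_1\dwe\pi_{\bn}d\vec{e}_2=K_g\,d\textup{vol}_g$ by the Gauss equation; therefore $\mu$ solves $-\lap_g\mu=*_g d\omega=K_g$ in $\mathcal D'(\Omega)$ with $\mu\in W^{1,2}_0(\Omega)$.

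Now the analytic heart. Writing $\vec{e}_i=(e^1_i,\dots,e^m_i)$, the right-hand side $d\vec{e}_1\dwe d\vec{e}_2=\sum_{k=1}^m de^k_1\wedge de^k_2$ is a sum of Jacobians, so $\mu=\sum_k\mu^k$ with $-\lap_g\mu^k=*_g(de^k_1\wedge de^k_2)$ and $\mu^k\in W^{1,2}_0(\Omega)$. Since $g$ is an $L^\infty$ metric comparable to $g_{\text{std}}$ and $e^k_i\in W^{1,2}$, Proposition~\ref{pr:Wente} applies (through its Remark) and, after summing the componentwise $L^\infty$ bounds and using $2ab\le a^2+b^2$, yields $\|\mu\|_{L^\infty(\Omega)}\le C_0\bigl(\|d\vec{e}_1\|^2_{L^2(\Omega,g)}+\|d\vec{e}_2\|^2_{L^2(\Omega,g)}\bigr)$ with $C_0$ a universal constant coming from the Wente estimate. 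Combining $\int_{\Omega}|d\mu|^2_g\,d\textup{vol}_g=\int_{\Omega}\mu(-\lap_g\mu)\,d\textup{vol}_g=\int_{\Omega}\mu K_g\,d\textup{vol}_g\le\|\mu\|_{L^\infty}\int_{\Omega}|K_g|\,d\textup{vol}_g\le\tfrac1{36}\|\mu\|_{L^\infty}$ with the two identities of the previous paragraph produces an inequality $\int_{\Omega}|d\mu|^2_g\le c_1\int_{\Omega}|d\mu|^2_g+c_2\int_{\Omega}|d\bn|^2_g$ with $c_1<1$; absorbing the first term on the left and feeding the result back into the energy identity gives $\int_{\Omega}(|d\vec{e}_1|^2_g+|d\vec{e}_2|^2_g)\,d\textup{vol}_g\le\tfrac32\int_{\Omega}|d\bn|^2_g\,d\textup{vol}_g$, the precise value $\tfrac32$ being exactly what the numerical threshold $\tfrac1{36}$ is calibrated to produce once the sharp constant of Proposition~\ref{pr:Wente} is used.

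The main obstacle I anticipate is twofold. First, one must verify that $K_g\,d\textup{vol}_g$ genuinely takes the Jacobian form $\sum_k de^k_1\wedge de^k_2$ required to invoke the Wente estimate — this is where the moving-frame structure (the cancellation of the tangential parts in $d\langle\vec{e}_1,d\vec{e}_2\rangle$) is indispensable; without the Coulomb gauge one cannot even write the equation $-\lap_g\mu=K_g$ with $\mu$ controlled. Second, one must track the explicit constants through the structural identities and the Wente estimate so that the absorption closes with the stated factor $\tfrac32$ rather than a larger constant; a cruder bookkeeping still gives a finite bound $C\int_{\Omega}|d\bn|^2_g$ but not the sharp one. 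A further, essentially routine, point is to justify the weak-regularity manipulations for a mere $W^{2,2}$ weak immersion — the weak Poincaré lemma, the trace meaning of the natural boundary condition, and the identity $\int_{\Omega}\mu(-\lap_g\mu)=\int_{\Omega}|d\mu|^2_g$ for $\mu\in W^{1,2}_0\cap L^\infty$ — or, alternatively, to reduce to the smooth case using the approximation results of \Cref{sec:Approximation}.
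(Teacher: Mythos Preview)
Your proposal is correct and follows essentially the same route as the paper's own proof: construct the Coulomb frame (the paper cites H\'elein's Lemma~4.1.3, you build it by the minimisation of Theorem~\ref{thexicon}), introduce the potential $\lambda$ (your $\mu$) for $*_g\langle\vec e_1,d\vec e_2\rangle$ with zero boundary values, identify $-\Delta_g\lambda$ with $K_g$ (equivalently with $*_g(d\vec e_1\dwe d\vec e_2)$), apply the Wente estimate of Proposition~\ref{pr:Wente}, and absorb using the smallness of $\int_\Omega|K_g|\,d\textup{vol}_g$ together with the structural identity $|d\vec e_1|^2_g+|d\vec e_2|^2_g=2|\langle\vec e_1,d\vec e_2\rangle|^2_g+|d\bn|^2_g$. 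The only cosmetic difference is that the paper applies Proposition~\ref{pr:Wente} directly to the vector-valued right-hand side (yielding $\|\lambda\|_{L^\infty}\le 18\|d\vec e_1\|_{L^2(g)}\|d\vec e_2\|_{L^2(g)}$), whereas you apply it componentwise and then use Cauchy--Schwarz and $2ab\le a^2+b^2$; both give the same constant $C_0=9$, so the absorption closes identically.
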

	
	\begin{proof}
		Thanks to \cite[Lemma 4.1.3]{helein2002}, there exists a frame $(\vec{e}_1,\vec{e}_2) \in W^{1,2}(\Omega,g)$ such that 
		\begin{equation}\label{eq:Clb}
			\begin{cases}
				d^{*_g} \left\lan d\vec{e}_1, \vec{e}_2\right\ran_{\R^m} = 0 & \text{in }\Omega,\\
				\left\lan \p_{\nu} \vec{e}_1, \vec{e}_2 \right\ran_{\R^m} = 0 & \text{on }\p \Omega,
			\end{cases}
		\end{equation}
		where $\nu$ is the unit outward-pointing normal for $g$. Moreover, by decomposing each $d\vec{e}_i$ into the sum of its tangential part along $\bP$ and its normal part, we obtain the following estimate
		\begin{align}\label{eq:frame}
			\int_{\Omega} |d\vec{e}_1|^2_{g,\R^m} + |d\vec{e}_2|^2_{g,\R^m}\, d\textup{vol}_g \leq 2\int_{\Omega} |\left\lan d\vec{e}_1, \vec{e}_2 \right\ran_{\R^m}|^2_g\, d\textup{vol}_g + \int_{\Omega} |d\bn|^2_g\, d\textup{vol}_g.
		\end{align}
		By Poincaré Lemma and \eqref{eq:Clb}, there exists $\lambda\in W^{1,2}(\Omega,g)$ such that 
		\begin{equation}\label{eq:lbd}
			\begin{cases}
				d\lambda = *_g \left\lan d\vec{e}_1, \vec{e}_2 \right\ran_{\R^m} & \text{in }\Omega,\\
				\lambda = 0 & \text{on }\p \Omega.
			\end{cases}
		\end{equation}
		Indeed, the boundary condition in \eqref{eq:Clb} implies that $\lambda$ is constant on $\p \Omega$. Since the first relation of \eqref{eq:lbd} depends only on $d\lambda$ and not on $\lambda$ itself, we can add a constant in order to get the second relation of \eqref{eq:lbd}. We apply $d^{*_g}$ to \eqref{eq:lbd} to obtain
		\begin{equation}\label{eq:lbd2}
        \begin{cases}
            -\lap_g\lambda = -*_g d\left( \left\lan d\vec{e}_1, \vec{e}_2 \right\ran_{\R^m}\right) & \text{in }\Om,\\
            \lambda = 0 & \text{on }\p \Om.
        \end{cases}
		\end{equation}
		Thanks to Proposition \ref{pr:Wente}, it holds
		\begin{align*}
        \begin{cases}
 			\|\lambda\|_{L^{\infty}(\Omega)}  \leq 18\,\|d\vec{e}_1\|_{L^2(\Omega,g)}\, \|d\vec{e}_2\|_{L^2(\Omega,g)}, \\[2mm]
            \|d\lambda\|_{L^2(\Omega,g)} \leq 3\sqrt{2}\,\|d\vec{e}_1\|_{L^2(\Omega,g)}\, \|d\vec{e}_2\|_{L^2(\Omega,g)}.
        \end{cases}
		\end{align*}
		Denote $\vae_0\coloneqq\|K_g\|_{L^1(\Omega,g)}$. We multiply the first relation of \eqref{eq:lbd2} by $\lambda$ and integrate by parts:
		\begin{align*}
			\int_{\Omega} |d\lambda|^2_g\, d\textup{vol}_g & \leq \|\lambda\|_{L^{\infty}(\Omega)} \int_{\Omega} \left| d\left( \left\lan d\vec{e}_1, \vec{e}_2 \right\ran_{\R^m}\right) \right|_g\, d\textup{vol}_g \\[3mm]
			 & \leq \|\lambda\|_{L^{\infty}(\Omega)} \int_{\Omega} |K_g|\, d\textup{vol}_g \\[3mm]
			 & \leq 18\, \varepsilon_0\, \|d\vec{e}_1\|_{L^2(\Omega,g)}\, \|d\vec{e}_2\|_{L^2(\Omega,g)}.
		\end{align*}
		Thanks to \eqref{eq:lbd}, we obtain
		\begin{align*}
			\int_{\Omega} |\left\lan d\vec{e}_1, \vec{e}_2 \right\ran_{\R^m}|^2_g\, d\textup{vol}_g & \leq 18\, \varepsilon_0\, \|d\vec{e}_1\|_{L^2(\Omega,g)} \|d\vec{e}_2\|_{L^2(\Omega,g)} \leq 9\, \varepsilon_0 \left( \|d\vec{e}_1\|_{L^2(\Omega,g)}^2 + \|d\vec{e}_2\|_{L^2(\Omega,g)}^2 \right).
		\end{align*}
		Using \eqref{eq:frame}, we obtain
		\begin{align*}
			\int_{\Omega} |\left\lan d\vec{e}_1, \vec{e}_2 \right\ran_{\R^m}|^2_g\, d\textup{vol}_g \leq 9\, \varepsilon_0 \left(2\int_{\Omega} |\left\lan d\vec{e}_1, \vec{e}_2\right\ran_{\R^m}|^2_g\, d\textup{vol}_g + \int_{\Omega} |d\bn|^2_g\, d\textup{vol}_g\right).
		\end{align*}
		If $\vae_0\le\frac{1}{36}$, we obtain
		\begin{align*}
			\frac{1}{2}\int_{\Omega} |\left\lan d\vec{e}_1, \vec{e}_2 \right\ran_{\R^m}|^2_g\, d\textup{vol}_g \leq \frac{1}{4}\int_{\Omega} |d\bn|^2_g\, d\textup{vol}_g.
		\end{align*}
		We conclude thanks to \eqref{eq:frame}.
	\end{proof}

\subsection{Local injectivity and integral densities}\label{sec:Density}

\begin{Dfi}\label{defdensi}
   Let $\Si$ be an orientable $n$-dimensional closed smooth manifold and consider $\bP\in W^{1,\infty}_{\textup{imm}}(\Si,\R^m)$. For any measurable set $E\subset \R^m$, we define the volume of $E\cap \bP(\Sigma)$ as 
   \[
   \textup{Vol}(E)\coloneqq\int_{\Si} \mathbf 1_{\bP^{-1}(E)} \,d\textup{vol}_{g_{\bP}}.
   \]
   For $x\in \R^m$, we define the \textit{density} $\theta_{x}\in \N_0$ at $x$ by the following limit (whenever it exists)
   \begin{align*}
       \theta_x\coloneqq\lim_{r\rightarrow 0^+}\frac{\textup{Vol}(\bP(\Si)\cap B_r(x) )}{r^n|B^n|}.
   \end{align*}
\end{Dfi}
In this section, we prove that weak immersions with VMO derivatives are locally injective and have a well-defined density, which is an integer. For notation on VMO functions, see Section \ref{sec-VMO}.
\begin{thm}\label{loin}
    Let $f\in W^{1,\infty}(B^n,\R^m)$. Suppose that there exist a function $\omega\colon (0,\infty)\rightarrow [0,\infty]$ with $\lim_{r\rightarrow 0} \omega (r)=0$ and a constant $\La>0$ such that \begin{align*}\begin{dcases}
        \beta_{r,B^n}(\g f)\le \omega(r),\quad &\text{for all }\,r>0,\\[3mm]
        \La^{-1}|v|\le |d f_p(v)|\le \La |v|, \quad&\text{for a.e. }p\in B^n \text{ and all }\,v\in T_p(B^n).
        \end{dcases}
        \end{align*} In particular, $\g f\in VMO(B^n)$. Then there exists a constant $C>0$ and an open neighborhood $U$ of the origin, both depending only on $\La,\, \omega,\,n,\,m$ such that $$|f(x)-f(y)|\ge C|x-y|,\qquad \text{for all } \,x,y\in U.$$
\end{thm}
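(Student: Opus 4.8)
\emph{Plan.} The idea is to approximate $f$, on suitably small balls, by an affine map and to use the quantitative $VMO$ bound $\beta_{r,B^n}(\g f)\le\omega(r)$ both to control the error of this approximation and to ensure that its linear part is non-degenerate. The decisive point is to work, for each pair $x,y$, on a ball whose radius is comparable to $|x-y|$: this forces the approximation error to scale like $\omega(|x-y|)^{1/n}|x-y|$, which is negligible against $|x-y|$ once $|x-y|$ is small. (Note that $\g f\in VMO(B^n)$ is immediate since $\beta_r(\g f)\le\omega(r)\to 0$.)

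So, given $x,y\in B^n$ with $x\neq y$ and $|x-y|$ small, I would set $\rho\coloneqq 2|x-y|$, $B\coloneqq B_\rho(x)$, and let $A\coloneqq\fint_B\g f$ be the entrywise average of the Jacobian matrix over $B$. Once the neighborhood is shrunk so that $B\subset B^n$, the definition of $\beta$ gives $\fint_B|\g f-A|\le\beta_{\rho,B^n}(\g f)\le\omega(\rho)$. From the pointwise lower bound, for every $v\in\R^n$ and a.e.\ $p\in B$ one has $\La^{-1}|v|\le|\g f(p)\,v|\le|Av|+|(\g f(p)-A)v|$; averaging over $p\in B$ yields
\begin{align*}
|Av|\ \ge\ \big(\La^{-1}-\omega(\rho)\big)\,|v|,
\end{align*}
so $A$ is injective with a controlled lower bound as soon as $\omega(\rho)\le\tfrac12\La^{-1}$.

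Next comes the affine approximation error. Writing $u(z)\coloneqq f(z)-Az$, so that $\g u=\g f-A$ and $u(x)-u(y)=f(x)-f(y)-A(x-y)$, I would apply componentwise the standard pointwise representation for Sobolev functions on the ball $B$ (of the form $|u(w)-\fint_B u|\le C(n)\int_B|w-z|^{1-n}|\g u(z)|\,dz$) at $w=x$ and $w=y$ to obtain
\begin{align*}
|f(x)-f(y)-A(x-y)|\ \le\ C(n,m)\int_{B}\frac{|\g f(z)-A|}{|x-z|^{n-1}}\,dz\ +\ C(n,m)\int_{B}\frac{|\g f(z)-A|}{|y-z|^{n-1}}\,dz.
\end{align*}
Each Riesz potential is then estimated by splitting the domain at an optimized radius $\delta$: on $\{|x-z|<\delta\}$ one uses the pointwise bound $|\g f-A|\le C(n)\La$ (coming from $|df_p|\le\La$), and on $\{|x-z|\ge\delta\}$ one uses $\int_B|\g f-A|\le\omega(\rho)\,|B_\rho|$; optimizing in $\delta$ produces a bound of order $\omega(\rho)^{1/n}\rho$. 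Hence
\begin{align*}
|f(x)-f(y)-A(x-y)|\ \le\ C(\La,n,m)\,\omega(\rho)^{1/n}\,\rho\ =\ 2\,C(\La,n,m)\,\omega(\rho)^{1/n}\,|x-y|.
\end{align*}

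Combining the two displays gives $|f(x)-f(y)|\ge|A(x-y)|-|f(x)-f(y)-A(x-y)|\ge\big(\La^{-1}-\omega(\rho)-2C(\La,n,m)\omega(\rho)^{1/n}\big)|x-y|$. Since $\omega(\rho)\to0$, one chooses $\rho_0=\rho_0(\La,\omega,n,m)\in(0,\tfrac45]$ with $\omega(\rho)+2C(\La,n,m)\omega(\rho)^{1/n}\le\tfrac12\La^{-1}$ for all $\rho\le\rho_0$, and sets $U\coloneqq B_{\rho_0/4}(0)$; then for $x,y\in U$ one has $\rho=2|x-y|\le\rho_0$ and $B_\rho(x)\subset B_{5\rho_0/4}(0)\subset B^n$, so the estimate applies and yields $|f(x)-f(y)|\ge\tfrac12\La^{-1}|x-y|$, whence also injectivity of $f$ on $U$. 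The only genuine obstacle is this scaling balance: a fixed reference ball would give an error that is only $o(\rho)$ and hence useless when $|x-y|\ll\rho$, and the issue is resolved precisely by the sublinear gain $\omega(\rho)^{1/n}$ obtained by trading the $L^1$-smallness of $\g f-A$ against its $L^\infty$-bound on a ball of radius $\sim|x-y|$.
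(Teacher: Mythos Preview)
Your proof is correct and follows the same skeleton as the paper's: fix $x,y$, average $\nabla f$ over a ball of radius comparable to $|x-y|$ to get an affine comparison map $z\mapsto Az$, show $A$ has a quantitative lower bound, and control the error $f(x)-f(y)-A(x-y)$. The difference lies in how the error is estimated. You use the Riesz-potential representation $|u(w)-\fint_B u|\le C\int_B|\nabla u|\,|w-z|^{1-n}\,dz$ and interpolate between the $L^1$-smallness $\fint_B|\nabla f-A|\le\omega(\rho)$ and the trivial $L^\infty$-bound, yielding an error of order $\omega(\rho)^{1/n}\rho$. The paper instead invokes the John--Nirenberg inequality to upgrade the $L^1$ oscillation bound to an $L^{2n}$ bound on $\nabla f-A$, and then applies Morrey's inequality with $p=2n$ to get an error of order $\omega(\rho)\rho$. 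Your argument is more elementary (it avoids John--Nirenberg) at the cost of a weaker exponent in $\omega$; since one only needs the error to be $o(\rho)$, this loss is immaterial for the conclusion. Both arguments give the lower bound on $|Av|$ by the same averaging trick you use.
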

\begin{proof}
    Let $V$ be the set of full-rank $m\times n$ matrices, then $V$ is open in the space of $m\times n$ matrices. By the definition of weak immersion, there exists a compact subset $K$ of $V$ such that $\g f\in K$ a.e. in $B^n$. Hence there exists $\vae>0$ such that the set $K'\coloneqq\{M\in V\colon \dist(M,K)\le \vae\}$ is compact and contained in $V$. It follows that there exists a constant $C_0>0$ such that for any $v\in \R^n$, $M\in K'$, it holds\begin{align}\label{lowb}
        |Mv|\ge C_0 |v|.
    \end{align}
    Moreover, the constants $C_0,\,\vae$, and $K$ can be chosen to depend on $\La,\, \omega,\,n,\,m$ only. We choose $r_0, \vae_0\in (0,1/8)$ small enough (to be determined later) depending on $\La,\, \omega,\,n,\,m $ only such that for every ball $B$ of radius $\le 4r_0$ we have $\fint_B\lf|\g f-\fint_B \g f\rg|<\min(\vae,\vae_0)$. 
    Now fix $x_0,y_0\in B_{r_0}(0)$. Let $r\coloneqq|x_0-y_0|$ and consider the ball $B'\coloneqq B_{2r}(x_0)$. We denote
    $$
    M_0\coloneqq\fint_{B'} \g f.
    $$ 
    Since $2r<4r_0$ and $\g f\in K$ a.e., we have 
    $$
    \fint_{B'}|\g f-M_0|<\min(\vae,\vae_0).
    $$
    Hence, it holds $M_0\in K'$. Letting $f_0(x)\coloneqq f(x)-M_0 x$, we apply Morrey's inequality \cite[Theorem 5.6.4]{evans} in the case $p=2n$ and John--Nirenberg inequality \cite[Theorem 3.5]{hanlinpde}, for $\vae_0$ small enough we have \begin{align}\begin{aligned}
         \label{err-est}
    |f_0(x_0)-f_0(y_0)|&\le C_1\mkt  r^{1/2}\lf(\int_{B'}|\g f_0|^{2n}\rg)^{\frac{1}{2n}}\\
&= C_1 \mkt  r^{1/2}\lf(\int_{B'}|\g f-M_0|^{2n}\rg)^{\frac{1}{2n}}\\[2mm]
&\le C_2\mkt  r\mkt  \vae_0= 2^{-1}C_0\mkt  r=2^{-1}C_0 |x_0-y_0|,\end{aligned}
\end{align} where $C_1,\, C_2$ are positive constants depending only on $n$, and we choose $\vae_0= (2C_2)^{-1}C_0$. Therefore, by \eqref{lowb} and \eqref{err-est} we have $$|f(x_0)-f(y_0)|\ge |M_0(x_0-y_0)|-|f_0(x_0)-f_0(y_0)|\ge \frac{C_0}{2}|x_0-y_0|.$$
\end{proof}
By using the embedding $W^{1,(n,q)}(B^n)\hookrightarrow VMO(B^n)$ for $1\le q<\infty$ (see Definition \ref{dfi-So-Lor} and \cite[Theorem 6.1(iii)]{Cianchi98}), we obtain the following corollary.
\begin{Co}\label{Co:locinj}
    Let $m,n\in\N$ with $2\le n\le m$, and let $1\le q<\infty$. Suppose $f\in W^{1,\infty}_{\textup{imm}}(B^n,\R^m)$ and $\g^2 f\in L^{n,q}(B^n)$, then there exists a constant $C>0$ and an open neighborhood $U$ of the origin such that $$|f(x)-f(y)|\ge C|x-y|, \qquad \text{for all } \,x,y\in U.$$
\end{Co}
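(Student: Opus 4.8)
The plan is to reduce the statement directly to Theorem~\ref{loin}; the only point that requires an argument is that the hypothesis $\g^2 f\in L^{n,q}(B^n)$ forces $\g f$ to have vanishing mean oscillation, together with a quantitative modulus. First I would note that, since $f\in W^{1,\infty}(B^n,\R^m)$ and $B^n$ has finite Lebesgue measure, the differential $\g f$ belongs to $L^{n,q}(B^n)$ (because $L^{\infty}(B^n)\hookrightarrow L^{n,q}(B^n)$ on a bounded domain); combined with the assumption $\g^2 f\in L^{n,q}(B^n)$ this gives $\g f\in W^{1,(n,q)}(B^n)$ in the sense of Definition~\ref{dfi-So-Lor}. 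Invoking the embedding $W^{1,(n,q)}(B^n)\hookrightarrow VMO(B^n)$, valid for every $1\le q<\infty$ by \cite[Theorem~6.1(iii)]{Cianchi98}, I conclude that $\g f\in VMO(B^n)$. Equivalently, setting $\omega(r)\coloneqq\beta_{r,B^n}(\g f)$ as in \eqref{vmomodules}, one obtains a function $\omega\colon(0,\infty)\to[0,\infty]$ with $\lim_{r\to 0}\omega(r)=0$ and, trivially, $\beta_{r,B^n}(\g f)\le\omega(r)$ for all $r>0$.

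Next I would record the immersion bound: since $f\in W^{1,\infty}_{\textup{imm}}(B^n,\R^m)$, Definition~\ref{defweakimm} (applied on $B^n$ with the Euclidean reference metric) gives a constant $\La_0>0$ with $\La_0^{-1}|v|^2\le|df_p(v)|^2\le\La_0|v|^2$ for a.e.\ $p\in B^n$ and all $v\in T_p(B^n)$; writing $\La\coloneqq\sqrt{\La_0}$ yields $\La^{-1}|v|\le|df_p(v)|\le\La|v|$ a.e. With this $\omega$ and this $\La$, all the hypotheses of Theorem~\ref{loin} are met, and that theorem produces a constant $C>0$ and an open neighborhood $U$ of the origin --- depending only on $\La$, $\omega$, $n$, $m$ --- such that $|f(x)-f(y)|\ge C|x-y|$ for all $x,y\in U$. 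This is exactly the asserted estimate, so the argument is complete.

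There is essentially no serious obstacle here: the proof amounts to feeding the correct Sobolev--Lorentz embedding into Theorem~\ref{loin}. The only points demanding a little care are the two minor reductions above, namely that $L^{\infty}(B^n)\hookrightarrow L^{n,q}(B^n)$ because $B^n$ is bounded --- so that $\g f\in L^{n,q}$ and hence $\g f\in W^{1,(n,q)}$ --- and that the constant in the immersion inequality of Definition~\ref{defweakimm} must be replaced by its square root when passing to the (non-squared) form required as input to Theorem~\ref{loin}.
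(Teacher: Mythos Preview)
Your proposal is correct and follows exactly the approach indicated in the paper: the corollary is deduced from Theorem~\ref{loin} via the embedding $W^{1,(n,q)}(B^n)\hookrightarrow VMO(B^n)$ for $1\le q<\infty$ (\cite[Theorem~6.1(iii)]{Cianchi98}), which is precisely the one-line justification the paper gives immediately before stating the corollary. Your additional remarks on why $\nabla f\in W^{1,(n,q)}$ and on the square-root adjustment of the immersion constant are routine but correct.
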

\begin{Rm}
     When $m>n$, Corollary~\ref{Co:locinj} is sharp in the sense that there exists $f\in W_{\imm}^{1,\infty}(B^n,\R^m)$ with $\g^2 f\in L^{n,\infty}(B^n)$ but $f$ is not injective in any neighborhood of the origin. We construct it as follows.
     Without loss of generality, we can assume $m=n+1$. We define $\ti f_0\in C^\nf( \mathbb S^{n-1},\R^n)$ by 
     \begin{align*}
         \ti f_0(x_1,\dots,x_n)\coloneqq \Big(\big(2+\cos(\pi x_n)\big)x_1,\dots, \big(2+\cos(\pi x_n)\big)x_{n-1},\mko\sin(\pi x_n)\Big).
     \end{align*}
     Then $\ti f_0$ is a non-injective smooth immersion from $\mathbb S^{n-1}$ to $\R^n$. Hence, there exists a non-injective smooth immersion $\ti f\colon \mathbb S^{n-1}\to \mathbb S^n$. 
     Now for $x\in B^n$, we define 
     \begin{align*}
         f(x)=\begin{dcases}
             |x|\mkt\ti f\Big(\frac{x}{|x|}\Big)&\text{if }x\neq 0,\\
             0&\text{otherwise}.
         \end{dcases}
     \end{align*}
    Then we have $f\in  W_{\imm}^{1,\infty}(B^n,\R^{n+1})$, and there exists a constant $C>0$ such that $|\g^2 f(x)|\le C|x|^{-1}$ for all $x\in B^n\smi\{0\}$. This implies that $\g^2 f\in L^{n,\infty}(B^n)$, but $f$ is not injective in any neighborhood of $0$. 
    
    In the special case $m=n$, a sharp theorem states that any $W^{2,2}$ weak immersion (also called ``BLD maps'' in this case) is a local homeomorphism everywhere, see \cite[Theorem~1.1]{Heinonen00}. However, when $n\ge 3$, it remains open whether one can replace $W^{2,2}$ by $W^{2,(2,q)}$ for any $2<q<\nf$.
\end{Rm}
In the proof of \cite[Corollary~3.43]{Ri16}, it is mentioned that in the case $n=2$, if we assume in addition that $\bP\in W^{2,2}(\Sigma)$, then for any $x\in\R^m$, $\theta_x$ exists and is in $\N_0$. Here we prove a stronger result.
\begin{Th}\label{thden}
    Let $\Si$ be an $n$-dimensional closed smooth manifold and $\bP\in W^{1,\infty}_{\textup{imm}}(\Si,\R^m)$ be such that $d\bP\in VMO(\Sigma)$. Then we have 
    \be\label{denequ}
  \theta_y=\text{Card}(\vec{\Phi}^{-1}(y)),\qquad  \text{for all }\,y\in \R^m. 
    \ee
\end{Th}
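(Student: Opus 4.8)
The plan is to establish \eqref{denequ} by combining the local injectivity of $\vec\Phi$ near each point (coming from the VMO hypothesis via Theorem~\ref{loin}) with a degree-theoretic or area-formula argument that computes the volume of small balls. First I would fix $y\in\R^m$. Since $\Sigma$ is compact and $\vec\Phi$ is a weak immersion with $d\vec\Phi\in VMO(\Sigma)$, for each point $p\in\vec\Phi^{-1}(y)$ the hypotheses of Theorem~\ref{loin} are satisfied in a chart around $p$ (the bi-Lipschitz bound \eqref{immcon} gives the pointwise invertibility condition, and the VMO hypothesis supplies the modulus $\omega$), so there is an open neighborhood $U_p$ of $p$ and a constant $C_p>0$ with $|\vec\Phi(x)-\vec\Phi(x')|\ge C_p|x-x'|$ for all $x,x'\in U_p$. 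In particular $\vec\Phi|_{U_p}$ is injective, and — shrinking $U_p$ — we may assume $\vec\Phi(U_p)\cap\vec\Phi(\Sigma\setminus U_p')=\emptyset$ for a slightly smaller neighborhood $U_p'\ni p$; here I would also use that $\vec\Phi^{-1}(y)$ is finite, which follows because it is a closed subset of the compact $\Sigma$ in which each point is isolated (again by the lower Lipschitz bound). Write $\vec\Phi^{-1}(y)=\{p_1,\dots,p_N\}$, so the claim becomes $\theta_y=N$.

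The core computation is then local. Around each $p_j$, using an isothermal or merely bi-Lipschitz chart $\psi_j\colon B^n\to U_{p_j}$ (a genuine coordinate chart on $\Sigma$), the map $f_j\coloneqq\vec\Phi\circ\psi_j\in W^{1,\infty}(B^n,\R^m)$ is a weak immersion with $\nabla f_j\in VMO$ that is bi-Lipschitz onto its image near $0$. The quantity $\mathrm{Vol}(\vec\Phi(\Sigma)\cap B_r(y))$ decomposes, for $r$ small, as $\sum_{j=1}^N\int_{\{f_j\in B_r(y)\}} d\mathrm{vol}_{g_{f_j}}$ since for small $r$ the set $\vec\Phi^{-1}(B_r(y))$ is contained in $\bigcup_j U_{p_j}$. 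For each $j$ I would show
\[
\lim_{r\to0^+}\frac{1}{r^n|B^n|}\int_{\{f_j\in B_r(y)\}} d\mathrm{vol}_{g_{f_j}}=1.
\]
This is exactly the statement that the induced measure $\vec\Phi_*(d\mathrm{vol}_{g_{\vec\Phi}})$, restricted near the sheet through $p_j$, has Lebesgue density $1$ at $y$ as a measure on the $n$-rectifiable set $f_j(B^n)\subset\R^m$. Since $\nabla f_j\in VMO$, the image $f_j(B^n)$ has an approximate tangent $n$-plane at $f_j(0)=y$ which is a genuine (not just approximate) tangent because of the uniform lower Lipschitz bound; rescaling $x\mapsto f_j(0)+\varepsilon^{-1}(f_j(\varepsilon x)-f_j(0))$ converges uniformly on $B^n$ to the linear map $L\coloneqq df_j(0)$ (here is where VMO is used: the averaged oscillation of $\nabla f_j$ over $B_\varepsilon$ tends to $0$, so the rescalings converge in $C^0$ to the affine map given by the average gradient, which itself converges to a fixed matrix $L$ of full rank). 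Hence $\varepsilon^{-1}(f_j(B_\varepsilon)-y)\to L(B^n)$ in Hausdorff distance and, more importantly, the pushforward measures converge, yielding density exactly $\frac{\mathcal H^n(L(B^n)\cap B_1)}{|B^n|}\cdot$(correction) $=1$ by the area formula for the linear map $L$, which is an isometry up to the conformal/metric normalization built into $g_{f_j}=f_j^*g_{\mathrm{std}}$. Summing over $j$ gives $\theta_y=N=\mathrm{Card}(\vec\Phi^{-1}(y))$, and in the case $\vec\Phi^{-1}(y)=\emptyset$ one has $\theta_y=0$ trivially since $B_r(y)\cap\vec\Phi(\Sigma)=\emptyset$ for small $r$ by compactness.

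The main obstacle, I expect, is making the blow-up / density computation rigorous at the regularity $\nabla\vec\Phi\in VMO$: one must show that the rescaled maps $f_{j,\varepsilon}(x)\coloneqq\varepsilon^{-1}(f_j(\varepsilon x)-y)$ converge (in $C^0(B^n,\R^m)$, and with their Jacobians converging weakly enough to pass to the limit in the volume integral) to the linear map $L=df_j(0)$. The uniform $C^0$ convergence follows from writing $f_{j,\varepsilon}(x)-L x=\int_0^1(\nabla f_j(\varepsilon tx)-L)\,x\,dt$ and controlling the right side by the VMO modulus together with the John--Nirenberg inequality (exactly the estimate \eqref{err-est} already used in the proof of Theorem~\ref{loin}); the convergence of the volume integrals then needs the uniform $L^\infty$ bound on $\nabla f_j$ (so $\sqrt{\det g_{f_{j,\varepsilon}}}$ is bounded) plus the $C^0$ convergence of $f_{j,\varepsilon}$ to identify the limiting region $\{L\cdot x\in B_1\}$, where dominated convergence applies. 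A secondary technical point is ensuring $\vec\Phi^{-1}(B_r(y))\subset\bigcup_j U_{p_j}$ for $r$ small, which is a routine compactness argument given that $\vec\Phi^{-1}(y)$ is finite and $\vec\Phi$ is continuous on the compact $\Sigma$.
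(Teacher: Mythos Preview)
Your overall strategy matches the paper's: use Theorem~\ref{loin} to conclude that $\vec\Phi^{-1}(y)$ is finite, decompose the volume into contributions from disjoint neighborhoods of the preimages, and reduce to a local density-one statement for a bi-Lipschitz $f\in W^{1,\infty}_{\textup{imm}}(B^n,\R^m)$ with $\nabla f\in VMO$, proved via blow-up. The compactness argument ensuring $\vec\Phi^{-1}(B_r(y))\subset\bigcup_j U_{p_j}$ and the passage from bounded Jacobians plus $C^0$ convergence to convergence of the volume integrals are both handled essentially as you describe.

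There is, however, a genuine gap in your blow-up step. You assert that the rescalings $f_{j,\varepsilon}$ converge in $C^0$ to the linear map $L=df_j(0)$, arguing that the averages $\fint_{B_\varepsilon}\nabla f_j$ converge to a fixed full-rank matrix. This is false in general: the VMO condition controls the \emph{oscillation} of $\nabla f_j$ on small balls, but does not force the ball averages themselves to converge. The paper gives an explicit counterexample in a footnote---the immersion $\bar f(x_1,x_2)=(x_1,x_2,100^{-1}x_1\cos(\log(1-\log(x_1^2+x_2^2))))$ lies in $W^{2,2}_{\textup{imm}}(D^2,\R^3)$, hence has $\nabla\bar f\in VMO$, yet the rescalings $\bar f_r$ diverge as $r\to 0$. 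So $df_j(0)$ need not exist, and the integral representation $f_{j,\varepsilon}(x)-Lx=\int_0^1(\nabla f_j(\varepsilon tx)-L)\,x\,dt$ cannot be used as written.

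The paper circumvents this by a subsequence argument: given any sequence $r_k\to 0$, one extracts a further subsequence along which the averages $\fint_{B^n}\nabla f_{C_0 r_{k_i}}$ converge to some matrix $M$ (which may depend on the subsequence). Along this subsequence the rescalings converge in $W^{1,p}$ for all $p<\infty$ to the linear map $\omega\mapsto M\omega$, and the ellipsoid computation $\mathcal L^n(\{M^TM\omega\cdot\omega\le 1\})=|B^n|\det(M^TM)^{-1/2}$ shows the density along that subsequence equals $1$, \emph{independently of which $M$ arose}. Since every sequence admits such a subsequence with the same limiting value, the full limit $\theta_y$ exists and equals $\operatorname{Card}(\vec\Phi^{-1}(y))$.
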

\begin{proof}
    By Theorem \ref{loin}, for any point $y\in {\R}^m$, $\bP^{-1}(\{y\})$ is at most finite. Thus it suffices to show that for a bi-Lipschitz map $f\in W^{1,\infty}_{\textup{imm}}(B^n,\R^m)$ with $\g f\in VMO(B^n)$, $f(0)=0$, there holds \be\label{loc-den}
        \lim_{r\rightarrow 0^+}\frac{\int_{f^{-1}(B_r(0))}1\,d\textup{vol}_{g_f}}{r^n|B^n|}=1,
    \ee
    where $g_f\coloneqq f^*g_{\text{std}}$.
      For $r\in (0,1)$, we define $f_r\colon B^n\rightarrow \R^m$ by 
\begin{align}\label{deff_r}
f_r(w)\coloneqq\frac{f(r\omega )}{r}.
\end{align}
    Then $\{f_r\}$ is uniformly bounded in $W^{1,\infty}(B^n)$.  Moreover, we have the following limit 
    \begin{align}\label{con-der}
        \fint_{B^n} \lf|\g f_r-\fint_{B^n}\g f_r\rg|=\fint_{B_r(0)} \lf|\g f-\fint_{B_r(0)}\g f\rg|\xrightarrow[r\to 0^+]{} 0.
    \end{align}
    Since $f$ is bi-Lipschitz on $B^n$, there exists a positive constant $C_0$ such that 
    \be\label{bi-lip}
    |x|\le C_0 |f(x)|, \qquad \text{ for all }x\in B^n.
    \ee
    Let $\{r_k\}_{k=1}^\infty\subset \R_+$ be an arbitrary sequence with $r_k\rightarrow 0^+$, then there exists a subsequence $\{r_{k_i}\}_{i=1}^\infty$ and an $m\times n$ matrix $M$ such that 
    \be\label{L1con-der}
    \fint_{B^n} \g f_{C_0r_{k_i}}\xrightarrow[i\to \infty]{} M.
    \ee 
    Now it suffices to prove $$
        \lim_{i\rightarrow \infty}\frac{\int_{f^{-1} \lf(B_{r_{k_i}}(0)\rg )}1\,d\textup{vol}_{g_f}}{r_{k_i}^n|B^n|}=1. 
    $$Denote $\tilde f_i\coloneqq f_{C_0r_{k_i}}$. By \eqref{con-der}, \eqref{L1con-der} we have \begin{align}\label{dfilim}
          \lim_{i\rightarrow \infty} \int_{B^n}\lf|\g \tilde f_i-M\rg|=0.\end{align}
   Let $\tilde f\colon B^n\rightarrow \R^m$ defined by $\tilde f(\omega)=M\omega$. Since $\{\tilde f_i\}$ is bounded in $W^{1,\infty}(B^n)$ and $\tilde f_i(0)=0$ for any $i$, by \eqref{dfilim} we obtain that
  $$ \tilde f_i\xrightarrow[i\rightarrow\infty]{} \tilde f \quad\text{in } W^{1,p}(B^n),\qquad \text{for all } 1\le p<\infty.$$ 
By \eqref{bi-lip} and definition of $f_r$, for any $\omega\in B^n$, $i\in \mathbb N$, we have 
        $$\lf|\omega\rg|\le C_0\big|\tilde f_i(\omega)\big|.$$
    Hence for any $\omega\in B^n$, we have 
    \be \label{est-M}
   |\omega|\le C_0 \big|\tilde f(\omega)\big|=C_0|M\,\omega|.
   \ee 
   In particular, this implies $M$ has full rank. 
   By \eqref{bi-lip} we have $f^{-1}\big(B_{r_{k_i}}(0)\big)\subset B_{C_0r_{k_i}}(0)$. We can then apply the change of variables formula to get 
   \be \label{cha-to-g}
   (C_0r_{k_i})^{-n}\int_{f^{-1}\lf(B_{r_{k_i}}(0)\rg )}1\,d\textup{vol}_{g_f}=\int_{\tilde f_i^{-1}\big(B_{C_0^{-1}}(0)\big)}\sqrt{\det \big(\g \tilde f_i^T \g \tilde f_i\big)}.
   \ee
   By the definition of weak immersions, there exists a constant $C>0$ independent of $i$ such that for all $i\in\N$,
\begin{align*}
    C^{-1}\le\det\big(\g \tilde f_i^T \g \tilde f_i\big)\le C \qquad \text{a.e. on }B^n.\end{align*}
Since $\g \tilde f_i\rightarrow M$ in $L^p(B^n)$ for any $p\in [1,\infty)$, we have 
\be \label{con-dvol}
    \lf\|\sqrt{\det \big(\g \tilde f_i^T\g \tilde f_i\big)}- \sqrt{\det\big(M^T M\big)}\rg\|_{L^1(B^n)}\xrightarrow[i\rightarrow \infty]{} 0.\ee
By Sobolev embedding we have $\tilde f_i\rightarrow \tilde f$ in $C^0(\overline{B^n})$
   \footnote{In general, $f_r$ defined in \eqref{deff_r} does not have a $C^0$ limit as $r\to 0$ under the same assumptions. For instance, we may take 
   $$\bar f(x_1,x_2)\coloneqq\begin{cases}
       0 & \text{if }(x_1,x_2)=0, \\[2mm]
       \Big(x_1,x_2,100^{-1}\,x_1\cos\big(\log( 1-\log (x_1^2+x_2^2))\big)\Big) & \text{otherwise}.
   \end{cases}$$ 
   Then $\bar f\in W^{2,2}_{\textup{imm}}(D^2,\R^3)$ hence $\g \bar f\in VMO(D^2)$, but for any $\omega\in  \{(x_1,x_2)\in D^2:x_1\neq 0\}$, $\bar f_r(\omega)\coloneqq r^{-1}\bar f(r\omega)$ diverges as $r\rightarrow 0$. Moreover, we compute the contingent cone \cite[Definition 4.1.1]{Aubinjp09} and Clarke tangent cone \cite[Definition 4.1.5]{Aubinjp09} of $\bar f(D^2)$ at $0$: $$T_{\bar f(D^2)}(0)=\big\{(x_1,x_2,x_3)\in \R^3: |x_3|\le |x_1|\big\},\qquad C_{\bar f(D^2)}(0)=\{0\}\times \R\times \{0\}.$$
   Neither of the two tangent cones above is a $2$-plane in $\R^3$. However, if we assume $f\colon D^2\rightarrow \R^3$ is a $C^1$ embedding, then it holds that $$T_{f(D^2)}(0)=C_{f(D^2)}(0)=
   \text{Im}(\g f(0)).$$ }.
Since $\mathcal L^n\lf(\tilde f^{-1}\big(\p B_{C_0^{-1}}(0)\big)\rg)=0$, by \cite[Theorem 3.2.11]{Durrett19} we obtain
   \be\label{con-vol} \mathcal L^n\lf(\tilde f_i^{-1}\big(B_{C_0^{-1}}(0)\big)\rg)\rightarrow \mathcal L^n\lf(\tilde f^{-1}\big(B_{C_0^{-1}}(0)\big)\rg).\ee
   Since $\tilde f(\omega)=M\omega$, we have \begin{align}\begin{aligned}\label{com-vol} 
        \mathcal L^n\lf(\tilde f^{-1}\big(B_{C_0^{-1}}(0)\big)\rg)&=C_0^{-n}\mathcal L^n \lf\{\omega\in \R^n: \langle M^TM\omega,\omega\rangle \le 1\rg\}.\end{aligned}\end{align}
Since $M^TM$ is positive definite, there exist $Q\in SO(n)$ and $D=\text{diag}(\la_1,\dots,\la_n)$ with $\la_1,\dots,\la_n>0$ such that $M^TM=QDQ^{-1}$. It follows that \begin{align}
    \begin{aligned}\label{comp-pre}\mathcal L^n \lf\{\omega\in \R^n: \langle M^TM\omega,\omega\rangle \le 1\rg\}&=\mathcal L^n \lf\{\omega\in \R^n: \langle DQ^{-1}\omega,Q^{-1}\omega\rangle \le 1\rg\}\\[2.5mm]
&=\mathcal L^n \lf\{v\in \R^n: \langle Dv,v\rangle \le 1\rg\}\\
&=\mathcal L^n\bigg\{(v_1,\dots,v_n)\in \R^n: \sum_{j=1}^n \la_j v_j^2\le 1\bigg\}.
\end{aligned}
\end{align}
By a linear change of variable, we have 
\begin{align}
    \begin{aligned}\label{linchavar}
        \mathcal L^n\bigg\{(v_1,\dots,v_n)\in \R^n: \sum_{j=1}^n \la_j v_j^2\le 1\bigg\}&=\prod_{j=1}^n \la_j^{-\frac 12} |B^n| =\det(M^TM)^{-\frac 12} |B^n|.
    \end{aligned}
\end{align}
   Combining \eqref{cha-to-g}--\eqref{linchavar}, we finally conclude $$\lim_{i\rightarrow \infty}\frac{\int_{f^{-1} \big(B_{r_{k_i}}(0)\big )}1\, d\textup{vol}_{g_f}}{r_{k_i}^n|B^n|}=\frac{C_0^n\mathcal L^n\lf(\tilde f^{-1}\big(B_{C_0^{-1}}(0)\big)\rg)\sqrt{\det(M^T M)}}{|B^n|}=1.$$
   Thus, the limit \eqref{loc-den} is proved.
\end{proof}

\subsection{Approximation by smooth immersions}\label{sec:Approximation}

In this section, we prove that every weak immersion $\bP$ can be approximated by smooth immersions $\bP_k$ and that the conformal structures induced by $\bP_k$ converge to that of $\bP$. To do so, we first prove in \Cref{sec:ConvergenceCoordinates} that the conformal charts obtained by Coulomb frames converge. Combining this with an argument from Uhlenbeck, we prove in \Cref{sec:ConvergenceStructure} that the conformal structures converge as well. We summarize the approximation results that we obtain in \Cref{sec:ApproximationResults}.

\subsubsection{Convergence of isothermal coordinates}\label{sec:ConvergenceCoordinates}

The goal of this section is to prove Theorem \ref{thmconchart}, namely that if a sequence of weak immersions $(\bP_k)_{k\in\N}$ converges to some $\bP_{\infty}$ in the strong $W^{2,2}_{\textup{loc}}$-topology, assuming a control on the metrics $g_{\bP_k}$, then the conformal charts induced by $\bP_k$ converge to conformal charts for $\bP$.\\

In order to prove the convergence of the Coulomb frames, we first need to prove some convergence result for equations having varying coefficients.
\begin{Lm}\label{strconsol}
 Let $n\ge 2$, $U$ be a bounded open subset of $\R^n$, $\{a^{ij}_k\}\subset L^\infty(U)$, $\{f_k\}\subset W^{-1,2}(U)$. Suppose that there exists a constant $\La>0$ such that for all $\xi\in \R^n$, $k\in\mathbb N$, and a.e. $x\in U$, it holds
    \begin{align*}
         \La^{-1} |\xi|^2\le \sum_{i,j}^n a^{ij}_k(x) \,\xi_i\xi_j \le \La |\xi|^2. \end{align*}
Assume in addition that $a^{ij}_k\rightarrow a^{ij}$ a.e. for some $a^{ij}\in L^\infty(U)$ and $f_{k}\rightarrow f$ in $W^{-1,2}(U)$ for some $f\in W^{-1,2}(U)$. Let $u_k\in W^{1,2}(U)$ be a solution to the equation  \begin{align}\label{ellequ}
    \p_i\big(a^{ij}_k\,\p_j u_k\big)=f_k  \quad \text{in }U.   
    \end{align}
Suppose that $u_k\rightharpoonup u$ weakly in $W^{1,2}(U)$. Then we have $u_k\rightarrow u$ in $W_{\textup{loc}}^{1,2}(U)$.
\end{Lm}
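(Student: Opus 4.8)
The plan is to establish local strong convergence by a Caccioppoli-type energy comparison using a cutoff function and exploiting the weak convergence together with the uniform ellipticity. First I would fix a cutoff $\eta\in C_c^\infty(U)$ with $0\le\eta\le 1$ and work on the set where $\eta\equiv 1$. The key identity is obtained by testing the equation \eqref{ellequ} for $u_k$ against $\eta^2(u_k-u)$, and testing a ``limiting'' relation against the same function. More precisely, write $w_k\coloneqq u_k-u$ and compute
\begin{align*}
\int_U \eta^2\, a^{ij}_k\,\p_j w_k\,\p_i w_k
= -\int_U 2\eta\,(\p_i\eta)\, a^{ij}_k\,\p_j w_k\, w_k
- \int_U \eta^2\, a^{ij}_k\,\p_j u\,\p_i w_k
- \langle f_k,\eta^2 w_k\rangle.
\end{align*}
Here I used that $\p_i(a^{ij}_k\p_j u_k)=f_k$ tested against $\eta^2 w_k\in W_0^{1,2}(U)$. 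The left-hand side controls $\int_U\eta^2|\nabla w_k|^2$ from below by uniform ellipticity, so it suffices to show every term on the right tends to $0$.

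The first term: since $w_k\rightharpoonup 0$ weakly in $W^{1,2}$, Rellich gives $w_k\to 0$ strongly in $L^2(U)$, while $\p_j w_k$ is bounded in $L^2$ and $|a^{ij}_k|\le C\Lambda$; hence $\int 2\eta(\p_i\eta)a^{ij}_k\,\p_j w_k\,w_k\to 0$ by Cauchy--Schwarz. The third term: $\eta^2 w_k$ is bounded in $W_0^{1,2}(U)$ and converges weakly to $0$ there, so $\langle f_k,\eta^2 w_k\rangle = \langle f_k-f,\eta^2 w_k\rangle + \langle f,\eta^2 w_k\rangle\to 0$, using $f_k\to f$ strongly in $W^{-1,2}$ and the uniform $W^{1,2}_0$-bound for the first piece, and weak convergence $\eta^2 w_k\rightharpoonup 0$ for the second. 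The second — and main — term is $-\int_U \eta^2 a^{ij}_k\,\p_j u\,\p_i w_k$. Here $\p_i w_k\rightharpoonup 0$ weakly in $L^2(U)$, so it is enough to show $\eta^2 a^{ij}_k\,\p_j u \to \eta^2 a^{ij}\,\p_j u$ strongly in $L^2(U)$: indeed then the integral splits as $\int \eta^2(a^{ij}_k-a^{ij})\p_j u\,\p_i w_k + \int\eta^2 a^{ij}\p_j u\,\p_i w_k$, the first going to $0$ by strong$\times$weak convergence and the second by weak convergence of $\p_i w_k$ against the fixed $L^2$ function $\eta^2 a^{ij}\p_j u$. The strong $L^2$ convergence of $\eta^2 a^{ij}_k\p_j u$ follows from $a^{ij}_k\to a^{ij}$ a.e., the uniform bound $|a^{ij}_k|\le C\Lambda$, $|\p_j u|\in L^2$, and the dominated convergence theorem (the integrand is dominated by $(C\Lambda)^2|\p_j u|^2\eta^4\in L^1$).

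I expect the main obstacle to be purely bookkeeping: one must be careful that the tested function $\eta^2 w_k$ is genuinely admissible (it lies in $W^{1,2}_0(U)$ since $\eta$ is compactly supported), that the duality pairing $\langle f_k,\cdot\rangle$ is used consistently, and that the ``weak $\times$ strong'' and ``dominated convergence'' arguments are invoked on the correct factors. There is no genuine analytic difficulty once the a.e. convergence of the coefficients is combined with their uniform $L^\infty$ bound via dominated convergence; the ellipticity is only used to bound $\int\eta^2|\nabla w_k|^2$ by the left-hand side, and a standard covering argument upgrades the conclusion from ``on $\{\eta=1\}$ for each $\eta$'' to convergence in $W^{1,2}_{\textup{loc}}(U)$.
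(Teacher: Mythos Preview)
Your approach is essentially identical to the paper's: test the equation against a cutoff times $u_k-u$, use uniform ellipticity to control $\int \eta^2|\nabla(u_k-u)|^2$ from below, and kill the three right-hand terms via Rellich, the strong $W^{-1,2}$ convergence of $f_k$, and dominated convergence on the coefficients paired with weak convergence of $\partial_i w_k$. One minor algebraic slip: your displayed identity is missing the term $-\int_U 2\eta(\partial_i\eta)\,a^{ij}_k\,\partial_j u\,w_k$ (coming from $\partial_j u_k=\partial_j u+\partial_j w_k$ in the cross term), but this also tends to zero by the same Rellich argument, so the proof goes through.
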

\begin{proof}
    Let $ V\Subset U$ be a bounded open subset with smooth boundary. Let $\xi\in C^\infty_c(U)$ be a cut-off function such that $\xi\ge 0$ on $U$ and $\xi=1$ on $V$. Denote by $\lf\langle ,\rg\rangle $ the duality pairing between $W^{-1,2}(U)$ and $W^{1,2}_0(U)$. Using $ (u_k-u)\,\xi$ as a test function in the equation \eqref{ellequ}, we obtain
    \begin{align}\begin{aligned}\label{est-diff-W12}
    & \Lambda^{-1}\int_V |\g(u_k-u)|^2 \le \int_U a^{ij}_k\, \xi \,\p_j(u_k-u)\, \p_i(u_k-u)\\
    & \leq- \lf\langle f_k, \xi (u_k-u) \rg\rangle -\int_U \big(a^{ij}_k \, \p_ju_k\,\p_i\xi\, (u_k-u) + a^{ij}_k \,\xi\,\p_j u\, \p_i(u_k-u)\big).
\end{aligned}
\end{align}
Since $\xi(u_k-u)\rightharpoonup 0$ weakly in $W_0^{1,2}(U)$ and $\|f_k-f\|_{W^{-1,2}(U)}\rightarrow 0$ as $k\rightarrow \infty$, we have
\begin{align}\begin{aligned}
    \label{1st-est-Duk-diff} \lf|\lf\langle f_k, \xi (u_k-u) \rg\rangle\rg|&\le \lf|\lf\langle f_k-f,\xi (u_k-u)\rg\rangle\rg|+\lf|\lf \langle f,\xi (u_k-u)\rg\rangle\rg|\\[3mm]
    &\le \lf\|f_k-f\rg\|_{W^{-1,2}(U)}\lf\|\xi(u_k-u)\rg\|_{W^{1,2}(U)}+\lf|\lf \langle f,\xi (u_k-u)\rg\rangle\rg|\\[2mm]
    &\xrightarrow[k\to \infty]{} 0.\end{aligned}\end{align}
By Rellich–Kondrachov compactness theorem, we also get \begin{align}\begin{aligned} \label{2nd-est-Duk-diff} 
\lf|\int_U a^{ij}_k \, \p_ju_k\,\p_i\xi\, (u_k-u)\rg|&\le \lf\|a^{ij}_k\rg\|_{L^\infty(U)}\lf\|\p_j u_k\rg\|_{L^2(U)}\lf\|\p_i\xi\,(u_k-u)\rg\|_{L^2(U)}\xrightarrow[k\to \infty]{} 0.  
\end{aligned}
\end{align} 
Concerning the last term, by weak convergence $u_k\rightharpoonup u$ in $W^{1,2}(U)$ and dominated convergence theorem, we obtain \begin{align}
    \begin{aligned}
        \label{3rd-est-Duk-diff} 
        \lf|\int_U a^{ij}_k \,\xi\,\p_j u\, \p_i(u_k-u)\rg|
        &\le  \lf|\int_U a^{ij} \,\xi\,\p_j u\, \p_i(u_k-u)\rg|+\int_U \lf|(a^{ij}_k-a^{ij}) \,\xi\,\p_j u\, \p_i(u_k-u)\rg|\\
        &\le \lf|\int_U a^{ij} \,\xi\,\p_j u\, \p_i(u_k-u)\rg| + \lf\|(a^{ij}_k-a^{ij}) \,\xi\,\p_j u\rg\|_{L^2(U)}\lf\|\p_i(u_k-u)\rg\|_{L^2(U)}\\
        &\xrightarrow[k\to \infty]{} 0.
    \end{aligned}
\end{align} 
Combining \eqref{est-diff-W12}--\eqref{3rd-est-Duk-diff} we finally conclude $u_k\rightarrow u$ in $W^{1,2}_{\textup{loc}}(U)$.
 \end{proof}

 Using the proof of Theorem \ref{thexicon}, we are now ready to prove strong convergence of conformal charts for a convergent sequence of weak immersions with bounded induced metric.
 
\begin{Th}\label{thmconchart}
    Let $\bP_k\in W_{\textup{imm}}^{2,2}(D^2,\R^m)$ for $k\in \mathbb N \cup \{\infty\}$. Assume that $\bP_k \rightarrow \bP_{\infty}$ in $W^{2,2}(D^2)$ as $k\rightarrow \infty$, and that there exists a constant $\La>0$ such that for any $k\in \N \cup \{\infty\}$, any $X\in \R^2$ and a.e. $p\in D^2$, there holds 
    \begin{align}\label{immconD-seq}
\Lambda^{-1} |X|_{\R^2} \le | d(\vec{\Phi}_k)_p(X)|_{\R^m}  \le \Lambda |X|_{\R^2}.
\end{align} 
    Then there exists a neighborhood $U\subset D^2$ of $0$ and $W^{1,\infty}\cap W^{2,2}$ diffeomorphisms $\vp_k\colon U\rightarrow \vp_k(U)\subset \R^2 $ such that $\bP_k \circ \vp_k^{-1}$ is (weakly) conformal on $\vp_k(U)$ for any $k\in \N \cup \{\infty\}$, $\vp_k\rightarrow \vp_{\infty}$ in $W^{2,2}(U)$ and $\|\vp_k\|_{W^{1,\infty}(U)}\le C$, $\|\vp_k^{-1}\|_{W^{1,\infty}(\vp_k(U))}\le C$ for a constant $C>0$.
\end{Th}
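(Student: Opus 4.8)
The plan is to run, for every $k\in\N\cup\{\infty\}$, the construction carried out in the proof of Theorem \ref{thexicon} (Gram--Schmidt frame $\to$ Coulomb rotation $\to$ conformal factor $\to$ primitive $1$-forms $\to$ chart) and to show that each object produced along the way depends continuously on $\bP_k$ in the relevant topology. The first thing I would do is collect the uniform bounds implied by \eqref{immconD-seq} together with $\bP_k\to\bP_\infty$ in $W^{2,2}$: the frames $\nabla\bP_k$ are bounded in $L^\infty(D^2)$ by $\sqrt2\,\Lambda^{1/2}$ (hence in $W^{1,2}$), so $g_k\coloneqq g_{\bP_k}$, the coefficient matrices $\sqrt{\det g_k}\,g_k^{ij}$ and $\det g_k$ are uniformly bounded in $L^\infty$, uniformly elliptic with constants depending only on $\Lambda$, and converge a.e.\ and in every $L^p$, $p<\infty$, to the corresponding objects for $\bP_\infty$ (using $\nabla\bP_k\to\nabla\bP_\infty$ in all $L^p$ and a.e.). The only analytic mechanism needed for products is the elementary fact that $a_kb_k\to ab$ in $L^2$ whenever $a_k\to a$ in $L^2$ and $b_k\to b$ a.e.\ with $\sup_k\|b_k\|_{L^\infty}<\infty$.

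Next I would treat the orthonormal frames. Gram--Schmidt \eqref{gramsch} has denominators bounded away from $0$ uniformly, so $(\vec f_1^{\,k},\vec f_2^{\,k})\in L^\infty\cap W^{1,2}(D^2,V_2(\R^m))$ is uniformly bounded and converges in $W^{1,2}$ and a.e.\ to $(\vec f_1^{\,\infty},\vec f_2^{\,\infty})$. Let $\theta_k\in W^{1,2}(D^2,\R)$, normalised by $\int_{D^2}\theta_k=0$, be the minimiser of $E_k(\theta)=\int_{D^2}|d\theta+\langle\vec f_1^{\,k},d\vec f_2^{\,k}\rangle|^2_{g_k}\,\dvol_{g_k}$; from $E_k(\theta_k)\le E_k(0)\le C\|\nabla\vec f_2^{\,k}\|_{L^2}^2\le C$ and Poincaré, $\theta_k$ is bounded in $W^{1,2}$. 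Its Euler--Lagrange equation \eqref{eularequ}, namely $\partial_i(\sqrt{\det g_k}\,g_k^{ij}\partial_j\theta_k)=-\partial_i(\sqrt{\det g_k}\,g_k^{ij}\langle\vec f_1^{\,k},\partial_j\vec f_2^{\,k}\rangle)$ with the natural boundary condition, holds against all $W^{1,2}(D^2)$ test functions, so testing the equations for $\theta_k$ and $\theta_\infty$ against $\theta_k-\theta_\infty$ and subtracting — an energy comparison in the spirit of Lemma \ref{strconsol} — yields $\nabla(\theta_k-\theta_\infty)\to 0$ in $L^2$, i.e.\ $\theta_k\to\theta_\infty$ in $W^{1,2}(D^2)$ and a.e. Rotating, $\vec e_1^{\,k}+i\vec e_2^{\,k}=e^{i\theta_k}(\vec f_1^{\,k}+i\vec f_2^{\,k})$, and applying the product fact with the strongly convergent factors $\nabla\theta_k$ and $\nabla\vec f_i^{\,k}$ (and $e^{i\theta_k}$ bounded, convergent a.e.), we get $(\vec e_1^{\,k},\vec e_2^{\,k})\to(\vec e_1^{\,\infty},\vec e_2^{\,\infty})$ in $W^{1,2}(D^2)$, uniformly bounded in $L^\infty$.

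Then comes the conformal factor and the chart. Let $\lambda_k\in W^{1,2}_0(D^2)$ solve $d\lambda_k=*_{g_k}\langle\vec e_1^{\,k},d\vec e_2^{\,k}\rangle$, equivalently $\Delta_{g_k}\lambda_k=\partial_2\vec e_1^{\,k}\cdot\partial_1\vec e_2^{\,k}-\partial_1\vec e_1^{\,k}\cdot\partial_2\vec e_2^{\,k}=:R_k$ in $D^2$ with $\lambda_k=0$ on $\partial D^2$. Chanillo--Li (Lemma \ref{lm-chali}), or Proposition \ref{pr:Wente}, gives the uniform bound $\|\lambda_k\|_{L^\infty}+\|\nabla\lambda_k\|_{L^2}\le C(\Lambda)\|\nabla\vec e_1^{\,k}\|_{L^2}\|\nabla\vec e_2^{\,k}\|_{L^2}\le C$; moreover $R_k$ is a sum of Jacobians of the $W^{1,2}(D^2,\R^2)$ maps $((\vec e_1^{\,k})_i,(\vec e_2^{\,k})_i)$, so $R_k\to R_\infty$ in $L^1$ (even in $\mathcal H^1$, by Lemma \ref{CLMS}). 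Testing the equations for $\lambda_k$ and $\lambda_\infty$ against $\lambda_k-\lambda_\infty\in W^{1,2}_0(D^2)$ and subtracting, and bounding the source term by $\|R_k-R_\infty\|_{L^1}\|\lambda_k-\lambda_\infty\|_{L^\infty}\to 0$, gives $\lambda_k\to\lambda_\infty$ in $W^{1,2}(D^2)$ and a.e., hence $e^{-\lambda_k}\to e^{-\lambda_\infty}$ in $W^{1,2}(D^2)$ with uniform $L^\infty$ bound. The pullback $1$-forms $e_i^{*,k}=\langle\vec e_i^{\,k},d\bP_k\rangle$ then converge in $W^{1,2}(D^2)$ (their components are bounded in $L^\infty$, and the product fact applies to each term of the product rule), so $e^{-\lambda_k}e_i^{*,k}\to e^{-\lambda_\infty}e_i^{*,\infty}$ in $W^{1,2}(D^2)$; as in Theorem \ref{thexicon} this form is closed, and the Poincaré lemma produces $\varphi^i_k\in W^{1,\infty}\cap W^{2,2}(D^2,\R)$ with $d\varphi^i_k=e^{-\lambda_k}e_i^{*,k}$, normalised by $\varphi^i_k(0)=0$. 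Writing $\varphi_k=(\varphi^1_k,\varphi^2_k)$, we conclude $\varphi_k\to\varphi_\infty$ in $W^{2,2}(D^2)$, $\|\varphi_k\|_{W^{1,\infty}(D^2)}\le C$, and
\[
\det(\nabla\varphi_k)=e^{-2\lambda_k}\sqrt{\det g_k}\ \ge\ \Lambda^{-2}e^{-2\sup_k\|\lambda_k\|_{L^\infty}}=:2c_0>0\qquad\text{a.e.}
\]

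The hard part is the last step: upgrading these global estimates on $D^2$ to a single invertibility neighbourhood $U\ni 0$ valid for all $k$, with $\|\varphi_k^{-1}\|_{W^{1,\infty}(\varphi_k(U))}\le C$ uniform. This needs a quantitative form of the weak inverse function theorem, Lemma \ref{thinv}: running its proof with $\phi=\varphi_k$, the Beltrami coefficient of $\varphi_k$ has $L^\infty$ norm $\le k_0<1$ and compactly supported $W^{1,2}$ norm $\le C$, where $k_0$ and $C$ depend on $\varphi_k$ only through $\|\varphi_k\|_{W^{1,\infty}}$, $\|\varphi_k\|_{W^{2,2}}$ and $c_0$; the associated quasiconformal map $f_k$ and the holomorphic map $\varphi_k\circ f_k^{-1}$ are then uniformly controlled, the order of its zero at $0$ is forced to be $1$, the injectivity neighbourhood contains a fixed ball $U=D_{r_0}(0)$, and $\|\varphi_k^{-1}\|_{W^{1,\infty}(\varphi_k(U))}$ is controlled through $\nabla\varphi_k^{-1}=(\nabla\varphi_k)^{-1}\circ\varphi_k^{-1}$ and $\det\nabla\varphi_k\ge 2c_0$. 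Finally, exactly as in the last lines of the proof of Theorem \ref{thexicon}, $\bP_k\circ\varphi_k^{-1}$ is weakly conformal on $\varphi_k(U)$ (with conformal factor $\lambda_k\circ\varphi_k^{-1}$), and $\varphi_k\to\varphi_\infty$ in $W^{2,2}(U)$ follows from the $W^{2,2}(D^2)$ convergence above. Everything in the first three paragraphs is routine once the uniform bounds and the energy-comparison principle are in hand; the genuine obstacle is extracting from the proof of Lemma \ref{thinv} the dependence of the injectivity radius and of all constants on $\varphi_k$ solely through $\|\varphi_k\|_{W^{2,2}}$, $\|\varphi_k\|_{W^{1,\infty}}$ and the essential infimum of $\det\nabla\varphi_k$.
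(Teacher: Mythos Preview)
Your proposal is correct and follows essentially the same plan as the paper's proof: track each step of the construction in Theorem~\ref{thexicon} and show it is stable under the given convergence, using the energy-comparison principle of Lemma~\ref{strconsol} for the rotation angle and the conformal factor. There are two minor cosmetic differences (you minimise $E_k$ over $W^{1,2}$ with zero mean so that the natural boundary condition forces $\lambda_k\in W^{1,2}_0(D^2)$ directly, whereas the paper minimises over $W^{1,2}_0$ and instead splits $\lambda_k=\lambda'_k+(\lambda_k-\lambda'_k)$ into a Dirichlet part controlled by Proposition~\ref{pr:Wente} and a $g_k$-harmonic part controlled on $D_{1/2}(0)$ via Meyers' estimate), but the substance is the same.

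The one genuine divergence is the final invertibility step. You propose to make Lemma~\ref{thinv} quantitative by tracking the dependence of the injectivity radius on $\|\varphi_k\|_{W^{2,2}}$, $\|\varphi_k\|_{W^{1,\infty}}$ and $\inf\det\nabla\varphi_k$ through the Beltrami analysis; this is plausible but, as you say, is the hard part and requires extracting uniform bounds from \cite{CFMOZ} and then controlling the holomorphic factor. The paper bypasses this entirely: since $\nabla\varphi_k$ is uniformly bounded in $W^{1,2}(D_{1/2}(0))$ and $W^{1,2}\hookrightarrow VMO$ in two dimensions, the VMO modulus of $\nabla\varphi_k$ is controlled uniformly in $k$, and the two-sided Lipschitz bound \eqref{vpimm} gives a uniform immersion constant. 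Theorem~\ref{loin} then produces, directly from these two pieces of data, a neighbourhood $U$ of $0$ and a bi-Lipschitz constant that depend only on $\Lambda$ and the uniform VMO modulus, hence are independent of $k$. This route is shorter and avoids the quasiconformal machinery; your route would also work but costs more.
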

\begin{proof}
    We follow the proof of Theorem \ref{thexicon}. For $k\in \N\cup \{\infty\}$, denote 
    \begin{gather*}
g_k= g_{ij,k}\ dx^i\otimes dx^j,\qquad g_{ij,k}=\langle \p_i\bP_k,\p_j \bP_k\rangle, \\[2mm]
 (g^{ij}_k)=(g_{ij,k})^{-1},\qquad  \det g_k=\det(g_{ij,k}).
\end{gather*}
Then we have $g_k\rightarrow g_\infty$ in $W^{1,2}(D^2)$ as $k\rightarrow\infty$, and $\{g_k\}$, $\{g^{ij}_k\}$ are bounded in $L^\infty(D^2)$. \\ For $k\in \N\cup \{\infty\}$, we apply Gram--Schmidt process to $(\p_1\bP_k,\p_2\bP_k)$ and get \begin{align}\label{gramsch-seq}
    \vec f_{1,k}\coloneqq\frac{\p_1\bP_k}{|\p_1\bP_k|},\qquad \vec f_{2,k}\coloneqq\frac{\p_2\bP_k-(\p_2\bP_k \cdot \vec f_{1,k})\vec f_{1,k}}{|\p_2\bP_k-(\p_2\bP_k \cdot \vec f_{1,k})\vec f_{1,k}|}.
\end{align}
By $\eqref{immconD-seq}$ we have $|\p_1 \bP_k|,|\p_2\bP_k|\in [\La^{-1},\La]$, and 
$$
\left|\p_2\bP_k-(\p_2\bP_k \cdot \vec f_{1,k})\vec f_{1,k}\right| 
\in [\La^{-1}, \La(1+\La^2)].
$$
Hence it holds that $(\vec f_{1,k},\vec f_{2,k})\in L^\infty \cap W^{1,2}(D^2,V_2({\R}^m))$, where $V_2({\R}^m)$ denotes the space of orthonormal 2-frames in ${\R}^m$. Since $(\bP_k)_{k\in\N}$ converges in $W^{2,2}$, we deduce that $\vec f_{i,k}\rightarrow \vec f_{i,\infty}$ in $W^{1,2}(D^2)$ as $k\rightarrow \infty$ for $i\in \{1,2\}$.
For $\theta\in W^{1,2}(D^2,{\R})$, we denote $(\vec{f}^{\,\theta}_{1,k},\vec{f}^{\,\theta}_{2,k})$ the rotation by the angle $\theta$ of the original frame  $(\vec f_{1,k},\vec f_{2,k})$:
\begin{align*}
\vec{f}^{\,\theta}_{1,k}+i\vec{f}^{\,\theta}_{2,k}=e^{i\theta} \left(\vec f_{1,k}+i\vec f_{2,k} \right).
\end{align*}
Then we still have $\left(\vec{f}^{\,\theta}_{1,k},\vec{f}^{\,\theta}_{2,k} \right)\in L^\infty\cap W^{1,2}(D^2,V_2(\R^m))$.\\
  We look now for a rotation of this orthonormal frame realizing the following absolute minimum
\be
\label{I.8-seq}
\begin{array}{l}
\ds\inf_{\theta\in W_0^{1,2}(D^2,{\R})}\int_{D^2} |\lan \vec{f}^{\,\theta}_{1,k},d\vec{f}^{\,\theta}_{2,k}\ran|^2_{g_k}\ d\textup{vol}_{g_k}
\\[5mm]
\ds=\inf_{\theta\in W_0^{1,2}(D^2,{\R})}\int_{D^2} \sum_{i,j=1}^2g^{ij}_k\, \lan\vec{f}^{\,\theta}_{1,k},\p_{i}\vec{f}^{\,\theta}_{2,k}\ran\,\lan\vec{f}^{\,\theta}_{1,k},\p_{j}\vec{f}^{\,\theta}_{2,k}\ran\, \sqrt{\det g_k}\  dx^1\we dx^2.
\end{array}
\ee
  For $i\in\{1,2\}$, we have 
\be \label{rot-fdf}
\lan\vec{f}^{\,\theta}_{1,k},\p_{i}\vec{f}^{\,\theta}_{2,k}\ran=\p_{i}\theta+\lan\vec f_{1,k},\p_{i}\vec f_{2,k}\ran.
\ee
Hence the following energy is strictly convex in $W_0^{1,2}(D^2,{\R})$:
\begin{align*}
E(\theta)\coloneqq\int_{D^2} |\lan \vec{f}^{\,\theta}_{1,k},d\vec{f}^{\,\theta}_{2,k}\ran|^2_{g_k}\, d\textup{vol}_{g_k}=\int_{D^2} |d\theta+\lan \vec f_{1,k},d\vec f_{2,k}\ran|^2_{g_k}\,d\textup{vol}_{g_k}.
\end{align*}
By a standard application of Mazur's lemma, we have a unique $\theta_k\in W_0^{1,2}(D^2,\R)$ that achieves the minimum of $E$. It is the unique solution in $W_0^{1,2}(D^2,\R)$ for the Euler--Lagrange equation:
\be\label{eularequ-seq}
\forall \phi\in W_0^{1,2}(D^2,{\R}),\qquad \int_{D^2}d\phi\wedge *_{g_k}\lf( d\theta_k+\lan\vec f_{1,k},d\vec f_{2,k}\ran  \rg) =0.
\ee
In particular, taking $\phi=\theta_k$ in \eqref{eularequ-seq}, we get 
$$
\La^{-4}\int_{D^2} |\g \theta_k |^2 \,dx^2\le \int_{D^2} |d\theta_k|^2_{g_k}\, d\textup{vol}_{g_k}\le \int_{D^2} |d\vec f_{2,k}|_{g_k}^2 \,d\textup{vol}_{g_k}\le \La^4 \int_{D^2} |\g\vec f_{2,k}|^2.
$$ 
Thus the sequence $\{\theta_k\}$ is bounded in $W_0^{1,2}(D^2)$ and each subsequence contains a further subsequence $\{\theta_{k_l}\}$ weakly converging to some $\theta'\in  W^{1,2}_0(D^2)$. Then using \eqref{eularequ-seq}, H\"older's inequality and dominated convergence theorem, we have 
$$
\forall \phi\in W_0^{1,2}(D^2,{\R}),\qquad \int_{D^2}d\phi\wedge *_{g_\infty}\lf( d\theta'+\lan\vec f_{1,\infty},d\vec f_{2,\infty}\ran  \rg) =0.
$$ 
Hence it holds $\theta_{\infty}=\theta'$ and we have $\theta_k\rightharpoonup \theta_{\infty}$ weakly in $W^{1,2}(D^2)$. Also by dominated convergence, we have 
$$
d\lf(*_{g_k}\lan\vec f_{1,k},d\vec f_{2,k}\ran \rg)\rightarrow d\lf(*_{g_\infty}\lan\vec f_{1,\infty},d\vec f_{2,\infty}\ran \rg) \qquad \text{in $W^{-1,2}(D^2)$. }
$$
Then by replacing the cutoff function $\vp$ by $1$ on $D^2$, we use the same argument as in Lemma \ref{strconsol} to obtain $\theta_k\rightarrow \theta_{\infty}$ in $W^{1,2}(D^2)$. Denoting $(\vec{e}_{1,k},\vec{e}_{2,k})\coloneqq(\vec{f}^{\,\theta_k}_{1,k},\vec{f}^{\,\theta_k}_{2,k})$, we have $\vec e_{i,k}\rightarrow \vec e_{i,\infty}$ as $k\rightarrow \infty$ for $i\in \{1,2\}$. We deduce from \eqref{eularequ-seq} that the following equation holds in the sense of distribution:
\[
d\Big(  *_{g_k} \lan\vec{e}_{1,k},d\vec{e}_{2,k}\ran  \Big)=0.
\] 
Therefore by the weak Poincar\'e lemma, there exists $\la_k\in W^{1,2}(D^2)$ with $\int_{D^2 }\la_k=0$ such that \[d\la_k =  *_{g_k} \lan\vec{e}_{1,k},d\vec{e}_{2,k}\ran.\]
Hence we have the convergence $\g\la_k\rightarrow \g\la_{\infty}$ in $L^2(D^2)$, from which we deduce that $\la_k\rightarrow \la_{\infty}$ in $W^{1,2}(D^2)$ by Poincar\'e inequality.
In order to deduce boundedness in $C^0$, we use the following equation:
\[\Delta_{g_k} \la_k=*_{g_k} d*_{g_k} d\la_k=-*_{g_k} d\lan\vec{e}_{1,k},d\vec{e}_{2,k}\ran.
\]
Let $\la_k'\in \bigcap_{1<p<2} W^{1,p}_0(D^2)$ be the solution to the following equation:
\[
\begin{cases}
\ds\,\p_{i}\lf(\sqrt{\det g_k}\ g_k^{ij}\,\p_{j}\la'_k\rg)=\p_2 \vec e_{1,k} \cdot \p_1 \vec e_{2,k} -\p_1 \vec e_{1,k} \cdot \p_2 \vec e_{2,k}\quad &\mbox{ in }D^2,\\[3mm]
\ds\la_k'=0\quad &\mbox{ on }\p D^2.
\end{cases}
\]
By Proposition \ref{pr:Wente}, 
we obtain that $\{\la_k'\}$ is bounded in $C^0(\overline{D^2})\cap W^{1,2}_0(D^2,{\R})$. Concerning the difference $\lambda_k'-\lambda_k$, we have the following equation: 
$$
\p_{i}\lf(\sqrt{\det g_k}\ g_k^{ij}\, \p_{j}(\la'_k-\la_k)\rg)=0.
$$
Thanks to \cite {Me63}, there exists $p=p(\La)\in(2,\infty)$ and a positive constant $C$ independent of $k$, such that 
$$ 
\|\la'_k-\la_k\|_{C^0(\overline{D_{1/2}(0)})}\le C\|\la'_k-\la_k\|_{W^{1,p}(D_{1/2}(0))} \le C(\La)\,\|\la'_k-\la_k\|_{W^{1,2}(D^2)}\le C.
$$
It follows that $\{\la_k\}_{k\in\N}$ is bounded in $C^0(\overline{D_{1/2}(0)})$. For $i\in \{1,2\}$, we define pullback of $\vec e_{i,k}$ and the dual $1$-forms 
\begin{align}\label{dualbas-seq}\begin{dcases}
    e_{i,k}\coloneqq  g^{jl}_k\,\lan \vec e_{i,k}, \p_j\bP_k \ran \,\frac{\p}{\p x^l}\in L^\infty\cap W^{1,2}(D_{1/2}(0),TD_{1/2}(0)),\\[2mm]
    e_{i,k}^\ast\coloneqq\lan \vec{e}_{i,k},\p_{j}\vec{\Phi_k}\ran \,dx^j=\lan \vec e_{i,k}, d\bP_k\ran\in L^\infty \cap W^{1,2}(D_{1/2}(0),T^*D_{1/2}(0)).
\end{dcases}
\end{align}
Then by the same proof of Theorem \ref{thexicon}, we have
\[
d(e^{-\la_k}\, e_{i,k}^*)=0.
\]
Using the weak Poincar\'e lemma, we obtain the existence of $\vp^i_{k}\in W^{1,\infty}\cap W^{2,2}(D_{1/2}(0),{\R})$ with $\int_{D_{1/2}(0)} \vp^i_{k}=0$ for $i\in \{1,2\}$ such that
\be
\label{dp=-seq}
d\varphi^i_{k}=e^{-\la_k}\, e_{i,k}^*.
\ee
Let $\vp_k\coloneqq(\vp^1_{k},\vp^2_{k})\in  W^{1,\infty}\cap W^{2,2}(D_{1/2}(0),{\R^2})$. From \eqref{dualbas-seq} and \eqref{dp=-seq} we deduce that for a.e. $p\in D_{1/2}(0)$ and any $v\in T_p(D_{1/2}(0))$,
\begin{align}\label{vpimm}
    e^{-\|\la_k\|_{C(\overline{D_{1/2}(0)})}}\,\La^{-1}|v|\le  | d(\vp_{k})_p(v)|=  e^{-\la_k}|d\bP_k(v)|\le e^{\|\la_k\|_{C(\overline{D_{1/2}(0)})}}\,\La|v|.
    \end{align}
Since $\la_k \rightarrow \la_{\infty}$, $e_{i,k}^*\rightarrow e^*_{i,\infty }$ in $W^{1,2}(D^2)$ as $k\rightarrow\infty$, and $\{\la_k\}$ is bounded in $C(\overline{D_{1/2}(0)})$, by dominated convergence theorem and Poincar\'e inequality we have $\vp_{k}\rightarrow \vp_{\infty}$ in $W^{2,2}(D_{1/2}(0))$. 
Combining \eqref{vpimm} and Lemma \ref{loin}, it follows that there exists an open neighborhood $U\subset D_{1/2}(0)$ of $0$ independent of $k$ such that $\vp_k$ is open and injective on $U$, and its inverse $\vp_k^{-1}\in W^{1,\infty}\cap W^{2,2}(\vp_k(U),\R^2)$. We obtain that $\vp_k$ is an isothermal chart of $\bP_k$ by the same proof of Theorem \ref{thexicon}.
\end{proof}

\subsubsection{Convergence of conformal structures}\label{sec:ConvergenceStructure}

As a consequence of Theorem \ref{thmconchart}, we deduce the convergence of the underlying conformal structure for sequences of metrics induced by weak immersions which converge in some Sobolev space. To begin with, we consider the case where $g_k\rightarrow g_{\infty}$ smoothly. \begin{Lm}\label{lem-stru-con-smooth}
    Let $\Sigma$ be an oriented connected $2$-dimensional closed smooth manifold. Assume that $(g_k)_{k\in \N\cup \{\infty\}}$ is a sequence of Riemannian metrics on $\Sigma$, and $$g_k\xrightarrow[k\rightarrow\infty]{} g_{\infty} \quad \text{in }\, C^\infty(\Sigma, T^*\Sigma\ot T^*\Sigma).$$ Then there exists a sequence $(h_k)_{k\in \N\cup \{\infty\}}$ of metrics of constant Gaussian curvature $1,-1$ or $0$ such that $h_k$ is conformal to $g_k$ and $h_k\rightarrow h_\infty$ in $C^\infty(\Sigma, T^*\Sigma\ot T^*\Sigma)$ as $k\rightarrow \infty$.
\end{Lm}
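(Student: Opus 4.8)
The plan is to exhibit, for each metric $g$, a preferred representative $h=h_g$ in the conformal class $[g]$ of constant Gaussian curvature, in such a way that $g\mapsto h_g$ is continuous (in fact smooth) in the $C^\infty$ topology; the assertion $h_k\to h_\infty$ then follows at once from $g_k\to g_\infty$ in $C^\infty$. I would split according to the genus $\gamma=\gamma(\Sigma)$ and write $h_g=e^{2u_g}\,g$, determining $u_g$ from a semilinear elliptic equation: for $\gamma\ge 1$ the function $u_g$ is uniquely pinned down and depends smoothly on $g$ by the implicit function theorem, while for $\gamma=0$ the non-compactness of the conformal group of $\s^2$ obstructs a naive uniqueness and a different normalisation is needed. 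Everywhere I would use that on a closed surface $g=e^{2\alpha}h$ implies $-\Delta_h\alpha=e^{2\alpha}K_g-K_h$ (the identity underlying \eqref{liouv}), so that prescribing $K_h$ becomes a Liouville-type equation for $\alpha$.

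For $\gamma\ge 2$ I take $h_g$ to be the unique metric of curvature $-1$ in $[g]$: with $h_g=e^{2u}g$ this reads $\Delta_g u=K_g+e^{2u}$. Existence and uniqueness of $u=u_g\in C^\infty(\Sigma)$ is classical, and the linearised operator $\Delta_g-2e^{2u_g}$ is negative definite, hence an isomorphism in each $C^{k,\alpha}$, so the implicit function theorem makes $g\mapsto u_g$ smooth in the $C^{k,\alpha}$ topologies for every $k\ge 2$, $\alpha\in(0,1)$; since $C^\infty$-convergence on a closed manifold is convergence in every $C^{k,\alpha}$, this gives $h_{g_k}\to h_{g_\infty}$ in $C^\infty(\Sigma,T^*\Sigma\otimes T^*\Sigma)$. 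For $\gamma=1$ I take the flat representative of unit area: $K_{h_g}=0$ forces $\Delta_g u=K_g$, which is solvable because $\int_\Sigma K_g\,d\textup{vol}_g=0$ by Gauss--Bonnet, uniquely up to an additive constant, and the constant is fixed by $\int_\Sigma e^{2u}\,d\textup{vol}_g=1$. The map $(g,u)\mapsto(\Delta_g u-K_g,\ \int_\Sigma e^{2u}\,d\textup{vol}_g-1)$ has invertible partial derivative in $u$ (because $\Delta_g$ is an isomorphism modulo constants and $\int e^{2u}>0$), so again $g\mapsto u_g$ is smooth and one concludes as before.

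The case $\gamma=0$, i.e.\ $\Sigma\cong\s^2$, is the main obstacle: the metric of curvature $+1$ in $[g]$ is unique only up to the non-compact group $\mathrm{Conf}(\s^2)$, so no argument on $u$ alone selects a continuous branch. Instead I would fix once and for all three distinct points $q_1,q_2,q_3\in\s^2$ and three distinct points $p_1,p_2,p_3$ of the standard round sphere, and let $F_g\colon(\s^2,g)\to(\s^2,g_{\std})$ be the conformal diffeomorphism normalised by $F_g(q_i)=p_i$. Such $F_g$ exists and is unique: uniformization identifies $(\s^2,[g])$ with $(\s^2,[g_{\std}])$ and a Möbius transformation is determined by the images of three points; concretely $F_g$ solves the Beltrami equation of $g$ with the three-point normalisation of Lemma~\ref{exisolbel}, the Beltrami coefficient being the smooth algebraic expression in the $g_{ij}$ appearing in \eqref{Bel-metric}. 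For smooth $g$, $F_g$ is a smooth diffeomorphism by elliptic regularity, and $h_g:=F_g^{\,*}g_{\std}$ is conformal to $g$ and, being isometric to $g_{\std}$, has constant Gaussian curvature $1$.

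The one delicate point is the smooth dependence of $F_g$ on $g$ in the $C^\infty$ topology, which is precisely the classical parametric dependence (Ahlfors--Bers theory) of the three-point-normalised solution of the Beltrami equation on its coefficient, combined with the smooth dependence of that coefficient on $g$; when writing the full proof I would make the latter explicit via \eqref{Bel-metric}. Granting this, $g_k\to g_\infty$ in $C^\infty$ gives $F_{g_k}\to F_{g_\infty}$ in $C^\infty(\s^2,\s^2)$, hence $h_k=F_{g_k}^{\,*}g_{\std}\to F_{g_\infty}^{\,*}g_{\std}=h_\infty$ in $C^\infty(\Sigma,T^*\Sigma\otimes T^*\Sigma)$, completing the argument in all three cases.
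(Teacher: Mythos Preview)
Your proposal is correct and follows essentially the same trichotomy as the paper: for genus $\ge 2$ you invoke the smooth dependence of the hyperbolic representative on $g$ (the paper cites Tromba's theorem, which is proved by the implicit function theorem argument you outline), for the torus you solve $\Delta_g u=K_g$ with a normalisation (the paper uses $\int u\,d\textup{vol}_{g_\infty}=0$ and direct elliptic estimates rather than the implicit function theorem, but this is cosmetic), and for the sphere both you and the paper use the three-point-normalised solution of the Beltrami equation and pull back the round metric. The only place where the paper is more explicit is the sphere case, where instead of invoking Ahlfors--Bers parametric dependence it writes the Beltrami equation as a divergence-form elliptic system (via Reshetnyak) and bootstraps $C^0$ convergence of the normalised $f_k$ to $C^\infty$ convergence by standard interior estimates in both stereographic charts.
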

\begin{proof}
    Let $\chi(\Sigma)$ denote the Euler characteristic of $\Sigma$ and $K_{h_k}$ be the Gaussian curvature of $h_k$.\\
    
    {\bf (Case I)} $\chi(\Sigma)\leq -2$ and $K_{h_k}=-1$.

    By a theorem of Poincar\'e and its corollary (see \cite[Theorem 1.6.2]{Tromba92}), in this case, for any $s\in \N$, $s\ge 3$ and any metric $g\in W^{s,2}(\Sigma,T^*\Sigma\ot T^*\Sigma)$, there exists a unique metric $h\in  W^{s,2}(\Sigma,T^*\Sigma\ot T^*\Sigma)$ such that $h$ is weakly conformal to $g$ and $h$ has constant Gaussian curvature $-1$. Moreover, the map taking $g$ to $h$ is continuous in $W^{s,2}$. It follows that $h_k$ is uniquely determined by $g_k$, and converges to $h_\infty$ smoothly since $g_k\rightarrow g_\infty$ in $C^\infty(\Sigma, T^*\Sigma\ot T^*\Sigma)$.\\
    
    {\bf(Case II)} $\chi(\Sigma)=0$ and $K_{h_k}=0$.
    
    Let $h_k=e^{2\la_k}g_k$ for some $\lambda_k\in C^{\infty}(\Sigma)$. We denote the Gaussian curvature and negative Laplace--Beltrami operator associated to $g_k$ by $K_{g_k}$ and $\Delta_{g_k}$ respectively, the condition that $h_k$ has constant Gaussian curvature $0$ is then equivalent to the equation (see e.g. \cite[Theorem 2.6]{Ri16}) \be \label{deltagk=K}\Delta_{g_k}\la_k=K_{g_k}.\ee
    By Gauss--Bonnet theorem, we have 
    $$
    \int_{\Sigma}K_{g_k}\,d\textup{vol}_{g_k}=2\pi \chi(\Sigma)=0.
    $$ 
    Hence for each $k\in\N\cup\{\infty\}$, there exists a smooth solution $\la_k$ to \eqref{deltagk=K} and it is unique up to additive constants (see \cite[Theorem 4.7]{aubin1998}). By adding suitable constants, we can without loss of generality assume that 
    \be\label{normalizelambda}
    \forall k\in \N\cup\{\infty\},\qquad \int_{\Sigma} \la_k \,d\textup{vol}_{g_{\infty}}=0.
    \ee
   Since $g_k\rightarrow g_{\infty}$ in $C^\infty$, we have $K_{g_k}\rightarrow K_{g_\infty}$ in $C^\infty(\Sigma)$ as $k\rightarrow\infty$, and in particular, $\{K_{g_k}\}_{k\in\N\cup\{\infty\}}$ is bounded in $C^0(\Sigma)$. By \eqref{deltagk=K} and the convergence $g_k\rightarrow g_\infty$, there exists a constant $C_1>0$ independent of $k$ such that 
   \begin{align*}
        C_1\int_{\Sigma} |d\la_k|_{g_\infty}^2\,d\textup{vol}_{g_\infty}&\le\int_{\Sigma} |d\la_k|_{g_k}^2\,d\textup{vol}_{g_k}=-\int_{\Sigma} \lambda_k\, \Delta_{g_k}\la_k\,d\textup{vol}_{g_k} =\int_{\Sigma }-\la_k K_{g_k}\,d\textup{vol}_{g_k}.
        \end{align*}
        Furthermore, by \eqref{normalizelambda}, Cauchy--Schwarz inequality, and Poincar\'e inequality, there exist constants $C_2,C_3>0$ independent of $k$ such that
       \begin{align*}
            \int_{\Sigma }-\la_k K_{g_k}\,d\textup{vol}_{g_k}&\le \|K_{g_k}\|_{L^{\infty}(\Sigma)}\, \textup{vol}_{g_k}(\Sigma)^{\frac{1}{2}} \lf(\int_{\Sigma} \la_k^2 \,d\textup{vol}_{g_k}\rg)^{\frac 12} \\
        &\le C_2\, \left( \sup_{l\in\N} \|K_{g_l}\|_{L^{\infty}(\Sigma)}\right)\, \left( \sup_{l\in\N} \textup{vol}_{g_l}(\Sigma)^{\frac{1}{2}}\right)\lf(\int_{\Sigma} \la_k^2 \,d\textup{vol}_{g_\infty}\rg)^{\frac 12} \\
        &\le C_3\, \lf(\int_{\Sigma} |d\la_k|_{g_{\infty}}^2 \,d\textup{vol}_{g_\infty}\rg)^{\frac 12}.
        \end{align*}
        Consequently, $\{\la_k\}_{k\in\N\cup\{\infty\}}$ is bounded in $W^{1,2}(\Sigma)$. Now we apply the interior estimates as in \cite[Section 6.3.1]{evans} to the equation \eqref{deltagk=K} and obtain that $\{\la_k\}_{k\in\N\cup\{\infty\}}$ is bounded in $W^{s,2}(\Sigma)$ for any $s\in \N$. A standard compactness argument then implies $\la_k\rightarrow \la_{\infty}$ in $C^\infty(\Sigma)$ as $k\rightarrow\infty$. Finally, by setting $h_k=e^{2\la_k}g_k$, we obtain the required convergent sequence of metrics of Gaussian curvature $0$.\\
        
    {\bf(Case III)} $\chi(\Sigma)= 2$ and  $K_{h_k}=1$.

    We identify $\Sigma=\C\cup\{\infty\}$ with the canonical smooth structure and orientation. Define $\psi(z)\coloneqq1/z$ on $\Sigma$, and $\phi\colon \C\rightarrow S^2\setminus\{(0,0,1)\}$ be the inverse of the stereographic projection, i.e. $$\phi(x,y)\coloneqq\lf(\frac{2x}{1+x^2+y^2},\frac{2y}{1+x^2+y^2},\frac{-1+x^2+y^2}{1+x^2+y^2}\rg).$$
    Let $h\coloneqq\phi^*g_{S^2}$, where $g_{S^2}$ is the standard round metric on $S^2$. Then $h$ can be extended to a Riemannian metric on $\Sigma$ with constant Gaussian curvature $1$, and is conformal to $g_{\text{std}}=dx^2+dy^2$ on $\C$. Given $k\in\N\cup\{\infty\}$, there exist two functions $\sigma_k\colon \C\rightarrow (0,\infty)$ and $\mu_k\colon \C\rightarrow D_1(0)\subset \C$ such that
    $$
    g_k=(g_k)_{11}\,dx^2+2(g_k)_{12}\,dxdy+(g_k)_{22}\, dy^2=\sigma_k|dz+\mu_k\, d\bar z|^2,
    $$
    where $dz=dx+idy$ and $d\bar z=dx-idy$. Moreover, such $\mu_k$ is uniquely determined by $g_k$ and can be computed as (see e.g. \cite[Section 1.2.1]{Nag88})
    $$
    \mu_k=\frac{(g_k)_{11}-(g_k)_{22}+2i\,(g_k)_{12}}{(g_k)_{11}+(g_k)_{22}+2\sqrt{(g_k)_{11}(g_k)_{22}-(g_k)_{12}^2}}.
    $$
    In particular, we have $\mu_k\rightarrow \mu_{\infty}$ in $C^\infty_{\textup{loc}}(\C)$. On the other hand, we obtain that for all $\omega\in\C$,
    $$ 
    (\psi^{-1})^*(g_k)_\omega=\frac{\sigma_k\circ \psi^{-1}(\omega)}{|\omega|^4} \big|d\omega+ \mu_k\circ\psi^{-1}(\omega)\, \omega^2{\bar\omega}^{-2}d\bar{\omega}\big|^2.
    $$
Since $(\psi^{-1})^*g_k$ is smooth on $\C$, it follows that $ \mu_k\circ \psi^{-1}(\omega)\, \omega^2{\bar \omega}^{-2}$ can be extended to a smooth function on $\C$ and converges to $ \mu_\infty\circ \psi^{-1}(\omega)\, \omega^2{\bar \omega}^{-2}$ in $C^\infty_{\textup{loc}}(\mathbb C)$. In particular, it holds that $\|\mu_k\|_{L^\infty(\C)}<1$ and $\|\mu_k-\mu_\infty\|_{L^\infty(\C)}\rightarrow 0$ as $k\rightarrow \infty$. 
    
    For $k\in \N\cup\{\infty\}$, let $f_k$ be the unique orientation-preserving homeomorphism of $\Sigma=\C\cup\{\infty\}$ onto itself satisfying the relations 
    \begin{align*}
    \begin{cases} 
        \p_{\bar z} f_k=\mu_k\, \p_{z} f_k, \\[2mm]
        f_k(0)=0,\ f_k(1)=1,\ f_k(\infty)=\infty.
    \end{cases}
    \end{align*}
    In particular, $f_k$ is quasiconformal with complex dilatation $\mu_k$ (see Definition \ref{dfiquasicon}). 
    By considering the quasiconformal maps $f_k\circ f_{\infty}^{-1}$ and applying \cite[Proposition 4.36]{Imayoshi92}, we have $f_k\rightarrow f_\infty$ locally uniformly on $\C$.  
    We denote $\sigma_k\in L^\infty(\C,\R^{2\times 2})$ the unique matrix-valued function on $\C$ such that $\sigma_k$ is positive definite a.e., $\text{det}(\sigma_k)=1$, and such that
    $$
\mu_k=\frac{(\sigma_k)_{22}-(\sigma_k)_{11}-2i\,(\sigma_k)_{12}}{(\sigma_k)_{11}+(\sigma_k)_{22}+2}.
$$ 
Such $\sigma_k$ depends uniquely and smoothly on $\mu_k$, hence we have $\sigma_k\rightarrow \sigma_{\infty}$ in $C^\infty_{\textup{loc}}(\C,\R^{2\times 2})$. By \cite[Theorem 5.1]{Reshetnyak89}, $f_k$ satisfies the elliptic equation 
    $$
    \text{div}(\sigma_k(z) \nabla f_k(z))=0 \quad\text{on }\C.
    $$
    Hence standard elliptic estimates as in Case II imply that $f_k\rightarrow f_{\infty}$ in $C^\infty_{\textup{loc}}(\C)$. 
    On the other hand, the map $\psi\circ f_k\circ \psi^{-1}$ is quasiconformal with complex dilatation $\mu_k\circ \psi^{-1}(\omega)\, \omega^2{\bar \omega}^{-2}$ (see \cite[Proposition 4.13]{Imayoshi92}). Moreover, it holds
    $$
    \mu_k\circ \psi^{-1}(\omega)\, \omega^2{\bar \omega}^{-2}\xrightarrow[k\rightarrow\infty]{} \mu_\infty\circ \psi^{-1}(\omega)\, \omega^2{\bar \omega}^{-2} \qquad \text{ in }C_{\textup{loc}}^\infty(\C).
    $$
    By the same argument as before, we obtain 
    $$
    \psi\circ f_k\circ \psi^{-1}\xrightarrow[k\rightarrow\infty]{}\psi\circ f_\infty\circ \psi^{-1} \qquad \text{ in }C_{\textup{loc}}^\infty(\C).
    $$
    Hence it holds $f_k\rightarrow f_\infty$ in $C^\infty(\Sigma)$ as $k\rightarrow \infty$. Besides, by the proof of \cite[Lemma 5.B.3]{Ahlfors06}, both $f_k$ and $\psi\circ f_k\circ \psi^{-1}$ have no critical points on $\C$, hence each $f_k$ is a diffeomorphism on $\Sigma$. 
    Finally, for all $k\in \N\cup\{\nf\}$, let $h_k\coloneqq f_k^* h$, then $h_k$ is a Riemannian metric with constant Gaussian curvature $1$ on $\Sigma$, and $h_k\rightarrow h_\infty$ smoothly on $\Sigma$. Since $h$ is conformal to $g_{\text{std}}$ and $f_k^* g_{\text{std}}$ is conformal to $g_k$ on $\C$ (see for instance \cite[Section 1.5.1]{Imayoshi92}), we have $h_k=f_k^* h$ is conformal to $g_k$ on $\C\subset \Sigma$. The conformality holds on $\Sigma$ since $h_k$, $g_k$ are both Riemannian metrics on $\Sigma$, and $\C$ is dense in $\Sigma$.
\end{proof}
Then we consider the general case in which $\{g_k\}$ are metrics induced by weak immersions. The strategy of proof is an adaptation of \cite[Proposition 3.2]{Uh82}. For a sequence of functions $f_k$ and a compact set $K\subset \R^n$, we write $f_k\rightarrow f$ in $C^\infty(K)$ if each $f_k$ is defined on a neighborhood of $K$ (possibly depending on $k$) and for any $l\in \N_0$, $\|f_k-f\|_{C^l(K)}\rightarrow 0$ as $k\rightarrow \infty$.

\begin{thm}\label{thm-con-stru}
    Let $\Sigma$ be an oriented connected $2$-dimensional closed smooth manifold with a reference Riemannian metric $\ti g$. Assume that $\bP_k \rightarrow \bP_{\infty}$ in $W^{2,2}(\Sigma,\R^m)$ as $k\rightarrow \infty$, and that there exists a constant $\La>0$ such that for any $k\in \N \cup \{\infty\}$, there holds\begin{align}
\Lambda^{-1} |X|_{\ti g} \le | d(\vec{\Phi}_k)_p(X)|_{\R^m}  \le \Lambda |X|_{\ti g},\qquad \text{for a.e. }p\in \Sigma \text{ and all } \mkt X\in T_p\Sigma.
\end{align}
Let $g_k\coloneqq\bP_k^*\,g_{\text{std}}$ be the metric induced by $\bP_k$. Assume that $\ti g$ and $g_{\infty}$ are weakly conformal (i.e. $\Sigma$ is equipped with the complex structure induced by $\bP_\infty$ as in Corollary \ref{coconstr}), 
then there exists a sequence $(h_k)_{k\in \N\cup \{\infty\}}$ of metrics smooth in the complex structure induced by $\bP_k$, and a sequence $(\Psi_k)_{k\in \N}$ of bi-Lipschitz homeomorphisms on $\Sigma$ such that\begin{align*}
 \begin{dcases} 
    (i)\ \text{$h_k$ has constant Gaussian curvature $1,-1$ or $0$,}\\[3mm]
    (ii)\ \text{$\Psi_k^*h_k\xrightarrow[k\rightarrow\infty]{} h_{\infty}$ in $C^\infty(\Sigma,T^*\Sigma\ot T^*\Sigma)$,}\\[2mm]
    (iii)\ \text{$\Psi_k\xrightarrow[k\rightarrow\infty]{} id$ in $W^{2,2}(\Sigma,\Sigma)$,}\\[2mm]
    (iv)\ \text{$h_k$ is weakly conformal to $g_k$,}\\[3mm]
    (v)\ \text{$\{\Psi_k\}_{k\in \N}$, $\{\Psi_k^{-1}\}_{k\in\N}$ are bounded in $W^{1,\infty}(\Sigma,\Sigma)$. In particular, the distortions}\\
    \text{\quad \quad\cite[Definition 1.11]{Hencl14} of $\{\Psi_k\}$ are uniformly bounded in $L^\infty(\Sigma)$.} 
    \end{dcases}
             \end{align*}
\end{thm}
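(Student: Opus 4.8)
Since $\tilde g$ and $g_\infty$ are weakly conformal, I equip $\Sigma$ with the smooth structure $\mathcal{V}_\infty$ of the $\bP_\infty$-isothermal charts (Corollary \ref{coconstr}); the plan is to pull back through well chosen diffeomorphisms $\Psi_k$ so as to reduce to the smooth situation of Lemma \ref{lem-stru-con-smooth}. Around each point of $\Sigma$, Theorem \ref{thexicon} gives a $\bP_\infty$-isothermal chart, and Theorem \ref{thmconchart} then yields, on a common neighbourhood and for all $k\in\N\cup\{\infty\}$ simultaneously, $\bP_k$-isothermal charts $\vp_k^\alpha$ with $\vp_k^\alpha\to\vp_\infty^\alpha$ in $W^{2,2}$ and uniform $W^{1,\infty}$ bounds on $\vp_k^\alpha$ and $(\vp_k^\alpha)^{-1}$. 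By compactness I extract a finite cover $\{U_\alpha\}$ with a subordinate partition of unity $\{\chi_\alpha\}$. The transition maps $g_k^{\alpha\beta}\coloneqq\vp_k^\alpha\circ(\vp_k^\beta)^{-1}$ are holomorphic for the standard structure on $\C$ (Corollary \ref{coconstr}(i)); being holomorphic and converging in $C^0$, they converge to $g_\infty^{\alpha\beta}$ in $C^\infty_{\textup{loc}}$ --- this holds even though the Beltrami coefficients of $g_k$ need not converge in $L^\infty$, which is precisely why one works with the atlas rather than with $g_k$ directly.

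\textbf{Step 2: construction of $\Psi_k$.} I look for maps $F_k^\alpha\colon U_\alpha\to\C$, smooth for $\mathcal{V}_\infty$, solving the cocycle relation $F_k^\alpha=g_k^{\alpha\beta}\circ F_k^\beta$ on overlaps and close to $\vp_\infty^\alpha$. For $k=\infty$ the choice $F_\infty^\alpha=\vp_\infty^\alpha$ solves this; since $g_k^{\alpha\beta}\to g_\infty^{\alpha\beta}$ in $C^\infty_{\textup{loc}}$, an Uhlenbeck-type iterative patching argument (cf. \cite[Proposition~3.2]{Uh82}) --- building the $F_k^\alpha$ chart by chart and correcting the cocycle defect, which is small precisely because the transitions converge smoothly --- produces such $F_k^\alpha$ with $F_k^\alpha\to\vp_\infty^\alpha$ in $C^\infty$. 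Set $\Psi_k\coloneqq(\vp_k^\alpha)^{-1}\circ F_k^\alpha$; the cocycle relation makes this independent of $\alpha$, hence a globally defined map $\Sigma\to\Sigma$, and $\vp_k^\gamma\circ\Psi_k=F_k^\gamma$ shows $\Psi_k\colon(\Sigma,\mathcal{V}_\infty)\to(\Sigma,\mathcal{V}_k)$ is smooth. Using the uniform $W^{1,\infty}$ bounds on $(\vp_k^\alpha)^{-1}$, the $C^\infty$ convergence $F_k^\alpha\to\vp_\infty^\alpha$ and $\det\nabla\Psi_k\ge c>0$, Lemma \ref{thinv} gives that $\Psi_k$ is a local homeomorphism with inverse in $W^{1,\infty}\cap W^{2,2}$; since $\Psi_k\to\mathrm{id}$ in $C^0$, a degree argument makes it a global homeomorphism, the composition $(\vp_k^\alpha)^{-1}\circ F_k^\alpha$ converges to $\mathrm{id}$ in $W^{2,2}(\Sigma,\Sigma)$, and $\Psi_k,\Psi_k^{-1}$ are uniformly bounded in $W^{1,\infty}$. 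This is $(iii)$ and $(v)$.

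\textbf{Step 3: $h_k$ and smooth convergence.} Let $h_k$ be the metric of constant Gaussian curvature $1$, $-1$ or $0$ conformal to $g_k$ and smooth for $\mathcal{V}_k$ (Corollary \ref{coconstr}(iii)), so $(i)$ and $(iv)$ hold; in the flat and spherical cases I leave the residual normalization free for now. Since $\Psi_k\colon(\Sigma,\mathcal{V}_\infty)\to(\Sigma,\mathcal{V}_k)$ is a smooth diffeomorphism, $\Psi_k^*h_k$ is a smooth metric on $(\Sigma,\mathcal{V}_\infty)$ of constant curvature, conformal to $\Psi_k^*g_k$, with conformal class $\tilde J_k\coloneqq\Psi_k^*J_k$, for which $\tilde g_k\coloneqq\sum_\alpha\chi_\alpha\,|dF_k^\alpha|^2$ is a smooth representative (a positive multiple of each $|dF_k^\beta|^2$ because $g_k^{\alpha\beta}$ is holomorphic). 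As $F_k^\alpha\to\vp_\infty^\alpha$ in $C^\infty$, we get $\tilde g_k\to\tilde g_\infty\coloneqq\sum_\alpha\chi_\alpha\,|d\vp_\infty^\alpha|^2$ in $C^\infty(\Sigma,T^*\Sigma\ot T^*\Sigma)$, a smooth metric in the class $J_\infty$. Lemma \ref{lem-stru-con-smooth} then gives constant-curvature metrics conformal to $\tilde g_k$ converging in $C^\infty$ to the one conformal to $\tilde g_\infty$; by uniqueness of the constant-curvature metric in a conformal class (up to the normalization just fixed) these are $\Psi_k^*h_k$ and $h_\infty$, which yields $(ii)$.

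\textbf{Main obstacle.} I expect the hard point to be Step 2: one must manufacture an honest smooth diffeomorphism between the smooth structures $\mathcal{V}_\infty$ and $\mathcal{V}_k$ that is close to the identity and converges, whereas the naive candidate (the identity read in mixed charts) is only $W^{1,\infty}\cap W^{2,2}$, and mollifying it both violates the cocycle relation and destroys $C^\infty$ control. Making the patching work --- where the holomorphy of the transition functions and their $C^\infty$ convergence are essential --- is the core of the argument; the remaining ingredients (continuity of composition of Sobolev maps, uniform ellipticity, matching the residual normalizations in the flat and spherical cases) are routine.
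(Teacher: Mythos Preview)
Your proposal is correct and follows essentially the same route as the paper: the paper also upgrades the convergence of the holomorphic transition maps to $C^\infty_{\textup{loc}}$, carries out the Uhlenbeck-type inductive patching (constructing maps $\rho_{k,\alpha}\to id$ in $C^\infty$, which correspond to your $F_k^\alpha\circ(\vp_\infty^\alpha)^{-1}$), defines $\Psi_k=(\vp_{k,\alpha})^{-1}\circ\rho_{k,\alpha}\circ\vp_{\infty,\alpha}$, builds a smooth representative $g_k'=\sum_\alpha\vp_{k,\alpha}^*(\phi_\alpha\,g_{\mathrm{std}})$ of $[g_k]$ so that $\Psi_k^*g_k'\to g_\infty'$ smoothly, and then applies Lemma~\ref{lem-stru-con-smooth}. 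The one minor difference is that the paper \emph{defines} $h_k\coloneqq(\Psi_k^{-1})^*h_k'$ with $h_k'$ the output of that lemma, rather than taking $h_k$ from Corollary~\ref{coconstr}(iii) and matching normalizations afterwards as you do --- this sidesteps the uniqueness issue in the spherical case (where the constant-curvature representative is only unique up to the $6$-dimensional M\"obius group) that you flag as ``routine''.
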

\begin{proof}
    By Theorem \ref{thmconchart}, there exists a covering of $\Sigma$ by open subsets $\{U_{\alpha}\}_{\alpha=1}^l$ with orientation-preserving bi-Lipschitz diffeomorphisms $\vp_{k,\alpha}\colon U_\alpha\rightarrow \vp_{k,\alpha}(U_\alpha)\subset \R^2$ (where $k\in \N\cup\{\infty\}$) such that $\bP_k\circ \vp_{k,\alpha}^{-1}$ is weakly conformal,
    $\vp_{k,\alpha}\rightarrow \vp_{\infty,\alpha}$ in $W^{2,2}(U_\alpha)$ as $k\rightarrow\infty$ and $\|\vp_{k,\alpha}\|_{W^{1,\infty}(U_\alpha)} + \|\vp_{k,\alpha}^{-1}\|_{W^{1,\infty}(\vp_{k,\alpha}(U_\alpha))}$ is uniformly bounded. Consequently, for any compact subset $K$ of $\vp_{\infty,\alpha}(U_{\alpha})$, $K\subset \vp_{k,\alpha}(U_\alpha)$ for large enough $k$ and $\vp_{k,\alpha}^{-1}\rightarrow \vp_{\infty,\alpha}^{-1}$ in $W^{2,2}(K)$. Then for any $1\le \alpha,\beta\le l$ and compact subset $K$ of $\vp_{\infty,\beta}(U_\alpha\cap U_\beta)$, the transition maps converge as well:
    \be\label{tran-con}
    \vp_{k,\alpha,\beta}\coloneqq\vp_{k,\alpha}\circ \vp_{k,\beta}^{-1}\xrightarrow[k\rightarrow \infty]{} \vp_{\infty,\alpha}\circ \vp_{\infty,\beta}^{-1}=:\vp_{\infty,\alpha,\beta}\quad \text{in $W^{2,2}(K)$.}   \ee 
    Furthermore, since each $\vp_{k,\alpha,\beta}$ is holomorphic, the convergence holds in $C^\infty(K)$.\\
    
    We want to prove that for $k$ large enough, there exists an open cover $\{V_{\alpha}\}_{\alpha=1}^l$ of $\Sigma$ with $V_\alpha\Subset U_\alpha$ and $C^\infty$ injective immersions $\rho_{k,\alpha}\colon \vp_{\infty, \alpha}(\overline{V_\alpha})\subset\R^2\rightarrow \vp_{k,\alpha}(U_\alpha)\subset\R^2$ satisfying  
    \begin{align}\begin{cases}(i)\ \rho_{k,\alpha}\xrightarrow[k\rightarrow\infty]{} id &\text{ in }C^\infty(\vp_{\infty, \alpha}(\overline{V_\alpha}),\R^2),\\[3mm]
(ii)\ \vp_{k,\beta,\alpha}\circ\rho_{k,\alpha}\circ\vp_{\infty,\alpha,\beta}=\rho_{k,\beta} &\text{ on } \vp_{\infty,\beta}(\overline{V_\alpha}\cap \overline{V_\beta}).\label{cocycle}
\end{cases}
    \end{align}  
    Once these maps $\rho_{k,\alpha}$ are defined, we consider the map $\Psi_k\coloneqq\vp_{k,\alpha}^{-1}\circ \rho_{k,\alpha}\circ \vp_{\infty,\alpha}$ on $\overline{V_\alpha}$. This definition is independent of $\alpha$ by \eqref{cocycle}. 
    By construction, it holds $\Psi_k\rightarrow id$ in $C^0(\Sigma,\Sigma)$. For $k$ large enough, $\Psi_k$ is an immersion, hence surjective (its image is both open and closed). 
    As for global injectivity, assume that there exist sequences $\{k_i\}_{i=1}^\infty\subset \N$, $\{p_{i,1}\}_{i=1}^\infty$, $\{p_{i,2}\}_{i=1}^\infty\subset \Sigma$ such that $\lim_{i\rightarrow \infty}k_i=\infty$ and $\Psi_{k_i}(p_{i,1})=\Psi_{k_i}(p_{i,2})$. 
    We can without loss of generality assume that $p_{i,1}\rightarrow p_1$ and $p_{i,2}\rightarrow p_2$ as $i\rightarrow \infty$. Since we have $\Psi_k\rightarrow id$ in $C^0(\Sigma,\Sigma)$ as $k\rightarrow \infty$, we obtain $p_1=p_2\in V_\alpha$ for some $\alpha$. For $i$ large enough we get $p_{i,1}=p_{i,2}$ since $\Psi_k$ is injective on $V_\alpha$.\\
    
     We now proceed to the construction of $\{V_\al\}$ and $\{\rho_{k,\al}\}$. Let $\{U_\alpha'\}_{\alpha=1}^l$ be an open cover of $\Sigma$ such that $U_\alpha'$ has smooth boundary and $U_\alpha'\Subset U_\alpha$ (for existence of such sets, see the proof of \cite[Proposition 8.2.1]{Daners08}). 
     We claim that there exist open sets $\{V_{\al,j}\}_{1\le \alpha\le j\le l}$ (write $V_{\al,\al-1}=V_{\al,\al}$), an integer $k_0\in\N$, 
     and $C^\infty$ injective maps $\rho_{k,\al}\colon \vp_{\infty,\al}(\overline{V_{\al,\al}})\rightarrow\vp_{k,\alpha}(U_\al)$ for $k\ge k_0$ such that for any $1\le j\le l$ and $1\le \al,\beta\le j$, it holds 
         \begin{subnumcases}{}
             (i)\ V_{\al,j}\subset V_{\al,j-1}\subset U_{\al}',\label{con-rho-V1}\\[3mm]
             (ii)\ \Sigma=\Big(\bigcup\limits_{\gamma\le j} V_{\gamma,j}\Big)\cup \Big(\bigcup \limits_{\gamma>j} U'_{\gamma}\Big),\label{con-rho-V2}\\
             (iii)\ \rho_{k,\alpha}\xrightarrow[k\rightarrow\infty]{} id &in $C^\infty(\vp_{\infty, \alpha}(\overline{V_{\alpha,\alpha}}),\R^2)$,\label{con-rho-V3}\\[2mm]
             (iv)\ \vp_{k,\beta,\alpha}\circ\rho_{k,\alpha}\circ\vp_{\infty,\alpha,\beta}=\rho_{k,\beta} &on $\vp_{\infty,\beta}(\overline{V_{\alpha,j}}\cap \overline{V_{\beta,j}})$.\label{con-rho-V4}
         \end{subnumcases}
     We prove this by induction on $j$. 
For $j=1$, we define $V_{1,1}=U_1'$ and $\rho_{k,1}\colon \vp_{\infty,1}(\overline{U_1'})\rightarrow \R^2$ as the identity map. 
For $k$ large enough, the image of $\rho_{k,1}$ is contained in $\vp_{k,1}(U_1)$ since $\vp_{k,1}\rightarrow \vp_{\infty,1}$ in $C(\overline{U_1'},\R^2)$ and $U_1'\Subset U_1$. 
Let $1\le j_0\le l$, suppose that we have constructed $\{V_{\alpha, j}\}_{1\le \al\le j\le j_0}$ and $\{\rho_{k,\alpha}\}_{1\le \al\le j_0}$ such that \eqref{con-rho-V1}--\eqref{con-rho-V4} hold for any $1\le j\le j_0$ and $1\le \al,\beta\le j$. We now construct $\rho_{k,j_0+1}$ and $\{V_{\al,j_0+1}\}_{1\le \al\le j_0+1}$ using the induction hypothesis.\\

For $1\le \alpha\le j_0$ we define 
$$
\omega_{k,j_0}\coloneqq\vp_{k,j_0+1,\alpha}\circ\rho_{k,\alpha}\circ\vp_{\infty,\alpha,j_0+1} \quad \text{on $\vp_{\infty,j_0+1}\left( \overline{V_{\alpha,j_0}} \cap U_{j_0+1}\right)$.}
$$ 
 By our induction hypothesis \eqref{con-rho-V4} for $j\le j_0$, this map is well-defined on $\vp_{\infty,j_0+1}\Big(\big( \bigcup\limits_{\alpha\le j_0}\overline{V_{\alpha,j_0}}\big) \cap U_{j_0+1}\Big)$ and independent of $\alpha$. 
 By the assumption $\Sigma=\big(\bigcup\limits_{\alpha\le j_0} V_{\alpha,j_0}\big)\cup \big(\bigcup \limits_{\alpha>j_0} U'_{\alpha}\big)$, we have  
$$
\Big(\Sigma\setminus \bigcup\limits_{\alpha\le j_0 } V_{\alpha,j_0}\Big)\cap \Big(\Sigma\setminus \bigcup\limits_{\alpha> j_0}U'_{\alpha}\Big)=\emptyset.
$$
Hence we can choose a smooth cut-off $\xi_{j_0}\in C^\infty(\Sigma)$ such that $\xi_{j_0}=0$ on a neighborhood of $\Sigma\setminus\bigcup\limits_{\alpha\le j_0} V_{\alpha,j_0}$ and  $\xi_{j_0}=1$ on a neighborhood of $\Sigma\setminus \bigcup\limits_{\alpha> j_0}U'_{\alpha}$. Now we define $V_{j_0+1,j_0+1}\coloneqq U_{j_0+1}'\Subset U_{j_0+1}$, and for $\alpha\le j_0$ 
     $$V_{\alpha,j_0+1}\coloneqq V_{\alpha,j_0}\cap \,\text{int}\Big\{x\in\Sigma:\xi_{j_0}(x)=1\Big\}.$$ 
The condition \eqref{con-rho-V1} then holds for $j=j_0+1$. By the definition of $\{V_{\al,j_0+1}\}_{\al\le j_0+1}$ we have \begin{align*}
    \Big(\bigcup\limits_{\alpha\le j_0+1} V_{\alpha,j_0+1}\Big)\cup \Big(\bigcup \limits_{\alpha>j_0+1} U'_{\alpha}\Big)&=\Big(\bigcup\limits_{\alpha\le j_0} V_{\alpha,j_0+1}\Big)\cup \Big(\bigcup \limits_{\alpha>j_0} U'_{\alpha}\Big)\\
    &=\Big( \Big(\bigcup\limits_{\alpha\le j_0} V_{\alpha,j_0}\Big)\cap\,\text{int}\Big\{x\in\Sigma:\xi_{j_0}(x)=1\Big\}\Big) \cup \Big(\bigcup \limits_{\alpha>j_0} U'_{\alpha}\Big).\end{align*}
    The definition of $\xi_{j_0}$ also implies $$\Sigma\setminus\bigcup\limits_{\alpha> j_0}U'_{\alpha}\subset \text{int}\Big\{x\in\Sigma:\xi_{j_0}(x)=1\Big\}.
    $$We then obtain \begin{align*}
     \Big(\bigcup\limits_{\alpha\le j_0+1} V_{\alpha,j_0+1}\Big)\cup \Big(\bigcup \limits_{\alpha>j_0+1} U'_{\alpha}\Big)&\supset \Big( \Big(\bigcup\limits_{\alpha\le j_0} V_{\alpha,j_0}\Big)\setminus \bigcup\limits_{\alpha> j_0}U'_{\alpha}\Big) \cup \Big(\bigcup \limits_{\alpha>j_0} U'_{\alpha}\Big)\\
    &=\Big(\bigcup\limits_{\alpha\le j_0} V_{\alpha,j_0}\Big)\cup \Big(\bigcup \limits_{\alpha>j_0} U'_{\alpha}\Big)\\
    &=\Sigma.
\end{align*}
Hence the condition \eqref{con-rho-V2} holds for $j=j_0+1$. Since $\xi_{j_0}$ vanishes on a neighborhood of $\Sigma\setminus\bigcup\limits_{\alpha\le j_0} V_{\alpha,j_0}$, and $\omega_{k,j_0}\circ \vp_{\infty,j_0+1}\in C^\infty\big(\big( \bigcup\limits_{\alpha\le j_0}V_{\alpha,j_0}\big) \cap U_{j_0+1}\big)$ we have $(\xi_{j_0} \circ\vp^{-1}_{\infty,j_0+1})\,\omega_{k,j_0}\in C^\infty(\vp_{\infty,j_0+1}(U_{j_0+1}))$. 
Then we define $$\rho_{k,j_0+1}\coloneqq (\xi_{j_0}\circ \vp^{-1}_{\infty,j_0+1})\,\omega_{k,j_0}+(1-\xi_{j_0}\circ \vp^{-1}_{\infty,j_0+1}) \,id\in C^\infty(\vp_{\infty,j_0+1}(U_{j_0+1}),\R^2).$$ 
Let $\alpha\le j_0$. Since $\xi_{j_0}=1$ on $\overline{V_{\alpha,j_0+1}}$, the following holds on $\vp_{\infty,j_0+1}(U_{j_0+1}\cap \overline{V_{\alpha,j_0+1}})$:
$$\rho_{k,j_0+1}=\omega_{k,j_0}=\vp_{k,j_0+1,\alpha}\circ\rho_{k,\alpha}\circ\vp_{\infty,\alpha,j_0+1}.$$
Since we have $
\overline{V_{j_0+1,j_0+1}}=\overline{U'_{j_0+1}}\subset U_{j_0+1}$, the condition \eqref{con-rho-V4} is thus satisfied for $\beta=j=j_0+1$, $\alpha\le j_0$. By using \eqref{con-rho-V1} and the identity $\vp_{k,\al,\beta}=\vp_{k,\beta,\al}^{-1}$, we then obtain \eqref{con-rho-V4} for $j=j_0+1$ and any  $\al,\beta\le j_0+1$.\\

Next we prove $\rho_{k,j_0+1}\rightarrow id$ in $C^\infty(\vp_{\infty,j_0+1}(\overline{U'_{j_0+1}}),\R^2)$ as $k\rightarrow\infty$. 
Fixing $\alpha\le j_0$, let $W$ be an open set such that $\vp_{\infty,\alpha}(\overline{V_{\alpha,j_0}}\cap \overline{U_{j_0+1}'})\subset W\Subset \vp_{\infty,\alpha}(U_\alpha\cap U_{j_0+1})$. 
The convergence \eqref{tran-con} implies that $$ \vp_{k,j_0+1,\alpha}\xrightarrow[k\rightarrow\infty]{} \vp_{\infty,j_0+1,\alpha} \quad \text{in $C^\infty(\overline W)$. } $$
By our induction hypothesis \eqref{con-rho-V3} for $\al\le j_0$, we also have $$ \rho_{k,\alpha}\xrightarrow[k\rightarrow\infty]{} id \quad \text{in $C^\infty(\vp_{\infty,\alpha}(\overline{V_{\alpha,j_0}}))$.}$$
Then the definition of $\omega_{k,j_0}$ implies  
$$ \omega_{k,j_0}\xrightarrow[k\rightarrow\infty]{} id \qquad \text{in $C^\infty\left(\vp_{\infty,j_0+1}(\overline{V_{\alpha,j_0}}\cap \overline{U'_{j_0+1}})\right)$.}
$$ 
Hence we have
      $$ \rho_{k,j_0+1}\xrightarrow[k\rightarrow\infty]{} id\qquad \text{in $C^\infty\Big(\vp_{\infty,j_0+1}\Big(\Big(\bigcup_{\alpha\le j_0}\overline{V_{\alpha,j_0}}\Big)\cap \overline{U'_{j_0+1}}\Big)\Big)$.}$$
On the other hand, it holds that $\rho_{k,j_0+1}=id$ on $\vp_{\infty,j_0+1}\big(U_{j_0+1}\setminus \bigcup\limits_{\alpha\le j_0}V_{\alpha,j_0}\big)$. Therefore, we have
$$
\rho_{k,j_0+1}\xrightarrow[k\rightarrow\infty]{} id\quad \text{in $C^\infty\big(\vp_{\infty,j_0+1}( \overline{U'_{j_0+1}})\big)$.}
$$
Hence the condition \eqref{con-rho-V3} holds for $j=j_0+1$. In particular, $\rho_{k,j_0+1}$ is an immersion on $\vp_{\infty,j_0+1}(\overline{V_{j_0+1,j_0+1}})=\vp_{\infty,j_0+1}(\overline{U'_{j_0+1}})$ for $k$ large enough. 
Also, since $\p U'_{j_0+1}$ is smooth, we have the following embedding (see \cite[Theorem 5.8.4]{evans}): 
$$
W^{1,\infty}\left( \vp_{\infty,j_0+1}(U'_{j_0+1}) \right)\subset C^{0,1}\left(\vp_{\infty,j_0+1}(\overline{U'_{j_0+1}}) \right).
$$
Consequently, there exists $k_0\in \N$ such that for any $k\ge k_0$ and $x,y\in \vp_{\infty,j_0+1}(\overline{U'_{j_0+1}})$ it holds 
$$ 
|(\rho_{k,j_0+1}-id)(x)-(\rho_{k,j_0+1}-id)(y)|\le \frac 12 \,|x-y|.
$$
Hence we have 
$$
|\rho_{k,j_0+1}(x)-\rho_{k,j_0+1}(y)|\ge \frac 12 \,|x-y|.
$$
In particular, this implies $\rho_{k,j_0+1}$ is injective on $ \vp_{\infty,j_0+1}(\overline{U'_{j_0+1}})$ for $k\ge k_0$. 
Moreover, since $V_{j_0+1,j_0+1}=U'_{j_0+1}\Subset U_{j_0+1}$ and $\vp_{k,j_0+1}\rightarrow\vp_{\infty,j_0+1}$ in $C^0(\overline{U'_{j_0+1}})$ as $k\rightarrow\infty$, 
for $k$ large enough we have $$\rho_{k,j_0+1}\big(\vp_{\infty,j_0+1}(\overline{V_{j_0+1,j_0+1}})\big)\subset \vp_{k,j_0+1}(U_{j_0+1}).$$ This completes the induction. Finally, for $1\le \alpha\le l$, we take $V_\alpha\coloneqq V_{\alpha,l}$, the constructions of $\{\rho_{k,\alpha}\}$ and $\{V_\alpha\}$ are thus accomplished.

Since $\Sigma=\bigcup\limits_{1\le \alpha\le l} V_\alpha$, we can choose smooth non-negative cut-off functions $\phi_\alpha\in C^\infty_c(\vp_{\infty,\alpha}(V_\alpha))$ such that $\sum\limits_{\alpha=1}^l \phi_\alpha\circ \vp_{\infty,\alpha}>0$ on $\Sigma$. 
Since $\vp_{k,\alpha}\rightarrow \vp_{\infty,\alpha}$ in $C^0(\overline{V_\alpha})$ as $k\rightarrow\infty$, we have $\phi_{\alpha}\in C_c^\infty(\vp_{k,\alpha}(V_\alpha))$ for $k\in \N\cup\{\infty\}$ large enough. For such $k$ we define $$g'_k\coloneqq\sum_{\alpha=1}^l \vp_{k,\alpha}^*(\phi_\alpha\, g_{\text{std}}).$$
Then $g_k'$ is a smooth metric under the complex structure induced by $\bP_k$ as defined in Corollary \ref{coconstr}. 
Since $\vp^*_{k,\alpha}\,g_{\text {std}}$ is weakly conformal to $g_k$ on $V_\alpha$, we have $g_k'$ is weakly conformal to $g_k$. 
Since we assumed $\ti g$ and $g_\infty$ are weakly conformal, we obtain that $\vp_{\infty, \alpha}$ is weakly conformal from $(V_\alpha, g)$ to $(\vp_{\infty,\alpha}(V_\alpha),g_{\text{std}})$. 
Hence $\vp_{\infty,\alpha}$ is smooth by Corollary \ref{coconstr} (i). 
Let $1\le \alpha\le l$, $x\in V_\alpha$, then by \eqref{tran-con} and \eqref{cocycle} we have \begin{align*}
(\Psi_k^* g'_k)_x&=\Big((\vp_{k,\alpha}^{-1}\circ \rho_{k,\alpha}\circ \vp_{\infty,\alpha})^*\sum_{x\in V_\beta} \vp_{k,\beta}^*(\phi_\beta \,g_{\text{std}})\Big)_x\\
&=\Big(\sum_{x\in V_\beta} (\vp_{k,\beta,\alpha}\circ \rho_{k,\alpha} \circ \vp_{\infty,\alpha})^*(\phi_\beta \,g_{\text{std}})\Big)_x\\
&\xrightarrow[k\to \infty]{} \Big(\sum_{x\in V_\beta} (\vp_{\infty,\beta})^*(\phi_\beta\, g_{\text{std}})\Big)_x=(g'_{\infty})_x \;\text{smoothly in $x$}.
\end{align*}
Consequently, we can apply Lemma \ref{lem-stru-con-smooth} to the sequence $\{\Psi_k^*g_k'\}$ (denote $\Psi_\infty\coloneqq id$) to obtain a sequence of Riemannian metrics ${h_k'}$ of constant Gaussian curvature $1,-1$ or $0$ such that $h_k'$ is conformal to $\Psi_k^*g_k'$ and $h_k'$ converges to $h_\infty'$ smoothly. Finally, we take $h_k\coloneqq(\Psi_k^{-1})^* h_k'$, then $\Psi_k^*h_k$ converges to $h_{\infty}$ smoothly, $h_k$ is weakly conformal to $g_k'$ and hence $g_k$. By the definition of $\Psi_k$, we also have $h_k$ is smooth in the complex structure induced by $\bP_k$ (i.e. $(\vp_{k,\alpha}^{-1})^*h_k$ is smooth), and $h_k$ has the same constant Gaussian curvature as $h_k'$.
\end{proof}
\begin{Rm}
   Let $K>0$. Assume $\{g_k\}$ is replaced by a sequence of metrics $\{\tilde {g_k}\}$ such that for any $k$ and any coordinate chart $\vp\colon U\colon\Sigma\rightarrow\R^2$, it holds $$\det( (\tilde {g_k})_{ij})> K|((\tilde {g_k})_{ij})|^2\quad \text{a.e.,}
   $$
   where we denote 
   $$(\tilde {g_k})_{ij}\coloneqq\tilde {g_k}\Big((d\vp)^{-1}\Big(\frac{\p}{\p x^i}\Big),(d\vp)^{-1}\Big(\frac{\p}{\p x^j}\Big)\Big).$$
   Assume $\tilde {g_k}\rightarrow \tilde g_{\infty}$ a.e. Then we can construct the conformal charts $\{\vp_{k,\alpha}\}$ as in Remark \ref{rm-gconstr}, and by using the same argument as in Theorem \ref{thm-con-stru}, we find $\{h_k\}$, $\{\Psi_k\}$ satisfying Theorem \ref{thm-con-stru} (i), (ii), (iv), but $\Psi_k$ may not be Lipschitz. Indeed, by \cite[Proposition 4.36]{Imayoshi92} and \cite{Iwaniec79}, we have $\vp_{k,\alpha}\rightarrow \vp_{\infty,\alpha}$, $\vp^{-1}_{k,\alpha}\rightarrow \vp^{-1}_{\infty,\alpha}$ in $W^{1,p}_{\textup{loc}}$ for some $p>2$, hence $\Psi_k\rightarrow id$, $\Psi^{-1}_k\rightarrow id$ in $W^{1,p}(\Sigma,\Sigma)$, in particular, in $C^0(\Sigma,\Sigma)$.
\end{Rm}

\subsubsection{Approximation results}\label{sec:ApproximationResults}

The first goal of this section is to prove Theorem \ref{th:Approx}, namely that a $W^{1,\infty}$-immersion can be approximated by smooth immersions as soon as its first derivative belongs to $VMO$. Then we prove Theorem \ref{th-approx} at the end. 
We define a weak $W^{1,\infty}$-immersion taking values in manifolds.
\begin{Dfi}
     Let $N$ be an $n$-dimensional smooth manifold without boundary, $U\subset N$ be open and precompact, $M$ be an $\ell$-dimensional manifold smoothly embedded in $\R^m$. We say $f\in W^{1,\infty}_{\textup{imm}}(U,M)$ if $f\in W^{1,\infty}_{\textup{imm}}(U,\R^m)$ and $f(x)\in M$ for any $x\in U$.
\end{Dfi}

We estimate the mean oscillation of a composition and the distance to a smooth approximation.

\begin{Lm}
    \label{lmcovvmo}
  Let $U\subset \R^n$ be open and bounded. Let $f\in W^{1,\infty}(U)$ with $\g f\in  \text{VMO}(U)$. Let $\vp\colon\overline U\rightarrow \overline W\subset \R^n$ be a $C^1$ diffeomorphism, where $W=\vp(U)$. \\
  (i) For any $r>0$, we have $\g (f\circ \vp^{-1})\in VMO(W)$, and there exists a positive constant $C=C(n,\|\g\vp\|_{L^\infty(U)},\|\g\vp^{-1}\|_{L^\infty(W)})$ such that
  \begin{align}\label{nabfphi}
      \beta_r(\g(f\circ\vp^{-1}))\le C\left(\beta_r(\g f)+\beta_r(\g \vp)\|\g f\|_{L^\infty(U)}\right).
  \end{align}
 (ii) Let $\eta$ be the standard mollifier. For $\vae>0$ we let $\eta_{\vae}(x)=\vae^{-n}\eta(\vae^{-1}x)$. For $x\in U$ and $\vae<\dist(x, U^c)$, we define $f_\vae(x)\coloneqq \int_{U} f(y) \,\eta_{\vae}(x-y) \,dy$.
    \\Let $K>0$ and assume in addition that $\p U$ is Lipschitz. Then there exists a positive constant $ C=C(\|\g\vp\|_{L^\infty(U)},\|\g\vp^{-1}\|_{L^\infty(W)}, U,K)>0$ such that for any $x_1,x_0\in U$ and $\vae>0$ satisfying $|x_1-x_0|\le K\vae$ and $\vae<\min\big(\dist(\vp(x_1),\p W), \dist(x_0,\p U)\big)$, there holds 
 \begin{align}\begin{aligned}\label{vmocov}
 &\left| \g (f_\vae \circ\vp^{-1})(\vp(x_0))-\fint_{B_{\vae} (\vp(x_1))}\g (f\circ\vp^{-1}) \right| \\[2mm]
 &\le C\Big(\beta_\vae(\g f)+\big(\omega_\vae(\g \vp)+\vae\big)\|\g f\|_{L^\infty(U)} \Big),
 \end{aligned}
 \end{align}
 where $\omega_\vae(\g \vp)\coloneqq\sup\{|\g \vp(x)-\g \vp(y)|:x,y\in U, |x-y|\le \vae\}$.
\end{Lm}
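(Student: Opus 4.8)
Both parts rest on the chain rule $\nabla(f\circ\varphi^{-1})=\big((\nabla f)\circ\varphi^{-1}\big)\,\nabla\varphi^{-1}$ together with three elementary facts about the mean–oscillation modulus $\beta_r$. First, a product rule: for bounded matrix‑valued $A,B$ one has $\beta_r(AB)\le 2\|A\|_{L^\infty}\beta_r(B)+2\|B\|_{L^\infty}\beta_r(A)$, obtained by replacing $\fint_{B_\rho}(AB)$ by $(\fint_{B_\rho}A)(\fint_{B_\rho}B)$ and estimating the error via the triangle inequality. Second, a doubling property $\beta_{2r}(u)\le C(n)\,\beta_r(u)$ (cover a ball of radius $2r$ by boundedly many balls of radius $r$ and chain their averages), so that changing scale by a fixed factor costs only a constant; a similar chaining gives $\omega_{Cr}(v)\le C\,\omega_r(v)$ for a modulus of continuity $v$. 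Third, stability of $VMO$ under composition with $\varphi,\varphi^{-1}$, which is exactly Lemma~\ref{lm-quasi-vmo}: since $\varphi$ is $C^1$ on the compact set $\overline U$ with $\nabla\varphi$ invertible everywhere, we have $|\nabla\varphi|\le M$ and $\det\nabla\varphi\ge\delta>0$ (after composing with a reflection if $\varphi$ is orientation reversing), hence $\det\nabla\varphi>K|\nabla\varphi|^n$ with $K=\delta/M^n$, and likewise for $\varphi^{-1}$; so $\beta_{r,W}(h\circ\varphi^{-1})\le C\,\beta_{r,U}(h)$ for every $h\in VMO(U)$.

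For \textbf{(i)} the plan is: write $\nabla\varphi^{-1}=\big((\nabla\varphi)^{-1}\big)\circ\varphi^{-1}$; since matrix inversion is Lipschitz on the compact subset of the invertible matrices traced out by $\nabla\varphi$, a double‑average argument gives $\beta_{r,U}\big((\nabla\varphi)^{-1}\big)\le C\,\beta_{r,U}(\nabla\varphi)$. Applying the $VMO$‑composition fact to the functions $\nabla f$ and $(\nabla\varphi)^{-1}$ on $U$ yields $\beta_{r,W}\big((\nabla f)\circ\varphi^{-1}\big)\le C\,\beta_{r,U}(\nabla f)$ and $\beta_{r,W}\big(\nabla\varphi^{-1}\big)\le C\,\beta_{r,U}(\nabla\varphi)$. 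Then the product rule applied to $A=(\nabla f)\circ\varphi^{-1}$, $B=\nabla\varphi^{-1}$, together with $\|A\|_{L^\infty}=\|\nabla f\|_{L^\infty}$ and $\|B\|_{L^\infty}\le C$, gives \eqref{nabfphi} and in particular $\nabla(f\circ\varphi^{-1})\in VMO(W)$.

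For \textbf{(ii)} set $w_0=\varphi(x_0)$, $w_1=\varphi(x_1)$, so $|w_0-w_1|\le\|\nabla\varphi\|_{L^\infty}K\varepsilon$. Because $\partial U$ is Lipschitz, I would first extend $\nabla f$ via Corollary~\ref{covmoext} to $\widetilde{\nabla f}\in VMO(\R^n)\cap L^\infty(\R^n)$ with $\beta_{r,\R^n}(\widetilde{\nabla f})\le C(U)\big(\beta_{r,U}(\nabla f)+r\|\nabla f\|_{L^\infty}\big)$; note $\nabla f_\varepsilon(x_0)=(\widetilde{\nabla f})_\varepsilon(x_0)$ since the mollifier only sees $B_\varepsilon(x_0)\subset U$, and every average of $\nabla f$ over a subset of $U$ is an average of $\widetilde{\nabla f}$. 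Using the two chain rules, I would split the quantity to be estimated into three pieces. (A) $\nabla f_\varepsilon(x_0)-\fint_{B_\varepsilon(x_0)}\nabla f$, controlled by $\|\eta\|_{L^\infty}|B_1|\,\beta_\varepsilon(\nabla f)$. (B) the difference between $\fint_{B_\varepsilon(x_0)}\nabla f$ and $\fint_{B_\varepsilon(w_1)}(\nabla f)(\varphi^{-1}(w))\,dw$: a change of variables turns the latter into a weighted average (weight $|\det\nabla\varphi|\in[c,C]$) over $\varphi^{-1}(B_\varepsilon(w_1))$, a set of measure $\sim\varepsilon^n$ contained, together with $B_\varepsilon(x_0)$, in one ball $B_{C\varepsilon}(x_1)$; comparing both (weighted) averages to $\fint_{B_{C\varepsilon}(x_1)}\widetilde{\nabla f}$ and using the doubling property and the extension bound gives $\le C\,\beta_\varepsilon(\nabla f)+C\varepsilon\|\nabla f\|_{L^\infty}$. (C) the error in freezing $\nabla\varphi^{-1}$ at $w_0$ inside $\fint_{B_\varepsilon(w_1)}\big((\nabla f)\circ\varphi^{-1}\big)\,\nabla\varphi^{-1}$, bounded by $\|\nabla f\|_{L^\infty}\sup_{w\in B_\varepsilon(w_1)}|\nabla\varphi^{-1}(w)-\nabla\varphi^{-1}(w_0)|$; writing $\nabla\varphi^{-1}=((\nabla\varphi)^{-1})\circ\varphi^{-1}$ and using $A^{-1}-B^{-1}=A^{-1}(B-A)B^{-1}$ together with $\omega_{C\varepsilon}(\nabla\varphi)\le C\,\omega_\varepsilon(\nabla\varphi)$ bounds this by $C\|\nabla f\|_{L^\infty}\,\omega_\varepsilon(\nabla\varphi)$. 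Multiplying (A) and (B) by the bounded factor $\nabla\varphi^{-1}(w_0)$ and adding (C) yields \eqref{vmocov}.

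\textbf{Main obstacle.} The delicate point is not any individual estimate but making everything \emph{uniform up to $\partial U$}: the comparison balls $B_{C\varepsilon}(x_1)$ needed to bridge the two coordinate systems can protrude from $U$, which is precisely why the Lipschitz hypothesis on $\partial U$ and the $VMO$‑extension of Corollary~\ref{covmoext} are invoked in (ii), and why the constant there is permitted to depend on $U$ (and why (i), which involves only interior moduli, needs no such hypothesis). The remaining difficulty is the purely bookkeeping one of tracking $\beta$ and $\omega$ at rescaled radii $C\varepsilon$ versus $\varepsilon$ and of carrying the $L^\infty$ bounds on $\nabla\varphi,\nabla\varphi^{-1}$ through the matrix products; this is absorbed by the doubling property of $\beta_r$ and the chaining of $\omega_r$.
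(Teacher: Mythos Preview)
Your proposal is correct and follows essentially the same strategy as the paper's proof: both parts rest on the chain rule, Lemma~\ref{lm-quasi-vmo} for the stability of $\beta_r$ under composition with $\varphi^{-1}$, and a product-type splitting. Your double-average argument for $\beta_r((\nabla\varphi)^{-1})\le C\beta_r(\nabla\varphi)$ is a clean way to avoid worrying about invertibility of $\fint\nabla\varphi$, and your decomposition (A)--(C) for part~(ii) matches the paper's steps \eqref{modiff3}, \eqref{vmoaftercov}--\eqref{modiff2}, and \eqref{dvpdiff} respectively. The only minor technical differences are that in your step~(B) you change variables directly rather than invoking the proof of Reimann's theorem, and in your step~(C) you rely on the chaining $\omega_{C\varepsilon}(\nabla\varphi)\le C'\omega_\varepsilon(\nabla\varphi)$ inside the quasiconvex Lipschitz domain, whereas the paper instead extends $(\nabla\varphi)^{-1}$ to $\mathbb R^n$ (picking up the $+\varepsilon$ there) and chains in $\mathbb R^n$; both routes are valid and yield the same final estimate.
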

\begin{proof}
   (i) Since $\vp$ is quasiconformal, by Lemmas \ref{bmo-estimate} and \ref{lm-quasi-vmo}, we have $\g f\circ \vp^{-1}$ is in $VMO(W)$. Moreover, there exists a constant $C=C\big(n,\|\g \vp\|_{L^\infty(U)},\|\g \vp^{-1}\|_{L^\infty(W)}\big)$ such that the following estimate holds
   $$
   \beta_r \big(\g f\circ \vp^{-1}\big)\le C\,\beta_r(\g f).
   $$
   Hence for every ball $B_r\subset W$, we have
   \begin{align*}
       & \fint_{B_r} \left|\g(f\circ \vp^{-1})-\fint_{B_r}\g f\circ\vp^{-1}\fint_{B_r} \g \vp^{-1}\right| \\[1.5mm]
       &\le  \ \fint_{B_r} \left|\left(\g f\circ \vp^{-1}-\fint_{B_r}\g f\circ\vp^{-1}\right)\g \vp^{-1}\right| + \fint_{B_r}\left| \g f\circ\vp^{-1}\left(\g \vp^{-1}-\fint_{B_r}\g \vp^{-1}\right) \right|\\[2mm]
       &\le \|\g \vp^{-1}\|_{L^
       \infty(W)} \ \beta_r \big(\g f\circ \vp^{-1})+\beta_r(\g \vp^{-1})\|\g f\|_{L^\infty(U)}\\[4mm]
       &\le C\big(\beta_r(\g f)+\beta_r(\g \vp)\|\g f\|_{L^\infty(U)}\big),
   \end{align*}
   The estimate \eqref{nabfphi} is thus proved. \\
   (ii) Let $y\in B_{\vae}(\vp(x_1))$, we have 
   $$
   |\vp^{-1}(y)-x_1|\le \|\g \vp^{-1}\|_{L^\infty(W)}\, |y-\vp(x_1)|\le  \vae\,\|\g \vp^{-1}\|_{L^\infty(W)}.
   $$
   Hence it holds that $|x_0-\vp^{-1}(y)|\le \vae\,(K+\|\g \vp^{-1}\|_{L^\infty(W)})$. Moreover it holds $(\g \vp)^{-1}\in C^0(\overline U)$ and for all $r>0$, we have
   $$
   \omega_r\big ((\g \vp)^{-1}\big)\le C\,\omega_r(\g \vp).
   $$
   Since $\p U$ is Lipschitz, we can extend $(\g \vp)^{-1}$ to $g\in C_c(\R^n,\R^{n\times n})$ such that for any $r>0$,
   $$
   \omega_{r,\R^n} (g)\le C\Big(\omega_r\big((\g \vp)^{-1}\big)+r\,\|(\g \vp)^{-1}\|_{L^\infty(U)}\Big)\le C\Big( \omega_r(\g \vp)+r\Big).
   $$
   In particular, this implies that there exists a constant $C=C(\|\g\vp\|_{L^\infty(U)},\|\g\vp^{-1}\|_{L^\infty(W)}, U,K)>0$ such that for any $y\in B_\vae(\vp(x_1))$, it holds that
   \begin{align*}
       \big\|\g \vp^{-1}(y)-\g \vp^{-1}(\vp(x_0))\big\|&=\big\|\big(\g \vp(\vp^{-1}(y))\big)^{-1}-\big(\g \vp(x_0)\big)^{-1}\big\|\\[2mm]
       &\le \big(K+\|\g \vp^{-1}\|_{L^\infty(W)}+1\big)\,\omega_{\vae,\R^n} (g)\\[2mm]
       &\le C\big( \omega_\vae(\g \vp)+\vae\big).
    \end{align*}
     Hence we have 
     \begin{align} \begin{aligned}\label{dvpdiff}
         &\left|\fint_{B_{\vae} (\vp(x_1))}\g (f\circ\vp^{-1})-\g \vp^{-1}(\vp(x_0))\fint_{B_{\vae} (\vp(x_1))}  \g f\circ \vp^{-1} \right|\\[2mm]
         &\le C \big(\omega_\vae(\g \vp)+\vae\big )\,\|\g f\|_{L^\infty(U)}.
         \end{aligned}
    \end{align}
       Since $\dist(\vp(x_1),\p W)>\vae$, we have $\dist (x_1,\p U)>\vae\,\|\g \vp\|^{-1}_{L^\infty(U)} $. Now we can apply the proof of Riemann's Theorem \cite[Theorem 2]{Rei74} and Lemma \ref{bmo-estimate} to get \begin{align}\label{vmoaftercov}
           \bigg| \fint_{B_{\vae} (\vp(x_1))}  \g f\circ \vp^{-1}-\fint_{B_{\vae\,\|\g \vp\|^{-1}_{L^\infty}}(x_1)} \g f\bigg|\le C\,\beta_\vae (\g f).
       \end{align}
       Since $\p U$ is Lipschitz, by  Corollary \ref{covmoext} we can extend $\g f$ to a function $\tilde f\in VMO(\R^n,\R^n)$ with 
       $$
    \forall r>0,\qquad \beta_r(\tilde f)\le C\big (\beta_r(\g f)+r\,\|\g f\|_{L^\infty(U)}\big ).
       $$
       Hence it holds that
       \begin{align}\begin{aligned}\label{modiff2}
           \bigg|\fint_{B_{\vae\,\|\g \vp\|^{-1}_{L^\infty}}(x_1)} \g f-\fint_{B_\vae(x_0)} \g f \bigg|  \le C\,\beta_{\vae}(\tilde f)
            \le C\big( \beta_\vae(\g f)+\vae\,\|\g f\|_{L^\infty(U)}\big).
           \end{aligned}
        \end{align}
        We also have that 
        \begin{align}
        \begin{aligned} \label{modiff3}
               \left|\g f_\vae(x_0)-\fint_{B_\vae(x_0)}\g f \right|&= \left|\int_{B_\vae(x_0)}\eta_{\vae}(x_0-x)\,\left(\g f(x)-\fint_{B_\vae(x_0)}\g f\,dx\right) \right|\\[1mm]                &\le C\,\fint_{B_\vae(x_0)} \left|\g f-\fint_{B_\vae(x_0)}\g f \right|\\[1mm]
                & \le C\, \beta_\vae(\g f). 
        \end{aligned}
        \end{align}
        Combining \eqref{vmoaftercov}, \eqref{modiff2}, \eqref{modiff3} we obtain by the triangle inequality 
        \begin{align}
        \begin{aligned}\label{modiff4}
                   & \left|\g \vp^{-1}(\vp(x_0))\fint_{B_{\vae}(\vp(x_1))}\g f\circ \vp^{-1}- \g(f_\vae \circ\vp^{-1})(\vp(x_0))\right|\\[1mm]
                   &\le \|\g \vp^{-1}\|_{L^\infty}\left|\fint_{B_{\vae}(\vp(x_1))}\g f\circ \vp^{-1}-\g f_\vae(x_0)\right|\\[1mm] 
                   &\le C\big( \beta_\vae(\g f)+\vae\,\|\g f\|_{L^\infty(U)}\big).
        \end{aligned}
        \end{align}
        Finally, \eqref{vmocov} follows from \eqref{dvpdiff} and \eqref{modiff4}.
\end{proof}

We are now ready to establish an approximation result required for the proof of Theorem \ref{th-approx}. The underlying idea goes back to \cite{SU}, see also \cite[Remark~2.1]{Kuwertli12} for the $W^{2,2}$ case in dimension $2$. For completeness, we present a detailed proof.

\begin{thm}\label{th:Approx}
    Let $(N,\ti g)$ be an $n$-dimensional Riemannian manifold without boundary, $M$ be a manifold of dimension $\ell$ smoothly embedded in $\R^m$. Let $U\subset N$ be open and precompact. Assume $\p U$ is Lipschitz. Let $f\in 
    W_{\textup{imm}}^{1,\infty}(U,M)$ with $df\in \text{VMO}(U)$. Then there exists a sequence of $C^\infty$ immersions $\{f_k\}_{k=1}^\infty\subset C^\infty(\overline U,M)$ and a positive constant $\La$ such that $df_k\rightarrow df $ a.e. on $U$ and \begin{equation}\label{seqbound}
        \La^{-1}|v|_{\ti g}\le|d(f_k)(v)|_{\R^m}\le \La |v|_{\ti g},\qquad \text{for all } \,v\in TU\,\text{ and all }\, k\in\N.
    \end{equation} 
  Moreover, if $f\in W^{2,p}(U)$ for any $1\le p<\infty$, $f_k$ can be chosen to strongly converge to $f$ in $W^{2,p}(U)$.
    \end{thm}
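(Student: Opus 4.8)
The plan is to approximate $f$ by mollification in local charts, carefully patching together with a partition of unity, and then projecting back onto $M$. The key point is to control the gradient of the mollified map using the $\text{VMO}$-hypothesis on $df$, so that the immersion condition \eqref{seqbound} survives in the limit with a uniform constant.

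\textbf{Step 1: Local mollification.} First I would cover $\overline{U}$ by finitely many coordinate charts $(V_\alpha, \varphi_\alpha)$, $\varphi_\alpha\colon V_\alpha\to \varphi_\alpha(V_\alpha)\subset \R^n$ a $C^1$ (indeed smooth, since $N$ is smooth) diffeomorphism, with a subordinate smooth partition of unity $\{\chi_\alpha\}$. Working in each chart, $f\circ\varphi_\alpha^{-1}$ is a $W^{1,\infty}$ map with gradient in $\text{VMO}(\varphi_\alpha(V_\alpha))$ by Lemma \ref{lmcovvmo}(i). After choosing slightly smaller precompact charts, I extend these maps to Lipschitz maps on $\R^n$ (using that $\partial U$ is Lipschitz together with Corollary \ref{covmoext} to keep the $\text{VMO}$-modulus under control) and mollify at scale $\varepsilon = 1/k$, obtaining $(f\circ\varphi_\alpha^{-1})_\varepsilon =: F_{\alpha,\varepsilon}$. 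These are smooth, converge to $f\circ\varphi_\alpha^{-1}$ in $W^{1,p}_{\textup{loc}}$ for every finite $p$, and $\g F_{\alpha,\varepsilon}\to \g(f\circ\varphi_\alpha^{-1})$ a.e.\ (up to a subsequence). The crucial quantitative estimate is precisely \eqref{vmocov}: it says that the chart-wise mollified gradient at a point differs from an average of $\g(f\circ\varphi^{-1})$ over a small ball by a quantity of the order $\beta_\varepsilon(\g f) + (\omega_\varepsilon(\g\varphi)+\varepsilon)\|\g f\|_{L^\infty}$, which tends to $0$ uniformly as $\varepsilon\to 0$.

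\textbf{Step 2: Uniform immersion bound and patching.} Because the average of $\g(f\circ\varphi^{-1})$ over a small ball lies in a fixed compact set of full-rank matrices (exactly the set $K'$ from the proof of Theorem \ref{loin}, using the immersion condition \eqref{immcon}), the estimate \eqref{vmocov} gives, for $k$ large, that $\g F_{\alpha,\varepsilon}$ is uniformly close to this compact set of full-rank matrices, hence is itself of full rank with singular values bounded above and below by constants independent of $k$ and of the point. Pulling back to $N$ and glueing via $\{\chi_\alpha\}$, I set $\tilde f_k \coloneqq \sum_\alpha \chi_\alpha\cdot (F_{\alpha,1/k}\circ\varphi_\alpha)$, which is a smooth $\R^m$-valued map converging to $f$ in $W^{1,p}(U)$ with $d\tilde f_k\to df$ a.e.; the uniform two-sided bound \eqref{seqbound} for $d\tilde f_k$ (with a possibly worse but still uniform $\Lambda$) follows since a convex-type combination of maps each uniformly close to a nondegenerate linear map, together with the fact that the $\chi_\alpha$-derivative terms carry factors $|F_{\alpha,1/k}\circ\varphi_\alpha - f|\to 0$ in $C^0$, stays nondegenerate. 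Here the VMO hypothesis is doing the essential work: without it the mollified gradients could oscillate so wildly that the patched map fails to be an immersion.

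\textbf{Step 3: Projection onto $M$.} Since $f(U)$ lies in the smoothly embedded submanifold $M\subset\R^m$ and $\overline{U}$ is compact, $f(\overline{U})$ is a compact subset of $M$; let $\Pi$ denote the nearest-point projection onto $M$, which is smooth on a tubular neighborhood $\mathcal{T}$ of $f(\overline U)$. For $k$ large, $\tilde f_k(\overline U)\subset \mathcal T$ (by $C^0$-convergence), so $f_k \coloneqq \Pi\circ\tilde f_k\in C^\infty(\overline U, M)$ is well-defined. Since $d\Pi$ restricted to $T_{f(x)}M$ is the identity and $df_x$ has image in $T_{f(x)}M$, we get $df_k\to df$ a.e.\ and, because $d\Pi$ is uniformly bi-Lipschitz on the compact set $f(\overline U)$ and $\tilde f_k\to f$ in $C^0$, the bound \eqref{seqbound} is preserved (with $\Lambda$ enlarged by a factor depending only on $\|d\Pi\|$). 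This gives the first assertion. For the last claim, if in addition $f\in W^{2,p}(U)$ for all finite $p$, then standard properties of mollification give $F_{\alpha,1/k}\to f\circ\varphi_\alpha^{-1}$ in $W^{2,p}_{\textup{loc}}$, hence $\tilde f_k\to f$ in $W^{2,p}(U)$; composing with the smooth maps $\varphi_\alpha$, $\chi_\alpha$ and $\Pi$ and using the chain rule together with $W^{2,p}\hookrightarrow W^{1,\infty}$ (valid for $p>n$) preserves $W^{2,p}$-convergence, so $f_k\to f$ in $W^{2,p}(U)$.

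\textbf{Main obstacle.} The delicate part is Step 2: ensuring that the uniform immersion bound \eqref{seqbound} genuinely holds for the \emph{patched} map and not merely chart by chart. The estimate \eqref{vmocov} is tailored to compare $\g(f_\varepsilon\circ\varphi^{-1})$ at a point with a ball-average of $\g(f\circ\varphi^{-1})$ centered at a possibly \emph{different} point (within distance $K\varepsilon$), which is exactly what is needed when the partition-of-unity bump functions force one to evaluate different charts' contributions at nearby but non-identical points; keeping all the error terms $\beta_\varepsilon(\g f)$, $\omega_\varepsilon(\g\varphi)$, and the $C^0$-smallness of $\chi_\alpha$-derivative terms uniform in $k$ and in the base point, so that the total deviation from the compact nondegenerate set $K'$ is eventually less than its distance to the degenerate matrices, is the technical heart of the argument.
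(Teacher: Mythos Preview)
Your proposal is correct and follows essentially the same route as the paper: local mollification in charts, patching via a partition of unity, projection onto $M$ through the tubular-neighborhood retraction, with Lemma~\ref{lmcovvmo}(ii) (in particular \eqref{vmocov}) supplying the uniform full-rank control. The paper differs only cosmetically in that it multiplies by the cutoff \emph{before} mollifying (setting $\tilde f_j=\xi_j f$ and mollifying that), so each summand is already compactly supported; your order (mollify, then multiply by $\chi_\alpha$) works equally well, and your ``Main obstacle'' paragraph correctly identifies that \eqref{vmocov} is precisely what is needed to compare the different charts' contributions to a common ball-average of $\nabla(f\circ\phi_j^{-1})$.

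One point in Step~3 is imprecise and, as written, does not give the lower bound. The claim ``$d\Pi$ is uniformly bi-Lipschitz on the compact set $f(\overline U)$'' is false: $d\Pi_y$ has kernel the normal space $N_{\Pi(y)}M$, so it is never injective on $\R^m$. Consequently the lower bound in \eqref{seqbound} for $f_k=\Pi\circ\tilde f_k$ does \emph{not} follow from the lower bound for $\tilde f_k$ together with $\tilde f_k\to f$ in $C^0$ alone. What you need (and do have, from Step~2 via \eqref{vmocov}) is that $\nabla(F_\varepsilon\circ\phi_j^{-1})(z)$ is uniformly close to $\nabla(f\circ\phi_j^{-1})(z')$ for some nearby differentiability point $z'$; since the latter has image in $T_{f(\phi_j^{-1}(z'))}M$, on which $d\Pi$ is the identity, one writes $d\Pi\circ dF_\varepsilon = df(z') + \big[d\Pi\circ dF_\varepsilon - d\Pi\circ df(z')\big]$ and bounds the bracket using both the VMO-smallness of $|dF_\varepsilon(z)-df(z')|$ and the $C^0$-smallness of $|F_\varepsilon(z)-f(z')|$ (the second enters through the Lipschitz constant of $\nabla\Pi$). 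The paper spells this out in \eqref{dpi>}--\eqref{<c_j}; once you insert this argument, your Step~3 is complete.
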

\begin{proof}
    Since $\p U$ is Lipschitz and $f\in W^{1,\infty}(U)$, $f$ has a unique Lipschitz continuous representative defined on $\overline U$. In particular, $f(\overline{U})$ is compact. 
    Since $M$ is smooth, there exists a bounded tubular neighborhood $W$ of $M$ with a retraction map $\pi\in C^\infty( W, M)$ such that $\pi(y)=y$ for any $y\in M$. Since $f(U)$ is compact, there exists a precompact neighborhood $W'$ of $f(U)$ strictly contained in $W$. Since $\overline U$ is compact, there exist finitely many coordinate charts $\{(U_j,\phi_j)\}_{j=1}^s$ of $N$ such that $\overline U\subset \bigcup_{j=1}^s U_j$. Since $\p U$ is Lipschitz, we can further assume that for each $U_j\nsubset U$, we have $\phi_j(U_j)=B^n\subset \R^n$, $0\in \phi_j(\p U)$ and, for some Lipschitz map $\gamma_j\colon\R^{n-1}\rightarrow \R$,
    $$
    \phi_j(U\cap U_j)=\{(x_1,\dots,x_n)\in B^n:x_n>\gamma_j(x_1,\dots,x_{n-1})\}.
    $$ 
    Let $\{\xi_j\}_{j=1}^s$ be a $C^\infty$ partition of unity such that $\supp(\xi_j)\subset V_j\Subset U_j$ and $\sum_{j=1}^s \xi_j=1$ on $U$, where each $V_j$ is a precompact open subset of $U_j$. 
    Let $\ti f_j\coloneqq f\xi_j$, then $\ti f_j\in W^{1,\infty}(U,\R^k)$ and $d\ti f_j\in  VMO(U)$. Let 
    \begin{align}\label{eq:choice_ep}
    0<\vae< \min_{1\leq j\leq s}\left[ \frac{1}{K_j+1} \min\Big(\text{dist}\big(\phi_j(\supp(\xi_j)),\phi_j(\p V_j)\big),\text{dist}\big(\phi_j(V_j),\phi_j(U_j)^c\big)\Big) \right].
    \end{align}
    Define $\eta_\vae$ as in Lemma \ref{lmcovvmo} (ii). For $1\le j\le s$, if $U_j\subset U$, for $x\in V_j$ we define 
    $$ 
    F_{j,\vae}(x)\coloneqq \int_{B_\vae(\phi_j(x))}\ti f_j (\phi_j^{-1}(z)) \,\eta_\vae (\phi_j(x)-z)\,dz.
    $$ 
    Then it holds that $F_{j,\vae}\in C_c^\infty(V_j,\R^k)\subset C_c^\infty(U,\R^k)$. 
    If $U_j\cap \p U\neq \emptyset$, let $K_j\coloneqq\text{Lip}(\gamma_j)+2$. We denote $e_n\coloneqq(0,\dots,0,1)\in \R^n$. Then for any $z\in \phi_j(\overline V_j\cap \overline U)$, $\vae>0$, there holds 
    $$
    B_\vae(z+K_j\,\vae\, e_n)\cap \phi_j(U_j\cap\p U)=\emptyset.
    $$
    By choice of $\vae$ in \eqref{eq:choice_ep}, it holds that
    $$
    B_\vae(z+K_j\,\vae\, e_n) \subset \phi_j(U_j\cap U).
    $$
    We define for $x\in \phi_j(\overline V_j\cap \overline U)$:
    $$ 
    F_{j,\vae}(x)\coloneqq \int_{B_\vae(0)}\ti f_j\big (\phi_j^{-1}(\phi_j(x)+K_j\,\vae\, e_n-z)\big) \,\eta_\vae (z)\,dz.
    $$
    Thus, we have $F_{j,\vae}=0$ in a neighborhood of $\p V_j\cap U$. We extend the map $F_{j,\vae}$ to a function in $C^\infty(\overline U)$ by setting $F_{j,\vae}=0$ on $\overline U\setminus V_j$.\\
    
    We now define $F_\vae\coloneqq\sum_{j=1}^s F_{j,\vae}$. Then it holds that $F_\vae\rightarrow f$ in $C(\overline U)$ as $\vae\rightarrow 0$, and $F_\vae(\overline U)\subset W'$ for $\vae$ small enough. Let $f _{ \vae}\coloneqq\pi\circ F_\vae$. The family $\{f _{ \vae}\}_{\vae>0}$ is bounded in $W^{1,\infty}(U,M)$ for $\vae$ sufficiently small. Since $f=\pi\circ f$, the Sobolev
    convergence of $f_{ \vae}$ to $f$ follows from the same convergence of $F_\vae$ to $f$. Now it suffices to prove that there exists a constant $\La>0$ such that for all $\vae$ small enough, we have
         \be\label{seqbound1} 
         \forall v\in T\Sigma,\qquad \La^{-1}|v|_{\ti g}\le|d(f _{ \vae})(v)|_{\R^m}\le \La |v|_{\ti g}. 
         \ee
Let $1\le j,j'\le s$, since $\overline V_j\cap \overline V_{j'}\subset  U_j\cap U_{j'}$, we can find a pre-compact region $U_{j,j'}$ with smooth boundary (see \cite[Proposition 8.2.1]{Daners08}) such that 
    $$
    \overline {V_j}\cap \overline {V_{j'}} \subset U_{j,j'}\Subset  U_j\cap U_{j'}.
    $$ 
    If $U_j\cap U_{j'}=\emptyset$, we set 
    $U_{j,j'}=\emptyset$. Now we can apply \eqref{vmocov} to the function $\ti f_j\circ \phi_{j'}^{-1}$ and transition map $\vp=\phi_j\circ \phi_{j'}^{-1}$ defined on $\phi_{j'}(U_{j,j'})$. In the case where $U_{j'}\cap \p U\neq \emptyset$, we extend the following map to a VMO function defined on an open subset of $\phi_{j'}(U_{j'})$ strictly containing $\phi_{j'}(U_{j,j'})$, as in the proof of Lemma \ref{lmcovvmo}:
    \[
    \g (\ti f_j\circ \phi_{j'}^{-1})\colon\phi_{j'}(U_{j'}\cap U)\rightarrow \R^{k\times n}.
    \]
    For $z\in \phi_j(V_j\cap \overline U)$, we set $z^\vae\coloneqq z$ if $V_j\subset U$, and $z^\vae\coloneqq z+\vae \,K_j\,e_n$ otherwise. Then we obtain the following estimate where the right-hand side is independent of $z$:
    \begin{gather*}
        \left|\g(F_{j',\vae}\circ \phi_{j}^{-1})(z)-\fint_{B_\vae(z^\vae)}\g (\ti f_{j'}\circ \phi_{j}^{-1})\right|= \underset{\vae\to 0}{o}(1).
    \end{gather*}
  Consequently, it holds that
  \begin{align}\begin{aligned}
  \label{osdiff}
      &\left|\g (F_\vae\circ\phi_j^{-1})(z)-\fint_{B_\vae(z^\vae)}\g (f\circ \phi_{j}^{-1})\right|\\
      &\le \sum_{j'=1}^s \left|\g (F_{j',\vae}\circ \phi_{j}^{-1})(z)-\fint_{B_\vae(z^\vae)}\g (\ti f_{j'}\circ \phi_{j}^{-1})\right|\\
      &= \underset{\vae\to 0}{o}(1).
      \end{aligned}
    \end{align}
     Since $df\in \text{VMO}(U)$, we also have 
     \begin{align*}
          \fint_{B_\vae(z^\vae)}\left|\g (f\circ \phi_{j}^{-1})-\fint_{B_\vae(z^\vae)}\g (f\circ \phi_{j}^{-1})\right|=\underset{\vae\to 0}{o}(1).
      \end{align*} 
      Together with \eqref{osdiff}, then we get the following estimate where the right-hand side is independent of $z\in \phi_j(V_j\cap \overline U)$:
      \begin{align}\label{osdiff1}
          \fint_{B_\vae(z^\vae)}\left|\g (f\circ \phi_{j}^{-1})-\g (F_\vae\circ\phi_j^{-1})(z)\right|=\underset{\vae\to 0}{o}(1).
      \end{align} 
       Since $f\in W_{\textup{imm}}^{1,\infty}(U,M)$, there exists a constant $C_j>0$ such that for a.e. $z\in \phi_j(V_j)$ and for any $v\in\mathbb S^{n-1}$ it holds \begin{equation}\label{fimmcon}
          \big| d (f\circ \phi_j^{-1})_z(v)\big| \ge C_j.
          \end{equation}
    Since $\overline {W'}$ is compact and $\pi$ is smooth in an open set containing $\overline{W'}$, the map $\g \pi\colon\overline{W'}\rightarrow \R^{k\times k}$ is Lipschitz. 
    We define 
    \begin{align}\label{defal}
    \alpha\coloneqq\|\g \pi\|_{C^0(\overline{W'})}+1+\text{Lip}(\g \pi|_{\overline{W'}})\,\|\g (f\circ \phi_j^{-1})\|_{L^\infty(\phi_j(V_j\cap \overline U))}.
   \end{align}
    By \eqref{osdiff1}, there exists $\vae_0>0$ such that for any $\vae\in (0,\vae_0)$, we have on the one hand that $F_\vae\in C^\infty(\overline U,M)$ is well-defined with $F_\vae(\overline U)\subset W'$. On the other hand for any $z\in \phi_j(\supp(\xi_j)\cap\overline U)$, there holds $B_\vae(z^\vae)\subset \phi_j(V_j\cap \overline U)$, and there exists $z'\in B_\vae(z^\vae)$ satisfying $f\circ \phi_j^{-1}$ is differentiable at $z'$ with the following inequalities:
    \begin{itemize}
        \item oscillation of $\g(f\circ \phi_j^{-1})$:
        \begin{align}\label{oscdf}
            \big|\g (F_\vae\circ \phi_{j}^{-1})(z)-\g (f\circ\phi_j^{-1})(z')\big|\le (2\al)^{-1}C_j.
        \end{align}
        \item $C^0$ distance:
        \begin{align}
        \begin{aligned}\label{c0dis}
            & \big|(F_\vae\circ \phi_{j}^{-1})(z)-(f\circ\phi_j^{-1})(z')\big|\\[2mm]
            &\le \big|(F_\vae\circ \phi_{j}^{-1})(z)-(f\circ \phi_{j}^{-1})(z)\big|+\big|(f\circ \phi_{j}^{-1})(z)-(f\circ \phi_{j}^{-1})(z')\big|\\[2mm]
            &\le (2\al)^{-1} C_j.
            \end{aligned}
        \end{align}
    \end{itemize}
 The last inequality follows from the facts that $f\circ \phi_j^{-1}$ is Lipschitz continuous on $\phi_j(V_j\cap U)$ and $\|F_\vae-f\|_{C(\overline U)}\rightarrow 0$ as $\vae\rightarrow 0$. \\
 
 Given a point $z'$ where $f\circ \phi_j^{-1}$ is differentiable, it holds $d (f\circ \phi_{j}^{-1})_{z'}(v)\in T_{f\circ \phi_{j}^{-1}(z') }M$ for any $v\in \R^n$ since $f(U)\subset M$. In particular, we have $$\big(d\pi \circ d (f\circ \phi_{j}^{-1})\big)_{z'}(v)=  d (f\circ \phi_{j}^{-1})_{z'}(v).$$ 
 Hence, it holds for any $z\in \phi_j(\supp(\xi_j)\cap \overline U)$, $0<\vae<\vae_0$, $v\in \mathbb S^{n-1}$ that
 \begin{align}\begin{aligned}\label{dpi>}
     & \big|\big(d\pi\circ d (F_\vae\circ\phi_j^{-1})\big)_z(v)\big|\\[2mm]
        &\ge \  \big|d (f\circ \phi_{j}^{-1})_{z'}(v)\big|-\big|\big(d\pi\circ d (F_\vae\circ\phi_j^{-1})\big)_z(v)-\big(d \pi \circ d (f\circ \phi_{j}^{-1})\big)_{z'}(v)\big|.
        \end{aligned}
 \end{align}
 By the triangle inequality, we have for any $v\in\mathbb S^{n-1}$  \begin{align}\begin{aligned}\label{dpitri}
      &\big|\big(d\pi\circ d (F_\vae\circ\phi_j^{-1})\big)_z(v)-\big(d \pi \circ d (f\circ \phi_{j}^{-1})\big)_{z'}(v)\big|\\[3mm]
      &\le \big|\g \pi_{(F_\vae\circ \phi_{j}^{-1})(z)}\big(\g (F_\vae\circ \phi_{j}^{-1})_z-\g (f\circ\phi_j^{-1})_{z'}\big)(v)\big|\\[2.8mm]
      &\quad\quad+\big| \big(\g \pi_{(F_\vae\circ \phi_{j}^{-1})(z)}-\g \pi_{(f\circ\phi_j^{-1})(z')}\big) \g (f\circ \phi_{j}^{-1})_{z'}(v)\big|\\[3mm]
      &\le \big|\g \pi\big((F_\vae\circ \phi_{j}^{-1})(z)\big)\big| \ \big|\g (F_\vae\circ \phi_{j}^{-1})(z)-\g (f\circ\phi_j^{-1})(z')\big|\\[3.5mm]
        &\quad\quad+\text{Lip}(\g \pi|_{\overline{W'}})\  \big|(F_\vae\circ \phi_{j}^{-1})(z)-(f\circ\phi_j^{-1})(z')\big|\ \big|\g (f\circ\phi_j^{-1})(z')\big|.
 \end{aligned}
 \end{align}
 Combining \eqref{defal}--\eqref{c0dis} and \eqref{dpitri}, we obtain \begin{align}\begin{aligned}\label{<c_j}
     &\big|\big(d\pi\circ d (F_\vae\circ\phi_j^{-1})\big)_z(v)-\big(d \pi \circ d (f\circ \phi_{j}^{-1})\big)_{z'}(v)\big|\\[2.5mm]
 &\le (2\al)^{-1}C_j\Big(\|\g \pi\|_{C(\overline{W'})}+\text{Lip}(\g \pi|_{\overline{W'}})\,\|\g (f\circ \phi_j^{-1})\|_{L^\infty(\phi_j(V_j\cap \overline U))}\Big)\\[2.5mm]
        &\le 2^{-1}C_j.
        \end{aligned}
  \end{align}
   By \eqref{fimmcon}, \eqref{dpi>}, \eqref{<c_j} we finally conclude that for any $z\in \phi_j(\supp(\xi_j)\cap \overline U)$, $0<\vae<\vae_0$, $v\in \mathbb S^{n-1}$, it holds 
    \begin{align*}
    & \big|\big(d \pi\circ d (F_\vae\circ\phi_j^{-1})\big)_z(v)\big|
        \ge  2^{-1}C_j.
    \end{align*} 
    The estimate \eqref{seqbound1} is obtained since $\overline U\subset \bigcup_{j=1}^s \supp(\xi_j)$.
\end{proof}
We are now ready to prove Theorem \ref{th-approx}.
\begin{hproof4}
Let $\bP\in W^{2,2}_{\textup{imm}}(\Sigma,\R^m)$ be weakly conformal. Let $\ti g$ be a reference Riemannian metric on $\Sigma$. By Theorem \ref{th:Approx}, there exists a sequence of $C^\infty$ immersions $\bP_k$ satisfying $\bP_k\rightarrow \bP$ in $W^{2,2}(\Sigma,\R^m)$ and there exists a constant $\Lambda>0$ such that \be\label{dphi_k_bound}
\Lambda^{-1} |X|_{\ti g} \le | d(\vec{\Phi}_k)(X)|  \le \Lambda |X|_{\ti g},\qquad \text{for all }  X\in T\Sigma\,\text{ and all } \,k\in \N.\ee
Write $\bP_\infty\coloneqq\bP$, and we construct $\{\Psi_k\}$, $\{h_k\}$ as in Theorem \ref{thm-con-stru}. Since each $\bP_k$ is smooth, from the construction we see that each $\Psi_k$ is also smooth, and we define $\bP_k'\coloneqq\bP_k\circ \Psi_k$, $h_k'\coloneqq\Psi_k^*h_k$, $g_k'\coloneqq\bP_k'^*g_{\text{std}}$. Then we have $g_k'=\Psi_k^*(\bP_k^*g_{\text{std}})$ is conformal to $h_k'$ since $h_k$ is conformal to $\bP_k^*g_{\text{std}}$, and $h_k'\rightarrow h$ in $C^\infty(\Sigma,T^*\Sigma\ot T^*\Sigma)$. Moreover, by Theorem \ref{thm-con-stru} (iii) we have $\bP_k'\rightarrow \bP$ in $W^{2,2}(\Sigma,\R^m)$. Write $g'_k=e^{2\alpha_k}h_k'$. 

Now it remains to show that $\alpha_k\rightarrow \alpha$ in $C^0(\Sigma)$. Let $p\in \Sigma$. Since $h_k'\rightarrow h$ in $C^\infty(\Sigma)$, by a similar argument as in Theorem \ref{thmconchart} 
(replacing $\vec e_{i,k}$ by orthonormal frames on $T\Sigma$, and $d\vec e_{i,k}$ by covariant derivatives of these vector fields with respect to $h'_k$), there exists a neighborhood $U$ of $p$ and $C^\infty$ diffeomorphisms $\vp_k$ from $U$ to $\vp_k(U)\subset \R^2$ such that for any $k\in \N\cup\{\infty\}$, 
it holds that $\vp_k(p)=0$, $D^2\Subset\vp_k(U)$, and $\vp_k^*g_{\text{std}}$ is conformal to $h_k'$ (write $h_\infty'\coloneqq h$). Moreover, we have $\vp_k\rightarrow \vp_{\infty}$ in $C^\infty_{\textup{loc}}(U)$. 
Since $h_k'$ is conformal to $g_k'$, we also obtain that $\bP_k'\circ \vp_k^{-1}$ is conformal. 
We denote
$$ 
(\vp_k^{-1})^*h_k'=e^{2\tilde\la_k} (dx_1^2+dx_2^2),\quad (\vp_k^{-1})^*g_k'=(\bP_k'\circ \vp^{-1}_k)^*g_{\text{std}}=e^{2\la_k}(dx_1^2+dx_2^2).
$$
Then we have $\tilde\la_k\rightarrow\tilde\la_\infty$ in $C^\infty(\overline{D^2})$ since it holds $\vp_k^{-1}\rightarrow\vp_{\infty}^{-1}$ in $C^\infty(\overline{D^2})$ and $h_k'\rightarrow h_{\infty}'=h$ in $C^\infty(\Sigma)$. Since the sequence $\{\la_k\}$ is bounded in $L^\infty(D^2)$ by \eqref{dphi_k_bound} and Theorem \ref{thm-con-stru} (v), 
and $\bP_k'\circ \vp_k^{-1}\rightarrow \bP\circ \vp_{\infty}^{-1}$ in $W^{2,2}(D^2,\R^m)$, we have $\la_k\rightarrow \la_{\infty}$ in $W^{1,2}(D^2)$. Now by applying the argument in Corollary \ref{coconfactor} to $\{\bP_k'\circ \vp_k^{-1}\}$, we have $\la_k\rightarrow\la_\infty$ in $W^{2,1}_{\textup{loc}}(D^2)$. Since $\alpha_k=(\la_k-\tilde\la_k)\circ \vp_k$ on $U$, and $p$ is arbitrarily chosen on $\Sigma$, we conclude that $\alpha_k\rightarrow \alpha$ in $W^{2,1}(\Sigma)$, hence also in $C^0(\Sigma)$ (see \cite[Theorem 3.3.4 \& 3.3.10]{helein2002}).
\end{hproof4}

\section{Willmore surfaces}\label{sec-willsur}

Motivated by the generalization of Willmore surfaces in higher dimension, we provide in this section a new proof of the regularity of Willmore surfaces which does not involve the choice of conformal coordinates. In \Cref{sec:CL}, we write the Euler--Lagrange equation of Willmore surfaces in divergence form. In \Cref{wilreg}, we prove the regularity of Willmore surfaces.

\subsection{The Euler--Lagrange equation and conservation laws}\label{sec:CL}

In this section, we show that the Euler--Lagrange system of the Willmore functional can be written as a div-curl system. To do so, we need Noether theorem to rewrite it in divergence form. 
\begin{Th}[{Noether theorem \cite{Noe}}]
    Let $l\colon \R^m\times \mathbb R^{m\times n}\rightarrow \R$ for $l(z,p)$ being $C^1$ in $z$ and $C^2$ in $p$. Let $X$ be a tangent vector field on $\R^m$, $F(t,z)$ denote the flow of $X$ at time $t$ with $F(0,z)=z$. We say $X$ is an \textit{infinitesimal symmetry} of $l$ if 
\[l(u,\g u)=l(F(t,u), \g(F(t,u))),\qquad \text{for all }\, u\in C^1(B^n,\R^m).\]
    Let $u$ be a critical point of $L(u)\coloneq\int_{B^n}l(u,\g u) \,d\mathcal L^n$, i.e. for any $\omega\in C^\infty_c(B^n)$, it holds \[
        \lf. \frac{d}{dt}\, L(u+t\omega)\rg |_{t=0}=0.\]
    Then for any infinitesimal symmetry $X$ of $l$, we have \begin{align}\label{eq:noether}\mbox{div}\, \lf(\lf(\frac{\p l} {\p p}\cdot X\rg)\circ u\rg)=0.\end{align}
\end{Th}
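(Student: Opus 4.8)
The statement is the classical Noether theorem for first-order Lagrangians, so the plan is to follow the standard variational argument, carefully tracking the flow $F(t,\cdot)$ of the symmetry vector field $X$. First I would observe that, since $u$ is $C^1$ and $X$ is a smooth vector field, $t\mapsto F(t,u(x))$ is well-defined for $|t|$ small on any compact subset of $B^n$, and that $u_t\coloneqq F(t,u)$ is an admissible variation of $u$ whenever we localize: pick an arbitrary $\omega\in C_c^\infty(B^n)$ with $0\le \omega\le 1$, and consider the compactly supported variation $u_t^\omega(x)\coloneqq F(t\,\omega(x), u(x))$. This agrees with $u$ outside $\supp\omega$, so it is a legitimate competitor in the definition of critical point.

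\textbf{Main computation.} The key step is to differentiate the identity $l(u,\g u)=l(F(t,u),\g F(t,u))$ (valid by the infinitesimal symmetry hypothesis, applied pointwise after noting $u\in C^1$) together with the Euler--Lagrange equation for $u$. Concretely, I would compute
\[
0=\left.\frac{d}{dt}\right|_{t=0} L(u_t^\omega)
=\int_{B^n}\left[\frac{\p l}{\p z}(u,\g u)\cdot (X\circ u)\,\omega
+\frac{\p l}{\p p}(u,\g u):\g\!\left((X\circ u)\,\omega\right)\right]d\mathcal L^n,
\]
where I used $\left.\frac{\p}{\p t}\right|_{t=0}u_t^\omega=(X\circ u)\,\omega$ and $\left.\frac{\p}{\p t}\right|_{t=0}\g u_t^\omega=\g((X\circ u)\,\omega)$. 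On the other hand, testing the Euler--Lagrange equation of $u$ against the admissible function $(X\circ u)\,\omega$ (this requires $(X\circ u)\,\omega\in C_c^\infty$, or at least $C_c^1$, which holds since $X$ is smooth, $u\in C^1$, $\omega\in C_c^\infty$) gives exactly
\[
\int_{B^n}\left[\frac{\p l}{\p z}(u,\g u)\cdot (X\circ u)\,\omega
+\frac{\p l}{\p p}(u,\g u):\g\!\left((X\circ u)\,\omega\right)\right]d\mathcal L^n=0
\]
— so subtracting, the bulk terms cancel and we are left with
\[
\int_{B^n}\frac{\p l}{\p p}(u,\g u):\left[(X\circ u)\otimes \g\omega\right]d\mathcal L^n=0,
\]
i.e. $\int_{B^n}\left(\left(\frac{\p l}{\p p}\cdot X\right)\circ u\right)\cdot \g\omega\,d\mathcal L^n=0$ for all $\omega\in C_c^\infty(B^n)$. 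By definition of distributional divergence this is precisely $\mathrm{div}\left(\left(\frac{\p l}{\p p}\cdot X\right)\circ u\right)=0$ in $\mathcal D'(B^n)$, which is \eqref{eq:noether}.

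\textbf{Expected obstacle and remarks.} The computation itself is routine; the only points requiring care are (i) justifying that $u_t^\omega$ is an admissible variation and that differentiation under the integral sign is legitimate — here the compact support of $\omega$ and the $C^1$ (resp. $C^2$) regularity of $l$ in $z$ (resp. $p$), together with smoothness of the flow $F$, make the difference quotients uniformly bounded on $\supp\omega$, so dominated convergence applies; and (ii) keeping the symmetry hypothesis in its correct form: the hypothesis $l(u,\g u)=l(F(t,u),\g F(t,u))$ is used with the \emph{unlocalized} flow to cancel the contribution coming from differentiating $l$ through its dependence on $t$ when $\omega\equiv 1$ locally, but since we actually vary with $u_t^\omega$ we only need the pointwise chain-rule identity $\frac{\p l}{\p z}\cdot X + \frac{\p l}{\p p}:\g(X\circ u)=0$ obtained by differentiating the symmetry identity at $t=0$; feeding this back also works and is perhaps cleaner. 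I would present the argument via the second route (differentiate the symmetry identity first to get the pointwise relation, then integrate against $\omega$ and use Euler--Lagrange), as it avoids any delicate interchange of limits beyond elementary ones. No step here is a genuine obstacle; the statement is essentially bookkeeping once the admissible variation is set up correctly.
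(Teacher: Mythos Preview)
The paper does not prove this statement; it quotes Noether's theorem as a classical result with a reference to Noether's 1918 paper, so there is no approach in the paper to compare your proposal against.

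That said, your main computation as written is circular. You claim
\[
0=\left.\frac{d}{dt}\right|_{t=0} L(u_t^\omega)
=\int_{B^n}\Big[\frac{\p l}{\p z}\cdot (X\circ u)\,\omega
+\frac{\p l}{\p p}:\g\big((X\circ u)\,\omega\big)\Big]\,d\mathcal L^n,
\]
but the only reason the left-hand side vanishes is that $u$ is critical and the tangent $\dot u_0^\omega=(X\circ u)\omega$ is compactly supported --- in other words, this \emph{is} the weak Euler--Lagrange equation tested against $(X\circ u)\omega$. Your second displayed integral is therefore identical to the first, and subtracting yields $0=0$, not the residual $\int(\p_p l\cdot X)\cdot\g\omega=0$ you claim. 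The symmetry hypothesis plays no role in this part of your argument; note that the symmetry does \emph{not} give $L(u_t^\omega)=L(u)$, because $\g u_t^\omega$ contains an extra $\g\omega$ term which breaks the pointwise invariance.

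The correct proof is exactly your ``second route'': differentiate the symmetry identity $l(u,\g u)=l(F(t,u),\g F(t,u))$ at $t=0$ to get the \emph{pointwise} relation
\[
\frac{\p l}{\p z}(u,\g u)\cdot X(u)+\frac{\p l}{\p p}(u,\g u):\g(X\circ u)=0,
\]
multiply by $\omega$ and integrate, and \emph{then} subtract the Euler--Lagrange identity tested against $(X\circ u)\omega$. Now the two integrals genuinely differ, by precisely $\int\frac{\p l}{\p p}:\big[(X\circ u)\otimes\g\omega\big]$, and the conclusion follows. Lead with this route; the first one does not work as written.
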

In the same paper, Noether in fact considered higher-order Lagrangians of the form $$L=\int_{B^n} l(u,\g u,\dots,\g^k u)\,d\mathcal L^n \quad (k\ge 2),$$ where $l$ is a smooth function. While the classical conservation law \eqref{eq:noether} may not hold for $k\ge 2$, symmetries of the Lagrangian density $l$ still generate conservation laws in the form $\text{div }J=0$, where $J$ is a Noether current expressible in terms of $u$ and its derivatives.\\

Let $\Sigma$ be a $2$-dimensional closed smooth orientable manifold. The notions of Gauss map, second fundamental form and mean curvature associated to $\bP\in W^{2,2}_{\textup{imm}}(\Sigma, \R^3)$ have been defined in Notation \ref{not-met}.


\medskip


\begin{Dfi}
Let $\vec\Phi\in W^{2,2}_{\text{imm}}(\Sigma, \R^3)$. We define the \textit{Willmore energy}:
\[
W(\vec{\Phi})\coloneqq\int_{\Sigma} \big| \bH \big|^2 \,d\textup{vol}_{g}.
\]
The map $\vec\Phi$ is said to be a critical point for $W$ if for any $\vec w\in C^\infty(\Sigma,\R^3)$, there holds
\begin{equation*}
   \lf. \frac d{dt}\,W(\vec\Phi+t\,\vec w)\rg|_{t=0}=0.
\end{equation*}
Such a critical point is called a \textit{weak Willmore immersion}. Similarly, we can define weak Willmore immersions in $W^{2,2}_{\text{imm}}(D^2,\R^3)$.
\end{Dfi}
Throughout Sections \ref{sec-willsur} and \ref{sec:4d}, we use the Einstein summation convention, and we leave out the symbols $\otimes$ for sections of $\bigwedge \R^m\ot \bigwedge T^*\Sigma$  ($m\in \N^+$). For instance, we write $\p_i\bP \,dx^j=\p_i\bP\ot dx^j$.

\subsubsection*{The Noether current associated to translations.}
Now we compute the Euler-Lagrange equation satisfied by weak Willmore immersions. In fact, the divergence-form equation can be seen as a consequence of the pointwise invariance of $H^2\,\dvol_g$ by translations in the ambient space, as pointed out in~\cite{Ber} (see also~\cite{marque19}).
\begin{Th}[{\cite[Theorem 1.5]{Riv14}}]\label{thm:willequ}
A weak immersion $\vec \Phi\in W^{2,2}_{\text{imm}}(\Sigma, \R^3)$ is a weak Willmore immersion if and only if the following equation holds in $\mathcal D'\big(\Sigma,\R^3\ot \bwe^2T^*\Si\big)$:
\be\label{willmore eq} 
d *_g\lf(-2H\,d\vec n +d\vec H -H^2\,d\vec \Phi\rg)=0.
\ee
\end{Th}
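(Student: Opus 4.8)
The plan is to obtain \eqref{willmore eq} as the Noether conservation law associated to the invariance of $W$ under translations of the target, combined with a direct computation of the distributional first variation of $W$. Fix a smooth local chart $(x^1,x^2)$ on $\Sigma$; since \eqref{willmore eq} is a coordinate-free identity between $\R^3\ot\bwe^2 T^*\Sigma$-valued distributions, it is enough to establish it in each chart. By \eqref{mean} one has $\vec H=\tfrac12\Delta_g\vec\Phi$, and $g=g_{\vec\Phi}$, $\Delta_g$, $\vec n$ and $\vec{\II}$ are all built from $\nabla\vec\Phi$ and $\nabla^2\vec\Phi$ alone, so in the chart the Willmore density reads $|\vec H|^2\,d\textup{vol}_g=\ell(\nabla\vec\Phi,\nabla^2\vec\Phi)\,dx^1\we dx^2$ with $\ell$ smooth in its arguments and \emph{not} depending on the value of $\vec\Phi$. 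In particular $\ell$ is invariant under $\vec\Phi\mapsto\vec\Phi+t\vec a$ for every constant $\vec a\in\R^3$, so by the generalised (second-order) Noether theorem recalled above there is a Noether current; and since $\ell$ carries no explicit $\vec\Phi$-dependence, this current is literally
\begin{align*}
J^i &\;=\;\p_j\frac{\p\ell}{\p(\p_{ij}\vec\Phi)}-\frac{\p\ell}{\p(\p_i\vec\Phi)}\;\in\;\R^3,
\end{align*}
and the Euler--Lagrange operator of $W$ is identically $-\p_iJ^i$. Hence, once one checks that $t\mapsto W(\vec\Phi+t\vec w)$ is differentiable at $0$ for every $\vec w\in C^\infty(\Sigma,\R^3)$ with
\begin{align*}
\lf.\frac{d}{dt}\,W(\vec\Phi+t\vec w)\rg|_{t=0}
&=\int_\Sigma\Big(\frac{\p\ell}{\p(\p_i\vec\Phi)}\cdot\p_i\vec w+\frac{\p\ell}{\p(\p_{ij}\vec\Phi)}\cdot\p_{ij}\vec w\Big)\,dx^1\we dx^2,
\end{align*}
it follows that $\vec\Phi$ is a weak Willmore immersion if and only if $\p_iJ^i=0$ in $\mathcal D'$. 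The regularity needed here --- $\nabla\vec\Phi\in L^\infty$, $g,g^{-1}\in L^\infty$, $\vec n\in W^{1,2}$, $\vec H\in L^2$, $\vec\Phi\in W^{2,2}$ --- is exactly what is available for weak immersions, and it makes every product below either an $L^1_{\loc}$ function or the product of an $L^2$ function with a $W^{-1,2}$ distribution, so all the manipulations below are licit.

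The heart of the proof is then the explicit identification
\begin{align*}
J^i &\;=\;c\,\Big(*_g\lf(-2H\,d\vec n+d\vec H-H^2\,d\vec\Phi\rg)\Big)\!\lf(\tfrac{\p}{\p x^i}\rg)\qquad\text{for some nonzero constant }c.
\end{align*}
I would compute the first variation of $W$ term by term, using $\delta(\sqrt{\det g})=\sqrt{\det g}\,g^{ij}\,\p_i\vec\Phi\cdot\p_j\vec w$, $\delta g^{ij}=-g^{ik}g^{jl}(\p_k\vec\Phi\cdot\p_l\vec w+\p_l\vec\Phi\cdot\p_k\vec w)$, the (tangential) variation $\delta\vec n=-g^{kl}(\vec n\cdot\p_k\vec w)\,\p_l\vec\Phi$, and the variation of $\vec{\II}_{ij}=(\vec n\cdot\p_{ij}\vec\Phi)\,\vec n$, then integrating by parts twice to eliminate the second derivatives of $\vec w$. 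The outcome is long; to reorganise it into divergence form \emph{without} passing to conformal coordinates I would use Lemma~\ref{lmuse}, i.e. $-2H\,d\vec\Phi=d\vec n+\vec n\times *_g d\vec n$, together with the Gauss and Codazzi equations --- these are precisely what lets one trade the Gaussian curvature terms and the $d\vec H$ term against one another and recognise a total differential. It is convenient to split $\vec w$ into its parts tangent and normal to $\vec\Phi(\Sigma)$: the tangential part only reflects the reparametrisation invariance of $W$ and assembles into the tangential part of $*_g(-2H\,d\vec n+d\vec H-H^2\,d\vec\Phi)$, while the normal part produces the normal part. This gives
\begin{align*}
\lf.\frac{d}{dt}\,W(\vec\Phi+t\vec w)\rg|_{t=0}
&=c\int_\Sigma\lan d\vec w,\,-2H\,d\vec n+d\vec H-H^2\,d\vec\Phi\ran_g\,d\textup{vol}_g\\
&=-c\int_\Sigma\vec w\cdot d*_g\lf(-2H\,d\vec n+d\vec H-H^2\,d\vec\Phi\rg),
\end{align*}
the second equality being an integration by parts valid in $\mathcal D'$; note that $d\vec H\in W^{-1,2}$ and $H\,d\vec n,\,H^2\,d\vec\Phi\in L^1$, so $*_g(-2H\,d\vec n+d\vec H-H^2\,d\vec\Phi)\in W^{-1,2}+L^1$ and its exterior derivative is a genuine $\R^3\ot\bwe^2 T^*\Sigma$-valued distribution.

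By the fundamental lemma of the calculus of variations, the last pairing vanishes for all $\vec w\in C^\infty(\Sigma,\R^3)$ if and only if $d*_g\lf(-2H\,d\vec n+d\vec H-H^2\,d\vec\Phi\rg)=0$ in $\mathcal D'(\Sigma,\R^3\ot\bwe^2 T^*\Sigma)$; together with the reduction of the first paragraph this is exactly the claimed equivalence, and it identifies $*_g(-2H\,d\vec n+d\vec H-H^2\,d\vec\Phi)$ (up to the constant $c$) with the Noether current of translation invariance --- the point being that, the Lagrangian density having no explicit dependence on $\vec\Phi$, its Euler--Lagrange operator is automatically a divergence.

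The main obstacle is entirely concentrated in the computation of the preceding paragraph: carrying out the first variation intrinsically, keeping careful track of the tangential versus normal components of $\vec w$, and massaging the lengthy result into the compact conservative form with the correct sign and constant --- this is where Lemma~\ref{lmuse} plays the role that the conformal-coordinate identity $\nabla\vec n=-\vec n\times\nabla^\perp\vec n-2H\,\nabla^\perp\vec\Phi$ plays in the computation of \cite{Riv08}. By contrast, the analytic justifications of the first paragraph (differentiating under the integral sign, integrating by parts at $W^{2,2}$ regularity) are routine given the properties of weak immersions collected in \Cref{sec-weakimm}.
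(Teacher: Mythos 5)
Your plan is correct in its overall architecture and coincides with the paper's proof at the level of structure: both compute the first variation of $W$ under $\vec\Phi_t=\vec\Phi+t\vec w$ using precisely the same formulas for $\delta g^{ij}$, $\delta\sqrt{\det g}$, $\delta\vec n$, and $\delta\vec{\II}$, both land on a divergence form, and both close by the definition of a weak critical point together with a partition of unity. The Noether framing you lead with (since $\ell$ depends only on $\nabla\vec\Phi$ and $\nabla^2\vec\Phi$, the Euler--Lagrange operator must be a total divergence, namely $\partial_i J^i$ with $J^i=\partial_j\frac{\partial\ell}{\partial(\partial_{ij}\vec\Phi)}-\frac{\partial\ell}{\partial(\partial_i\vec\Phi)}$) is exactly the heuristic the paper invokes in prose but does not use as a formal crutch, and modulo a sign ($EL=+\partial_iJ^i$, not $-\partial_iJ^i$) your account of that principle is right.

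Where you diverge from the actual paper proof is in the plan for the ``heart of the computation''. You propose to collapse the raw first variation into divergence form using Lemma~\ref{lmuse} and the Gauss--Codazzi equations; the paper does neither. Its trick is to notice that $\partial_i\big((\det g)^{1/2}g^{ij}\partial_j\vec\Phi\big)=\sqrt{\det g}\,\Delta_g\vec\Phi$ is normal to the surface, so the troublesome term $g^{ij}\partial_j\vec\Phi\cdot\partial_i(\delta\vec n)$ in $\delta H$ becomes at once a full divergence of a first-order quantity (equation (4.7) in the proof). After that single observation, the variation is already in the form $\langle\vec V,d\vec w\rangle_g\,d\textup{vol}_g$ plus an exact $2$-form, with $\vec V=\vec n\,dH-H\,d\vec n-H^2\,d\vec\Phi$. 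No curvature identities are needed, and Lemma~\ref{lmuse} is reserved for the regularity argument (Theorem~\ref{th:n_continuous}), not for this one. Your route is not wrong, but it would be substantially longer, and the ``trading the Gaussian curvature terms against $d\vec H$'' step you anticipate is avoided entirely by the paper's choice of how to group terms. One more thing worth flagging: the pointwise identity $\delta(H^2\,d\textup{vol}_g)=\vec w\cdot d*_g\vec V-d*_g(\vec V\cdot\vec w-\vec H\cdot d\vec w)$ is established first for smooth $\vec\Phi$ and then transferred to $W^{2,2}_{\text{imm}}$ by the approximation Theorem~\ref{th:Approx}; your appeal to ``all the manipulations below are licit'' at $W^{2,2}$ regularity glosses over this step, which is actually needed to make the term-by-term variation and the factoring $EL=\partial_iJ^i$ rigorous for a genuinely weak immersion.
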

\begin{proof}
Let $(x^1,x^2)$ be local coordinates for $\Sigma$ that maps onto $D^2$ and $\bP\in W^{2,2}_{\imm}(D^2,\R^3)$. Let $\vec w\in C^\infty(\ov{D^2},\R^3)$. We consider the variation 
\begin{align*}
    \bP_t=\bP+t\bw.
\end{align*}
Denote $g_t=g_{\bP_t}$, $\bn_t=\bn_{\bP_t}$, etc. Since $g^{ij}_t\,g_{jk,t}=\delta_{ik}$, taking derivative with respect to $t$, we have
\begin{gather}\begin{dcases}\label{dg^ij}
    \lf.\frac{d}{dt} \,g_{ij,t}\rg |_{t=0}=\lf.\frac{d}{dt}\, \p_{i} \bP_t\rg|_{t=0}\cdot\p_{j} \bP+\lf.\frac{d}{dt} \,\p_{j} \bP_t\rg|_{t=0}\cdot\p_{i}\bP=\p_i\vec w \cdot\p_{j}\bP+\p_{j}\vec w \cdot\p_{i}\bP,
    \\[2mm]
    \lf.\frac{d}{dt} \,g_t^{ij}\rg|_{t=0}=-g^{ik}g^{\ell j}\lf.\frac{d}{dt} \,g_{k\ell,t}\rg|_{t=0}=-g^{ik}g^{\ell j}(\p_{k}\vec w \cdot\p_{\ell}\bP+\p_{\ell}\vec w \cdot\p_{k}\bP).
    \end{dcases}
\end{gather}
By Jacobi's formula and writing $g=(g_{ij})$ for simplicity, it follows that 
\begin{align}\begin{aligned}
    \label{dvol}
    \lf.\frac{d}{dt} (\det g_t)^{\frac 12}\rg|_{t=0}&=\frac 12 (\det g)^{\frac 12} \,\text{Tr}\lf(g^{-1}\lf.\frac{d}{dt}\, g_t\rg|_{t=0}\rg)\\
    &=\frac12 (\det g)^{\frac 12} \,g^{ij}(\p_i\vec w \cdot\p_{j}\bP+\p_{j}\vec w \cdot\p_{i}\bP)\\[2mm]
    &=(\det g)^{\frac 12}\,g^{ij}\,\p_{j}\vec w \cdot\p_{i}\bP.
    \end{aligned}
\end{align}
Since $|\bn|=1$, it holds 
\begin{align*}
    \bn_t\cdot \frac{d}{dt} \,\bn_t  =\frac 12\, \frac{d}{dt}\, |\bn_t|^2=0.
\end{align*}
Thus the variation of the Gauss map is given by 
\begin{align*}
    \lf.\frac{d}{dt}\, \bn_t\rg|_{t=0}=g^{ij}\lf(\p_{i}\bP \cdot \lf.\frac{d}{dt}\,\bn_t\rg|_{t=0}\rg)\p_{j}\bP=-g^{ij}\lf(\bn \cdot \lf.\frac{d}{dt}\,\p_{i}\bP_t\rg|_{t=0}\rg)\,\p_{j}\bP=-g^{ij}\, (\bn \cdot \p_i\vec w)\,\p_{j}\bP.
\end{align*} 
We now compute the variation of the mean curvature:
\begin{align}\begin{aligned}
    \label{dH}
    2\lf.\frac{d}{dt} \,H_t\rg|_{t=0}&=g^{ij}\lf.\frac{d}{dt}\,  {\II}_{ij,t}\rg|_{t=0}+{\II}_{ij}\lf.\frac{d}{dt}\, g_t^{ij}\rg|_{t=0}\\[1mm]
    &=-g^{ij} \,\p_{j}\bP\cdot \lf.\frac{d}{dt}\,\p_{i}\bn_t\rg|_{t=0}-g^{ij}\, \p_{i}\bn\cdot \p_{j}\vec w+\II_{ij}\lf.\frac{d}{dt} \,g_t^{ij}\rg|_{t=0}.
    \end{aligned}
\end{align}
Thanks to the fact that $\Delta_g\bP\perp \p_l\bP$, we have
\begin{align}
\begin{aligned}\label{dpn}
    (\det g)^{\frac 12}\,g^{ij}\, \p_{j}\bP\cdot \lf.\frac{d}{dt}\,\p_{i}\bn_t\rg|_{t=0}&=-(\det g)^{\frac 12}\,g^{ij}\,\p_j\bP \cdot \p_i\big (g^{k\ell} (\bn \cdot \p_{k}\vec w)\,\p_{\ell}\bP\big)\\
    &=-\p_i\lf (\big((\det g)^{\frac 12}\,g^{ij}\,\p_j\bP\big) \cdot \big(g^{k\ell}(\bn \cdot \p_{k}\vec w)\, \p_{\ell}\bP\big)\rg)\\[1mm]
    &=-\p_i \lf ( (\det g)^{\frac 12}\,g^{ij}\, g_{j\ell}\,g^{k\ell} (\bn \cdot \p_{k}\vec w)\rg)\\[1mm]
    &=- \p_i \lf ((\det g)^{\frac 12} \,g^{ik}  (\bn \cdot \p_{k}\vec w)\rg)\\[1mm]
    &=- \p_i \lf ((\det g)^{\frac 12} \,g^{ij}  (\bn \cdot \p_{j}\vec w)\rg).
    \end{aligned}
\end{align}
Since the second fundamental form is symmetric and $\p_i\bn \cdot \bn=0$, we also have \begin{align}\begin{aligned}
    \label{dg}
 \II_{ij}\lf.\frac{d}{dt} \,g^{ij}_t\rg|_{t=0}&=   -2\,\II_{ij}\,g^{ik}g^{\ell j}\,\p_{k}\vec w \cdot\p_{\ell}\bP\\
    &=2\, \p_{k}\vec w\cdot \lf(g^{ik}(\p_i\bn \cdot \p_j\bP)\,g^{\ell j}\,\p_{\ell}\bP\rg)\\[2mm]
    &=2 \,\p_{k}\vec w \cdot\big( g^{ik}\, \p_i\bn\big)\\[2.5mm]
    &=2\,g^{ik}\,\p_{k}\vec w\cdot \p_i\bn\\[2.5mm]
    &=2\,g^{ij}\,\p_{j}\vec w\cdot \p_i\bn.
    \end{aligned}
\end{align} 
Combining \eqref{dvol}--\eqref{dg}, we obtain the pointwise a.e. variation
\begin{align}\label{ptvarwill}
\begin{aligned}
    &\lf.\frac d{dt}\big(H_t^2\mko(\det g_t)^{\frac 12}\big)\rg|_{t=0}\\
    &= 2H\lf.\frac{d}{dt}\, H_t\rg|_{t=0} (\det g)^{\frac 12}+H^2 \lf.\frac{d}{dt} (\det g_t)^{\frac 12}\rg|_{t=0}\\
    &=H\mkt \p_i \lf ((\det g)^{\frac 12} \mko g^{ij}  (\bn \cdot \p_{j}\vec w)\rg)+ H\mko (\det g)^{\frac 12}\,g^{ij}\mko \p_{i}\bn\cdot \p_{j}\vec w+H^2\mko(\det g)^{\frac 12}\mko g^{ij}\mko \p_{j}\vec w \cdot\p_{i}\bP.
    \end{aligned}
\end{align}
For any $1\le j\le 2$, $a\in L^\nf \cap W^{1,2}(D^2)$ and $f\in L^{2}(D^2)$, we have 
\begin{align*}
    \|a \mkt \p_{j} f  \|_{L^1+W^{-1,2}(D^2)}
    &\le \|\p_{j}(af)\|_{W^{-1,2}(D^2)}+\|f\mkt\p_{j} a \|_{L^1(D^2)}\\[0.3ex]
    &\le \|af\|_{L^{2}(D^2)}+\|\p_{j} a\|_{L^2(D^2)} \|f\|_{L^{2}(D^2)}\\[0.5ex]
    &\le \|a\|_{L^\nf \cap W^{1,2}(D^2)} \mko\|f\|_{L^{2}(D^2)}.
\end{align*}
Hence, there exists a universal constant $C>0$ such that for all $a\in L^\nf \cap W^{1,2}(D^2)$ and $T\in L^1+W^{-1,2}(D^2)$, it holds that
\begin{align}\label{prorL1W-12d}
    \|aT\|_{L^1+W^{-1,2}(D^2)}\le C\mko\|a\|_{L^\nf \cap W^{1,2}(D^2)} \mko \|T\|_{L^1+W^{-1,2}(D^2)}.
\end{align}
By~\eqref{ptvarwill} and~\eqref{prorL1W-12d}, we obtain in $W^{-1,2}+L^1(D^2)$ that 
\begin{align}\label{ptvarH^2det12}
\begin{aligned}
     &\lf.\frac d{dt}\big(H_t^2\mko(\det g_t)^{\frac 12}\big)\rg|_{t=0}\\
     &=-\p_i H\, (\det g)^{\frac 12} \,g^{ij}  (\bn \cdot \p_{j}\vec w)+ \p_i \lf (H\mko (\det g)^{\frac 12} \mko g^{ij}  (\bn \cdot \p_{j}\vec w)\rg)\\
     &\quad+\lf\langle H\,d\bn+H^2\,d\bP,d\vec w\rg\rangle_g (\det g)^{\frac 12}\\
     &=\lf\lan -\bn \,dH+H\,d\bn+H^2\,d\bP,d\vec w\rg\ran_g(\det g)^{\frac 12}+ \p_i \lf (H\mko (\det g)^{\frac 12} \mko g^{ij}  (\bn \cdot \p_{j}\vec w)\rg).
\end{aligned}
\end{align}
We define \begin{align}\label{defV2d}
    \vec V\coloneqq \bn \, dH-H\mko d\bn-H^2\mko d\bP=-2H\mko d\vec n +d\vec H -H^2\mko d\vec \Phi.
\end{align}
From~\eqref{ptvarH^2det12} it follows that in $W^{-2,2}+W^{-1,1}(D^2,\bwe^2T^* D^2)$,
\begin{align}\label{ptvarH^2dvol}
\begin{aligned}
    \lf.\frac d{dt}(H_t^2\,\dvol_{g_t})\rg|_{t=0}&= (*_g\,\vec V) \dwe d\vec w+\p_i \lf (H\mko (\det g)^{\frac 12} \mko g^{ij}  (\bn \cdot \p_{j}\vec w)\rg)dx^1\we dx^2\\
    &=\vec w\cdot d*_g\vec V- d*_g \big(\vec V\cdot \bw-\vec H\cdot d\bw\big).
\end{aligned}
\end{align}
Using integration by parts and denoting by $\big\lan\cdot,\cdot\big\ran$ the canonical pairing between $\mca D'(D^2,\R^3\ot \bwe^2 T^*D^2)$ and $C_c^\nf(D^2,\R^3)$, for $\vec w\in C_c^\nf(D^2,\R^3)$ we then obtain\footnote{We first prove~\eqref{varwillint} for immersions $\bP\in C^\nf(\ov{D^2},\R^3)$. The general case then follows from Theorem~\ref{th:Approx} together with~\eqref{ptvarwill}.}
 \begin{align}\label{varwillint}
    \lf.\frac{d}{dt}\, W(\bP_t)\rg|_{t=0}
    =\big\lan  d*_g\vec V,\vec w\big\ran.
\end{align}
Therefore, by a partition of unity argument, we have $\bP\in W^{2,2}_{\textup{imm}}(\Sigma,\R^3)$ is Willmore if and only if $d*_g \vec V=0$, i.e., the equation~\eqref{willmore eq} holds in $\mathcal D'\big(\Sigma,\R^3\ot \bwe^2T^*\Si\big)$. 
\end{proof}
To prove the regularity of weak Willmore immersions, by using local coordinates $(x^1,x^2)$, we can still assume $\bP\in W^{2,2}_{\textup{imm}}(D^2,\R^3)$. By Definition \ref{defweakimm}, there exists $\La\ge 1$ such that (see Notation~\ref{not-met})
\begin{align}\label{weakimmcon2d}
\Lambda^{-1}|v|_{\R^2}^2 \le | d\vec{\Phi}_x(v)|_{\R^3}^2  \le \Lambda | v|_{\R^2}^2,\qquad \text{for a.e. }x\in D^2  \text{ and all } v\in T_x D^2.
\end{align}
Defining $\vec V$ as in~\eqref{defV2d}, we have 
\begin{align*}
    *_g\,\vec V\in L^1+W^{-1,2}(D^2,\R^3\ot T^*D^2).
\end{align*}
By \eqref{willmore eq} and weak Poincar\'e lemma (see \cite[Chapter I, Theorem 2.24]{Dem}), there exists $\vec L\in \mathcal D'(D^2,\R^3)$ such that 
\begin{align}\label{defL}
    d\vec L= *_g\,\vec V.
\end{align}
In particular, it holds that $d\bL\in W^{-1,2}+L^1(D^2)$, and by Corollary \ref{corofbb}, we have $\bL \in L^2_{\text{loc}}(D^2)$. The quantity $\bL$ alone is not sufficient to deduce some additional regularity since $\bL$ does not possess additional derivatives a priori. Using the pointwise invariance of $H^2\,\dvol_g$ by dilations and rotations, we deduce some conservation laws related to $\bL$ and $\bP$.
\subsubsection*{The Noether currents associated to dilations and rotations.}
\begin{Th}[{\cite[Theorem 5.59]{Ri16}}]\label{thmcon1}
Let $\bP\in W^{2,2}_{\text{imm}}(D^2, \R^3)$ be a weak Willmore immersion. Then there exists $\bL\in L^2_{\text{loc}}(D^2,\R^3)$ satisfying \eqref{defL}\footnote{Up to additive constants, this $\bL$ is $-1/2$ of the one defined in \cite[Theorem 5.59]{Ri16}. There is a typo in \cite[Equation (5.214b)]{Ri16}: the $+$ sign should be changed to $-$.} and moreover, we have
\begin{gather}
\begin{gathered}\label{conser1}
d\left(\vec L \cdot d\bP \right)=0, \quad d\left(\bL\times d\bP+H\,d\bP\right)=0.
    \end{gathered}
\end{gather}
\end{Th}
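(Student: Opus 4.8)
The plan is to derive the two conservation laws \eqref{conser1} from the pointwise invariance of the integrand $H^2\,\dvol_g$ under the one-parameter groups of dilations $z\mapsto e^t z$ and rotations $z\mapsto R_t z$ of $\R^3$, in exactly the same spirit as the proof of Theorem~\ref{thm:willequ}. Since the Willmore functional is not a first-order Lagrangian, I cannot invoke the first-order Noether theorem \eqref{eq:noether} directly; instead I will repeat the variational computation of \eqref{ptvarwill}--\eqref{ptvarH^2dvol} with the specific variation $\vec w$ generated by the symmetry, and read off the conservation law from the fact that the left-hand side is a total derivative (because the symmetry preserves the integrand pointwise) while \eqref{ptvarH^2dvol} expresses the same variation as $\vec w\cdot d*_g\vec V - d*_g(\vec V\cdot\vec w - \vec H\cdot d\vec w)$, the first term vanishing by \eqref{willmore eq} and hence $d*_g\vec L = d*_g\vec V = 0$ already granting \eqref{defL}.

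First I would record the existence of $\vec L\in L^2_{\loc}(D^2,\R^3)$ with $d\vec L = *_g\vec V$: this is the weak Poincaré lemma applied to \eqref{willmore eq}, plus Corollary~\ref{corofbb} to upgrade the distributional primitive to $L^2_{\loc}$, exactly as stated in the paragraph preceding the theorem. Then for the dilation invariance I take $\vec w = \vec\Phi$ (the infinitesimal generator of $\vec\Phi_t = e^t\vec\Phi$): scale invariance of $W$ means $H^2_t\,\dvol_{g_t}$ is literally unchanged, so $\frac{d}{dt}\big(H^2_t\,\dvol_{g_t}\big)\big|_{t=0} = 0$ pointwise. Plugging $\vec w=\vec\Phi$ into \eqref{ptvarH^2dvol} and using $d*_g\vec V=0$ gives $0 = -d*_g(\vec V\cdot\vec\Phi - \vec H\cdot d\vec\Phi)$, i.e. $d\big(*_g\vec V\cdot\vec\Phi - \vec H\cdot d\vec\Phi\big)=0$; since $*_g\vec V = d\vec L$ one rewrites $d\vec L\cdot\vec\Phi = d(\vec L\cdot\vec\Phi) - \vec L\cdot d\vec\Phi$ and checks that the remaining terms combine to a constant multiple of $\vec L\cdot d\vec\Phi$ (here I would need the identity $\vec H\cdot d\vec\Phi = 0$, which follows because $\vec H$ is normal and $d\vec\Phi$ is tangent, together with a short computation showing the $d$-exact pieces cancel), yielding $d(\vec L\cdot d\vec\Phi)=0$. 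For the rotation invariance I take $\vec w = \vec B\times\vec\Phi$ for an arbitrary fixed $\vec B\in\R^3$ (generator of rotations about $\vec B$): again $\frac{d}{dt}(H^2_t\,\dvol_{g_t})|_{t=0}=0$ pointwise, and feeding this $\vec w$ into \eqref{ptvarH^2dvol} with $d*_g\vec V=0$ produces $d\big(*_g\vec V\cdot(\vec B\times\vec\Phi) - \vec H\cdot d(\vec B\times\vec\Phi)\big)=0$; using the scalar-triple-product identity $\vec V\cdot(\vec B\times\vec\Phi) = \vec B\cdot(\vec\Phi\times\vec V)$ and $\vec H\cdot d(\vec B\times\vec\Phi) = \vec B\cdot(\vec\Phi\times d\vec H) $ (since $d\vec\Phi\parallel$ tangent and $\vec H\parallel$ normal kills the other term), and arbitrariness of $\vec B$, one arrives at $d\big(\vec\Phi\times d\vec L + H\,d\vec\Phi\big)=0$ after simplifying $\vec\Phi\times *_g\vec V$ in terms of $\vec L\times d\vec\Phi$ and collecting the mean-curvature term. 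I would present both computations first for smooth $\vec\Phi\in C^\infty(\ov{D^2},\R^3)$, where all manipulations are classical, and then pass to the general $W^{2,2}_{\imm}$ case by the approximation Theorem~\ref{th:Approx} exactly as in footnote to \eqref{varwillint}.

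The main obstacle I anticipate is \emph{bookkeeping the boundary/exact terms correctly}: in \eqref{ptvarH^2dvol} the term $\p_i\big(H(\det g)^{1/2}g^{ij}(\vec n\cdot\p_j\vec w)\big)$ and the term $\vec V\cdot\vec w - \vec H\cdot d\vec w$ must be tracked through the substitution $\vec w=\vec\Phi$ or $\vec w=\vec B\times\vec\Phi$, and one has to verify that when $\vec w$ is the symmetry generator these pieces assemble precisely into $\vec L\cdot d\vec\Phi$ (resp. $\vec L\times d\vec\Phi + H\,d\vec\Phi$) rather than into some other $d$-exact form; this requires care with the identities $\vec n\cdot\p_j\vec\Phi = 0$, $\vec H = H\vec n$, $\vec H\cdot d\vec\Phi=0$, and, in the rotation case, with the fact that $d(\vec B\times\vec\Phi) = \vec B\times d\vec\Phi$ has a tangential part $\vec B\times\p_j\vec\Phi$ whose contraction against $\vec n$ is generally nonzero, so the term $H(\det g)^{1/2}g^{ij}(\vec n\cdot(\vec B\times\p_j\vec\Phi))$ does \emph{not} vanish and must be shown to be exactly the source of the extra $H\,d\vec\Phi$ in the second conservation law. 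A secondary subtlety is the regularity: each individual term in the manipulations lives only in $L^1+W^{-1,2}$ or $W^{-1,1}+W^{-2,2}$, so every "integration by parts" must be justified as an identity of distributions; the estimate \eqref{prorL1W-12d} and the product rule for $L^\infty\cap W^{1,2}$ against $L^1+W^{-1,2}$ handle this, and invoking the smooth approximation upfront makes the distributional identities automatic once established for smooth immersions.
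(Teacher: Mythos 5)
Your plan is essentially the paper's proof: insert the infinitesimal generators $\vec w = \vec\Phi$ and $\vec w = \vec B\times\vec\Phi$ into the pointwise variation identity \eqref{ptvarH^2dvol}, observe that the left-hand side vanishes because $H^2\,\dvol_g$ is pointwise invariant under dilations and rotations, then use the Euler--Lagrange equation $d*_g\vec V = 0$ together with $*_g\vec V = d\vec L$ to unwind the remaining exact term into the two conservation laws. One correction to the rotation step: the displayed identity $\vec H\cdot d(\vec B\times\vec\Phi) = \vec B\cdot(\vec\Phi\times d\vec H)$ is a slip (there is no $d\vec H$ here; the correct identity is $\vec H\cdot(\vec B\times d\vec\Phi) = -\,\vec B\cdot(\vec H\times d\vec\Phi)$, and nothing is ``killed''), and the extra $H\,d\vec\Phi$ in \eqref{conser1} comes precisely from $\vec H\times *_g d\vec\Phi = H\,\vec n\times *_g d\vec\Phi = H\,d\vec\Phi$, a consequence of Lemma~\ref{lmuse}; your ``main obstacle'' paragraph does correctly identify this term as the source, so the parenthetical remark appears to be a typo rather than a conceptual error.
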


\begin{proof}
We have already proved $\bL\in L^2_{\text{loc}}(D^2,\R^3)$, hence it remains to prove the relations \eqref{conser1}. We first consider the variation $\bP_t=(1+t)\bP$ with 
$$
    \bw=\lf.\frac d{dt}\mko \bP_t\rg|_{t=0}=\bP.
$$
Denote $H_t$ and $g_t$ as in the proof of Theorem~\ref{thm:willequ}. Since $H_t^2\,dvol_{g_t}=H^2\,\dvol_g$, combining~\eqref{ptvarH^2dvol} and \eqref{defL}, we obtain that
\begin{align*}
    0&=   \frac d{dt}(H_t^2\,\dvol_{g_t})\Big|_{t=0}\\
    &=- \mko d*_g \big(\vec V\cdot \bw-\vec H\cdot d\bw\big)\\[0.5ex]
    &=-\mko d(d\vec L\cdot \bw)\\[0.5ex]
    &=d\vec L\dwe d\bP\\[0.5ex]
    &=d(\vec L\cdot d\bP).
\end{align*}
To prove the second conservation law in~\eqref{conser1}, for $\vec a\in \R^3$ we consider the variation $\bP_t$ satisfying
\begin{align*}
  \begin{dcases} 
  \frac d{dt}\mko \bP_t=\vec a\times \bP_t, 
  \qquad t\in \R,\\[0.3ex]
  \bP_0=\bP.
  \end{dcases}
\end{align*} 
For this variation, the equations~\eqref{ptvarwill}, \eqref{ptvarH^2det12}, and~\eqref{ptvarH^2dvol} remain valid. Moreover, for each $t\in \R$, there exists $Q_t\in \text{SO}(3)$ depending only on $\vec a$ and $t$ such that $\bP_t=Q_t\circ \bP$ on $D^2$. Consequently, we have $H_t^2\,dvol_{g_t}=H^2\,\dvol_g$ for all $t\in \R$.  It follows that
\begin{align*}
    0&=   \frac d{dt}(H_t^2\,\dvol_{g_t})\Big|_{t=0}\\
    &=- \mkt d*_g \big(\vec V\cdot \bw-\vec H\cdot d\bw\big)\\[0.4ex]
    &=-\mkt d\big(d\bL\cdot (\vec a\times \bP)-\bH\cdot  (\vec a\times *_g \,d\bP)\mko \big)\\[0.4ex]
    &= \vec a\cdot d\big(d\vec L\times \bP-\bH \times*_g \,d\bP \big)\\[0.4ex]
    &=-\mkt \vec a \cdot d\big(\bL\times d\bP+\bH \times*_g \,d\bP\big).
\end{align*}
Since $\bn\times *_g\, d\bP=d\bP$, and $\vec a\in \R^3$ is arbitrary, we then obtain
\begin{align*}
    d\big(\bL\times d\bP+H\mko d\bP\big)=0.
\end{align*}
This completes the proof.
\end{proof}
By \eqref{conser1} and weak Poincar\'e lemma, there exist $ S\in W^{1,2}_{\textup{loc}}(D^2)$ and $\bR\in W^{1,2}_{\textup{loc}}(D^2,\R^3)$ such that 
\begin{align*}
    dS=\vec L \cdot d\bP, \qquad \text{ and }\qquad d\bR=\bL\times d\bP+H\mko d\bP.
\end{align*}

From these relations, we derive a system on $S,\bR$ which does not involve $\bL$ anymore.
    \begin{Th}[{\cite[Theorem 5.60]{Ri16}}]\label{thmcon2}
    Let $\bP\in W^{2,2}_{\text{imm}}(D^2, \R^3)$ be a weak Willmore immersion, and define $\bL$ as in Theorem \ref{thmcon1}. Then there exist $S\in W^{1,2}_{\text{loc}}(D^2)$ and $\bR\in W^{1,2}_{\text{loc}}(D^2,\R^3)$ satisfying 
    \begin{align}\label{con1}
    dS=\vec L \cdot d\bP, \qquad \text{ and }\qquad d\bR=\bL\times d\bP+H\,d\bP.
    \end{align}
    Moreover, we have 
    \begin{align}\label{con2}
        dS=-*_g d\bR\cdot\bn,\qquad \text{ and }\qquad d\bR=*_g\big(\bn \times  d\bR+ dS\   \bn\big).
    \end{align}
    \end{Th}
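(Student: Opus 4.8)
By Theorem~\ref{thmcon1} we already know that $\bL\in L^2_{\textup{loc}}(D^2,\R^3)$ and that the conservation laws \eqref{conser1} hold in $\mca D'(D^2)$. Since $d\bP\in L^\infty(D^2)$ and $H=\bH\cdot\bn\in L^2(D^2)$, the $\R$-valued $1$-form $\bL\cdot d\bP$ and the $\R^3$-valued $1$-form $\bL\times d\bP+H\,d\bP$ belong to $L^2_{\textup{loc}}(D^2)$ and are closed by \eqref{conser1}. Hence the weak Poincar\'e lemma \cite[Chapter I, Theorem 2.24]{Dem} produces $S\in W^{1,2}_{\textup{loc}}(D^2)$ and $\bR\in W^{1,2}_{\textup{loc}}(D^2,\R^3)$ satisfying \eqref{con1} (on the simply connected disk one may even take global primitives).

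\textbf{The algebraic system \eqref{con2}.} Once $dS$ and $d\bR$ are replaced by their expressions \eqref{con1}, the relations \eqref{con2} become a pointwise a.e.\ identity between $\R^3$-valued $1$-forms, so no smoothing of $\bP$ is needed. The plan is to decompose $\bL=\bL^\top+(\bL\cdot\bn)\,\bn$ into its tangential and normal parts, writing $\bL^\top:=\bL-(\bL\cdot\bn)\bn=b^i\,\p_i\bP$ a.e.\ for measurable functions $b^i$, and to use the pointwise identities, valid a.e.\ on $D^2$: $\bn\times d\bP=-*_g d\bP$ (equivalently $\bn\times *_g d\bP=d\bP$, already used in the proof of Theorem~\ref{thmcon1}); $*_g*_g=-\mathrm{id}$ on $1$-forms in dimension two; $\p_i\bP\times\p_j\bP=\vae^{ij}\sqrt{\det g}\,\bn$ together with $*_g\,dx^i=\sqrt{\det g}\,g^{ij}\vae^{jk}\,dx^k$; and $*_g(\omega\,\vec c)=(*_g\omega)\,\vec c$ for a scalar $1$-form $\omega$ and an $\R^3$-valued function $\vec c$ (the Hodge star acts only on the form part, pointwise). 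From the first and third of these one obtains the key identity $\bL^\top\times d\bP=\big(*_g(\bL^\top\cdot d\bP)\big)\,\bn$, while $\bL^\perp\times d\bP=-(\bL\cdot\bn)\,*_g d\bP$ and $*_g d\bP$ are both tangential, hence orthogonal to $\bn$. Consequently, dotting $*_g d\bR=*_g(\bL\times d\bP)+H\,*_g d\bP$ with $\bn$ annihilates the $H\,d\bP$ and the $\bL^\perp$ contributions and leaves
\[
*_g\, d\bR\cdot\bn=*_g\big((\bL^\top\times d\bP)\cdot\bn\big)=*_g*_g(\bL^\top\cdot d\bP)=-(\bL\cdot d\bP)=-\,dS,
\]
which is the first equation of \eqref{con2}. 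For the second, the double cross-product formula together with $\bn\cdot d\bP=0$ gives $\bn\times d\bR=\bn\times(\bL\times d\bP)+H\,\bn\times d\bP=-(\bL\cdot\bn)\,d\bP-H\,*_g d\bP$; adding $dS\,\bn=(\bL^\top\cdot d\bP)\,\bn$, applying $*_g$, and using $*_g*_g d\bP=-d\bP$ together with the key identity recovers $\bL^\perp\times d\bP+\bL^\top\times d\bP+H\,d\bP=d\bR$.

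\textbf{Main obstacle.} There is essentially no analytic difficulty here: the Willmore equation enters only through Theorem~\ref{thmcon1} (the existence of $\bL$ with $d\bL=*_g\vec V$ and the conservation laws), whereas \eqref{con2} is an identity that would hold for any $S,\bR$ attached via \eqref{con1} to an arbitrary $W^{2,2}$ weak immersion. The only point requiring genuine care is the bookkeeping of signs in the interplay between $*_g$, the cross product, and the multivector conventions of Section~\ref{sec-preliminaries}; in particular one must fix the orientation in $\p_i\bP\times\p_j\bP=\vae^{ij}\sqrt{\det g}\,\bn$ consistently with the definition $\bn=\star\,(\p_1\bP\wedge\p_2\bP)/|\p_1\bP\wedge\p_2\bP|$ and verify $*_g\,dx^i=\sqrt{\det g}\,g^{ij}\vae^{jk}\,dx^k$ once and for all, after which the computation above goes through verbatim.
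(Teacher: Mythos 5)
Your proof is correct, and the route you take is genuinely different from the paper's. You decompose $\bL$ into tangential and normal parts with respect to the immersion, establish the key pointwise identity $\bL^\top\times d\bP=\big(*_g(\bL^\top\cdot d\bP)\big)\,\bn$ directly from the coordinate formulas for $*_g\,dx^i$ and $\p_i\bP\times\p_j\bP$, prove the first equation of \eqref{con2} by dotting with $\bn$, and then the second by applying $*_g$ to $\bn\times d\bR+dS\,\bn$. The paper proceeds in the opposite order and avoids the decomposition: it starts from the second equation, applying the Jacobi identity $\bn\times(\bL\times d\bP)=-\bL\times(d\bP\times\bn)-d\bP\times(\bn\times\bL)$, then replaces $d\bP\times\bn$ by $*_g\,d\bP$ and uses $\bL\times *_g\,d\bP=*_g(\bL\times d\bP)$; this extracts the pair $-*_g(\bL\times d\bP)-dS\,\bn$ in a single line, after which $\bn\times d\bR+dS\,\bn=-*_g\,d\bR$ follows and the first equation is recovered by dotting with $\bn$. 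The Jacobi-identity route is slightly more compact and sidesteps the coordinate computation behind your key identity; your tangential/normal decomposition is more verbose but makes the geometric mechanism (why the $H\,d\bP$ and $\bL^\perp$ contributions drop out under $\cdot\,\bn$) completely explicit and is arguably easier to adapt to higher dimensions. Both proofs are self-contained, use only the pointwise algebra of $*_g$, the cross product, and $\bn\cdot d\bP=0$, and require no further analytic input beyond \eqref{con1}, as you correctly observe.
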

    
    \begin{proof}
First, we prove the second equation in \eqref{con2}. Since $d\bP\times \bn=*_g\,d\bP$, taking the cross product of $\bL\times d\bP$ with $\bn$, we obtain
\begin{align*}
    \bn \times (\bL\times d\bP)&=-\bL\times (d\bP\times \bn)-d\bP\times (\bn\times\bL)\\[1mm]
    &=-\bL\times *_g\, d\bP+(\bn\cdot d\bP)\, \bL- (\bL\cdot d\bP) \,\bn\\[1mm]
    &=-*_g(\bL\times  d\bP)-dS\ \bn.
\end{align*}
It follows that 
\begin{align*}
    \bn \times  d\bR+ dS\  \bn&= \bn \times(\bL\times d\bP+H\,d\bP) + dS\  \bn\\[1mm]
    &=-*_g(\bL\times  d\bP)+H\,\bn\times d\bP\\[1mm]
    &=-*_g(\bL\times  d\bP+H\,d\bP)\\
    &=-*_gd\bR.
\end{align*}
We then obtain the second equation in \eqref{con2}:
\begin{align*}
    d\bR=*_g\big(\bn \times  d\bR+ dS\  \bn).
\end{align*}
The first equation in \eqref{con2} follows immediately:
\begin{align*}
    -*_g d\bR\cdot\bn=(\bn \times  d\bR+ dS\  \bn)\cdot \bn=dS.
\end{align*}
\end{proof}

By differentiating the relations \eqref{con2}, we obtain a div-curl system in $\bR$ and $S$.

\begin{Th}[{\cite[Corollary 5.61]{Ri16}}]\label{thm-sys-srh}
     Let $\bP\in W^{2,2}_{\text{imm}}(D^2, \R^3)$ be a weak Willmore immersion, and define $\bL,S,\bR$ as before. Then we have (see Notation \ref{not-overset})
         \begin{subnumcases}{}
         \Delta_g S=*_g\,\big(d\,\bn \dwe d\bR\big), \label{lap1}\\[3mm]
         \Delta_g \bR=*_g\,\big(d\bR\times d\bn+dS\wedge d\bn\big),\label{lap2}\\[3mm]
          \Delta_g \bP=*_g\,\big(dS\wedge d\bP+d\bR   \overset{\times}\wedge d\bP \big).\label{lap3}
     \end{subnumcases}
\end{Th}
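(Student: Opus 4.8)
The plan is to obtain all three identities \eqref{lap1}--\eqref{lap3} from the first-order systems \eqref{con1}--\eqref{con2} by applying the operator $*_g\, d\, *_g$, which coincides with $\Delta_g$ on functions and acts componentwise on $\R^3$-valued maps. The algebraic inputs are the following: $*_g*_g=-\,\mathrm{id}$ on $1$-forms in dimension $2$; the distributional identities $d\,d\bR=0$ and $d\,dS=0$; the Leibniz rules $d(d\bR\cdot\bn)=-\,d\bR\dwe d\bn$, $d(\bn\times d\bR)=d\bn\overset{\times}{\wedge}d\bR$, and $d(dS\,\bn)=-\,dS\wedge d\bn$; and the antisymmetry of $\dwe$ together with the symmetry of $\overset{\times}{\wedge}$ on $\R^3$-valued $1$-forms. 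Since $dS,d\bR,d\bn\in L^2_{\textup{loc}}(D^2)$, $\bn\in W^{1,2}\cap L^\infty(D^2)$, and the conformal factor satisfies $\lambda\in W^{2,1}_{\textup{loc}}\subset C^0$ by Corollary~\ref{coconfactor}, all the bilinear expressions that appear lie in $L^1_{\textup{loc}}(D^2)$ and the Leibniz rules hold in $\mathcal D'$; to be fully rigorous I would first check the identities for $\bP\in C^\infty(\overline{D^2},\R^3)$ and then pass to the limit using Theorem~\ref{th:Approx}.

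First I would prove \eqref{lap1}. Applying $*_g\,d$ to the first relation of \eqref{con2}, namely $dS=-\,*_g(d\bR\cdot\bn)$, and using $*_g*_g=-\,\mathrm{id}$ on $1$-forms, gives $\Delta_g S=*_g\,d\,*_g\,dS=*_g\,d(d\bR\cdot\bn)$; then $d\,d\bR=0$ and the Leibniz rule yield $d(d\bR\cdot\bn)=-\,d\bR\dwe d\bn=d\bn\dwe d\bR$, which is exactly \eqref{lap1}. For \eqref{lap2}, I would apply $*_g$ to the second relation of \eqref{con2} to rewrite it as $*_g\,d\bR=-(\bn\times d\bR+dS\,\bn)$, and then apply $*_g\,d$:
\begin{align*}
\Delta_g\bR=*_g\,d\,*_g\,d\bR=-\,*_g\,d\big(\bn\times d\bR+dS\,\bn\big);
\end{align*}
expanding the exterior derivative with the Leibniz rules above and $d\,d\bR=d\,dS=0$, and reorganizing the signs, gives \eqref{lap2}.

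For \eqref{lap3}, no differentiation of \eqref{con1} is needed: by \eqref{mean} one has $\Delta_g\bP=2\bH=2H\bn$, so it suffices to check $*_g\big(dS\wedge d\bP+d\bR\overset{\times}{\wedge}d\bP\big)=2H\bn$. Substituting $dS=\bL\cdot d\bP$ and $d\bR=\bL\times d\bP+H\,d\bP$ from \eqref{con1} and applying the identity $(\vec a\times\vec b)\times\vec c=(\vec a\cdot\vec c)\,\vec b-(\vec b\cdot\vec c)\,\vec a$ to $(\bL\times\p_i\bP)\times\p_j\bP$, the term $g_{ij}\bL$ is symmetric in $i,j$ and hence annihilated by $dx^i\wedge dx^j$, so that $(\bL\times d\bP)\overset{\times}{\wedge}d\bP=-\,dS\wedge d\bP$ and therefore $dS\wedge d\bP+d\bR\overset{\times}{\wedge}d\bP=H\,d\bP\overset{\times}{\wedge}d\bP$. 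Finally, $d\bP\overset{\times}{\wedge}d\bP=2(\p_1\bP\times\p_2\bP)\,dx^1\wedge dx^2$ and $\p_1\bP\times\p_2\bP=(\det g)^{1/2}\bn$, whence $*_g\big(d\bP\overset{\times}{\wedge}d\bP\big)=2\bn$, which completes the proof.

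The main obstacle is purely one of bookkeeping: fixing once and for all the orientation and Hodge-star conventions so that every sign produced by $*_g*_g$ and by the various Leibniz rules is consistent with the first-order systems \eqref{con2}, and checking that these distributional Leibniz rules genuinely apply at the available Sobolev regularity — this last point is where Corollary~\ref{coconfactor} and the approximation Theorem~\ref{th:Approx} enter. The remaining computations are routine.
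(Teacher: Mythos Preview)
Your proposal is correct and follows essentially the same route as the paper: apply $*_g\,d\,*_g$ to the two relations in \eqref{con2} to obtain \eqref{lap1}--\eqref{lap2}, and substitute \eqref{con1} into the right-hand side of \eqref{lap3} using the triple-product identity to reduce everything to $H\,d\bP\overset{\times}{\wedge}d\bP=2\bH\,*_g 1$. One small remark: your invocation of Corollary~\ref{coconfactor} is misplaced, since that result concerns the conformal factor in isothermal coordinates and here no conformal chart has been chosen; the Leibniz rules you need hold directly because $g_{ij}\in W^{1,2}\cap L^\infty$ and $dS,d\bR,d\bn\in L^2_{\textup{loc}}$, without any appeal to $\lambda$.
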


\begin{proof}
Equations \eqref{lap1} and \eqref{lap2} follow from the relations \eqref{con2}, by applying the operator $d^{*_g}$: 
\begin{gather*}
    \left\{
    \begin{aligned}
     & \Delta_g S=*_g \,d *_g dS=*_g \,d(d\bR\cdot \bn)=*_g \,(d\bn\dwe d\bR),\\[3mm]
     & \Delta_g \bR=*_g\, d *_g d\bR=-*_g d(\bn \times  d\bR+ dS\,  \bn)=*_g\,(d\bR\times d\bn+dS\wedge d\bn).
     \end{aligned}
     \right.
\end{gather*}
Thanks to \eqref{con1}, we have \begin{align*}
    d\bR   \overset{\times}\wedge d\bP&= (\bL\times d\bP+H\,d\bP)\overset{\times}\wedge d\bP\\
    &=d\bP\wedge (\bL \cdot d\bP)-(d\bP \dwe d\bP)\,\bL+H\,d\bP \overset{\times}\wedge d\bP\\[1mm]
    &=d\bP\wedge dS+2H (\p_1\bP \times \p_2\bP)\,dx^1\wedge dx^2\\[1mm]
    &=d\bP\wedge dS +2\bH *_g 1\\[1mm]
    &=-dS\wedge d\bP+ (\Delta_g \bP) *_g 1.
\end{align*}
This is \eqref{lap3}.
\end{proof}

\paragraph{The Noether current associated to inversions.}
Although $H^2\,\dvol_g$ is not pointwise invariant by inversions, this is the case of the Lagrangian $(H^2-K)\,\dvol_{g}$, see for instance~\cite{Chen}. In this section, we derive the associated conservation law \eqref{eq:inv}, which is equivalent to \eqref{lap3}.\\

Let $\bP\colon D^2\to \R^3$ be a Willmore immersion. As in \cite{marque19}, we consider the variation $\bP_t\coloneqq \vp_t\circ \bP$ where, for a given $\vec a\in \R^3$, the map $\vp_t$ is given by
\begin{align*}
    \vp_t(x)\coloneqq\frac{\frac{x}{|x|^2}+t\vec a}{\big|\frac{x}{|x|^2}+t\vec a\big|^2}.
\end{align*}
Then we have \begin{align*}
    \bw\coloneqq\lf.\frac d{dt} \,\bP_t\rg|_{t=0}=|\bP|^2 \vec a-2\mko(\bP\cdot \vec a)\mko\bP.
\end{align*}
Since $\bP$ is Willmore, by~\eqref{ptvarH^2dvol} and \eqref{defL}, we obtain that
\begin{align}\label{dH^2dvol}
    \lf.\frac d{dt}(H_t^2\,\dvol_{g_t})\rg|_{t=0}= \mkt d*_g \big(\vec H\cdot d\bw-\vec V\cdot \bw\big).
\end{align}
Denote by $\text{adj}(\II_t)$ the adjugate matrix of $(\II_{ij,t})$, i.e. the matrix satisfying the pointwise identity
\begin{align*}
    \text{adj}(\II_t)^{ij} \mko \II_{jk,t}=\det(\II_{ij,t})\mko\de^{i}_k.
\end{align*}
We have 
\begin{align}\label{dKdvol}
    \begin{aligned}
    \lf.\frac d{dt}\det(\II_{ij,t})\rg|_{t=0}&=\text{adj}(\II)^{ij} \lf.\frac d{dt}\II_{ij,t}\rg|_{t=0}\\
    &=\text{adj}(\II)^{ij}\Big(\p_{i}\bP\cdot \p_j\big(g^{k\ell}\, (\bn \cdot \p_k\vec w)\,\p_{\ell}\bP\big)- \p_{i}\bn\cdot \p_{j}\vec w\Big).
    \end{aligned}
\end{align}
Hence by \eqref{dvol},
\begin{align*}
        \lf.\frac d{dt}\big(K_t\det(g_t)^{\frac 12}\big) \rg|_{t=0}&=\lf.\frac d{dt}\Big(\det(\II_{ij,t})\,(\det g_t)^{-\frac 12}\Big) \rg|_{t=0} \\
        &=(\det g)^{-\frac 12}\,\text{adj}(\II)^{ij}\Big(\p_{i}\bP\cdot \p_j\big(g^{k\ell}\, (\bn \cdot \p_k\vec w)\,\p_{\ell}\bP\big)- \p_{i}\bn\cdot \p_{j}\vec w\Big)\\
        &\quad -\det(\II_{ij})\,(\det g)^{-\frac 12}g^{ij}\mko\p_{j}\vec w \cdot\p_{i}\bP.
\end{align*}
We compute 
\begin{align*}
       (\det g)^{-\frac 12}\,\text{adj}(\II)^{ij} \p_{i}\bn\cdot \p_{j}\vec w&=- (\det g)^{-\frac 12}\,\text{adj}(\II)^{ij}(g^{k\ell}\II_{ik}\p_\ell\bP)\cdot \p_{j}\vec w\\[1ex]
       &=- (\det g)^{-\frac 12}\det(\II_{ij})g^{ij}\, \p_i \bP\cdot \p_j\bw.
\end{align*}
Then \eqref{dKdvol} implies
\begin{align}\label{dKdet12}
      \lf.\frac d{dt}\big(K_t\det(g_t)^{\frac 12}\big)\rg|_{t=0}=(\det g)^{-\frac 12}\,\text{adj}(\II)^{ij}\p_{i}\bP\cdot \p_j\big(g^{k\ell}\, (\bn \cdot \p_k\vec w)\,\p_{\ell}\bP\big).
\end{align}
We will now show that 
\begin{align}\label{pjadjII0}
    \p_j \Big( (\det g)^{-\frac 12}\,\text{adj}(\II)^{ij}\p_{i}\bP\Big) \cdot \p_\ell \bP=0.
\end{align}
We define the map $A_g\colon T_x^*(D^2)\to  T_x^*(D^2)$ by 
\begin{align*}
    A_g(dx^i)\coloneqq (\det g)^{-1} \text{adj}(\II)^{ik} g_{kj}\, dx^j.
\end{align*}
Denoting by $g$, $g^{-1}$ the matrix $(g_{ij})$ and $(g^{ij})$ respectively, we have 
\begin{align*}
     (\det g)^{-1} \text{adj}(\II)^{ik} g_{kj}
     &= (\det g)^{-1} \big(\text{adj}(\II) g\big)^i_j\\
     &=\big(\text{adj}(g^{-1}\II)\big)^i_j\\
     &=\text{Tr}(g^{-1}\II)\de^i_j- \big(g^{-1}\II\big)^i_j\\
    & =2H\de_j^i -g^{ik}\II_{kj}.
\end{align*}
Then Lemma~\ref{lmuse} implies
\begin{align*}
    A_g(d\bP)&= (\det g)^{-1} \text{adj}(\II)^{ik} g_{kj}\mko \p_i\bP\, dx^j \\[0.3ex]
    &=2H \p_j\bP\,dx^j-g^{ik}\II_{kj}\mko \p_i\bP\, dx^j\\[0.3ex]
    &=2Hd\bP+d\bn\\[0.3ex]
    &=*_g\, d\bn \times \bn.
\end{align*}
Hence we have
\begin{align*}
    \p_j \Big( (\det g)^{-\frac 12}\,\text{adj}(\II)^{ij}\p_{i}\bP\Big)dx^1\we dx^2&=d*_g A_g(d\bP)\\
    &=d(\bn\times d\bn)\\[1ex]
    &=d\bn\times d\bn \\[1ex]
    &=K\bn \,\dvol_g.
\end{align*}
Thus, the identity \eqref{pjadjII0} is proved.
Since $\vp_t$ is conformal, we have by pointwise conformal invariance that $(H_t^2-K_t)\,\dvol_{g_t}=(H^2-K)\,\dvol_{g}$. Then by combining \eqref{dH^2dvol}--\eqref{pjadjII0}, we obtain that
\begin{align}\label{delH2*K}
\begin{aligned}
    0&=\lf.\frac d{dt}\Big((H_t^2-K_t)\,\dvol_{g_t}\Big)\rg|_{t=0}\\
    &=d*_g \Big(\bH \cdot d\bw -\bw\cdot \vec V\Big)-\p_j\Big((\det g)^{-\frac 12}\,\text{adj}(\II)^{ij} \bn\cdot \p_i\bw\Big)dx^1\we dx^2\\
    &=d*_g \Big(\bH \cdot d\bw + *_g \,d\bL\cdot \bw-A_g(\bn\cdot d\bw)\Big).
    \end{aligned}
\end{align}
By the expression $\bw=|\bP|^2 \vec a-2\mko(\bP\cdot \vec a)\mko\bP$, we have
\begin{align*}
    \bn\cdot d\bw&=2\mko \bn\cdot \Big((d\bP\cdot \bP)\mko \vec a-(d\bP\cdot \vec a)\mko \bP\Big)-2\mko (\bP\cdot \vec a) \mko \bn\cdot d\bP\\
    &=2\mko \vec a\cdot \Big((d\bP\cdot \bP) \bn-(\bn\cdot \bP)\mko d\bP)\Big)\\
    &=2\mko \vec a\cdot \Big((d\bP\times \bn)\times \bP\Big)\\
    &=2\mko \vec a\cdot (*_g\, d\bP\times \bP).
\end{align*}
We also have
\begin{align*}
    A_g(\bn\cdot d\bw)&=2\mko \vec a \cdot \Big(\big(A_g(d\bP)\times \bn\big)\times \bP \Big)\\
    &=2\mko \vec a \cdot \Big(\big(*_g d\bn\times \bn)\times \bn\big)\times \bP \Big)\\
    &=-2\mko \vec a\cdot (*_g\,d\bn \times \bP).
\end{align*}
The identity \eqref{delH2*K} then implies 
\begin{align*}
    \vec a \cdot d\Big(2\mko H\bP\times d\bP-2\mkt  d\bn \times \bP-d\bL|\bP|^2+2(d\bL\cdot \bP)\bP\Big)=0.
\end{align*}
Since $\vec a$ is arbitrarily chosen, we get 
\begin{align}\label{conlawinv}
    d\Big(2H\bP\times d\bP-2 \mkt d\bn \times \bP-d\bL|\bP|^2+2(d\bL\cdot \bP)\bP\Big)=0
\end{align}
Moreover, we have
\begin{align}\label{-dLbp2+2dL}
\begin{aligned}
    &-d\bL|\bP|^2+2(d\bL\cdot \bP)\bP\\[1ex]
    &=d\Big(2(\bL\cdot \bP)\bP-\bL|\bP|^2\Big)+2\bL\, (d\bP\cdot \bP)-2(\bL\cdot \bP)d\bP-2(\bL\cdot d\bP)\bP\\
    &=d\Big(2(\bL\cdot \bP)\bP-\bL|\bP|^2\Big)+2\bP\times (\vec L\times d\bP)-2dS\,\bP\\
    &=d\Big(2(\bL\cdot \bP)\bP-\bL|\bP|^2\Big)+2\bP\times d\bR-2H\bP\times d\bP-2dS\,\bP.
    \end{aligned}
\end{align}
Combining~\eqref{conlawinv}--\eqref{-dLbp2+2dL}, we then obtain
\begin{align*}
    2\,d\Big(-d\bn \times \bP+\bP\times d\vec R-dS\,\bP\Big)=0.
\end{align*}
It follows that
\begin{align}\label{eq:inv}
    d\Big(*_gd\bP-\bR\times d\bP-S\,d\bP \Big)=0.
\end{align}
This is the conservation law corresponding to inversions.
\subsection{Regularity of Willmore surfaces without conformal coordinates}\label{wilreg}

The goal of this section is to provide a new proof of the regularity of Willmore surfaces which does not involve the choice of conformal coordinates. The difficulty lies into proving the continuity of the Gauss map, see Theorem \ref{th:n_continuous}. The higher regularity then follows from standard bootstrap argument from the fact that $\bP$ can be represented locally as a graph, see Corollary \ref{cor:Willmore_graph}.
\begin{Dfi}[{Morrey spaces}]
    Let $1\le p<\infty$, $0\le \la\le n$, for a measurable function $f\colon U\rightarrow \R$, we say $f\in M^{p,\la}(U)$ if \begin{align*}
    \sup_{r>0,x_0\in U} r^{-\la}\int_{B_r(x_0)\cap U} |f(y)|^p\,dy<\infty,
\end{align*} 
and we define the Morrey norm \begin{align*}
    \|f\|_{M^{p,\la}(U)}\coloneqq\lf(\sup_{r>0,x_0\in U} r^{-\la}\int_{B_r(x_0)\cap U} |f(y)|^p\,dy\rg)^{\frac 1p}.
\end{align*}
\end{Dfi}

We start by proving some Morrey decay on the conserved quantities, which provides a Morrey decay on the mean curvature.

\begin{Th}\label{thmor1}
 Let $\bP\in W^{2,2}_{\text{imm}}(D^2, \R^3)$ be a weak Willmore immersion, and define $\bL,S,\bR$ as before. Let $\La$ be the constant in \eqref{weakimmcon2d} associated to $\bP$. Then there exists $\gamma=\gamma(\La) >0$ such that $\g S,\g \bR,\bH\in M^{2,\gamma}_{\textup{loc}}(D^2)$.
\end{Th}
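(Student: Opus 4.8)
First I would record that $\nabla\bn\in L^2(D^2)$: from $\p_k\bn=g^{ij}(\p_k\bn\cdot\p_j\bP)\,\p_i\bP$ together with $\p_k\bn\cdot\p_j\bP=-\bn\cdot\p_k\p_j\bP$ and \eqref{weakimmcon2d}, one gets $|\nabla\bn|\le C(\La)|\nabla^2\bP|$. Hence, by absolute continuity of the integral, there are $\vae_0=\vae_0(\La)>0$ (to be fixed below) and $r_0>0$ such that $\int_{B_r(x_0)}|\nabla\bn|^2<\vae_0$ whenever $B_r(x_0)\subset D^2$ and $r\le r_0$. Fix such an $x_0$ with $B_{r_0}(x_0)\Subset D^2$ and set, for $0<\rho\le r_0$,
\[
F(\rho)\coloneqq\int_{B_\rho(x_0)}\big(|\nabla S|^2+|\nabla\bR|^2\big).
\]
The plan is to prove a decay estimate $F(\theta\rho)\le\theta^{\gamma}F(\rho)$ for suitable $\theta\in(0,1)$, $\gamma=\gamma(\La)>0$, then iterate; since $x_0$ is arbitrary this yields the Morrey bounds.

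\textbf{Step 1 (Hodge splitting on a small ball).} On $B_r=B_r(x_0)$, $r\le r_0$, I would use the coordinate forms of \eqref{lap1}--\eqref{lap2}: writing $a^{ij}\coloneqq(\det g)^{1/2}g^{ij}$ (which is symmetric and, by \eqref{weakimmcon2d}, uniformly elliptic with constant depending only on $\La$), the equations read $\p_i(a^{ij}\p_j S)=\p_1\bn\cdot\p_2\bR-\p_2\bn\cdot\p_1\bR$ and $\p_i(a^{ij}\p_j\bR)=\p_1\bR\times\p_2\bn-\p_2\bR\times\p_1\bn+\p_1 S\,\p_2\bn-\p_2 S\,\p_1\bn$, both right-hand sides being sums of Jacobian-type terms. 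Split $S=\phi_S+h_S$, $\bR=\phi_{\bR}+h_{\bR}$ with $\phi_S,\phi_{\bR}\in W^{1,2}_0(B_r)$ solving these divergence-form equations and $h_S,h_{\bR}$ solving $\p_i(a^{ij}\p_j h)=0$ weakly in $B_r$. Applying the Chanillo--Li estimate (Lemma~\ref{lm-chali}, rescaled to $B_r$ — both sides of its conclusion are scale-invariant in dimension $2$) componentwise and summing, together with $\|\nabla\bn\|^2_{L^2(B_r)}<\vae_0$, gives
\[
\int_{B_r}|\nabla\phi_S|^2+\int_{B_r}|\nabla\phi_{\bR}|^2\le C(\La)\,\vae_0\,F(r).
\]
For the harmonic parts, Lemma~\ref{derdecay} (again rescaled; the exponent $\alpha=\alpha(\La)\in(0,1)$ is scale-invariant) yields for $0<t<1$
\[
\int_{B_{tr}}\big(|\nabla h_S|^2+|\nabla h_{\bR}|^2\big)\le C(\La)\,t^{2\alpha}\int_{B_r}\big(|\nabla h_S|^2+|\nabla h_{\bR}|^2\big)\le C(\La)\,t^{2\alpha}\,F(r).
\]

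\textbf{Step 2 (iteration) and the mean curvature.} Combining these via $|\nabla S|^2\le 2|\nabla h_S|^2+2|\nabla\phi_S|^2$ and likewise for $\bR$, I obtain $F(tr)\le C_1(\La)\big(t^{2\alpha}+\vae_0\big)F(r)$. Fixing $\gamma\coloneqq\alpha$, choose $\theta\in(0,1)$ with $C_1(\La)\theta^{2\alpha}\le\tfrac12\theta^{\alpha}$, and then $\vae_0=\vae_0(\La)>0$ with $C_1(\La)\vae_0\le\tfrac12\theta^{\alpha}$; this fixes $\vae_0$ (hence $r_0$) and gives $F(\theta r)\le\theta^{\alpha}F(r)$ for all $r\le r_0$. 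Since $\int_{B_{\theta^k r_0}(x_0)}|\nabla\bn|^2<\vae_0$ for every $k$, the estimate iterates to $F(\theta^k r_0)\le\theta^{k\alpha}F(r_0)$, and interpolating over radii $F(\rho)\le C(\La)(\rho/r_0)^{\alpha}F(r_0)$ for $0<\rho\le r_0$; as $F(r_0)\le\int_{D^2}(|\nabla S|^2+|\nabla\bR|^2)<\infty$ this shows $\nabla S,\nabla\bR\in M^{2,\gamma}_{\textup{loc}}(D^2)$ with $\gamma=\alpha(\La)$. Finally, from $\bH=\tfrac12\Delta_g\bP$ and the coordinate form of \eqref{lap3}, $2\bH=(\det g)^{-1/2}\big(\p_1 S\,\p_2\bP-\p_2 S\,\p_1\bP+\p_1\bR\times\p_2\bP-\p_2\bR\times\p_1\bP\big)$, so using $|\nabla\bP|\le C(\La)$ from \eqref{weakimmcon2d} one gets the pointwise bound $|\bH|\le C(\La)(|\nabla S|+|\nabla\bR|)$, hence $\int_{B_\rho(x_0)}|\bH|^2\le C(\La)F(\rho)$ and $\bH\in M^{2,\gamma}_{\textup{loc}}(D^2)$.

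\textbf{Main obstacle.} The delicate points are that the coefficients $a^{ij}=(\det g)^{1/2}g^{ij}$ are merely $L^\infty$, so the classical Wente inequality is unavailable and one must invoke the Chanillo--Li estimate (Lemma~\ref{lm-chali}, or equivalently Proposition~\ref{pr:Wente}), whose constant depends only on the ellipticity ratio — hence only on $\La$ — and is scale-invariant in dimension $2$; and that the decay exponent $\alpha$ in Lemma~\ref{derdecay} together with the smallness threshold $\vae_0$ must be chosen uniformly in the base point and stable under the rescaling $x\mapsto x_0+rx$, which is precisely what allows the iteration to close with a single $\gamma=\gamma(\La)$.
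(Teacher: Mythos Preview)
Your proof is correct and follows essentially the same approach as the paper: split $S$ and $\bR$ on each small ball into a $W^{1,2}_0$-part solving the divergence-form equation with Jacobian right-hand side (controlled by Chanillo--Li, Lemma~\ref{lm-chali}) and an $a^{ij}$-harmonic part (controlled by the De Giorgi--Nash decay of Lemma~\ref{derdecay}), then iterate and read off $\bH$ from \eqref{lap3}. The only cosmetic differences are that the paper uses an integration-by-parts orthogonality to get $\int|\nabla h_S|^2\le\int|\nabla S|^2$ cleanly rather than your factor-of-$2$ triangle inequality, and chooses the final exponent as $\gamma=-\log_\rho 2$ rather than your $\gamma=\alpha$; neither affects the argument.
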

\begin{proof}
By dilation and translation, it suffices to prove $\g S,\g \bR,\bH\in M^{2,\gamma}(D_{1/2}(0))$. By definition of a weak immersion, the coefficients $a^{ij}\coloneqq g^{ij}(\det g)^{1/2}$ satisfy the uniform ellipticity condition \eqref{elli}. Fix a small positive number $\vae_0$ which will be determined later. Since $ \bP\in W^{2,2}(D^2)$, we have 
$\g \bn\in L^2(D^2)$, hence there exists $r_0\in (0,\frac 14)$ depending on $\e_0$ and $\bP$ such that 
\begin{align}\label{smaint}
    \sup_{p\in D_{ 1/2}} \int _{D_{r_0}(p)} |\g \bn|^2 \,dx^1\we dx^2<\vae_0.
\end{align}
 Let $p\in D_{1/2}(0)$, $r\in (0,r_0)$. By Lemma \ref{lm-chali}, we can define $\Psi_S\in W^{1,2}_0(D_{r}(p))$ to be the unique solution of (see Section \ref{geono})
\begin{gather}\label{psiS}
 \begin{cases}  
 \p_i(a^{ij} \,\p_j\Psi_S)=\g ^{\perp}\bn \cdot \g \bR & \quad\mbox{in } D_{r}(p),\\[3mm]
 \Psi_S=0  &\quad \mbox{on } \p D_{r}(p).
\end{cases}\end{gather}
We also define $\vec{\Psi}_{\bR}\in W^{1,2}_0(D_{r}(p))$ to be the unique solution of
\begin{gather}\label{psiR}
 \begin{cases}  
 \p_i(a^{ij} \,\p_j\vec\Psi _{\bR})=\g ^{\perp}\bR\times \g \bn+ \g \bn\ \g ^{\perp}S & \quad \mbox{in } D_{r}(p),\\[3mm]
   \vec\Psi _{\bR}=0 &\quad \mbox{on } \p D_{r}(p).
    \end{cases}
    \end{gather}
Let $v_S\coloneqq S-\Psi_S$ and $\bv_{\bR}\coloneqq\bR-\vec\Psi _{\bR}$. We combine the relations \eqref{lap1} and \eqref{psiS} together with 
\begin{align*}
   (\det g)^{\frac 12}*_g \big(d\bn \dwe d\bR\big)=\g^{\perp}\bn \cdot \g \bR.
\end{align*} 
 We obtain
   \begin{align}\label{har1}
       \p_i(a^{ij}\, \p_jv_S)=0 \quad \mbox{  in } D_{r}(p).
   \end{align}
   As a result, it holds
   \begin{align}\begin{aligned}\label{intpar1}
       \int_{D_{r}(p)} |dS|_g^2\, d\textup{vol}_g&= \int_{D_{r}(p)} |dv_S|_g^2\, d\textup{vol}_g+ \int_{D_{r}(p)} |d\Psi_S|_g^2\, d\textup{vol}_g+2 \int_{D_{r}(p)} \langle dv_S,d\Psi_S\rangle_g\, d\textup{vol}_g\\
       &\ge \int_{D_{r}(p)} |dv_S|_g^2\, d\textup{vol}_g+2\int_{D_{r}(p)}  a^{ij}\,\p_jv_S\,\p_i \Psi_S\,dx^1\we dx^2\\
       &=\int_{D_{r}(p)} |dv_S|_g^2\, d\textup{vol}_g-2\int_{D_{r}(p)}  \p_i(a^{ij}\,\p_jv_S) \Psi_S\,dx^1\we dx^2\\
       &=\int_{D_{r}(p)} |dv_S|_g^2\, d\textup{vol}_g.
       \end{aligned}
   \end{align}
   Similarly, we have the system
   \begin{align}\label{har2}
        \p_i(a^{ij}\, \p_j\bv_{\bR})=0 \qquad \mbox{  in } D_{r}(p).
   \end{align}
   From this, we deduce the estimate
   \begin{gather}\label{intpar2}  
        \int_{D_{r}(p)} |d\bR|_g^2\, d\textup{vol}_g \ge \int_{D_{r}(p)} |d\bv_{\bR}|_g^2\, d\textup{vol}_g.
   \end{gather}
   Let $\rho\in (0,1)$ be determined later. It holds
   \begin{align*}
       &\int _{D_{\rho r}(p)} \lf(|dS|_g^2+|d\bR|_g^2\rg) d\textup{vol}_g \\
       &\le 2 \int _{D_{\rho r}(p)} \lf (|dv_S|_g^2+|d\bv_{\bR}|_g^2 \rg) d\textup{vol}_g+2 \int _{D_{ r}(p)} \lf(|d\Psi_S|_g^2+|d\vec{\Psi}_{\bR}|_g^2\rg) d\textup{vol}_g.
   \end{align*}
   By \eqref{immcon}, \eqref{psiS}, \eqref{psiR}, \eqref{har1}, \eqref{har2}, Lemmas \ref{derdecay} and \ref{lm-chali}, there exist positive constants $C_1,\,\alpha$ depending only on $\La$ such that
   \begin{align*}
       &\int _{D_{\rho r}(p)} \lf( |dS|_g^2+|d\bR|_g^2\rg) d\textup{vol}_g\\
       &\le C_1\lf( \rho^{2\alpha}\int_{D_{r}(p)} \lf(|dv_S|_g^2+|d\bv_{\bR}|_g^2\rg) d\textup{vol}_g+\|\g \bn\|^2_{L^2(D_{r}(p))}\int_{D_{ r}(p)} \lf(|dS|_g^2+|d\bR|_g^2\rg) d\textup{vol}_g\rg).
   \end{align*}
   From \eqref{smaint}, \eqref{intpar1} and \eqref{intpar2}, we deduce  \begin{align}\label{decayest}
   \begin{aligned}
       \int _{D_{\rho r}(p)} \lf(|dS|_g^2+|d\bR|_g^2\rg) d\textup{vol}_g \le C_1( \rho^{2\alpha}+\vae_0)\int_{D_{ r}(p)} \lf(|dS|_g^2+|d\bR|_g^2\rg)  d\textup{vol}_g.
       \end{aligned}
   \end{align}
   Now we choose 
   \begin{align}\label{detcon}
   \vae_0\coloneqq \frac{1}{4C_1}, \quad \rho\coloneqq\min\lf\{ \frac{1}{(4C_1)^{1/(2\alpha)} },  \frac{1}{2} \rg\}.
   \end{align} 
   For any $p\in D_{1/2}(0)$, $s\in (0,1)$, let $k\in \N_0$ such that $\rho^{k+1}\le s<\rho^{k}$. Thanks to \eqref{decayest} and \eqref{detcon} we have \begin{align*}
       \int_{D_{sr_0}(p)} \lf(|dS|_g^2+|d\bR|_g^2\rg) d\textup{vol}_g&\le  \int_{D_{\rho^kr_0}(p)}\lf( |dS|_g^2+|d\bR|_g^2\rg) d\textup{vol}_g\\
       &\le C(\La)\,2^{-k-1}\int_{D_{r_0+\frac 12}}\lf(|\g S|^2+|\g\bR|^2\rg)\\
       &\le C(\La)\,2^{-\log_\rho s}\int_{D_{\frac 34}}\lf(|\g S|^2+|\g\bR|^2\rg)\\
       &=C(\La)\, s^{-\log_\rho 2}\int_{D_{\frac 34}}\lf(|\g S|^2+|\g\bR|^2\rg).
   \end{align*}
Consequently, if we let $\gamma\coloneqq-\log_\rho 2\in (0,\infty)$ which depends on $\La$ only, then we have 
\begin{align*}
    \sup_{p\in D_{1/2}(0),r\le r_0}r^{-\gamma}\int_{D_r(p)}\lf( |\g S|^2+|\g\bR|^2\rg)<\infty.
\end{align*}
By \eqref{lap3}, we then have \begin{align*}
     \sup_{p\in D_{1/2}(0),r\le r_0}r^{-\gamma}\int_{D_r(p)} H^2<\infty.
\end{align*}
\end{proof}

We now prove that the Gauss map is continuous.

\begin{Th}\label{th:n_continuous}
 Let $\bP\in W^{2,2}_{\textup{imm}}(D^2, \R^3)$ be a weak Willmore immersion satisfying \eqref{weakimmcon2d}. Then there exists $\tau=\tau(\La) >0$ such that $\g \bn \in M^{2,\tau}_{\textup{loc}}(D^2)$. In particular, there exists $\al=\al(\La)\in (0,1)$ such that $\bn\in C^{0,\al}_{\textup{loc}}(D^2)$. 
\end{Th}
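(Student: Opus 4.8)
The plan is to establish the Morrey decay $\g\bn\in M^{2,\tau}_{\textup{loc}}(D^2)$ by an iteration modelled on the proof of Theorem~\ref{thmor1}, applied now to an elliptic equation for the Gauss map $\bn$ whose inhomogeneous part is controlled by the Morrey bounds on $\bH$, $dS$, $d\bR$ obtained there. By translation and scaling it suffices to treat $D_{1/2}(0)$. Once $\g\bn\in M^{2,\tau}$ with some $\tau=\tau(\La)\in(0,2)$ is known, the Hölder bound follows: for every ball $D_r(x)\Subset D^2$ one has $\int_{D_r(x)}|\g\bn|\le|D_r(x)|^{1/2}\big(\int_{D_r(x)}|\g\bn|^2\big)^{1/2}\le C\,r^{1+\tau/2}$, so Morrey's Dirichlet-growth lemma gives $\bn\in C^{0,\tau/2}_{\textup{loc}}(D^2)$.

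I would first fix the gauge. Since $|K_g|\le\tfrac12|d\bn|^2_g$ a.e.\ and $|d\bn|^2_g\,d\textup{vol}_g$ is comparable to $|\g\bn|^2\,dx\in L^1(D^2)$ by \eqref{weakimmcon2d}, there is $r_0=r_0(\bP,\vae_0)\in(0,\tfrac14)$, for a small constant $\vae_0$ to be fixed, such that $\int_{D_{r_0}(p)}|K_g|\,d\textup{vol}_g\le\tfrac1{36}$ and $\int_{D_{r_0}(p)}|d\bn|^2_g\,d\textup{vol}_g<\vae_0$ for all $p\in D_{1/2}(0)$. On each such disk, Proposition~\ref{pr:Clb} provides a Coulomb orthonormal tangent frame $(\vec e_1,\vec e_2)\in W^{1,2}(D_{r_0}(p),g)$ with $d^{*_g}\langle d\vec e_1,\vec e_2\rangle=0$ and $\int_{D_{r_0}(p)}(|d\vec e_1|^2_g+|d\vec e_2|^2_g)\,d\textup{vol}_g\le\tfrac32\int_{D_{r_0}(p)}|d\bn|^2_g\,d\textup{vol}_g$; writing $\langle d\vec e_1,\vec e_2\rangle=*_g\,d\lambda$ and applying Proposition~\ref{pr:Wente} to $\Delta_g\lambda=-*_g\langle d\vec e_1\dwe d\vec e_2\rangle$ gives $\|\lambda\|_{L^\infty}+\|d\lambda\|_{L^2(D_{r_0}(p),g)}\le C\int_{D_{r_0}(p)}|d\bn|^2_g\,d\textup{vol}_g$.

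The heart of the argument, and the step I expect to be the main obstacle, is to derive in this Coulomb gauge an elliptic equation for $\bn$ of the schematic form
\begin{equation*}
\p_i\big(a^{ij}\p_j\bn\big)=(\det g)^{\frac12}\big(*_g Q+\vec F\big)\qquad\text{in }D_{r_0}(p),
\end{equation*}
with $a^{ij}=g^{ij}(\det g)^{1/2}$ as in Theorem~\ref{thmor1}, where $*_g Q$ carries the $\mathrm{div}$--$\curl$ structure handled by Lemma~\ref{lm-chali}/Proposition~\ref{pr:Wente} — $Q$ being an exterior product (Notation~\ref{not-overset}) of $d\bn$ with the differential of a $W^{1,2}$ field built algebraically from $\bn,\vec e_1,\vec e_2,\lambda$, produced by the Coulomb condition — and where $\vec F$ is a finite sum of products of bounded fields ($\bn,\vec e_1,\vec e_2,d\bP,\lambda$) with one factor among $\bH$, $dS$, $d\bR$. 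Producing such an equation requires combining the conservation laws \eqref{lap1}--\eqref{lap3}, the relations \eqref{con2}, and the identity $-2H\,d\bP=d\bn+\bn\times *_g\,d\bn$ of Lemma~\ref{lmuse} so that no undifferentiated occurrence of $\g H$ survives — only $H$ itself, which by Theorem~\ref{thmor1} lies in $M^{2,\gamma}_{\textup{loc}}$. Granting this, Theorem~\ref{thmor1} gives $\bH,dS,d\bR\in M^{2,\gamma}_{\textup{loc}}$, hence by Cauchy--Schwarz $\vec F\in M^{1,\gamma'}_{\textup{loc}}$ for some $\gamma'=\gamma'(\La)>0$.

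The iteration then proceeds exactly as in Theorem~\ref{thmor1}. On $D_r(p)$ with $r\le r_0$, Lemma~\ref{lm-chali} furnishes $\vec\Psi\in W^{1,2}_0(D_r(p))$ with $\p_i(a^{ij}\p_j\vec\Psi)=(\det g)^{1/2}(*_g Q+\vec F)$, and Proposition~\ref{pr:Wente} for the $\mathrm{div}$--$\curl$ part together with standard Riesz-potential estimates for $\vec F$ yield $\|d\vec\Psi\|^2_{L^2(D_r(p),g)}\le C\big(\vae_0\,\|d\bn\|^2_{L^2(D_r(p),g)}+r^{\gamma'}\big)$. Since $\p_i(a^{ij}\p_j(\bn-\vec\Psi))=0$, Lemma~\ref{derdecay} gives, exactly as in \eqref{intpar1}, $\int_{D_{\rho r}(p)}|d(\bn-\vec\Psi)|^2_g\,d\textup{vol}_g\le C\rho^{2\al}\int_{D_r(p)}|d\bn|^2_g\,d\textup{vol}_g$. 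Combining, $\int_{D_{\rho r}(p)}|d\bn|^2_g\,d\textup{vol}_g\le C_1(\rho^{2\al}+\vae_0)\int_{D_r(p)}|d\bn|^2_g\,d\textup{vol}_g+C_1 r^{\gamma'}$; choosing $\vae_0$ and $\rho$ as in \eqref{detcon} and iterating over the scales $\rho^kr_0$ produces $\int_{D_s(p)}|d\bn|^2_g\,d\textup{vol}_g\le C(\La)\,s^{\tau}$ for all $p\in D_{1/2}(0)$ and $0<s<r_0$, with some $\tau=\tau(\La)>0$. By \eqref{weakimmcon2d} this gives $\g\bn\in M^{2,\tau}_{\textup{loc}}(D^2)$, and the continuity statement follows as in the first paragraph.
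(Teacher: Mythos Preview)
Your overall strategy---Morrey iteration on an elliptic equation for $\bn$ whose right-hand side splits into a $\mathrm{div}$--$\curl$ piece and a piece controlled by Theorem~\ref{thmor1}---is exactly the paper's, and your endgame (choice of $\vae_0$ and $\rho$, iteration over scales, Morrey $\Rightarrow$ H\"older via Campanato) is correct. But you have left the derivation of that equation as an acknowledged obstacle (``Granting this''), and the route you sketch toward it---Coulomb frame via Proposition~\ref{pr:Clb}, then combining \eqref{lap1}--\eqref{lap3} and \eqref{con2}---is far more elaborate than what is needed.

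The paper obtains the equation in one line from Lemma~\ref{lmuse} alone. Rewriting $-2H\,d\bP=d\bn+\bn\times *_g\,d\bn$ as $*_g d\bn=\bn\times d\bn-2H\,*_g d\bP$ and applying $*_g d$ gives, with $a^{ij}=g^{ij}(\det g)^{1/2}$,
\[
\p_i\big(a^{ij}\,\p_j\bn\big)=2\,\p_1\bn\times\p_2\bn-2\,\p_i\big(a^{ij}\,H\,\p_j\bP\big)\qquad\text{in }\mathcal D'(D^2).
\]
The first term is a Jacobian and plays the role of your $*_g Q$, handled by Lemma~\ref{lm-chali} with the estimate $\|\g\vec\Psi_1\|_{L^2(D_r(p))}^2\le C(\La)\,\vae_0\,\|\g\bn\|_{L^2(D_r(p))}^2$. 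The second term is \emph{already in divergence form}, so by Riesz representation in $W^{1,2}_0$ one solves $\p_i(a^{ij}\p_j\vec\Psi_2)=-2\,\p_i(a^{ij}H\,\p_j\bP)$ with $\|\g\vec\Psi_2\|_{L^2(D_r(p))}\le C(\La)\,\|H\|_{L^2(D_r(p))}$, and Theorem~\ref{thmor1} gives $\|H\|_{L^2(D_r(p))}^2\le C\,r^\gamma$. No Coulomb frame, no $S$, no $\bR$, no further use of the Willmore system enters here; only the Morrey bound on $H$ is needed. With this equation in hand, your decomposition $\bn=(\bn-\vec\Psi)+\vec\Psi$ and the iteration you wrote go through verbatim and coincide with the paper's \eqref{eqdecayest}.
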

\begin{proof}
     Let $a^{ij}\coloneqq g^{ij}(\det g)^{1/2}$. By Lemma \ref{lmuse}, we have 
     \[
     -2H\,d\bP=d\bn +\bn \times *_g\, d\bn.
     \]
   We deduce that the following system holds in $\D'(D^2)$:
   \begin{align}\label{dnequ}
   \begin{aligned}
        \p_i(a^{ij}\,\p_j\bn)&=(\det g)^{\frac 12} *_gd*_gd\bn \\[1mm]
         & =(\det g)^{\frac 12} *_g d(\bn\times d\bn-2H*_gd\bP)\\[1mm]
        &=2\,\p_1\bn \times \p_2\bn-2\p_i(a^{ij}\,H\,\p_j\bP).
    \end{aligned}
    \end{align}
  Fix a small positive number $\vae_0$ which will be determined later. As in the proof of Theorem \ref{thmor1}, there exists $r_0\in (0,\frac 14)$ depending on $\e_0$ and $\bP$ such that 
  \begin{align*}
    \sup_{p\in D_{ 1/2}} \int _{D_{r_0}(p)} |\g \bn|^2 \,dx^1\we dx^2<\vae_0.
\end{align*}
Let $p\in D_{1/2}(0)$, $r\in (0,r_0)$. By Lemma \ref{lm-chali}, we can define $\bPS_1\in W^{1,2}_0(D_r(p))$ to be the unique solution of 
\begin{align}\label{eq:systemPsi1}
 \begin{cases}  \p_i(a^{ij}\, \p_j\bPS_1)=2\,\p_1\bn\times\p_2\bn \quad &\mbox{in } D_{r}(p),\\[3mm]
   \bPS_1=0 \quad &\mbox{on } \p D_{r}(p).
    \end{cases}
    \end{align}
    We have the following a priori estimate:
    \[
    \|\g \bPS_1\|^2_{L^2(D_r(p))}\le C(\La)\, \|\g \bn\|^4_{L^2(D_r(p))}\le C(\La)\, \e_0\, \|\g \bn\|^2_{L^2(D_r(p))}.
    \]
    Furthermore, by Riesz representation theorem for Hilbert spaces, there exists a unique $\bPS_2\in W^{1,2}_0(D_r(p))$ satisfying the estimate $\|\g \bPS_2\|_{L^2(D_r(p))}\le C(\La)\, \|H\|_{L^2(D_r(p))}$ and the following equation holds 
    \begin{align}\label{eq:systemPsi2}
        \p_i(a^{ij}\, \p_j\bPS_2)=-2\,\p_i(a^{ij}\,H\,\p_j\bP).
    \end{align}
    Let $\bPS\coloneqq\bPS_1+\bPS_2\in W^{1,2}_0(D_r(p))$ and $\bv\coloneqq\bn-\bPS$. By Theorem \ref{thmor1}, there exist two constants $C=C(\bP)$ and $\gamma=\gamma(\La)>0$ such that
    \begin{align*}
        \|\g\bPS\|^2_{L^2(D_r(p))}&\le C(\La)\big(\e_0\, \|\g\bn\|^2_{L^2(D_r(p))}+\|H\|_{L^2(D_r(p))}^2\big)\\[2mm]
    &\le C(\bP)\lf( \e_0\, \|\g\bn\|^2_{L^2(D_r(p))}+ r^\gamma\rg). 
    \end{align*}
    Thanks to \eqref{eq:systemPsi1} and \eqref{eq:systemPsi2}, we also have 
    \[
    \p_i(a^{ij}\,\p_j\bv)=0.
    \]
    It follows that, as in the proof of Theorem \ref{thmor1}, it holds
    \[ 
    \int_{D_{r}(p)} |d\bn|_g^2\, d\textup{vol}_g \ge \int_{D_{r}(p)} |d\bv|_g^2\, d\textup{vol}_g.
    \]
    Let $\rho\in (0,1)$ be determined later. There exist constants $\al=\al(\La)>0$ defined as in Lemma \ref{derdecay}, $C_2=C_2(\La)>0$, $C_3=C_3(\bP)>0$ such that
    \begin{align}\label{eqdecayest}
   \begin{aligned}
       \int _{D_{\rho r}(p)}|d\bn|_g^2\,d\textup{vol}_g&\le 2 \int _{D_{\rho r}(p)}|d\bv|_g^2\,d\textup{vol}_g+2 \int _{D_{ r}(p)}|d\vec{\Psi}|_g^2\,d\textup{vol}_g\\
       &\le C_2\lf( \rho^{2\alpha}\int_{D_{r}(p)} |d\bv|_g^2\,d\textup{vol}_g+\e_0\, \|\g\bn\|^2_{L^2(D_r(p))}+C(\bP)\, r^\gamma\rg)\\
       &\le C_2\,( \rho^{2\alpha}+\vae_0)\int_{D_{ r}(p)}|d\bn|_g^2\,d\textup{vol}_g+C_3\,r^\gamma.
       \end{aligned}
   \end{align}
   Now we choose 
   \begin{align*}
   \vae_0\coloneqq \frac{1}{4C_2}, \quad \rho\coloneqq\min\lf\{ \frac{1}{(4C_2)^{1/(2\alpha)} }, \frac{1}{2^{1/\gamma}}, \frac{1}{2} \rg\}.
   \end{align*} 
   For any $p\in D_{1/2}(0)$, $s\in (0,1)$, let $k\in \N_0$ such that $\rho^{k+1}\le s<\rho^{k}$. Then by \eqref{eqdecayest} it holds
   \begin{align*}
       \int_{D_{sr_0}(p)} |d\bn|_g^2\,d\textup{vol}_g&\le  \int_{D_{\rho^k r_0}(p)} |d\bn|_g^2\,d\textup{vol}_g\\
       &\le C_3\,r_0^\gamma\lf(\rho^{\gamma(k-1)}+\frac 12\,\rho^{\gamma(k-2)}+\cdots + \frac{1}{2^{k-1}} \rg) +C(\La)\,2^{-k-1}\int_{D_{r_0+\frac 12}}|\g \vec n|^2.
    \end{align*}
    Since $\rho\leq \frac{1}{2}$, we obtain
    \begin{align*}
       \int_{D_{sr_0}(p)} |d\bn|_g^2\,d\textup{vol}_g &\le C(\bP)\bigg (2^{-\log_\rho s}\int_{D_{\frac 34}}|\g \vec n|^2+ \frac{ r_0^\gamma k}{2^{k-1} }\bigg)\\
       &\le C(\bP) \bigg(s^{-\log_\rho 2}\int_{D_{\frac 34}}|\g \vec n|^2+r_0^\gamma \,\lf(\frac 23 \rg)^{k-1}\bigg)\\[1.5mm]
       &\le C(\bP)\, s^{\log_{\rho} \frac 23}.
   \end{align*}
Therefore, if we let $\tau\coloneqq\log_\rho \frac 23\in (0,\infty)$ which depends on $\La$ only, then we have \begin{align*}
    \sup_{p\in D_{1/2}(0),r\le r_0}r^{-\tau}\int_{D_r(p)} |\g \bn|^2<\infty.
\end{align*}
The H\"older continuity of $\bn$ then follows from standard knowledge of the Morrey--Campanato spaces (see for instance \cite[Theorem~3.5.2]{Morrey08} and \cite{Adams15}).
\end{proof}

If the Gauss map is continuous, then one can write the image of $\bP$ as a graph.

\begin{Lm}\label{lm:loc-graph}
    Let $\bP\in W^{2,2}_{\textup{imm}}(D^2, \R^m)$ ($m\ge 3$). Assume the Gauss map $\bn$ is continuous, then there exist an open neighborhood $U\subset D^2$ of $0$, a $W^{2,2}\cap W^{1,\infty}$ homeomorphism $\Psi\colon U\rightarrow\Psi(U)\subset\R^2$, and $f\in W^{2,2}(\Psi(U),\R^{m-2})$ such that upon rotating and relabeling the coordinate axes if necessary, we have  $$\bP\circ \Psi^{-1}(x_1,x_2)=(x_1,x_2,f(x_1,x_2)),\qquad \text{for all }(x_1,x_2)\in  \Psi(U).$$ 
    Thus $\bP$ is locally the graph of a $W^{2,2}$ map.
\end{Lm}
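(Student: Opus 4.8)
The plan is to reparametrize $\bP$ near $0$ by projecting onto its tangent plane at the origin, exactly as in the smooth case; the only new ingredient is that the weak inverse function theorem of Lemma~\ref{thinv} replaces the classical one, and one has to check that the resulting graph map lies in $W^{2,2}$.

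First I would use the continuity of $\bn$ to fix convenient coordinates. Since the Hodge star on $\R^m$ is a linear isometry, continuity of $\bn = \star\big(\p_1\bP\wedge\p_2\bP/|\p_1\bP\wedge\p_2\bP|\big)$ is equivalent to continuity on $D^2$ of the unit simple $2$-vector $\omega(x)\coloneqq \p_1\bP(x)\wedge\p_2\bP(x)/|\p_1\bP(x)\wedge\p_2\bP(x)|$. Writing $\omega(0)=\vec v_1\wedge\vec v_2$ for an orthonormal pair, completing it to a positively oriented orthonormal basis of $\R^m$ and applying the corresponding rotation (which replaces $\bP$ by $R\circ\bP$ for some $R\in SO(m)$ and does not affect the graph conclusion), I may assume $\omega(0)=e_1\wedge e_2$. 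Let $\pi\colon\R^m\to\R^2$, $(y_1,\dots,y_m)\mapsto(y_1,y_2)$, and set $\Psi\coloneqq\pi\circ\bP\in W^{2,2}\cap W^{1,\infty}(D^2,\R^2)$. The coefficient of $e_1\wedge e_2$ in $\omega(x)$ is exactly $\det(\g\Psi(x))/|\p_1\bP(x)\wedge\p_2\bP(x)|$; it is continuous and equals $1$ at $x=0$, so it exceeds $1/2$ on a neighborhood $U_0\subset D^2$ of $0$. Together with the lower bound $|\p_1\bP\wedge\p_2\bP|=\sqrt{\det g_{\bP}}\ge\La^{-1}$ a.e., where $\La$ is the constant from Definition~\ref{defweakimm}, this yields $\det(\g\Psi)\ge\tfrac12\La^{-1}>0$ a.e.\ on $U_0$.

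Next I would apply Lemma~\ref{thinv}: fixing $r>0$ with $D_r(0)\Subset U_0$, the map $w\mapsto\Psi(rw)$ lies in $W^{2,2}\cap W^{1,\infty}(D^2,\R^2)$ with Jacobian determinant bounded below by $\tfrac12 r^2\La^{-1}>0$, hence is open and injective on some neighborhood of $0$ with inverse in $W^{2,2}\cap W^{1,\infty}$. Rescaling back produces an open set $U\ni 0$ on which $\Psi$ is open and injective, hence a homeomorphism onto the open set $\Psi(U)\subset\R^2$, with $\Psi^{-1}\in W^{2,2}\cap W^{1,\infty}(\Psi(U),\R^2)$. Setting then $\pi'\colon\R^m\to\R^{m-2}$, $(y_1,\dots,y_m)\mapsto(y_3,\dots,y_m)$, and $f\coloneqq\pi'\circ\bP\circ\Psi^{-1}$ on $\Psi(U)$, the identity $\pi\circ\bP\circ\Psi^{-1}=\Psi\circ\Psi^{-1}=\mathrm{id}$ gives $\bP\circ\Psi^{-1}(x_1,x_2)=(x_1,x_2,f(x_1,x_2))$, which is the asserted graph representation.

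The one step requiring care is the regularity $f\in W^{2,2}(\Psi(U),\R^{m-2})$, i.e.\ $\bP\circ\Psi^{-1}\in W^{2,2}$. I would argue as follows: $\Psi^{-1}$ is a bi-Lipschitz homeomorphism of planar domains, and $\g\bP\in W^{1,2}\cap L^\infty(U)$, so the change-of-variables formula gives $\g\bP\circ\Psi^{-1}\in W^{1,2}\cap L^\infty(\Psi(U))$; then $\g(\bP\circ\Psi^{-1})=(\g\bP\circ\Psi^{-1})\,\g\Psi^{-1}$ is a product of two functions in $W^{1,2}\cap L^\infty$, hence again in $W^{1,2}$, which yields $\bP\circ\Psi^{-1}\in W^{2,2}$ and completes the proof. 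I expect this composition and product bookkeeping, rather than any conceptual difficulty, to be the only mildly delicate point; everything else follows directly from the continuity of $\bn$, the weak immersion bounds, and Lemma~\ref{thinv}.
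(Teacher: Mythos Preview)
Your proof is correct and follows essentially the same approach as the paper: rotate so that $\omega(0)=e_1\wedge e_2$ (equivalently $\bn(0)=e_3\wedge\cdots\wedge e_m$), use continuity of $\bn$ together with the weak immersion bound $|\p_1\bP\wedge\p_2\bP|\ge\La^{-1}$ to obtain $\det(\g\Psi)\ge(2\La)^{-1}$ on a neighborhood, then invoke Lemma~\ref{thinv} to make $\Psi=(\Phi_1,\Phi_2)$ a local $W^{2,2}\cap W^{1,\infty}$ homeomorphism. You additionally spell out the $W^{2,2}$ regularity of $f=\pi'\circ\bP\circ\Psi^{-1}$ via the chain rule and the algebra property of $W^{1,2}\cap L^\infty$, a point the paper leaves implicit.
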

\begin{proof}
    Upon rotating and relabeling the coordinate axes, we assume $\bn(0)=e_3\wedge\dots\wedge e_m$, where $(e_1,\dots,e_m)$ denotes the canonical oriented basis of $\R^m$. By~\eqref{weakimmcon2d}, we have 
    \be\label{lowbou} 
    \La^{-2}\le\det(g_{ij})= |\p_1\bP\wedge \p_2\bP|^2\le \La^{2}. 
    \ee
   Since $\bn$ is continuous, there exists a neighborhood $U_1$ of $0$ such that $|\bn-\bn\mko(0)|<\frac 12$ on $U_1$. By definition of the Gauss map, we have the following inequality on $U_1$:
    \[
    \lf\lan\frac{\p_1\bP\wedge \p_2\bP}{|\p_1\bP\wedge \p_2\bP|}, e_1\wedge e_2\rg\ran =1+\lf\langle \star\,\bn-\star\,\bn\mko(0) ,e_1\wedge e_2\rg\rangle >\frac 12.
    \]
    Writing $\bP=(\Phi_1,\dots,\Phi_m)$, on the other hand we obtain
    \[ 
    \lf\lan\frac{\p_1\bP\wedge \p_2\bP}{|\p_1\bP\wedge \p_2\bP|}, e_1\wedge e_2\rg\ran=|\p_1\bP\wedge \p_2\bP|^{-1}\det (\p_i\Phi_j)_{1\le i,j\le 2}\le \La \det (\p_i\Phi_j)_{1\le i,j\le 2}.
    \]
    It follows that $\det(\p_i\Phi_j)_{1\le i,j\le 2}>(2\La)^{-1} $ on $U_1$. Then by Lemma \ref{thinv}, there exists an open neighborhood $U\subset U_1$ of $0$ such that $\Psi\coloneqq(\Phi_1,\Phi_2)$ is injective on $U$ with $\Psi^{-1}\in W^{1,\infty}\cap W^{2,2}(\Psi(U),\R^2)$. We obtain that for any $(x_1,x_2)\in \Psi(U)$, there holds $$\bP\circ \Psi^{-1}(x_1,x_2)=(x_1,x_2,\Phi_3\circ \Psi^{-1}(x_1,x_2),\dots,\Phi_m\circ \Psi^{-1}(x_1,x_2)).$$ 
\end{proof}

Consequently, a weak Willmore immersion is locally a graph, and we can prove smoothness under the graph coordinates.

\begin{Co}\label{cor:Willmore_graph}
 Let $\Sigma$ be a $2$-dimensional closed smooth manifold, $\bP\in W^{2,2}_{\textup{imm}}(\Sigma,\R^3)$. 
 Then $\bP$ is a graph near $\bP(p)$ for any $p\in\Sigma$, and $\bP\in C^\infty$ under the graph coordinates.
\end{Co}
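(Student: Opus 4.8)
The statement is the expected regularity bootstrap, building on the preceding results. The plan is to localize, reduce to a graph, and then run an elliptic bootstrap on the Willmore system written in graph coordinates. First I would fix $p\in\Sigma$ and choose local coordinates around $p$ mapping onto $D^2$, so that $\bP\in W^{2,2}_{\textup{imm}}(D^2,\R^3)$ and satisfies \eqref{weakimmcon2d}. By Theorem \ref{th:n_continuous}, the Gauss map $\bn$ is H\"older continuous, in particular continuous, so Lemma \ref{lm:loc-graph} applies (with $m=3$): there is an open neighborhood $U$ of $0$, a $W^{2,2}\cap W^{1,\infty}$ homeomorphism $\Psi\colon U\to\Psi(U)\subset\R^2$ with $W^{2,2}\cap W^{1,\infty}$ inverse, and $f\in W^{2,2}(\Psi(U))$ such that $\bP\circ\Psi^{-1}(x_1,x_2)=(x_1,x_2,f(x_1,x_2))$. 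This already shows $\bP$ is locally a graph, which is the first assertion; it remains to show $f\in C^\infty$.

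\textbf{The bootstrap.} In graph coordinates the induced metric is $g_{ij}=\delta_{ij}+\partial_i f\,\partial_j f$, which is bounded with bounded inverse and lies in $W^{1,2}$, and the mean curvature $H$, second fundamental form, and Gauss map become explicit smooth (actually rational) functions of $\nabla f$ and $\nabla^2 f$. The Willmore equation \eqref{willmore eq}, or equivalently the conservation laws of Theorems \ref{thmcon1}--\ref{thm-sys-srh}, can be transported to the graph chart: the point is that the chart change $\Psi$ is conformal to no particular structure but is nevertheless $W^{2,2}\cap W^{1,\infty}$ bi-Lipschitz, so all the distributional identities survive. I would then use the div-curl system \eqref{lap1}--\eqref{lap3}, or more directly the fourth-order equation $\Delta_g H + \ldots = 0$ rewritten in graph coordinates, together with the Morrey decay $\nabla S,\nabla\bR,\bH\in M^{2,\gamma}_{\textup{loc}}$ from Theorem \ref{thmor1} and $\nabla\bn\in M^{2,\tau}_{\textup{loc}}$ from Theorem \ref{th:n_continuous}. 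Morrey regularity of $\nabla\bn$ upgrades, via Morrey--Campanato and the graph equation $\Delta_g f = \ldots$ (a second-order uniformly elliptic equation whose right-hand side involves $H$ and lower-order terms), to $f\in C^{1,\alpha}_{\textup{loc}}$; then the coefficients $g^{ij}$ and the right-hand sides of the Willmore system are $C^{0,\alpha}$, so Schauder estimates give $f\in C^{2,\alpha}_{\textup{loc}}$. With $f\in C^{2,\alpha}$ the metric is $C^{1,\alpha}$, $H\in C^{0,\alpha}$, and the fourth-order Willmore equation for $H$ (or the pair of second-order equations relating $f$ and $H$) has $C^{0,\alpha}$ coefficients; iterating Schauder estimates gives $H\in C^{k,\alpha}$ and $f\in C^{k+2,\alpha}$ for every $k$, hence $f\in C^\infty$. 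One can also conclude real-analyticity by the classical argument of Morrey for nonlinear elliptic systems with analytic structure, but $C^\infty$ suffices here.

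\textbf{Main obstacle.} The routine-looking but genuinely delicate step is the \emph{first} gain of a derivative: passing from the Morrey-class information $\nabla^2 f\in M^{2,\gamma}$, $\nabla\bn\in M^{2,\tau}$ (equivalently $H\in M^{2,\gamma}$) to continuity, and then $C^{1,\alpha}$ regularity, of $\nabla f$. This requires combining the Morrey--Campanato embedding with the fact that in graph coordinates $f$ solves $\mathrm{div}(a^{ij}\nabla f)=$ (right-hand side controlled by $H$ and quadratic-in-$\nabla^2 f$ terms with a Jacobian/Wente structure), where $a^{ij}=a^{ij}(\nabla f)$ is only $W^{1,2}\cap L^\infty$ a priori — so one needs the Morrey decay of the right-hand side together with a perturbative Campanato-type argument (as in Theorems \ref{thmor1}, \ref{th:n_continuous}) rather than off-the-shelf Schauder. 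Once $f\in C^{1,\alpha}$ is established, everything downstream is a clean Schauder iteration on a quasilinear fourth-order (or coupled second-order) system, and the analytic structure of the nonlinearities presents no further difficulty. A secondary technical point is checking carefully that the distributional conservation laws \eqref{conser1}, the $(\bR,S)$ system \eqref{con2}, and \eqref{lap1}--\eqref{lap3} are all preserved under the bi-Lipschitz $W^{2,2}$ change of variables $\Psi$; this follows from the chain rule for Sobolev functions composed with bi-Lipschitz maps and the fact that Hodge stars and exterior derivatives transform covariantly, but it should be stated explicitly.
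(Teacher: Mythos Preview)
Your overall architecture matches the paper's, but the bootstrap is mis-calibrated in two places. The step you label as the main obstacle --- extracting $f\in C^{1,\alpha}$ --- is actually immediate: in graph coordinates $\bn=(-\partial_1 f,-\partial_2 f,1)/\sqrt{1+|\nabla f|^2}$, and this relation can be smoothly inverted for $\nabla f$ (indeed $\partial_i f=-n_i/n_3$), so $\bn\in C^{0,\alpha}_{\textup{loc}}$ from Theorem~\ref{th:n_continuous} gives $\nabla f,\,g\in C^{0,\alpha}_{\textup{loc}}$ with no PDE or Campanato argument. Your worry about transporting the conservation laws under the bi-Lipschitz change $\Psi$ is also unnecessary: being Willmore is chart-independent, so one simply reconstructs $\bL,S,\bR$ from scratch in the graph chart and reruns Theorems~\ref{thmor1}--\ref{th:n_continuous} there.

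The genuine gap is the next step. Your claim that ``the right-hand sides of the Willmore system are $C^{0,\alpha}$, so Schauder gives $f\in C^{2,\alpha}$'' is false: after obtaining $g\in C^{0,\alpha}$, the right-hand side of $\Delta_g S=*_g(d\bn\dwe d\bR)$ is a product of two $M^{2,\tau}$ functions, hence only in $M^{1,\tau}_{\textup{loc}}$ --- nowhere near H\"older --- and likewise for \eqref{lap2} and \eqref{dnequ}. Schauder does not apply at this stage. The paper instead runs an $L^p$ iteration: the embedding $M^{1,\tau}(D^2)\hookrightarrow W^{-1,p}(D^2)$ for some $p>2$ feeds into $L^p$ estimates for divergence-form operators with VMO coefficients \cite{Byun05}, yielding $\nabla S,\nabla\bR\in L^p_{\textup{loc}}$; then \eqref{lap3} gives $H\in L^p_{\textup{loc}}$ and \eqref{dnequ} gives $\nabla\bn\in L^p_{\textup{loc}}$. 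The product structure of the right-hand sides lifts the exponent ($L^p\cdot L^p\subset L^{p/2}\subset W^{-1,2p/(4-p)}$ for $p<4$), and one iterates until $\nabla S,\nabla\bR,\nabla\bn,H\in L^q_{\textup{loc}}$ for every $q<\infty$, whence $f\in W^{2,q}_{\textup{loc}}$. Only at that point do the right-hand sides lie in $L^q$ for all $q$, and a standard $W^{2,q}$ bootstrap (via \cite{Chiarenza93} and \cite[Theorem~8.8]{Gilbarg01}) takes over to reach $C^\infty$.
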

\begin{proof}
   Let $p\in \Sigma$. Then by Theorem \ref{th:n_continuous} and Lemma \ref{lm:loc-graph}, upon affine transformation on $\R^m$ if necessary, we can without loss of generality assume $p=0\in \R^2$ and $\bP(x_1,x_2)=(x_1,x_2,f(x_1,x_2))$ on $D^2$ for some function $f\in W^{2,2}(D^2, \R^{m-2})$. Define $S,\vec R$ as before. Recall that by \eqref{dnequ} and Theorem \ref{thm-sys-srh}, we have the following system of elliptic equations 
   \begin{subnumcases}{}
         \Delta_g S=*_g\big(d\bn \dwe d\bR\big),\label{eq1} \\[2mm]
         \Delta_g \bR=*_g\big(d\bR\times d\bn+dS\wedge d\bn\big),\label{eq2}\\[2mm]
          2H=*_g\big(dS\wedge d\bP+d\bR   \overset{\times}\wedge d\bP \big),\label{eq3}\\[2mm]
          d*_gd\bn=-d\big(2H*_g d\bP\big)+d\bn\overset{\times}{\wedge} d\bn.  \label{eq4}
    \end{subnumcases} 
          Moreover, by Theorems \ref{thmor1} and \ref{th:n_continuous}, there exist $\tau\in (0,1)$ and $\alpha\in (0,1)$ such that 
    \begin{gather}\label{mor}
    H,\g\vec R, \g S,\g \vec n\in M^{2,\tau}_{\textup{loc}}(D^2),\quad \bn\in C_{\textup{loc}}^{0,\alpha}(D^2).
    \end{gather} 
        Using the expression of $\bP$ by $f$, we have\label{n=df}
              \be\bn=\frac{(-\p_1f,-\p_2 f,1)}{\sqrt{1+|\g f|^2}}.\ee 
        Hence by direct computation we see that $\g f, \g \bP\in C^{0,\alpha}_{\textup{loc}}(D^2)$. In particular, $g=g_{\bP}\in C_{\textup{loc}}^{0,\alpha}(D^2)$. 

         By \cite[Eq.~(1.5)]{Mingi11} and standard estimates on the Riesz potential \cite[Proposition~3.2(ii)]{Adams75}, we obtain $M^{1,\tau}(D^2)\hookrightarrow W^{-1,p}(D^2)$ for any $p\in(2,\frac{2-\tau}{1-\tau})$. Now since $*_g\big(d\bn \dwe d\bR\big) \in M^{1,\tau}_{\loc}(D^2)$, 
    we have $*_g\big(d\bn \dwe d\bR\big) \in W^{-1,p}_{\loc}(D^2)$ for some $p>2$. Applying \cite[Theorem~3.1]{Byun05} to the equation~\eqref{eq1} then yields $\g S\in L^p_{\textup{loc}}(D^2)$. Similarly, we have $\g \bR\in L^p_{\textup{loc}}(D^2)$, and hence by~\eqref{eq3}, $H\in L^p_{\textup{loc}}(D^2)$. Moreover, since $d(2H*_g d\bP)\in W^{-1,p}_{\loc}(D^2)$, combining~\eqref{eq4} and~\eqref{mor} with a similar argument as above implies $\g \bn\in L^p_{\textup{loc}}(D^2)$. It follows that $$d\bn \dwe d\bR\in \begin{cases}
         L^{p/2}_{\textup{loc}}(D^2)\subset W^{-1,\frac{2p}{4-p}}_{\textup{loc}}(D^2) \quad& \text{if } p<4,\\[3mm]   W^{-1,q}_{\textup{loc}}(D^2)\text{ for any $4<q<\infty$}\quad &\text{otherwise}.
          \end{cases}$$
          Hence by \eqref{eq1} and \cite[Theorem 3.1]{Byun05} again, we have \begin{align*}
       \g S\in \begin{cases}
              L_{\textup{loc}}^{(2/p-1/2)^{-1}}(D^2) \quad& \text{if }p<4,\\[3mm]
            L^q_{\textup{loc}}(D^2) 
            \quad& \text{otherwise.}
        \end{cases}
    \end{align*}
    The same also holds for $\g \bR,\,\g \bn,\, H$, and after finitely many iterations, we get $S,\bR,\bn\in W^{1,q}_{\textup{loc}}(D^2)$ for any $q<\infty$. By computing $\g \bn$ in the expression \eqref{n=df}, we get $f\in W^{2,q}_{\textup{loc}}(D^2)$ hence $\bP\in W^{2,q}_{\textup{loc}}(D^2)$. 
    
    Now return to the equations \eqref{eq1}--\eqref{eq4}. The right-hand side of each equation is in $L^q_{\textup{loc}}(D^2)$ for any $q<\infty$, and since $S,\bR,\bn,g\in W^{1,q}_\loc (D^2)$, by \cite[Theorem 4.1]{Chiarenza93} and the proof of \cite[Theorem 8.8]{Gilbarg01}, we obtain $S,\bR,\bn\in W^{2,q}_{\textup{loc}}(D^2)$, and then $\bP\in W^{3,q}_{\textup{loc}}(D^2)$... We finally get $\bP\in C^\infty$ under the graph coordinates.
\end{proof}

\section{Generalized Willmore functionals for 4-dimensional submanifolds}\label{sec:4d}

In this part we implement the approach introduced by the third author in \cite{Riv08} for the Willmore energy in order to deduce conserved quantities for critical points to scaling invariant Lagrangians in 4 dimension. The motivation being that, with the help of this conservation laws, one can hope to develop a strategy to devise a proof of the smoothness of weak critical points.
The interpretations by Bernard \cite{Ber} of the conservation laws found in \cite{Riv08} as being the Noether currents associated to the generators of the invariance group has enlightened the field of the analysis of conformally invariant Lagrangians of immersions. The computations of these conservation laws in dimension 4 have been first realised by Bernard in \cite{Bernard25}. Here, we present these computations for the simpler Lagrangien $\int |d H|_g^2\,\dvol_g$. The proof of the regularity of the critical points has been first given in a joint work of the three authors together with Bernard in \cite{bernard2025}.

\begin{Dfi}
Let $\vec\Phi\in W^{3,2}_{\text{imm}}(B^4, \R^5)$. We define the energy 
\[
E(\vec{\Phi})\coloneqq\frac{1}{2}\int_{B^4} | d H |_g^2 \,d\textup{vol}_{g}.
\]
The map $\vec\Phi$ is said to be a weak critical point of $E$ if for any $\vec w\in C^\infty_c(B^4,\R^5)$, there holds
\begin{equation*}
   \lf. \frac d{dt}\,E(\vec\Phi+t\,\vec w)\rg|_{t=0}=0.
\end{equation*}
\end{Dfi}

\subsubsection*{The Noether current associated to translations.}
We first compute the Euler--Lagrange equation satisfied by weak critical points of $E$. Similar to the 2-dimensional case, this divergence-form equation is also a consequence of the pointwise invariance of $|d H|_g^2\,\dvol_g$ by translations in the ambient space, see~\cite[Section~A.2]{Bernard25}.
\begin{Lm}
\label{lm-critic-W^{3,2}} An immersion $\vec{\Phi}\in W^{3,2}_{imm}(B^4, \R^5)$ is a weak critical point of $E$ if and only if the following equation holds in $\mathcal D'(B^4,\R^5\ot \bwe^4T^*B^4)$:
\be
\label{Euler-1}
d*_g\Big(2\,\langle d\vec{\Phi},dH\rangle_g \,dH -\frac 12\,d(\bn \, \Delta_g H)+ \Delta_g H\,d\bn-|dH|_g^2\,d\bP\Big)=0.
\ee
\end{Lm}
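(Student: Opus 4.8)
The strategy mirrors the proof of Theorem~\ref{thm:willequ}: the plan is to compute directly the first variation of $E$ along the path $\bP_t=\bP+t\,\vec w$ with $\vec w\in C_c^\nf(B^4,\R^5)$, to reduce it to a pointwise identity for the integrand $|dH_t|_{g_t}^2\,\dvol_{g_t}$, and then to read off the Euler--Lagrange equation after integration by parts. As in Theorem~\ref{thm:willequ}, I would first establish the pointwise variation formula for immersions in $C^\nf(\ov{B^4},\R^5)$; the extension to $\bP\in W^{3,2}_{\imm}(B^4,\R^5)$ then follows since every term produced in the computation is a product of factors lying in $L^\nf_{\loc}$, $L^2_{\loc}$ or $L^4_{\loc}$, so that the identity holds a.e.\ and the first variation is continuous in the $W^{3,2}$-topology (one may either invoke a $W^{3,2}$ density statement in the spirit of Theorem~\ref{th:Approx}, or argue directly by truncation and mollification).

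The computation rests on the variational identities already recorded in the proof of Theorem~\ref{thm:willequ}, all valid in any dimension: $\frac{d}{dt}g_{ij,t}|_{t=0}=\p_i\vec w\cdot\p_j\bP+\p_j\vec w\cdot\p_i\bP$, the induced formulas for $\frac{d}{dt}g^{ij}_t|_{t=0}$ and $\frac{d}{dt}(\det g_t)^{\frac12}|_{t=0}=(\det g)^{\frac12}\,g^{ij}\,\p_j\vec w\cdot\p_i\bP$, and $\frac{d}{dt}\bn_t|_{t=0}=-g^{ij}(\bn\cdot\p_i\vec w)\,\p_j\bP$. The genuinely new ingredients are the variations of $H$ and of $\Delta_g H$. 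For $\delta H$ I would use $n\bH=\Delta_g\bP$ together with $\bH\perp\p_\ell\bP$ and $\bH\cdot\frac{d}{dt}\bn_t|_{t=0}=0$ (the latter since $\frac{d}{dt}\bn_t$ is tangential), which expresses $\delta H$ as a second-order operator applied to $\vec w$ modulo lower-order terms; differentiating this once more, and also varying the coefficients $g^{ij}\sqrt{\det g}$ of $\Delta_g$, yields $\delta(\Delta_g H)$. Expanding
\begin{align*}
\left.\frac{d}{dt}\Big(|dH_t|_{g_t}^2\,\dvol_{g_t}\Big)\right|_{t=0}
&=\big(\delta g^{ij}\big)\,\p_i H\,\p_j H\,\dvol_g
+2\,g^{ij}\,\p_i(\delta H)\,\p_j H\,\dvol_g \\
&\quad +|dH|_g^2\,\delta(\dvol_g),
\end{align*}
one substitutes these identities and integrates by parts to move every derivative off $\vec w$; this is where the sixth-order nature of the equation surfaces, since up to three derivatives get transferred onto the coefficients.

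The decisive and by far most laborious step is to reorganise the resulting bulk term into the conservative form $\big\lan d*_g\vec V,\vec w\big\ran$ with
\[
\vec V=2\,\langle d\bP,dH\rangle_g\,dH-\tfrac12\,d(\bn\,\Delta_g H)+\Delta_g H\,d\bn-|dH|_g^2\,d\bP,
\]
all remaining terms being absorbed into exact differentials $d(\cdots)$ which pair with $\vec w$ and hence vanish after integration by parts against the compactly supported $\vec w$. One checks that $2\langle d\bP,dH\rangle_g\,dH$ arises from $\delta g^{ij}$, that $-|dH|_g^2\,d\bP$ arises from $\delta(\dvol_g)$, and that the higher-order pair $-\tfrac12 d(\bn\,\Delta_g H)+\Delta_g H\,d\bn$ comes from the term $2g^{ij}\p_i(\delta H)\p_j H$ after the second integration by parts. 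The pointwise translation invariance of $|dH|_g^2\,\dvol_g$ (it depends on $\bP$ only through $d\bP$ and its derivatives) guarantees a priori that this bulk term is a pure divergence---this is exactly Noether's theorem for higher-order Lagrangians recalled above---which both explains the structure of $\vec V$ and provides a consistency check for the bookkeeping. The main obstacle is therefore purely computational: carefully tracking the numerous terms produced by $\delta(\Delta_g H)$, in particular the variation of the Laplace--Beltrami coefficients and the tangential/normal splittings of $\p^2_{ij}\bP$, and recognising the combinations $2\langle d\bP,dH\rangle_g\,dH$ and $\Delta_g H\,d\bn$, which are the truly new pieces relative to the two-dimensional Willmore case. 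Once the pointwise identity is established, the fundamental lemma of the calculus of variations gives \eqref{Euler-1} in $\mathcal D'(B^4,\R^5\ot\bwe^4 T^*B^4)$, and the equivalence with $\bP$ being a weak critical point of $E$ is immediate.
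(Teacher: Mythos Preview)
Your proposal is correct and follows essentially the same route as the paper: compute the pointwise variation of $|dH_t|_{g_t}^2\,\dvol_{g_t}$ along $\bP_t=\bP+t\vec w$, integrate by parts to isolate $\vec w\cdot d*_g\vec V$ plus exact terms, and conclude by the fundamental lemma. One small organisational point: the paper never computes $\delta(\Delta_g H)$ directly---instead it writes the variation as containing $2\langle dH,d(\delta H)\rangle_g\,\dvol_g$, integrates by parts \emph{once} to produce $-2\,\Delta_g H\cdot\delta H\,\dvol_g$ plus an exact form, and only then substitutes the formula for $\delta H$ (which involves at most one derivative of $\vec w$ plus another exact term); this keeps the bookkeeping shorter than what you outline.
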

\begin{proof}
   Let $\bP\in W^{3,2}_{\imm}(B^4,\R^5)$ and $\vec w\in C^\infty(\ov{B^4},\R^5)$. We consider the variation 
\begin{align*}
    \bP_t=\bP+t\bw.
\end{align*}
    Similar to Section \ref{sec:CL}, we denote $g_t\coloneqq g_{\vec{\Phi}_t}$, $\det g_t\coloneqq\det (g_{ij,\bP_t})$, etc. The same computation as in~\eqref{dg^ij}--\eqref{dg} implies 
\begin{align}\label{ptvarH4d}\begin{dcases}
    \lf.\frac{d}{dt} \,g_t^{ij}\rg|_{t=0}=-g^{ik}g^{sj}(\p_{k}\vec w \cdot\p_{s}\bP+\p_{s}\vec w \cdot\p_{k}\bP),\\[1mm]
    \lf.\frac{d}{dt} (\det g_t)^{\frac 12}\rg|_{t=0}=(\det g)^{\frac 12}\,g^{ij}\,\p_{j}\vec w \cdot\p_{i}\bP,\\[1mm]
     4\lf.\frac{d}{dt} \,H_t\rg|_{t=0}= (\det g)^{-\frac 12}\, \p_i \lf ((\det g)^{\frac 12} \,g^{ij}  (\bn \cdot \p_{j}\vec w)\rg)+g^{ij}\, \p_{i}\bn\cdot \p_{j}\vec w.
    \end{dcases}
\end{align}
 We obtain the pointwise a.e. variation
\begin{align}\label{ptvar4d}
\begin{aligned}
    &\frac d{dt}\big(|d H_t|_{g_t}^2\,\dvol_{g_t}\big)\Big|_{t=0}\\[0.5ex]
    &= \lf(\frac{d}{dt}\, g_t^{ij}\Big|_{t=0}\p_i H\mkt\p_j H+ 2\,\bigg\langle dH,d\Big(\frac{d}{dt}\mkt H_t\Big|_{t=0}\Big) \bigg\rangle_{\!g} \rg)\dvol_g+ |dH|_g^2\mkt \frac{d}{dt}\,\dvol_{g_t}\Big|_{t=0}\\
    &=-\mko 2\mkt \langle d\vec{\Phi},dH\rangle_g\cdot
    \lf\langle dH,d\vec{w}\rg\rangle_g \dvol_g-2\mkt (*_g \,dH)\we d\Big(\frac{d}{dt}\mkt H_t\Big|_{t=0}\Big)\\[1.3ex]
    &\quad +|dH|_g^2\, g^{ij}\mko (\p_{j}\vec w \cdot\p_{i}\bP)\,\dvol_g.
    \end{aligned}
\end{align}
 Now for $a\in L^\nf\cap W^{2,2}(B^4)$ and $f\in L^{2}(B^4)$, we write 
\begin{align*}
         a \, \p_i\p_j f=\p_i\p_j(af)-\p_i(f\,\p_j\mko a  ) -\p_j(f\,\p_i\mko a )+f\,\p_i\p_j\mko a.
\end{align*}
We have the following estimates:
\begin{align*}\begin{dcases}
   \big \|\p_i\p_j (af)\big\|_{W^{-2,2}(B^4)}\le \|af \|_{L^{2}(B^4)}\le \|a\|_{L^\nf(B^4)}\mko\|f\|_{L^{2}(B^4)},\\[1ex]
    \big\|\p_i(f\,\p_j\mko a  )\big\|_{W^{-1,\frac 43}(B^4)}\le \|f\,\p_j\mko a  \|_{L^{\frac 43}(B^4)}\le  \|\g a\|_{L^4(B^4)}\mko \|f\|_{L^{2}(B^4)},\\[0.7ex]
    \|f\,\p_i\p_j\mko a\|_{L^1(B^4)}\le \|f\|_{L^2(B^4)}\|\g^2 a\|_{L^2(B^4)}.
    \end{dcases}
\end{align*}
Hence there exists a universal constant $C>0$ such that for all $a\in L^\nf\cap W^{2,2}(B^4)$ and $T\in W^{-2,2}+L^1(B^4)$, it holds that
\begin{align}\label{W-22L1estaT}
    \|aT\|_{W^{-2,2}+L^1(B^4)}\le C\mko\|a\|_{L^\nf\cap W^{2,2}(B^4)}\|T\|_{W^{-2,2}+L^1(B^4)}.
\end{align}
 Combining~\eqref{ptvarH4d}--\eqref{W-22L1estaT}, we obtain in $W^{-2,2}+L^1(B^4,\bwe^4T^*B^4)$ that
\begin{align}\label{ptvardH^2det12}
\begin{aligned}
     &\frac d{dt}\big(|d H_t|_{g_t}^2\,\dvol_{g_t}\big)\Big|_{t=0}\\
     &= -2\bigg(\langle d\vec{\Phi},dH\rangle_g\cdot
    \lf\langle dH,d\vec{w}\rg\rangle_g + \Delta_g H\mko \frac{d}{dt}\mko H_t\Big|_{t=0}\bigg)\,\dvol_g+ 2\mkt d\mko\bigg(\frac{d}{dt} H_t \Big|_{t=0} *_gdH\bigg)\\[0.3ex]
     &\quad +|dH|_g^2\, g^{ij}\mko (\p_{j}\vec w \cdot\p_{i}\bP)\,\dvol_g\\[0.3ex]
     &=\Big(-2\mkt \langle d\vec{\Phi},dH\rangle_g\cdot
    \langle dH,d\vec{w}\rangle_g + \frac 12\mko \p_i( \Delta_g H)\mko g^{ij}\mkt \bn\cdot \p_j \vec w-\frac 12\mko \Delta_g H\mkt  g^{ij}\mkt \p_i\bn\cdot\p_j\vec w\Big)\,\dvol_g\\[0.3ex]
    &\quad +d*_g \Big( -\frac 12\lap_g H \,  \bn \cdot d \bw+2\,\frac{d}{dt} H_t \Big|_{t=0} \,dH\Big)+|dH|_g^2\, g^{ij}\mko (\p_{j}\vec w \cdot\p_{i}\bP)\,\dvol_g\\[0.3ex]
    &=\Big\langle -2\,\langle d\vec{\Phi},dH\rangle_g \,dH+ \frac 12\mko\bn \,d(\Delta_g H)- \frac 12\mko \Delta_g H\mko d\bn+|dH|_g^2\,d\bP,d\vec w\Big\rangle_g\,\dvol_g\\
    &\quad +d*_g \Big( -\frac 12\lap_g H \,  \bn \cdot d \bw+2\,\frac{d}{dt} H_t \Big|_{t=0}\, dH\Big).
\end{aligned}
\end{align}
We define 
\begin{align}\label{defV4d}\begin{aligned}
    \vec V
    &\coloneqq 2\,\langle d\vec{\Phi},dH\rangle_g \,dH -\frac 12\,d(\bn \, \Delta_g H)+ \Delta_g H\,d\bn-|dH|_g^2\,d\bP\\
    &=2\,\langle d\vec{\Phi},dH\rangle_g \,dH- \frac 12\mko\bn \,d(\Delta_g H)+ \frac 12\mko \Delta_g H\mko d\bn-|dH|_g^2\,d\bP.
    \end{aligned}
\end{align}
Then we have $\vec V\in W^{-2,2}+L^1(B^4,\R^5\ot T^*B^4)$.
From~\eqref{ptvardH^2det12} it follows that in $W^{-3,2}+W^{-1,1}(B^4,\bwe^4T^*B^4)$,
\begin{align}\label{ptvardH^2dvol}
\begin{aligned}
   \frac d{dt}\big(|d H_t|_{g_t}^2\,\dvol_{g_t}\big)\Big|_{t=0}&= (*_g\,\vec V) \dwe d\vec w+d*_g \Big( -\frac 12\lap_g H \,  \bn \cdot d \bw+2\,\frac{d}{dt} H_t \Big|_{t=0}\, dH\Big)\\
    &=\vec w\cdot d*_g\vec V-d*_g \Big(\vec V\cdot \bw +\frac 12\mko \lap_g H \,  \bn \cdot d \bw-2\,\frac{d}{dt} H_t \Big|_{t=0}\, dH\Big).
\end{aligned}
\end{align}
As in~\eqref{varwillint}, for $\vec w\in C_c^\nf(B^4,\R^5)$ we obtain
 \begin{align*}
  &\frac d{dt}\, E(\bP_t)\Big|_{t=0}=\frac 12\mko \big\lan d*_g \vec V,\bw \big\ran.
\end{align*}
Therefore, we conclude that $\bP\in W^{3,2}_{\textup{imm}}(B^4,\R^5)$ is a weak critical point of $E$ if and only if $d*_g \vec V=0$.
\end{proof}

 Let $\bP\in W^{3,2}_{\textup{imm}}(B^4,\R^5)$ be a weak critical point of $E$, and define $\vec V$ as in~\eqref{defV4d}. By the embeddings $L^1(B^4)\hookrightarrow W^{-1,(4/3,\nf)}(B^4)\hookrightarrow W^{-2,(2,\nf)}(B^4)$ (see Lemma~\ref{lm:LpembW-1}), we obtain $*_g \,\vec V\in W^{-2,(2,\nf)}\big(B^4,\R^5\ot \bwe^3 T^*B^4\big)$. By weak Poincar\'e lemma (see for instance \cite[Cor.~3.4]{Costa10}) and Lemma~\ref{lm-critic-W^{3,2}}, there exists $\bL\in W^{-1,(2,\infty)}\big(B^4,\R^5\ot \bigwedge ^2 T^*B^4\big)$ such that \begin{align}\label{d*gL=*gV4d}
     d *_g \bL=*_g \,\vec V.
      \end{align}

\subsubsection*{The Noether currents associated to dilations and rotations.}
Since $|d H|_g^2\,\dvol_g$ is pointwise invariant under dilations and rotations, we can apply Noether theorem to find the corresponding conservation laws as in Theorem~\ref{thmcon1}. The operators $\,\resg$ and $\,\res$ are defined in \eqref{defresg}, and we define bilinear maps $\overset{\res}{\we}$, $\wres$ in the same way as in~\eqref{defdwe}: the upper operators act on the $\bwe \R^5$-factors, and the base operators $\we$, $\resg$, act on the $\bwe T^*_x B^4$-factors.
\begin{Prop}
    \label{prop:conlawdilrot}
Let $\bP\in W^{3,2}_{\text{imm}}(B^4, \R^5)$ be a weak critical point of $E$. Then there exists $\bL\in W^{-1,(2,\nf)}(B^4,\R^5\ot \bwe^2 T^*B^4)$ satisfying \eqref{d*gL=*gV4d} and moreover, we have
\begin{subnumcases}{}
    d*_g\big( \vec{L} \,\dres  d\bP+d(H^2)\mko\big)=0,\label{cons-law-dilation}\\[1ex]  d*_g\Big( -\vec L \wres d\vec\Phi -\frac 12\mko \lap_g H\mkt\vec n \we d\vec\Phi\Big)=0.\label{con-law-rotation-4d}
\end{subnumcases}
\end{Prop}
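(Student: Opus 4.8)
The strategy is the direct analogue of the proof of Theorem~\ref{thmcon1}, now applied to the Lagrangian density $|dH|_g^2\,\dvol_g$ which is, unlike $H^2\,\dvol_g$ in dimension $2$, genuinely scale invariant as well as rotation invariant. We start from the pointwise variation formula~\eqref{ptvardH^2dvol}, which is valid for \emph{any} admissible variation $\bw=\frac{d}{dt}\bP_t|_{t=0}$, not only compactly supported ones. For a dilation $\bP_t=(1+t)\bP$ we have $\bw=\bP$, and since $|dH_t|_{g_t}^2\,\dvol_{g_t}=|dH|_g^2\,\dvol_g$ for all $t$ (this is exactly scale invariance: $H_t=(1+t)^{-1}H$, $g_t=(1+t)^2 g$, $dH_t=(1+t)^{-1}dH$ and the $g_t$-norm contributes $(1+t)^{-2}$ while $\dvol_{g_t}$ contributes $(1+t)^4$), the left-hand side of~\eqref{ptvardH^2dvol} vanishes. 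Using $d*_g\vec V=0$ and $\frac{d}{dt}H_t|_{t=0}=-H$ together with $\bn\cdot d\bP=0$, the boundary terms collapse to $d*_g\big(\vec V\dres\bP - 2H\, dH\big)=0$; but $\vec V\dres\bP = (d*_g\bL)\dres \bP$ up to $*_g$, and after moving the exterior derivative inside one is left precisely with $d*_g\big(\vec L\dres d\bP\big)$ plus a term $d*_g\big(d(H^2)\big)$ coming from $-2H\,dH=-d(H^2)$. Reorganising gives~\eqref{cons-law-dilation}. Throughout, the estimate~\eqref{W-22L1estaT} and the mapping properties of $\dres$ guarantee every manipulation is legitimate in the relevant negative Sobolev--Lorentz spaces, so the formal computation is justified first for $C^\infty$ immersions and then extended by Theorem~\ref{th:Approx} together with the continuity of~\eqref{ptvardH^2dvol}.

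For the rotation law we imitate the second half of the proof of Theorem~\ref{thmcon1}: fix $\vec a\in\R^5$ (more precisely $\vec a\in\mathfrak{so}(5)$, i.e. we use a one-parameter subgroup of $SO(5)$), let $\bP_t=Q_t\circ\bP$ with $\dot Q_0$ the corresponding skew endomorphism, so that $\bw=\dot Q_0\,\bP$ and $d\bw=\dot Q_0\,d\bP$. Again $|dH_t|_{g_t}^2\,\dvol_{g_t}$ is constant in $t$ because rotations are isometries of $\R^5$ preserving the orientation data, hence~\eqref{ptvardH^2dvol} forces $d*_g\big(\vec V\cdot\bw+\tfrac12\lap_g H\,\bn\cdot d\bw-2(\tfrac{d}{dt}H_t|_0)\,dH\big)=0$. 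But $\frac{d}{dt}H_t|_0=0$ for a rigid rotation (the mean curvature is invariant), so only the first two terms survive. Writing $\vec V\cdot\bw=(d*_g\vec L)\dres\bw$ and using skew-symmetry of $\dot Q_0$ to pull it out of the divergence — exactly as the computation with $\vec a\cdot d(\cdots)$ in Theorem~\ref{thmcon1}, but now with the wedge $\wres$ replacing the cross product $\times$ of $\R^3$ — one obtains, after integrating by parts inside, the identity $d*_g\big(-\vec L\wres d\bP-\tfrac12\lap_g H\,\bn\wedge d\bP\big)=0$, which is~\eqref{con-law-rotation-4d}. The term $\tfrac12\lap_g H\,\bn\wedge d\bP$ is the genuinely new feature compared to dimension $2$: it arises from the second-order contribution $\tfrac12\lap_g H\,\bn\cdot d\bw$ in~\eqref{ptvardH^2dvol}, which has no counterpart in the Willmore computation because there the analogous boundary term was first-order.

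The main technical obstacle is \textbf{bookkeeping in low-regularity spaces}: $\bL$ only lies in $W^{-1,(2,\infty)}$, $\lap_g H$ only in $W^{-2,2}+L^1$, and the products appearing — $\vec L\dres d\bP$, $\lap_g H\,\bn\wedge d\bP$ — must be shown to define distributions whose exterior derivative makes sense, so that the "integrate by parts inside the divergence" moves are rigorous and not merely formal. The resolution is to perform all computations for $\bP\in C^\infty(\overline{B^4},\R^5)$ first (where everything is classical), express the conclusion as the vanishing of $d*_g$ of an explicit expression built from $\bP$, $\bn$, $H$, $\lap_g H$, and $\vec L$, and then invoke the approximation Theorem~\ref{th:Approx} to pass to general weak critical points — checking at each stage, via~\eqref{W-22L1estaT} and Lemma~\ref{lm:LpembW-1}, that both sides converge in the correct negative-order Sobolev--Lorentz topology. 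A secondary point requiring care is the precise form of the boundary/total-derivative terms in~\eqref{ptvardH^2dvol}: one must track the factor $\tfrac12$ and the sign conventions for $*_g$ in dimension $4$ (recall $d^{*_g}=-*_g d\,*_g$ and the even-dimension sign in~\eqref{*_gcomres}), since these are exactly what distinguishes the dilation current $d(H^2)$-correction and the rotation current $\bn\wedge d\bP$-correction from a naive transcription of the $2$-dimensional formulas.
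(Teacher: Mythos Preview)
Your plan is correct and follows essentially the same Noether-current route as the paper: both start from the variation formula~\eqref{ptvardH^2dvol}, exploit the pointwise invariance of $|dH|_g^2\,\dvol_g$ under dilations and rotations to kill the left-hand side, compute $\frac{d}{dt}H_t|_{t=0}=-H$ (resp.\ $=0$), and then move the exterior derivative onto $\bP$ via $d*_g\bL=*_g\vec V$ and~\eqref{*_gcomres} to extract~\eqref{cons-law-dilation} and~\eqref{con-law-rotation-4d}. The paper parametrises rotations by $\vec a\in\bwe^2\R^5$ acting as $\vec a\,\res\bP$ and verifies $\frac{d}{dt}H_t|_{t=0}=0$ by direct computation from~\eqref{ptvarH4d} rather than by your invariance argument, but this is a cosmetic difference, not a different strategy.
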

\begin{proof}
To prove~\eqref{cons-law-dilation}, we first consider the variation $\bP_t=(1+t)\bP$ with 
$$
    \bw=\frac d{dt}\mko \bP_t\Big|_{t=0}=\bP.
$$
Denote $H_t$ and $g_t$ as in the proof of Lemma~\ref{lm-critic-W^{3,2}}. Since $|d H_t|_{g_t}^2\,\dvol_{g_t}=|d H|_g^2\,\dvol_g$, combining~\eqref{ptvardH^2dvol}--\eqref{d*gL=*gV4d}, we obtain that
\begin{align}\label{dilconcomp1}
\begin{aligned}
    0&=   \frac d{dt}\big(|d H_t|_{g_t}^2\,\dvol_{g_t}\big)\Big|_{t=0}\\
    &=-\mkt d*_g \Big(\vec V\cdot \bw +\frac 12\mko \lap_g H \,  \bn \cdot d \bw-2\,\frac{d}{dt} H_t \Big|_{t=0}\, dH\Big)\\
    &=-\mkt d\Big(\bP\cdot d*_g\vec L-2\,\frac{d}{dt} H_t \Big|_{t=0}\, *_gdH\Big)\\
    &= d\Big((*_g\,\vec L)\dwe d\bP+2\,\frac{d}{dt} H_t \Big|_{t=0}\, *_gdH\Big).
    \end{aligned}
\end{align}
By~\eqref{ptvarH4d}, we have 
\begin{align*}
    \frac{d}{dt} H_t\Big|_{t=0}=\frac 14\mkt g^{ij}\mkt\p_i\bn\cdot \p_j\bP=-H.
\end{align*}
Then it follows from~\eqref{dilconcomp1} together with~\eqref{*_gcomres} that
\begin{align*}
    d*_g\big( \vec{L} \,\dres  d\bP+d(H^2)\mko\big)=-\mko d\big(\mko(*_g\,\bL)\dwe d\bP-2H*_gdH\big)=0.
\end{align*}
Next, we prove~\eqref{con-law-rotation-4d}.  For $\vec a\in \bwe^2\R^5$ we define $\bP_t$ by
\begin{align*}
  \begin{dcases} 
  \frac d{dt}\mko \bP_t=\vec a\,\res \bP_t, 
  \qquad t\in \R,\\[0.3ex]
  \bP_0=\bP.
  \end{dcases}
\end{align*} 
For this variation, the equation~\eqref{ptvardH^2dvol} remains valid. Since $ \frac d{dt}\mko \bP_t\cdot \bP_t=0$, we have $|\bP_t|^2=|\bP|^2$ for all $t\in \R$. 
Hence, there exists $Q_t\in \text{SO}(5)$ depending only on $\vec a$ and $t$ such that $\bP_t=Q_t\circ \bP$ on $B^4$. Consequently, we have $|d H_t|_{g_t}^2\,\dvol_{g_t}=|d H|_g^2\,\dvol_g$ for all $t\in \R$.  Using the identity $\vec a\,\res(\vec b\we \vec c)=\vec c\cdot (\vec a\,\res \vec b)$ for $\vec b,\vec c\in \R^5$, it follows that
\begin{align}\label{rotconcomp1}
\begin{aligned}
    0&=   \frac d{dt}\big(|d H_t|_{g_t}^2\,\dvol_{g_t}\big)\Big|_{t=0}\\
    &=-\mkt d*_g \Big(\vec V\cdot \bw +\frac 12\mko \lap_g H \,  \bn \cdot d \bw-2\,\frac{d}{dt} H_t \Big|_{t=0}\, dH\Big)\\
    &=-\mkt d\Big((\vec a\,\res \bP)\cdot d*_g\bL+\frac 12\mko \lap_g H \,  \bn \cdot (\vec a\,\res d\bP)-2\,\frac{d}{dt} H_t \Big|_{t=0}\, dH \Big)\\
    &= -\mkt \vec a\,\res d\Big(\bP\we d*_g\vec L+\frac 12\mko \lap_g H\,d\bP\we \bn\Big)+2\mko d\Big(\frac{d}{dt} H_t \Big|_{t=0}\, dH \Big)\\
    &=-\mkt \vec a \,\res d\Big((*_g\,\bL) \ovs{\ovwe}\we d\bP-\frac 12\mko \lap_g H\,\bn\we d\bP\Big)+2\mko d\Big(\frac{d}{dt} H_t \Big|_{t=0}\, dH \Big).
\end{aligned}
\end{align}
By~\eqref{ptvarH4d}, we compute
\begin{align*}
    4\, \frac{d}{dt} H_t \Big|_{t=0}&= *_g\, d\big(\bn\cdot *_g\,d\bw \big)+ d\bn\,\dres d\bw\\
    &=\vec a\,\res \Big(*_g d(*_g\,d\bP\we \bn)+d\bP\wres d\bn\Big)\\
    &=\vec a\,\res \Big(4\vec H\we \bn +*_g\big( d\bn\ovs{\ovwe}{\we}(*_g\,d\bP)\mko\big)+d\bP\wres d\bn \Big)\\
    &=\vec a\,\res \Big(d\bn\wres d\bP+ d\bP\wres d\bn \Big)\\[0.8ex]
    &=0.
\end{align*}
Since $\vec a\in \bwe^2\R^5$ is arbitrary, by~\eqref{rotconcomp1} and~\eqref{*_gcomres} we then obtain
\begin{align*}
    d*_g\Big( -\vec L \wres d\vec\Phi -\frac 12\mko \lap_g H\mkt\vec n \we d\vec\Phi\Big)=d\Big((*_g\,\bL) \ovs{\ovwe}\we d\bP-\frac 12\mko \lap_g H\,\bn\we d\bP\Big)=0.
\end{align*}
This completes the proof.

\end{proof}

By the embedding results in Lemma~\ref{lm:LpembW-1}, for any index $1\le j\le 4$ and any maps $a\in L^\nf \cap W^{1,4}(B^4)$ and $f\in L^{2,\nf}(B^4)$, we have 
\begin{align*}
    \|a \mkt \p_{j} f  \|_{W^{-1,(2,\nf)}(B^4)}
    &\le \|\p_{j}(af)\|_{W^{-1,(2,\nf)}(B^4)}+C \mko\|f\mkt\p_{j} a \|_{L^{\frac{4}{3},\nf}(B^4)}\\
    &\le \|af\|_{L^{2,\nf}(B^4)}+C\mko\|\p_{j} a\|_{L^4(B^4)} \|f\|_{L^{2,\nf}(B^4)}\\[0.7ex]
    &\le C\mko \|a\|_{L^\nf \cap W^{1,4}(B^4)} \mko\|f\|_{L^{2,\nf}(B^4)}.
\end{align*}
Hence, there exists a universal constant $C>0$ such that for all $a\in L^\nf \cap W^{1,4}(B^4)$ and $T\in W^{-1,(2,\nf)}(B^4)$, it holds that
\begin{align}\label{prornd}
    \|aT\|_{W^{-1,(2,\nf)}(B^4)}\le C \mko\|a\|_{L^\nf \cap W^{1,4}(B^4)} \mko \|T\|_{W^{-1,(2,\nf)}(B^4)}.
\end{align}  
Using the inequality~\eqref{prornd}, we obtain that
\begin{align*}
    \begin{dcases}
       *_g\big( \vec{L} \,\dres  d\bP+d(H^2)\mko\big)\in W^{-1,(2,\nf)}\Big(B^4,\bwe\nolimits^3 T^*B^4\Big),\\[0.5ex]
        *_g\Big( -\vec L \wres d\vec\Phi -\frac 12\mko \lap_g H\mkt\vec n \we d\vec\Phi\Big) \in W^{-1,(2,\nf)}\Big(B^4,\bwe\nolimits^2\R^5\ot \bwe\nolimits^3 T^*B^4\Big).
    \end{dcases}
\end{align*}
We define the codifferential $d^{*_g}$ as in~\eqref{defd*glap}. Then by Proposition~\ref{prop:conlawdilrot} and the weak Poincar\'e lemma \cite[Cor.~3.4]{Costa10}, there exist $S\in L^{2,\nf}(B^4,\bigwedge^2T^*B^4)$ and $\vec R\in L^{2,\nf}\big(B^4,\bigwedge^2 \R^5\ot \bigwedge^2T^*B^4\big)$ such that 
\begin{gather}\label{d*SRsys4d}
\begin{dcases}
    d^{*_g} S=-\vec L\,\dres d\bP-d(H^2),\\[1ex]
    d^{*_g} \vec R = -\vec L \wres d\vec\Phi -\frac 12\mko \lap_g H\mkt\vec n \we d\vec\Phi.
            \end{dcases}
\end{gather}

Following Remark~\ref{rm-conf-will}, it is natural to ask in 4 dimension this time the following question.

\begin{op}
\label{op-q-100}
Interpret variationally the following equations: Suppose there exists $\vec{L} $ in $ W^{-1,(2,\nf)}\big(B^4,\R^5\ot \bwe^2 T^*B^4\big)$ such that
\begin{align}\label{4d-will-const}
  \begin{dcases} 
  d*_g\big( \vec{L} \,\dres  d\bP+d(H^2)\mko\big)=0,\\[1ex]  
  \displaystyle      d*_g\Big( -\vec L \wres d\vec\Phi -\frac 12\mko \lap_g H\mkt\vec n \we d\vec\Phi\Big)=0.
\end{dcases}
    \end{align}
Does the system of equations (\ref{4d-will-const}) correspond to Euler--Lagrange equations related to the variations of the Dirichlet Energy of the mean curvature under some constraint?
Recall from  Remark~\ref{rm-conf-will} that in 2 dimension the corresponding system of equations is equivalent to the {\it Conformal Willmore Equations} obtained by taking variations of the Willmore equations with some constraint on the underlying conformal class induced by the metric.   
\end{op}

Now we deduce the counterpart to \eqref{lap3} for weak critical points of $E$.\footnote{Partly due to the fact that $|d H|_g^2\,\dvol_g$ is not conformally invariant, we do not obtain a divergence-form identity as in~\eqref{eq:inv}.}

\begin{Lm}
\label{lm-HSR}
With the above notations there holds
\be
\label{HSR}
2\mko \Delta_g H  \mkt \vec{n}-\langle d(H^2), d\vec{\Phi}\rangle_g= (d^{*_g}\vec{R})\,\overset{\res}{\res}_g d\vec{\Phi}+ d^{*_g}S\,\resg d\vec{\Phi}.
\ee
\end{Lm}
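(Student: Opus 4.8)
The statement \eqref{HSR} is an algebraic identity between tensors obtained by contracting the definitions \eqref{d*SRsys4d} of $S$ and $\vec R$ with $d\vec\Phi$. The plan is to compute the two contractions $d^{*_g}S\,\resg d\vec\Phi$ and $(d^{*_g}\vec R)\,\overset{\res}{\res}_g d\vec\Phi$ separately, using the explicit formulas \eqref{d*SRsys4d}, and then add them. First I would substitute $d^{*_g}S=-\vec L\,\dres d\vec\Phi-d(H^2)$ and compute $\big(-\vec L\,\dres d\vec\Phi-d(H^2)\big)\resg d\vec\Phi$. The term $-d(H^2)\resg d\vec\Phi$ is already one of the terms on the left-hand side of \eqref{HSR} (up to sign), so the work is to understand $\big(\vec L\,\dres d\vec\Phi\big)\resg d\vec\Phi$. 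Here one uses that $\vec L$ is a $2$-form valued in $\mathbb R^5$, so $\vec L\,\dres d\vec\Phi$ is a $1$-form (valued in $\mathbb R$, since $\dres$ pairs the $\mathbb R^5$ factors by the dot product), and contracting again with $d\vec\Phi$ via $\resg$ against the $T^*B^4$ factors produces a $0$-form.

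Next I would do the same with $\vec R$: substitute $d^{*_g}\vec R=-\vec L\wres d\vec\Phi-\tfrac12\Delta_g H\,\vec n\we d\vec\Phi$ and compute $\big(-\vec L\wres d\vec\Phi-\tfrac12\Delta_g H\,\vec n\we d\vec\Phi\big)\overset{\res}{\res}_g d\vec\Phi$. The second term gives $-\tfrac12\Delta_g H\,(\vec n\we d\vec\Phi)\overset{\res}{\res}_g d\vec\Phi$; expanding the interior product $\overset{\res}{\res}_g$ acting on both the $\bwe^2\mathbb R^5$-factor and the $T^*B^4$-factor, and using $|d\vec\Phi(X)|^2=g(X,X)$ together with $\vec n\perp d\vec\Phi(TB^4)$, this should collapse to a multiple of $\Delta_g H\,\vec n$; tracking the combinatorial constants (there are $4$ coordinate directions, so factors of $4$ appear, compensating the $\tfrac12$ and the $2$) should yield exactly $2\Delta_g H\,\vec n$. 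For the first term, $\big(\vec L\wres d\vec\Phi\big)\overset{\res}{\res}_g d\vec\Phi$: $\vec L\wres d\vec\Phi$ is a $2$-form valued in $\bwe^2\mathbb R^5$; contracting once more with $d\vec\Phi$ should reproduce, up to sign, the contribution $\big(\vec L\,\dres d\vec\Phi\big)\resg d\vec\Phi$ coming from the $S$-side, so that the two $\vec L$-dependent terms cancel when we add the two contractions. This cancellation is the heart of the identity — it is why $\vec L$ disappears from \eqref{HSR} — and it is exactly parallel to how, in the $2$-dimensional case, the $\vec L$-terms drop out of \eqref{struct}.

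The concrete tool for both cancellations is the product rule \eqref{prodrule} for the interior product together with the commutation relations \eqref{*_gcommwed}, \eqref{*_gcomres}, and the orthogonal decomposition $\mathbb R^5=d\vec\Phi(T_xB^4)\oplus N_x\Sigma$ with $N_x\Sigma=\mathbb R\vec n$. Concretely, in a local frame $(\partial_1,\dots,\partial_4)$ one writes $\vec L=\tfrac12 \vec L^{ij}\,dx_i\we dx_j$ and computes $\vec L\,\dres d\vec\Phi$ and $\vec L\wres d\vec\Phi$ componentwise, then contracts against $d\vec\Phi=\partial_k\vec\Phi\,dx_k$; the key point is that $\vec L^{ij}\cdot\partial_k\vec\Phi$ and $\vec L^{ij}\we\partial_k\vec\Phi$ appear in both computations, with the metric contractions arranged so that the sum telescopes. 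I would first verify the identity for smooth immersions $\vec\Phi\in C^\infty$, where all products make classical sense, and then note that by density of smooth immersions (Theorem~\ref{th:Approx}) and the fact that every term in \eqref{HSR} is continuous in the $W^{3,2}$ topology on immersions (the regularity of $\vec L,S,\vec R$ being as recorded after \eqref{d*SRsys4d}), the identity extends to $\vec\Phi\in W^{3,2}_{\mathrm{imm}}(B^4,\mathbb R^5)$.

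\textbf{Main obstacle.} The genuine difficulty is purely bookkeeping: getting all the combinatorial constants and signs right in the double interior products $\overset{\res}{\res}_g$ and $\wres$, since these involve both the $\bwe\mathbb R^5$-factors and the $\bwe T^*B^4$-factors and the Hodge/interior-product sign in \eqref{*_gcomres} depends on the parity of $\dim V$ (here $\dim T^*_xB^4=4$ is even, $\dim\mathbb R^5=5$ is odd, so the two contractions carry different sign conventions). I would handle this by reducing everything to the scalar contractions $\langle\,\cdot\,,\,\cdot\,\rangle_g$ via \eqref{defresg} and expanding in an orthonormal frame for $g$, which makes the cancellation of the $\vec L$-terms and the extraction of the $2\Delta_g H\,\vec n$ term transparent.
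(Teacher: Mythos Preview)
Your plan is correct and matches the paper's approach exactly: substitute \eqref{d*SRsys4d}, use the product rule \eqref{prodrule} on the $\bigwedge\mathbb R^5$-factor to show $\big(\vec L\wres d\vec\Phi\big)\overset{\res}{\res}_g d\vec\Phi=-\big(\vec L\,\dres d\vec\Phi\big)\resg d\vec\Phi$ (the extra term $\vec L\,\resg(d\vec\Phi\dwe d\vec\Phi)$ vanishes since $g_{ij}$ is symmetric), and compute $(\vec n\wedge d\vec\Phi)\overset{\res}{\res}_g d\vec\Phi=-4\vec n$ using $\vec n\cdot d\vec\Phi=0$ and $d\vec\Phi\,\dres d\vec\Phi=g^{ij}g_{ij}=4$. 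The density step you add at the end is unnecessary: once \eqref{d*SRsys4d} is in hand, \eqref{HSR} is a pointwise algebraic identity in the relevant tensor bundles and needs no approximation.
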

 \begin{proof}
Applying the identity~\eqref{prodrule} to $\,\res$ acting on $\bwe \R^5$, we have 
\begin{align}\begin{aligned}
    \label{II.7-a}
    \big(\vec L \wres d\vec\Phi \big)\,\ovs{\res}{\res}_g d\vec{\Phi}&=-\big(\vec{L}\,\dres d\vec{\Phi}\big)\,\resg d\vec{\Phi}-\vec L \,\resg \big(d\bP\dwe d\bP\big)\\[1ex]
    &=-\big(\vec{L}\,\dres d\vec{\Phi}\big)\,\resg d\vec{\Phi}.
\end{aligned}
\end{align}  
We also have 
\begin{align}\begin{aligned}
\label{II.7-b}
    (\vec n \we d\vec\Phi)\,\ovs{\res}{\res}_g d\vec{\Phi}
    &=d\bP \,\resg \big(\bn\cdot d\bP\big)-\bn\mko \big( d\bP\,\dres d\bP\big)\\[2mm]
    &=-\mko 4\mko \bn.
\end{aligned}\end{align}
Therefore, by~\eqref{d*SRsys4d} we obtain
\begin{align*}    
   (d^{*_g} \vec R) \,\ovs{\res}{\res}_g d\vec{\Phi}+d^{*_g}S\,\resg  d\vec{\Phi}&=-\frac 12\mko \Delta_gH (\vec n \we d\vec\Phi)\,\ovs{\res}{\res}_g d\vec{\Phi}-d(H^2)\,\resg d\vec{\Phi}\\[1ex]
    &=2\mko \Delta_g H  \mkt \vec{n}-\langle d(H^2), d\vec{\Phi}\rangle_g.
 \end{align*}
 This completes the proof.
 \end{proof}

We summarize our results of this section in the following theorem.
\begin{thm}
    Assume $\bP\in W_{imm}^{3,2}(B^4,\R^5)$ is a critical point of the functional 
$$
    E(\vec{\Phi})\coloneqq \frac 12\int_{B^4}\,|dH|_g^2 \, \dvol_g. 
$$
Then there exist $\bL\in W^{-1,(2,\nf)}\big(B^4,\R^5\ot \bwe^2 T^*B^4\big)$, $S\in L^{2,\nf}(B^4,\bigwedge^2T^*B^4)$, and $\vec R\in L^{2,\nf}\big(B^4,\bigwedge^2 \R^5\ot \bigwedge^2T^*B^4\big)$ such that the following system holds:
\begin{equation}\label{sys-LRS}
    \begin{dcases}
        d *_g \bL=*_g\Big(2\,\langle d\vec{\Phi},dH\rangle_g \,dH -\frac 12\,d(\bn \, \Delta_g H)+ \Delta_g H\,d\bn-|dH|_g^2\,d\bP\Big),\\[0.5ex]
        d^{*_g} S=-\vec L\,\dres d\bP-d(H^2),\\[0.5ex]
           d^{*_g} \vec R = -\vec L \wres d\vec\Phi -\frac 12\mko \lap_g H\mkt\vec n \we d\vec\Phi,\\[0.5ex]
        (d^{*_g}\vec{R})\,\overset{\res}{\res}_g d\vec{\Phi}+ d^{*_g}S\,\resg d\vec{\Phi}=2\mko \Delta_g H  \mkt \vec{n}-\langle d(H^2), d\vec{\Phi}\rangle_g.
\end{dcases}
\end{equation}
\end{thm}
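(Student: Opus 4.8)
The plan is to assemble the final theorem simply by collecting the four identities already derived in this section and recording the function spaces established along the way. Concretely, I would proceed as follows.

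First, I would invoke Lemma~\ref{lm-critic-W^{3,2}}: since $\bP$ is a weak critical point of $E$, the Euler--Lagrange equation \eqref{Euler-1} holds in $\mathcal D'(B^4,\R^5\ot\bwe^4 T^*B^4)$, i.e. $d*_g\vec V=0$ with $\vec V$ as in \eqref{defV4d}. Then I would recall the regularity observation made right after that lemma: by the embeddings $L^1(B^4)\hookrightarrow W^{-1,(4/3,\nf)}(B^4)\hookrightarrow W^{-2,(2,\nf)}(B^4)$ from Lemma~\ref{lm:LpembW-1}, together with $H\in W^{1,2}$ hence $\langle d\bP,dH\rangle_g\,dH\in L^1$, $\Delta_g H\,d\bn\in L^1$, $|dH|_g^2\,d\bP\in L^1$, and $d(\bn\,\Delta_g H)\in W^{-1,(4/3,\nf)}\hookrightarrow W^{-2,(2,\nf)}$, one gets $*_g\vec V\in W^{-2,(2,\nf)}(B^4,\R^5\ot\bwe^3 T^*B^4)$. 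The weak Poincar\'e lemma \cite[Cor.~3.4]{Costa10} then yields $\bL\in W^{-1,(2,\nf)}(B^4,\R^5\ot\bwe^2 T^*B^4)$ with $d*_g\bL=*_g\vec V$; this is the first line of \eqref{sys-LRS}.

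Second, I would apply Proposition~\ref{prop:conlawdilrot} to this same $\bL$: it gives the two conservation laws \eqref{cons-law-dilation} and \eqref{con-law-rotation-4d} coming from Noether's theorem for dilations and rotations. Using the estimate \eqref{prornd} (which is precisely the tool needed to keep products of $L^\nf\cap W^{1,4}$ functions with $W^{-1,(2,\nf)}$ distributions inside $W^{-1,(2,\nf)}$), one checks that the bracketed $3$-forms in \eqref{cons-law-dilation} and \eqref{con-law-rotation-4d} lie in $W^{-1,(2,\nf)}$; another application of the weak Poincar\'e lemma produces $S\in L^{2,\nf}(B^4,\bwe^2 T^*B^4)$ and $\vec R\in L^{2,\nf}(B^4,\bwe^2\R^5\ot\bwe^2 T^*B^4)$ satisfying the system \eqref{d*SRsys4d}, which is exactly the second and third lines of \eqref{sys-LRS}. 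Finally, the fourth line of \eqref{sys-LRS} is the content of Lemma~\ref{lm-HSR}, whose proof only uses the algebraic interior-product identities \eqref{prodrule}, \eqref{II.7-a}, \eqref{II.7-b} together with \eqref{d*SRsys4d}. Assembling these four facts gives the theorem.

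Since every ingredient is already proved in the section, there is no genuine analytic obstacle remaining; the only thing to be careful about is bookkeeping of function spaces --- namely verifying that all the coefficients appearing in the brackets ($d\bP\in L^\nf\cap W^{1,4}$, $\bn\in L^\nf\cap W^{1,4}$, $H\in W^{1,2}$, hence $H^2\in W^{1,\frac{4}{3}}$, etc., which follow from $\bP\in W^{3,2}_{\imm}(B^4,\R^5)$ and Sobolev embedding in dimension $4$) are regular enough that the products make sense and that \eqref{prornd} and Lemma~\ref{lm:LpembW-1} apply. That verification, and keeping track of which Hodge/interior-product operator acts on the $\bwe\R^5$ factor versus the $\bwe T^*B^4$ factor, is the main (purely technical) point; I would state it briefly and refer back to the computations in Lemma~\ref{lm-critic-W^{3,2}}, Proposition~\ref{prop:conlawdilrot} and Lemma~\ref{lm-HSR} rather than redoing them. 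The theorem is then immediate.
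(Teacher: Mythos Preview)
Your proposal is correct and matches the paper's approach exactly: the theorem is explicitly presented in the paper as a summary (``We summarize our results of this section in the following theorem''), with no separate proof, and the four lines of \eqref{sys-LRS} are precisely the contents of Lemma~\ref{lm-critic-W^{3,2}} together with \eqref{d*gL=*gV4d}, the system \eqref{d*SRsys4d} obtained from Proposition~\ref{prop:conlawdilrot}, and Lemma~\ref{lm-HSR}, with the function-space memberships recorded in the intervening text via Lemma~\ref{lm:LpembW-1}, \eqref{prornd}, and the weak Poincar\'e lemma.
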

As mentioned at the begining of this section, this set of identities and Noether currents obtained through the application of Noether theorem is the starting point to the proof of the regularity of weak immersions $\vec{\Phi}$ critical points to $E$ (see \cite{bernard2025}).

\end{document}